\newtheorem{theorem}{Theorem}[section]
\newtheorem{lemma}{Lemma}[section]
\newtheorem{proposition}{Proposition}[section]
\newtheorem{corollary}{Corollary}[section]
\newtheorem{notation}{Notation}[section]
\newtheorem{definition}{Definition}[section]
\theoremstyle{remark}
\newtheorem{remark}{Remark}[section]
\def\undertilde#1{\mathord{\vtop{\ialign{##\crcr
$\hfil\displaystyle{#1}\hfil$\crcr\noalign{\kern1.5pt\nointerlineskip}
$\hfil\widetilde{}\hfil$\crcr\noalign{\kern1.5pt}}}}}
\setlist[enumerate,2]{label=\arabic*)} \newcommand*\diff{\mathop{}\!\mathrm{d}}
\newcommand{\R}{\mathbb{R}}
\newcommand{\eps}{\epsilon}
\newcommand{\N}{\mathbb{N}}
\newcommand{\ind}{\mathds{1}}
\newcommand{\phitan}{\phi^{\natural}}
\newcommand{\phitaninverse}{\phi^{-\natural}}
\newcommand{\phidiss}{\phi_0^\flat}
\newcommand{\phicom}{\phi_{\phitan(u_L)}^\sharp}
\newcommand{\statespace}{\mathcal{I}_0}
\newcommand{\weakone}{\mathcal{S}_{\text{weak},u_L, \eps}}
\newcommand{\weaktwo}{\mathcal{S}_{\text{weak}}}
\newcommand{\Pismall}{\Pi_{C,s_0}}
\newcommand{\Qsmall}{Q_{C,s_0}}
\newcommand{\Rsmall}{R_{C,s_0}}
\newcommand{\Rsmallbd}{R_{C,s_0}^{bd}}
\newcommand{\Rsmallnot}{R_{C,s_0}^{0}}
\newcommand{\Rsmallplus}{R_{C,s_0}^{+}}
\newcommand{\Rsmallminus}{R_{C,s_0}^{-}}
\newcommand{\maximalspeedlarge}{\hat{\lambda}_1}
\newcommand{\maximalspeedsmall}{\hat{\lambda}_2}
\newcommand{\aplim}{\textnormal{ap }\lim}
\DeclarePairedDelimiter{\ceil}{\lceil}{\rceil}
\title[$L^2$-stability for Scalar Conservation Laws]{$L^2$-stability \& Minimal Entropy Conditions for Scalar Conservation Laws with Concave-Convex Fluxes}
\author{Jeffrey Cheng}
\address{Department of Mathematics, The University of Texas at Austin, Austin, TX 78712.}
\email{jeffrey.cheng@utexas.edu}
\date{\today}
\thanks{2010 \textit{Mathematics Subject Classification}. 35L65, 35B35, 35L45}
\thanks{\textit{Key words and phrases}. Uniqueness, Stability, Conservation law, Relative entropy, Entropy solution, Entropy condition, Shock wave, Contraction}
\thanks{\textbf{Acknowledgements}: The author would like to thank his graduate advisor, Alexis Vasseur, for providing helpful conversations and support throughout. This work was partially supported by NSF-DMS Grant 1840314.}
\begin{document}

\begin{abstract}
In this paper, we study stability properties of solutions to scalar conservation laws with a class of non-convex fluxes. Using the theory of $a$-contraction with shifts, we show $L^2$-stability for shocks among a class of large perturbations, and give estimates on the weight coefficient $a$ in regimes where the shock amplitude is both large and small. Then, we use these estimates as a building block to show a uniqueness theorem under minimal entropy conditions for weak solutions to the conservation law via a modified front tracking algorithm. The proof is inspired by an analogous program carried out in the $2 \times 2$ system setting by Chen, Golding, Krupa, \& Vasseur.
\end{abstract}
\maketitle 

\tableofcontents

\section{Introduction \& main results}
We consider a $1-$d scalar conservation law:
\begin{equation}\label{cl}
u_t+(f(u))_x=0 \indent t > 0, x \in \R,
\end{equation}
where $(t,x) \in \R^+ \times \R$ are time and space and $u \in \statespace \subset \R$ is the unknown. The state space $\statespace$ is assumed to be convex and bounded. Without loss of generality, we may assume $\statespace=[-M,M]$ for some $M >0$. We assume that $f \in C^4(\statespace)$. In this paper, we will consider the case where $f$ is concave-convex. This means $f$ verifies the following properties:
\begin{equation}\label{concaveconvex}
\begin{cases}
uf''(u) > 0 & (u \neq 0), \\
f'''(0) \neq 0, \\
\lim_{|u| \to +\infty}f'(u)=+\infty.
\end{cases}
\end{equation}
In particular, $f$ has only one inflection point, normalized to be $f''(0)=0$. Our guiding example is $f(u)=u^3$. We will consider classes of solutions to \eqref{cl} that are entropic for one (or more) convex entropies $\eta$, i.e. solutions which verify additionally:
\begin{equation}\label{entropic}
(\eta(u))_t+(q(u))_x \leq 0 \indent t > 0, x \in \R,
\end{equation}
in the sense of distributions. More specifically, we ask that for all $\psi \in C_0^\infty(\R^+ \times \R)$ verifying $\psi \geq 0$:
\[
\int_0^\infty \int_{-\infty}^\infty \bigl(\psi_t(t,x)\eta(u(t,x))+\psi_x(t,x)q(u(t,x))\bigr)\diff x \diff t \geq 0.
\]
\begin{notation}
Throughout, we work with a single fixed $C^3$ strictly convex entropy, denoted $\eta$, and a few Kru\v{z}kov entropies, denoted $\eta_k$, where $\eta_k(u):=|u-k|$. A weak solution to \eqref{cl} will be denoted ``$\eta$-entropic'' if it is entropic for the entropy $\eta$, and will be denoted ``$\eta_k-$entropic'' if it is entropic for $\eta_k$.
\end{notation}
It is well known that weak solutions to \eqref{cl} need not be unique. Kru\v{z}kov showed that solutions which are $\eta_k$-entropic for all $k$ are contractive against one other in $L^1$, and thus one recovers uniqueness if all the entropy conditions are imposed \cite{MR267257}. The main question we explore in this work is whether uniqueness can be recovered by imposing less entropy conditions. 
\par Dafermos \cite{MR546634} and DiPerna \cite{MR523630} showed a weak/strong stability result for Lipschitz solutions to \eqref{cl} by studying the evolution of the relative entropy:
\[
\eta(v|w)=\eta(v)-\eta(w)-\eta'(w)(v-w),
\]
where $w$ is the strong solution, and $v$ is a $\eta$-entropic weak solution. Note that in this case, $v$ is assumed to be entropic for merely one entropy. Notice that if $u$ is a weak solution of \eqref{cl}, \eqref{entropic}, then $u$ verifies a full family of entropy inequalities. Indeed, we define $q(a;b)$, the relative entropy flux, as:
\[
q(a;b)=q(a)-q(b)-q'(b)(f(a)-f(b)).
\]
Then, for any $b \in \statespace$ constant, each $\eta(\cdot|b)$ is an entropy for \eqref{cl}, and each $\eta$-entropic $u$ satisfies:
\begin{equation}\label{relativeentropic}
(\eta(u|b))_t+(q(u;b))_x \leq 0,
\end{equation}
in the sense of distributions. Similar to Kru\v{z}kov's theory \cite{MR267257}, \eqref{relativeentropic} provides a family of entropies measuring the distance of the solution to any constant state. Thus, the relative entropy method may be considered an analog of Kru\v{z}kov's theory applicable to systems of conservation laws (although we treat the scalar case here, where Kru\v{z}kov's methods are available). The main difference is that the relative entropies are $L^2$-based rather than $L^1$-based. 

\par Recently, the study of the time evolution of the relative entropy has been extended to the setting in which $w$ is a shock, under a genuine nonlinearity condition. In the scalar case, this corresponds to $f$ being strictly convex. Leger showed $L^2$-contraction for shocks among a class of solutions verifying merely one entropy condition \cite{MR2771666}. More recently, a weighted $L^2$-stability property for shocks, the so-called ``a-contraction'' property, has been shown, which can be seen as an extension of the result of Leger to systems (see \cite{MR3537479} and \cite{MR3519973}. See also \cite{MR4176349} for extensions to balance laws). We also briefly mention the extension of the $a$-contraction theory to systems of conservation laws with viscosity, which was recently used to prove the uniqueness of solutions to the isentropic Euler equations in the class of inviscid limits from Navier-Stokes \cite{MR4195742}, \cite{NSVS}. Another result of note is the recent work of Huang, Wang, \& Zhang, who prove the $a$-contraction property for the cubic flux in the viscous setting, which is the chief example in the class of fluxes we address in this article \cite{Wangcubic}.
\par In this work, we adapt the so-called ``relative entropy'' program to two novel scenarios. Primarily, we consider for the first time at the hyperbolic level a flux which is not genuinely nonlinear. Secondly, we construct the modified front tracking algorithm in a large-BV setting. The first main result is to show the $a$-contraction property for shocks via the relative entropy method in the case where the flux $f$ is concave-convex, among a class of weak solutions verifying minimal entropy conditions.

\par We restrict our study to solutions verifying the so-called Strong Trace property, which is now classical for the $a-$contraction theory:
\begin{definition}\label{strongtrace}
Let $u \in L^\infty(\R^+ \times \R)$. We say that $u$ verifies the Strong Trace property if for any Lipschitzian curve $t \mapsto h(t)$, there exists two bounded functions $u_-, u_+ \in L^\infty(\R^+)$ such that for any $T > 0$:
\begin{align*}
&\lim_{n\to \infty}\int_0^T\sup_{y \in (0,\frac{1}{n})}|u(t,h(t)+y)-u_+(t)|\diff t \\
&=\lim_{n\to \infty}\int_0^T\sup_{y \in (-\frac{1}{n},0)}|u(t,h(t)+y)-u_-(t)|\diff t=0.
\end{align*}
\end{definition}
\begin{notation}
For convenience, we will use later the notation $u_+(t)=u(t,h(t)+)$, and $u_-(t)=u(t,h(t)-)$. 
\end{notation}

\par Now, we can define the wildest spaces of solutions that we consider in the paper. The space of weak solutions that a fixed shock $(u_L, u_R)$ has stability in will depend on $u_L$. To this end, for $\eps > 0$, consider the class:
\begin{gather*}
\weakone=\{u \in L^\infty(\R^+ \times \R:\statespace)|\text{weak solution to } \eqref{cl}, \eta,\eta_{\phitan(u_L)}-\text{entropic, } \\
\text{ verifying Definition \ref{strongtrace}, and } (u_-(t),u_+(t)) \not \in B(\phidiss(u_L),\eps) \times B(u_L, \eps) \\
\text{ for almost every $t > 0$ for all Lipschitz curves $h(t)$}\}.
\end{gather*}
For definitions of the auxiliary functions $\phitan$, $\phidiss$, see \eqref{phitan}, \eqref{phidiss}.

\par Now, we may state our first main theorem. It gives a contraction (up to shift) for a suitable pseudo-distance between a fixed Ole\u{\i}nik shock and a wild solution.
\begin{theorem}\label{dissipationfnllargeshock}
Consider \eqref{cl}, with $f$ verifying \eqref{concaveconvex}. Let $u_L > 0$ and $u_R \in \mathcal{V}_0$ such that $(u_L, u_R)$ verifies the Ole\u{\i}nik condition. Then, there exists $0 < a^* < 1$ such that the following holds: let $a_1, a_2 > 0$ with $\frac{a_2}{a_1} < a^*$. Then, for any $u \in \weakone$, any $\overline{t} \in [0,\infty)$, and any $x_0 \in \R$, there exists a Lipschitz path $h:[\overline{t},\infty) \to \R$, with $h(\overline{t})=x_0$, such that the following dissipation functional verifies:
\begin{equation}\label{sufficientlarge}
a_1(\dot{h}(t)\eta(u_-(t)|u_L)-q(u_-(t);u_L))-a_2(\dot{h}(t)\eta(u_+(t)|u_R)-q(u_+(t);u_R)) \leq 0,
\end{equation}
for almost every $t \in [\overline{t},\infty)$. Here, $a^*$ depends only on the system, and continuously on $\eps, s_0=|u_L-u_R|$, $d(u_L,0)=|u_L-0|$, and $d(u_R, \phitan(u_L))$ if $d(u_R, \phitan(u_L))>0$.
\end{theorem}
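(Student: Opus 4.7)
The plan is to reduce Theorem \ref{dissipationfnllargeshock} to a pointwise dissipation inequality on the strong traces $(u_-(t), u_+(t))$, and then realize the shift $h$ as a Lipschitz solution of an ODE with discontinuous right-hand side, following the program of Leger and Krupa--Vasseur. Concretely, I would construct a bounded velocity function $V:\statespace\times\statespace\to\R$ and let $h$ solve $\dot{h}(t)=V(u_-(t),u_+(t))$ with $h(\overline{t})=x_0$, so that \eqref{sufficientlarge} reduces to the pointwise statement
\[
D(V;u_-,u_+) := V\bigl[a_1\eta(u_-|u_L) - a_2\eta(u_+|u_R)\bigr] - \bigl[a_1 q(u_-;u_L) - a_2 q(u_+;u_R)\bigr] \leq 0
\]
for every admissible pair $(u_-,u_+)$. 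Since $D$ is affine in $V$, the choice is dictated by the sign of its coefficient: where $a_1\eta(u_-|u_L) - a_2\eta(u_+|u_R) > 0$ one needs $V$ at most the Rankine--Hugoniot-type ratio of the two brackets, where it is negative one needs the reverse inequality, and on the zero set one needs the algebraic condition $a_1 q(u_-;u_L) \geq a_2 q(u_+;u_R)$ directly. Along the way one must check that this prescription can be made globally bounded, so that $h$ inherits Lipschitz regularity.

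The technical core is the case analysis on the feasible domain $\statespace^2$ minus the excluded ball $B(\phidiss(u_L),\eps)\times B(u_L,\eps)$. On the slice $\{u_+=u_R\}$ the $\eta$-entropy admissibility and the Oleinik condition on $(u_L,u_R)$ reduce the inequality to a standard shock-dissipation computation, while on the slice $\{u_-=u_L\}$ the Kru\v{z}kov-entropy hypothesis for $\eta_{\phitan(u_L)}$ supplies the missing sign when $u_+$ crosses the tangent companion $\phitan(u_L)$. For generic interior pairs the analysis splits along the concave-convex geometry: when $u_-$ lies on the same side of the inflection $0$ as $u_L$, the argument is morally the convex one of Leger, and one obtains the required inequality for any $a_2/a_1 < 1$; when $u_-$ has crossed $0$, one must exploit the tangent/chord identities built into $\phitan(u_L)$ and $\phidiss(u_L)$ to expand $\eta(\,\cdot\,|u_L)$ and $q(\,\cdot\,;u_L)$ near these critical values. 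Combining these expansions with the Oleinik chord condition on $(u_L,u_R)$, one extracts the threshold $a^*$: it is the sharp ratio for which the worst admissible $(u_-,u_+)$ outside the excluded ball still yields $D\leq 0$, and its continuous dependence on $\eps$, $s_0$, $d(u_L,0)$, and $d(u_R,\phitan(u_L))$ reflects exactly which geometric quantities control the competing dissipation and production terms.

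Once the pointwise inequality is established with a bounded velocity $V$, the Lipschitz shift $h$ is constructed by the now-standard argument of extending $V$ to an upper semicontinuous set-valued map on $\statespace^2$ and solving the resulting Filippov differential inclusion on $[\overline{t},\infty)$; the Strong Trace property of Definition \ref{strongtrace} makes $(u_-(t),u_+(t))$ measurable enough for this to produce the advertised almost-everywhere bound. I expect the principal obstacle to be the case analysis in a neighborhood of the excluded ball. Unlike the genuinely nonlinear setting of Leger, the coefficient $a_1\eta(u_-|u_L) - a_2\eta(u_+|u_R)$ and the competing flux bracket can degenerate simultaneously as $u_-\to \phidiss(u_L)$ and $u_+\to u_L$, so that the Rankine--Hugoniot ratio governing $V$ is ill-behaved and the bound $D\leq 0$ can fail without the $\eps$-separation; quantifying this degeneracy through the concave-convex structure of $f$ near $\phidiss(u_L)$, and tracking how it propagates into the admissible range of $a_2/a_1$, is the most delicate part of the argument.
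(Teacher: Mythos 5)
Your high-level picture is right: a pointwise dissipation inequality on the traces, a Filippov-type ODE for the shift, and an $\eps$-separation to avoid degenerate ``middle'' shocks near $(\phidiss(u_L), u_L)$. But there are two concrete gaps that would stop the plan as written.

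First, the velocity should be a function of a \emph{single} state $u \in \statespace$, not of the pair $(u_-,u_+)$. The paper takes $V(u) = \lambda(u) - (C_2+2L)\ind_{\Pi_a^c}(u)$ and invokes Lemma~\ref{filipov}, which produces a Lipschitz $h$ with $\dot h \in [V_{\min},V_{\max}]$ and, crucially, guarantees that for almost every $t$ either $u_+ = u_-$ or $(u_-,u_+,\dot h)$ is a genuine Rankine--Hugoniot discontinuity that is $\eta,\eta_{\phitan(u_L)}$-entropic. That structural fact is what makes the case analysis tractable: you never need a pointwise inequality valid on arbitrary pairs in $\statespace^2$, only on admissible shocks (plus the diagonal). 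A velocity built from the pair $(u_-,u_+)$ would not slot into that existence lemma, and the claim ``$D\leq 0$ for every admissible pair'' is then vague because you have not pinned down what pairs are admissible.

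Second, and more seriously, you are missing the organizing object of the whole proof: the set $\Pi_a=\{u : \tfrac{a_1}{a_2}\eta(u|u_L)-\eta(u|u_R)\leq 0\}$ and its shrinking diameter $\mathrm{diam}(\Pi_a)\lesssim\sqrt a$ (Lemma~\ref{pistructurelarge}). The paper's case split is on whether $u_-,u_+$ lie in $\Pi_a$ or $\Pi_a^c$, and it is the indicator $\ind_{\Pi_a^c}$ in $V$ that forces $\dot h$ artificially slow outside $\Pi_a$ so the $\tilde q/\tilde\eta$ control of Lemma~\ref{qcontrol} can sign the dissipation there. Your case analysis on slices $\{u_+=u_R\}$, $\{u_-=u_L\}$ does not correspond to any step of the argument and does not exhaust the admissible configurations; the relevant dichotomy is $\Pi_a$ vs.\ $\Pi_a^c$, with the three Propositions~\ref{Dcontlarge}--\ref{DRHlarge2} covering the continuous, left-in-$\Pi_a$ shock, and far-left/right-in-$\Pi_a$ shock cases respectively. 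Without $\Pi_a$ and the $\tilde q$ control lemma you have no mechanism for choosing a globally bounded $V$, nor for showing the sign of $D$ when the trace is continuous but outside a neighborhood of $u_L$.

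The parts you did identify---the Filippov construction, the role of the Kru\v{z}kov entropy $\eta_{\phitan(u_L)}$ in controlling shocks crossing the inflection, the use of the $\eps$-ball to exclude entropy-neutral discontinuities, and the dependencies of $a^*$---are correct and match the paper's Section~\ref{shiftconstruction}. You need to replace the pair-valued velocity with a state-valued one and import the $\Pi_a$ machinery for the proof to close.
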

\begin{notation}
In the above, and in all that follows, ``dependence on the system'' will be interpreted as a dependence on $f, \eta$, and $\statespace$, i.e. there will be implicit dependence on $M$, where $\statespace=[-M,M]$.
\end{notation}
As a corollary, we obtain the following contraction property:
\begin{corollary}\label{thmlargeshock}
Consider \eqref{cl}, with $f$ verifying \eqref{concaveconvex}. Let $u_L > 0, u_R \in \mathcal{V}_0$ such that $(u_L, u_R)$ verifies the Ole\u{\i}nik condition. Then, there exists $0 < a^* < 1, C > 0$ such that the following holds: let $a_1, a_2 > 0$ with $\frac{a_2}{a_1} < a^*$. For any $u \in \weakone$, there exists a Lipschitz path $h(t)$, with $h(0)=0$, such that for any $t > 0$, the pseudo-norm:
\[
E_t(u)=a_1 \int_{-\infty}^0\eta(u(t,x+h(t))|u_L)\diff x+a_2 \int_0^\infty \eta(u(t,x+h(t))|u_R)\diff x,
\]
is non-increasing in time. Especially, for every $t > 0$:
\[
||u(t,\cdot+h(t))-S||_{L^2(\R)} \leq C||u_0-S||_{L^2(\R)},
\]
where $u_0=u(t=0)$ and $S(x)=u_L$ for $x < 0$ and $S(x)=u_R$ for $x > 0$.
\end{corollary}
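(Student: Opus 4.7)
The plan is to combine Theorem \ref{dissipationfnllargeshock} with the distributional relative entropy inequality \eqref{relativeentropic}, applied with the constant states $u_L$ and $u_R$. First, apply Theorem \ref{dissipationfnllargeshock} with $\overline{t}=0$, $x_0=0$, and the given $a_1, a_2$, obtaining a Lipschitz shift $h:[0,\infty)\to\R$ with $h(0)=0$ satisfying \eqref{sufficientlarge} a.e. The objective is then to show the pseudo-norm $E_t(u)$ is non-increasing. Granted this, the $L^2$-bound follows immediately from strict convexity of $\eta\in C^3$ on the bounded set $\statespace$: there exist $c, C>0$ (depending on the system) with $c|a-b|^2 \leq \eta(a|b) \leq C|a-b|^2$ for all $a,b\in\statespace$, so $c\min(a_1,a_2)||u(t,\cdot+h(t))-S||_{L^2(\R)}^2 \leq E_t(u) \leq E_0(u) \leq C\max(a_1,a_2)||u_0-S||_{L^2(\R)}^2$.

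For the monotonicity, test \eqref{relativeentropic} (with $b=u_L$ and then $b=u_R$) against the non-negative test functions $\psi_L^n(t,x)=\phi(t)\chi^n(h(t)-x)$ and $\psi_R^n(t,x)=\phi(t)\chi^n(x-h(t))$, where $\phi\in C_0^\infty((0,\infty))$ is non-negative and $\chi^n:\R\to[0,1]$ is a smooth approximation of the Heaviside function whose derivative concentrates at $0$. Computing $\psi_t, \psi_x$, and passing to the limit $n\to\infty$ via Definition \ref{strongtrace} to identify the one-sided traces $u_\pm(t)$ along the curve $h$, one obtains
\begin{align*}
\int_0^\infty \phi'(t)\!\int_{-\infty}^{h(t)}\!\eta(u(t,x)|u_L)\diff x\diff t + \int_0^\infty \phi(t)\bigl[\dot{h}(t)\eta(u_-(t)|u_L)-q(u_-(t);u_L)\bigr]\diff t &\geq 0, \\
\int_0^\infty \phi'(t)\!\int_{h(t)}^{\infty}\!\eta(u(t,x)|u_R)\diff x\diff t - \int_0^\infty \phi(t)\bigl[\dot{h}(t)\eta(u_+(t)|u_R)-q(u_+(t);u_R)\bigr]\diff t &\geq 0.
\end{align*}
Multiplying the first by $a_1$, the second by $a_2$, and summing, the boundary terms at $x=h(t)$ combine into the left-hand side of \eqref{sufficientlarge}, which is non-positive by Theorem \ref{dissipationfnllargeshock} and may therefore be dropped. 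What remains is $\int_0^\infty \phi'(t) E_t(u)\diff t \geq 0$ for every non-negative $\phi\in C_0^\infty((0,\infty))$, i.e.\ $\partial_t E_t(u) \leq 0$ distributionally, which (after modifying on a null set) gives the pointwise monotonicity claimed.

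The main obstacle is the rigorous passage $n\to\infty$ in the boundary contributions, ensuring that the pairings $\int (\chi^n)'(h(t)-x)\, q(u(t,x);u_L)\diff x$ and its $\eta$-analogue converge to the true one-sided traces $q(u_-(t);u_L)$ and $\eta(u_-(t)|u_L)$ rather than to two-sided averages; this is exactly what the Strong Trace property supplies, but the bookkeeping on the sign of the $\dot{h}(t)$-terms, produced by differentiating $\chi^n(h(t)-x)$ in $t$ on each side of the curve, must be done with care so that the result combines cleanly with \eqref{sufficientlarge}. A minor secondary point is that the $L^2$-bound is only meaningful when $u_0-S\in L^2(\R)$; under this assumption the chain of inequalities in the first paragraph closes, while the monotonicity statement itself holds regardless of finiteness of $E_0(u)$.
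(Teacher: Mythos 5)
Your proposal is correct and is essentially the paper's own argument: the paper likewise integrates \eqref{relativeentropic} with $b=u_L$ on $x<h(t)$ and $b=u_R$ on $x>h(t)$, uses the Strong Trace property to produce the boundary terms \eqref{basiccomputation}, applies Theorem \ref{dissipationfnllargeshock} (with $\overline{t}=0$, $x_0=0$) to kill them, and then invokes Lemma \ref{quadraticentropy} for the $L^2$-stability. Your test-function construction is simply a more detailed write-up of the integration step the paper states in one line.
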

Assuming the contraction property holds, the stability in $L^2$ is a simple consequence of the following lemma:
\begin{lemma}[from \cite{MR2807139}, \cite{MR2508169}]\label{quadraticentropy}
For any fixed compact set $V \subseteq \statespace$, there exists $c^*, c^{**} > 0$ such that for all $(u,v) \in \statespace \times V$:
\begin{equation*}
    c^*|u-v|^2 \leq \eta(u|v)\leq c^{**}|u-v|^2
\end{equation*}
The constants $c^*, c^{**}$ depend on bounds on the second derivative of $\eta$ in $V$, and on the continuity of $\eta$ on $\statespace$.
\end{lemma}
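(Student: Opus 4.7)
The plan is to write the relative entropy in its integral Taylor form and then establish the two inequalities separately, treating the regime where $u$ and $v$ are close and the regime where they are far apart by different arguments. By Taylor's theorem with integral remainder,
\[
\eta(u|v)=\int_0^1 (1-s)\,\eta''\bigl(v+s(u-v)\bigr)(u-v)^2\,\diff s,
\]
and since $\statespace$ is convex, the segment from $v$ to $u$ stays inside $\statespace$ for every $(u,v)\in\statespace\times V$, so the integrand is well defined.

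For the upper bound, I would use directly that $\eta\in C^3(\statespace)$ and $\statespace$ is compact, so $M_2:=\sup_{w\in\statespace}|\eta''(w)|<\infty$, giving $\eta(u|v)\le\frac{M_2}{2}|u-v|^2$ and hence $c^{**}=M_2/2$.

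The lower bound is the main point and I would split into two regimes. Near the diagonal: since $V$ is compact and $\eta''>0$ on $\statespace$ by strict convexity, continuity of $\eta''$ gives $\eta''\ge m_V>0$ on a $\delta$-neighborhood $V_\delta$ of $V$ (for some $\delta>0$ depending on $V$). Then for $(u,v)\in\statespace\times V$ with $|u-v|\le\delta$, the whole segment lies in $V_\delta$, and the Taylor formula yields $\eta(u|v)\ge\frac{m_V}{2}|u-v|^2$. Far from the diagonal: on the compact set
\[
K_\delta:=\{(u,v)\in\statespace\times V:|u-v|\ge\delta\},
\]
the map $(u,v)\mapsto\eta(u|v)$ is continuous and, by strict convexity, strictly positive; hence it attains a positive minimum $m_\delta>0$ on $K_\delta$. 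Combined with $|u-v|^2\le(\mathrm{diam}\,\statespace)^2$, this gives $\eta(u|v)\ge (m_\delta/(\mathrm{diam}\,\statespace)^2)|u-v|^2$ on $K_\delta$. Setting $c^*:=\min\bigl(m_V/2,\,m_\delta/(\mathrm{diam}\,\statespace)^2\bigr)$ concludes the proof.

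The only delicate step is the lower bound in the far-from-diagonal regime: Taylor's formula alone is not enough because the integrand $\eta''(v+s(u-v))$ could approach zero along the segment for $u\in\statespace\setminus V$ even when $\eta''(v)>0$. This is precisely where compactness of $\statespace\times V$ together with the strict convexity of $\eta$ (which forces $\eta(u|v)>0$ whenever $u\ne v$) does the work, and it is also why the statement records a dependence on the continuity of $\eta$ on $\statespace$ in addition to the local bounds on $\eta''$ in $V$.
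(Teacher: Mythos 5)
The paper cites this lemma from \cite{MR2807139} and \cite{MR2508169} without reproducing a proof, so there is no in-paper argument to compare against. Your proof is correct and matches the standard argument underlying those references: Taylor's theorem with integral remainder gives the quadratic upper bound globally and the quadratic lower bound near the diagonal, while a compactness argument (using strict positivity of $\eta(u|v)$ off the diagonal) closes the far-from-diagonal case. The two-regime split is exactly what explains the stated dependence of the constants---bounds on $\eta''$ on (a neighbourhood of) $V$ for the near-diagonal estimate, and continuity of $\eta$ on $\statespace$ together with strict convexity for the far-from-diagonal minimum. One small point: the step ``$\eta''>0$ on $\statespace$ by strict convexity'' is an overreach, since a $C^2$ strictly convex function can have $\eta''$ vanish at isolated points (e.g.\ $u\mapsto u^4$); what the lower bound really needs is a positive lower bound on $\eta''$ in a neighbourhood of $V$, which is implicit in the lemma's reference to ``bounds on the second derivative of $\eta$ in $V$.'' In this paper's setting ($\eta\in C^3$ on the compact $\statespace$, with $\eta''$ tacitly bounded away from zero) this is harmless, and the two regimes could even be collapsed into a single uniform Taylor estimate; keeping the split is the form appropriate to the generality of the cited references.
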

By keeping track of the dependence of $a$ on the aforementioned quantities carefully, we will obtain as a second corollary that for $u_L$ contained in a compact set localized to the region where $f$ is convex, we will be able to take $a=a^* > 0$ uniformly for all shocks, although we will also need to restrict our class of weak solutions slightly more. Roughly speaking, as $\weakone$ depends on $u_L$, we need to consider a class that is contained in the intersection of all $\weakone$ over $u_L$ ranging over some compact set. Fix a closed interval $[\underline{b},\overline{b}]$, with $0 < \underline{b} < \overline{b}$. Then, we define:
\begin{gather*}
\weaktwo=\{u \in L^\infty(\R^+ \times \R:\statespace)|\text{weak solution to } \eqref{cl}, \eta,\eta_{\phitan(\underline{b})}-\text{entropic, } \\
\text{verifying Definition \ref{strongtrace}, and }
u_0 \geq \phitan(\underline{b}) \}.
\end{gather*}
Now, we can state the sharpened result:
\begin{corollary}\label{uniformalarge}
Consider \eqref{cl} with $f$ verifying \eqref{concaveconvex}. Fix a closed interval $[\underline{b},\overline{b}]$, with $0 < \underline{b} < \overline{b}$, and $\tilde{s_0} > 0$. Then there exists $0 < a^* < 1$ such that the following holds. Let $a_1, a_2 > 0$ with $\frac{a_2}{a_1} \leq a^*$. Consider any Ole\u{\i}nik shock $(u_L,u_R)$ with $u_L,u_R \in [\underline{b},\overline{b}], |u_L-u_R| \geq \tilde{s_0}$, any $u \in \weaktwo$, any $\overline{t} \in [0,\infty)$, and any $x_0 \in \R$. Then, there exists a Lipschitz path $h:[\overline{t},\infty) \to \R$, with $h(\overline{t})=x_0$, such that the following dissipation functional verifies:
\begin{equation*}
a_1(\dot{h}(t)\eta(u_-(t)|u_L)-q(u_-(t);u_L))-a_2(\dot{h}(t)\eta(u_+(t)|u_R)-q(u_+(t);u_R)) \leq 0,
\end{equation*}
for almost every $t \in [\overline{t},\infty)$. Here, $a^*$ depends only on the system, $\underline{b},\overline{b}$, and $\tilde{s_0}$. Moreover, there is a maximal shock speed $\maximalspeedlarge$, i.e. $|\dot{h}(t)| \leq \maximalspeedlarge$ for almost every $t \in [\overline{t},\infty)$.
\end{corollary}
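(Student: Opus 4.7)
The plan is to deduce this corollary from Theorem \ref{dissipationfnllargeshock} via a compactness/continuity argument, the main reduction being to exhibit $\weaktwo$ as a subset of $\weakone$ for every $u_L \in [\underline{b},\overline{b}]$, with one uniform choice of $\eps = \eps(\underline{b},\overline{b}) > 0$.

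First I would propagate the initial lower bound. For any $u \in \weaktwo$ I would invoke the standard one-sided maximum principle associated to a single Kru\v{z}kov entropy: subtracting the weak form of \eqref{cl} from the $\eta_{\phitan(\underline{b})}$-entropy inequality produces a distributional inequality for the negative part $(\phitan(\underline{b})-u)_+$, which together with $u_0 \geq \phitan(\underline{b})$ forces $u(t,x) \geq \phitan(\underline{b})$ almost everywhere. Since $\phitan$ is decreasing on the convex branch (see \eqref{phitan}), for every $u_L \in [\underline{b},\overline{b}]$ one has $\phitan(u_L) \leq \phitan(\underline{b}) \leq u$, so that $\eta_{\phitan(u_L)}(u) = u-\phitan(u_L)$ pointwise and the $\eta_{\phitan(u_L)}$-entropy inequality reduces to the weak equation (with equality). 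To handle the trace-avoidance condition I would use the geometric description \eqref{phidiss}: $\phidiss(u_L)$ lives on the concave branch and sits strictly below $\phitan(\underline{b})$ uniformly in $u_L \in [\underline{b},\overline{b}]$, so choosing $\eps$ less than half of that uniform gap forces $u_-(t) \notin B(\phidiss(u_L),\eps)$, and the exclusion $(u_-(t),u_+(t)) \notin B(\phidiss(u_L),\eps) \times B(u_L,\eps)$ holds automatically. This yields $\weaktwo \subseteq \weakone$ with a single $u_L$-independent $\eps$.

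With the inclusion established, Theorem \ref{dissipationfnllargeshock} provides, for every admissible $(u_L,u_R)$, a threshold $a^*(u_L,u_R) \in (0,1)$ depending continuously on $\eps$, on $s_0 = |u_L-u_R|$, on $d(u_L,0)$, and on $d(u_R,\phitan(u_L))$ in the regime where the latter is positive. On the compact parameter set
\[
K = \{(u_L,u_R) \in [\underline{b},\overline{b}]^2 : |u_L-u_R| \geq \tilde{s_0},\ (u_L,u_R)\ \text{Ole\u{\i}nik-admissible}\},
\]
all four quantities are uniformly bounded below: $\eps$ is fixed by Step 2, $s_0 \geq \tilde{s_0}$, $d(u_L,0) \geq \underline{b}$, and crucially $d(u_R,\phitan(u_L)) \geq \underline{b}$ as well, since $u_R \geq \underline{b} > 0$ whereas $\phitan(u_L) \leq 0$ lives on the concave branch. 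Continuity of $a^*$ on $K$ then guarantees a positive infimum, which furnishes the desired uniform constant. The shift $h$ is the one produced by Theorem \ref{dissipationfnllargeshock} applied to the specific shock $(u_L,u_R)$, and the uniform maximal speed $\maximalspeedlarge$ is inherited from the fact that the shifts constructed in the proof of Theorem \ref{dissipationfnllargeshock} are controlled pointwise by $\|f'\|_{L^\infty(\statespace)}$ and the (bounded) size of $\statespace$.

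The main obstacle I anticipate is the second half of the inclusion $\weaktwo \subseteq \weakone$, namely the avoidance condition on boundary traces: this rests on pinning down the geometric fact that $\phidiss(u_L) < \phitan(\underline{b})$ with a uniform gap, which is plausible from the concave-convex structure but depends on the precise definition \eqref{phidiss}. Once that geometric input is in hand, everything else is a routine compactness/continuity argument riding on the quantitative dependencies already recorded in Theorem \ref{dissipationfnllargeshock}.
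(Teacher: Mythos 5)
Your proposal is correct and follows essentially the same route as the paper: first establishing the inclusion $\weaktwo \subseteq \weakone$ uniformly over $u_L \in [\underline{b},\overline{b}]$ via the maximum principle ($u \geq \phitan(\underline{b})$), the monotonicity of $\phitan$, and the uniform gap $\phitan(u_L) - \phidiss(u_L) > 0$, and then extracting a uniform $a^*$ by minimizing the continuous dependence recorded in Theorem \ref{dissipationfnllargeshock} over the compact parameter set. The geometric input you flag as the ``main obstacle'' ($\phidiss(u_L) < \phitan(u_L)$ uniformly) is exactly what the paper uses; it is a direct consequence of the definitions \eqref{phitan}, \eqref{phidiss} and the strict concavity of $f$ on the negative branch, so there is no real gap there. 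One imprecision worth fixing: the uniform shock speed $\maximalspeedlarge$ is not controlled merely by $\|f'\|_{L^\infty(\statespace)}$ and the size of $\statespace$. The shift velocity \eqref{velocitylarge} carries the correction term $C_2+2L$, where $C_2$ is the constant from Lemma \ref{qcontrol} and depends on $u_L$, $s_0$, and $a$ (through the geometry of $\Pi_a$); the paper therefore runs a second compactness argument on $C_2$ over the same parameter set to obtain the uniform bound $\maximalspeedlarge = C^* + 3|L|$. Your conclusion is still correct, but the justification for the speed bound needs that extra compactness step rather than a direct pointwise bound by $\|f'\|_\infty$.
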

For use in a modified front tracking scheme later, we will need sharper estimates on $a^*$ in the small shock regime, i.e. the setting where $|u_L-u_R| < \tilde{s_0}$ (where $\tilde{s_0}$ will be determined by the following theorem). The need for this was first realized by Chen, Krupa \& Vasseur \cite{MR4487515} and established by Golding, Krupa, \& Vasseur \cite{MR4667839}. More precisely, we need $|a^*-1|$ to be proportional to the size of the shock. This leads us to our second main theorem.
\begin{theorem}\label{theoremsmallshock}
Consider \eqref{cl} with $f$ verifying \eqref{concaveconvex}. Fix a closed interval $[\underline{b}, \overline{b}]$, with $0 < \underline{b} < \overline{b}$. Then, there exist constants $C_0, \tilde{s_0}>0$ such that the following is true: consider any Ole\u{\i}nik shock $(u_L, u_R)$ with $u_L, u_R \in [\underline{b}, \overline{b}]$, and $|u_L-u_R|=s_0 < \tilde{s_0}$. Then, for any $a_1, a_2 > 0$ satisfying:
\begin{equation}\label{weightestimate}
1+\frac{C_0}{2}s_0 \leq \frac{a_1}{a_2} \leq 1+2C_0s_0,
\end{equation}
and any $u \in \weaktwo$, any $\overline{t} \in [0,\infty)$, and any $x_0 \in \R$, there exists a Lipschitz shift function $h: [\overline{t} \to \infty) \to \R$, with $h(\overline{t})=x_0$, such that the following dissipation functional verifies:
\begin{equation*}
a_1(\dot{h}(t)\eta(u_-(t)|u_L)-q(u_-(t);u_L))-a_2(\dot{h}(t)\eta(u_+(t)|u_R)-q(u_+(t);u_R)) \leq 0,
\end{equation*}
for almost every $t \in [\overline{t},\infty)$. Moreover, there is a maximal shock speed $\maximalspeedsmall$, i.e. $|\dot{h}(t)| \leq \maximalspeedsmall$ for almost every $t \in [\overline{t},\infty)$.
\end{theorem}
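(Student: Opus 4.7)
The plan is to follow the proof of Theorem~\ref{dissipationfnllargeshock} but with sharp tracking of the $s_0$-dependence of the admissible weight ratio. As in the large-shock case, the problem reduces to verifying the pointwise inequality
\[
a_1\bigl(\sigma\eta(v_-|u_L)-q(v_-;u_L)\bigr)-a_2\bigl(\sigma\eta(v_+|u_R)-q(v_+;u_R)\bigr)\leq 0
\]
for every entropy-admissible pair of traces $(v_-,v_+)$ and some shift speed $\sigma=\sigma(v_-,v_+)$ taking values in a bounded interval. The Lipschitz shift $h$ is then constructed from $\dot{h}(t)=\sigma(u_-(t),u_+(t))$ via Filippov theory for ODEs with discontinuous right-hand side, which simultaneously delivers the maximal speed bound $|\dot{h}|\leq\maximalspeedsmall$.

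For the ``near'' regime, where both $v_\pm$ lie within $O(\sqrt{s_0})$ of the corresponding $u_{L,R}$, I would take $\sigma$ equal to the Rankine--Hugoniot speed $\bigl(f(v_-)-f(v_+)\bigr)/(v_--v_+)$ of the instantaneous jump, with the continuous extension $\sigma=f'(v_\pm)$ at $v_-=v_+$. Since $[\underline{b},\overline{b}]\subset(0,\infty)$ is bounded away from the inflection point, $f''\geq c_0>0$ uniformly on a neighborhood, and the problem locally resembles a small shock in a genuinely nonlinear scalar law. Taylor-expanding the dissipation about $(u_L,u_R)$ in the manner of \cite{MR4487515,MR4667839} produces, at leading order, a quadratic form in $(v_--u_L,v_+-u_R)$ whose diagonal entries are $O(\eta''(u_{L,R})f''(u_{L,R}))$ and whose off-diagonal entries are $O(s_0)$. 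Enforcing non-positivity uniformly in $(u_L,u_R)\in[\underline{b},\overline{b}]^2$ identifies $C_0>0$ as an explicit combination of second derivatives of $f,\eta$ on $[\underline{b},\overline{b}]$; the factor-of-four slack built into~\eqref{weightestimate} absorbs the higher-order Taylor remainders.

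For the ``far'' regime, where $(v_-,v_+)$ is bounded away from $(u_L,u_R)$, I would argue as in the large-shock theorem. The strict convexity of $\eta$ together with the single Kru\v{z}kov entropy $\eta_{\phitan(\underline{b})}$ restricts the admissible traces; crucially, the barrier $\phitan(\underline{b})$ is designed so as to be consistent with every reference shock $u_L\in[\underline{b},\overline{b}]$ simultaneously, which is precisely why a single Kru\v{z}kov inequality suffices for the whole family. A compactness argument then shows $D\leq-\delta<0$ for some suitable $\sigma$ on each compact piece of the far region; since the weight perturbation is $O(s_0)$, it does not affect this sign. Splicing the near and far definitions of $\sigma$ into a single Borel-measurable selection yields the Filippov ODE whose solution is the desired $h$.

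The main obstacle is the sharp \emph{proportionality} $|a_1/a_2-1|\asymp s_0$ rather than a merely qualitative bound $a_1/a_2\to 1$. This requires identifying $C_0$ as a quantitative lower bound on the leading $s_0$-coefficient and verifying that all higher-order remainders in the Taylor expansion can be absorbed uniformly for $(u_L,u_R)\in[\underline{b},\overline{b}]^2$ and for traces ranging over compact subsets of $\statespace$. A secondary difficulty is that only one Kru\v{z}kov entropy is available in the far-field analysis, so the choice of the barrier level $\phitan(\underline{b})$ is essential and must be shown to exclude the relevant pathological traces uniformly across the whole family of reference shocks.
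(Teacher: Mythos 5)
Your overall framework (reduce to a pointwise dissipation inequality for the traces, build the shift via a Filippov ODE, localize in $(u_L,u_R)$ and conclude by compactness over $[\underline{b},\overline{b}]$) is the same as the paper's, but your treatment of the ``near'' regime has a genuine gap, and it sits exactly where the difficulty of the small-shock theorem lies. With $a_1/a_2=1+Cs_0$ the dissipation functional vanishes to high order at $(v_-,v_+)=(u_L,u_R)$: the negativity available near $u_L$ is only of size $s_0|v_--u_L|^2$ (the paper proves $D_{cont}\leq -Ks_0^3$ on $\Pismall$ and $D_{max}''(u_L)\lesssim -s_0$), while the Taylor remainders are of size $|v_--u_L|^3$. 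Consequently the ``factor-of-four slack'' in \eqref{weightestimate} can absorb remainders only when $|v_--u_L|\lesssim s_0$; for traces at distance between $O(s_0)$ and $O(1)$ from $u_L$ --- e.g. $|v_--u_L|\sim\sqrt{s_0}$, which is precisely the amplitude of the worst-case shocks identified in Lemma \ref{maximalshock} --- neither your quadratic-form expansion nor your far-field compactness bound $D\leq-\delta$ applies, since that $\delta$ degenerates as the jump size tends to zero. Moreover the two traces are not independent perturbations: they are linked by the Rankine--Hugoniot and entropy conditions of the solution's own discontinuity, so ``non-positivity of a quadratic form in $(v_--u_L,v_+-u_R)$'' is not the right object. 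The paper instead writes $v_+=S_{v_-}(s)$, shows the dissipation in $s$ is maximized at a maximal shock of size $\lesssim C^{-1/2}s_0^{1/2}$ (Lemma \ref{maximalshock}, where the concave-convex structure and the barrier $\phitan(\underline{b})$ from $\weaktwo$ are used to kill the second critical point created by the inflection), and then runs a multi-scale decomposition of $\Pismall$: the improved rate $-C^{1/3}s_0^3$ outside $\Qsmall$, the boundary set $\Rsmallbd$, the local-maximum computation $D_{max}(u_L)=D_{max}'(u_L)=0$, $D_{max}''(u_L)\lesssim-s_0$ on $\Rsmallnot$, and an ODE argument excluding critical points on $\Rsmallplus\cup\Rsmallminus$. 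None of this machinery is replaced by a single Taylor expansion with remainder absorption.

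Two further points are glossed over. First, your shift selection omits the weighted set $\Pi_a$ and the penalized velocity $\lambda(u)-(C^*+2L)\ind_{\Pi_a^c}(u)$; the penalization is what handles the case in which both traces lie outside $\Pi_a$, via the bound $|\tilde q|\leq C^*|\tilde\eta|$ there. Second, the uniform maximal speed $\maximalspeedsmall$ is not automatic from Filippov theory: the analogous constant from the large-shock Lemma \ref{qcontrol} degenerates as $s_0\to0$ because $|\tilde\eta'|\sim s_0$ on $\partial\Pismall$, and the paper needs the sharpened Lemma \ref{qcontrolsmall} to get a bound independent of $s_0$ before the Lebesgue-number/covering step. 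These last items are repairable within your framework, but the intermediate-amplitude analysis described above is an essential missing piece, not a technical remainder estimate.
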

As an application of these estimates, we prove a uniqueness theorem for solutions to \eqref{cl} verifying minimal entropy conditions. In the case where the flux is convex, it is well known that if a weak solution $u$ is entropic for one $C^1$ strictly convex entropy, then it is in fact the Kru\v{z}kov solution. This was first proven by Panov \cite{MR1296003}, and later by De Lellis, Otto, \& Westdickenberg \cite{MR2104269} using the correspondence between entropy solutions of the conservation law and viscosity solutions of the integrated equation (see also \cite{MR4623215}, in which the results of the aforementioned articles were recently improved). Later, it was again proven by Krupa \& Vasseur \cite{MR3954680}, who introduced the ideology that $L^2$-stability for shock waves can be extended to stability results for more general solutions, via a strategy of showing that a $\eta$-entropic weak solution $u$ verifies Ole\u{\i}nik's ``condition E'', which is known to imply that $u$ is then in fact the Kru\v{z}kov solution \cite{MR94541}, \cite{MR1707279}. 
\par Much less is known when the flux is not convex. In some cases, traveling waves have been shown to converge in the diffusive-dispersive limit to ``non-classical'' or ``undercompressive'' shock waves, which are not the Ole\u{\i}nik shock but are nevertheless entropic for at least one entropy \cite{MR1475777}, \cite{MR1920633}. Uniqueness of Riemann solutions in the class of piecewise smooth functions verifying a single entropy condition and an appropriate kinetic relation was shown by LeFloch \cite{MR1927887}, while uniqueness of weak solutions in a class of piecewise smooth solutions verifying a certain regularity property was shown by Ballou \cite{MR435615}. Regularity of the Kru\v{z}kov solution was studied by Cheng, Dafermos, \& Hoff \cite{MR785530}, \cite{MR818862}, \cite{MR688972}. However, to the best of our knowledge, there are no uniqueness or regularity results under minimal entropy conditions or for less regular solutions. This is what we aim to address in our last theorem.
\par The strategy will be to use the $L^2$-stability result of Theorem \ref{theoremsmallshock} to control the $L^2$ distance between a weak solution and a BV function which is to be determined. This allows for the transfer of desirable properties of the BV function onto the weak solution. The discretization scheme of Krupa \& Vasseur used in \cite{MR3954680} fails for the non-convex case, so we use a modified front tracking algorithm introduced in the $2 \times 2$ system setting by Chen, Krupa, \& Vasseur \cite{MR4487515}. The key difference is that as we are in the scalar case, the front tracking algorithm is valid for large-$BV$ initial data \cite{MR303068}. One of the main contributions of this work is to construct the weight $a$ in the modified front tracking algorithm in a large-BV setting. The last main result we prove is the following stability theorem:
\begin{theorem}\label{theoremstability}
Consider \eqref{cl} where $f$ verifies \eqref{concaveconvex}. Let $u^0 \in BV(\R)$ and \newline
$Range(u^0) \subset [\underline{b}, \overline{b}]$, where $0 < \underline{b} < \overline{b}$. Let $u$ be the Kru\v{z}kov solution to \eqref{cl} with initial data $u^0$. Let $u_n \in \weaktwo$ be a sequence of wild solutions, uniformly bounded in $L^\infty(\R^+ \times \R)$, with initial values $u^0_n \in L^\infty(\R)$. If $u^0_n$ converges to $u^0$ in $L^2(\R)$, then for every $T > 0, R> 0$, $u_n$ converges to $u$ in $L^\infty([0,T];L^2(-R,R))$. In particular, $u$ is unique in the class $\weaktwo$. 
\end{theorem}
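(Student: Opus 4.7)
The plan is to transfer the pointwise shock stability of Theorem \ref{theoremsmallshock} and Corollary \ref{uniformalarge} to the Kru\v{z}kov solution $u$ via a front tracking approximation argument, following the broad strategy of \cite{MR4487515} and \cite{MR3954680}. Since $u^0 \in BV(\R)$ takes values in $[\underline{b},\overline{b}]$, I would first apply the scalar front tracking algorithm of Dafermos \cite{MR303068}, valid for large-BV data, to build a family of piecewise constant approximations $u^\nu$ of $u$ with finitely many fronts at each time $t \in [0,T]$. Each front is either a small admissible shock of strength $< \tilde{s_0}$, one of at most $TV(u^0)/\tilde{s_0}$ large admissible shocks, or a rarefaction/non-physical front of strength $O(\nu)$. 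Standard front tracking theory then yields $u^\nu \to u$ in $L^\infty([0,T];L^1_{\mathrm{loc}}(\R))$, which upgrades to $L^\infty([0,T];L^2(-R,R))$ by the uniform $L^\infty$ bound.

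The technical core will be constructing, for each $\nu$, a piecewise constant weight $a^\nu(t,\cdot)$ jumping across each front, together with shift curves $h_i^\nu(t)$ attached to each front. Across a small shock of strength $s$ I would choose the ratio of adjacent weights inside $[1+\tfrac{C_0}{2}s,\,1+2C_0 s]$ as permitted by \eqref{weightestimate}; across a large shock the ratio is chosen below the constant $a^*$ of Corollary \ref{uniformalarge}; across a rarefaction/non-physical front the weight is left unchanged, with the resulting error absorbed into a $O(\nu)$ term. The telescoping bound $\prod_i (1+C_0 s_i) \leq \exp(C_0 \, TV(u^0))$ combined with the finitely many large fronts then guarantees that $a^\nu$ is pinched between two positive constants uniformly in $\nu$.

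Next I would introduce the weighted pseudo-distance
\[
E^\nu(t) = \sum_i a_i^\nu(t) \int_{h_{i-1}^\nu(t)}^{h_i^\nu(t)} \eta\bigl(u_n(t,x)\,\big|\,u_i^\nu(t)\bigr) \,\diff x,
\]
where $u_i^\nu$ is the constant value of $u^\nu$ on the $i$-th interval. Between interactions, the front-by-front dissipation is non-positive by Theorem \ref{theoremsmallshock} and Corollary \ref{uniformalarge}, while the interior contribution is non-positive because $u_n \in \weaktwo$ satisfies the family of entropy inequalities \eqref{relativeentropic}; therefore $E^\nu$ is non-increasing on each inter-interaction interval, and by Lemma \ref{quadraticentropy} is equivalent to a weighted $L^2$-distance between $u_n$ and the shifted $u^\nu$. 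At an interaction time the fronts merge and the weight must be redefined on the post-interaction Riemann fan; since the scalar scheme has only finitely many interactions on $[0,T]$ and the outgoing shock strengths are controlled by the incoming ones, the induced jumps in $E^\nu$ can be collected into a term vanishing as $\nu \to 0$.

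Combining these estimates gives, uniformly for $t \in [0,T]$,
\[
\|u_n(t,\cdot+h^\nu(t)) - u^\nu(t,\cdot)\|_{L^2(-R,R)}^2 \lesssim \|u_n^0 - u^{\nu,0}\|_{L^2(\R)}^2 + o_\nu(1).
\]
Sending $\nu \to 0$ and invoking the triangle inequality against $u$ yields $u_n \to u$ in $L^\infty([0,T];L^2(-R,R))$, and uniqueness in $\weaktwo$ follows by taking $u_n^0 = u^0$ for any candidate wild solution. The hard part will be the construction and bookkeeping of the weight $a^\nu$ in this large-BV setting, and in particular its behavior across interactions: unlike the $2\times 2$ system case of \cite{MR4487515}, which rests on a BV smallness assumption, here the total variation can be arbitrarily large, so the proof must track how many large shocks can arise and how the weight renormalizes at every interaction, which is highlighted in the introduction as one of the novel contributions of this work.
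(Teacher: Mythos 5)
Your overall skeleton (modified front tracking, weight $a$ across fronts, dissipation estimate front by front, uniform lower bound on $a$ via a product inequality) matches the paper's strategy up to and including its Proposition~\ref{closetomaxp}, and you correctly flag that the weight renormalization at interactions in the large-$BV$ setting is the crux. But there is a genuine gap in your final limiting step, and a smaller but real error in how you treat the weight jumps at interactions.

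The decisive issue is the last paragraph. The piecewise constant comparison function whose fronts are placed at $h_i^\nu(t)$ is \emph{not} the standard front tracking approximation $u^\nu$ of the Kru\v{z}kov solution $u$: its fronts move at the artificial shift speeds supplied by Theorem~\ref{theoremsmallshock} and Corollary~\ref{uniformalarge}, which (i) differ from the Rankine--Hugoniot speeds by $O(1)$ amounts, and (ii) are constructed from the wild solution $u_n$ itself. Consequently there is no ``standard front tracking theory'' argument giving $\|\psi^\nu - u\| \to 0$ as $\nu \to 0$, and your triangle inequality against $u$ does not close. Your displayed estimate also misplaces the shift: the $a$-contraction compares the unshifted $u_n(t,\cdot)$ against the shifted piecewise constant function, not $u_n(t,\cdot + h^\nu(t))$ against $u^\nu$. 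The paper fills this gap differently: it takes the sequence $\psi_n$ of modified front tracking functions (Proposition~\ref{closetomaxp}), shows they are uniformly bounded in $L^\infty(\R^+; BV)$ and Lipschitz in time into $L^1$, extracts a strong limit $\psi$ via an Aubin--Lions lemma (Lemma~\ref{aubinlions}), observes that $u_n\to\psi$ so $\psi$ is a weak $\eta$-entropic solution, that $\psi$ still has range in $[\underline{b},\overline{b}]$, and then \emph{identifies} $\psi$ with $u$ by invoking the convex-flux uniqueness theorem of De Lellis--Otto--Westdickenberg applied to the modified globally convex flux $\overline{f}$ agreeing with $f$ on $[\underline{b},\overline{b}]$. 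Without some identification step of this kind, one cannot conclude.

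The secondary issue concerns the claim that ``the induced jumps in $E^\nu$ can be collected into a term vanishing as $\nu \to 0$.'' The weight renormalizations at interaction times do not produce a vanishing error; in general the product of local weight ratios genuinely changes at an interaction. The paper instead constructs $a$ (through the book-keeping quantities $L(t)$, $K(t)$, $\xi_i$) so that $a(t+,x)\leq a(t-,x)$ at every interaction (Proposition~\ref{aproperties}, inequality~\eqref{amonotone}); combined with Lemma~\ref{stitchinglemma} this guarantees $E$ cannot jump upward, with no error to absorb. The only $o(1)$ contribution comes from the rarefaction fronts (Proposition~\ref{rarefactionshockerror}), not from the interactions. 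Your appeal to finiteness of the number of interactions is also not available for free here: it is part of what Lemma~\ref{fronttrackingwelldefined} establishes for the modified scheme, not a black box from the classical theory, precisely because the fronts move at shifted speeds.

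Finally, the uniform lower bound on $a$ is more delicate than the exponential bound $\prod (1+C_0 s_i)\leq \exp(C_0\,TV(u^0))$ you quote, since the ratio across a big shock is the fixed constant $C_1$, not $1+C_0 s_i$, and one must additionally bound how many big shocks can ever be created and how small-shock weight factors accumulate through interactions. This is exactly what the factors $C_1^{L(t)}$ and $\prod_{k_i\in K(t)} k_i$ in the paper's weight, together with the small-shock potential $S(t)$ estimates, are for.
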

As any $BV$ function verifies the Strong Trace property (Definition \ref{strongtrace}), we obtain the following theorem as a consequence:
\begin{theorem}\label{niceuniqueness}
Consider \eqref{cl} where $f$ verifies \eqref{concaveconvex}. Let $u^0 \in BV(\R)$ and \newline
$Range(u^0) \subset [\underline{b}, \overline{b}]$, where $0 < \underline{b} < \overline{b}$. Then, any solution with initial value $u^0$ is unique in the class:
\[
\{v \in L^\infty([0,T];BV(\R)) \text{ for all } T>0 \text{ and } \eta,\eta_{\phitan(\underline{b})}-entropic\}
\]
with the same initial data.
\end{theorem}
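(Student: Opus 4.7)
The plan is to derive Theorem \ref{niceuniqueness} directly from Theorem \ref{theoremstability} by applying it to a constant-in-$n$ sequence of wild solutions. The principal observation is that every $v$ satisfying the hypotheses of Theorem \ref{niceuniqueness} already belongs to the admissible class $\weaktwo$ on which Theorem \ref{theoremstability} operates; once this embedding is established, applying Theorem \ref{theoremstability} to $u_n \equiv v$ forces $v$ to coincide with the Kru\v{z}kov solution $u$, and uniqueness then follows immediately.

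The first step is to verify $v \in \weaktwo$. Three of the four defining properties are essentially immediate from the hypotheses of Theorem \ref{niceuniqueness}: $v$ is a weak solution, it is $\eta$- and $\eta_{\phitan(\underline{b})}$-entropic, and the initial bound $u^0 \geq \phitan(\underline{b})$ holds because $u^0 \geq \underline{b} > 0 > \phitan(\underline{b})$ --- the last inequality encoding that for a concave-convex $f$ normalized so that $f''(0) = 0$, the tangent companion $\phitan$ maps positive states to the concave branch $\{u<0\}$ (a sign property that should be read off from the construction of $\phitan$ in Section 2; for the guiding example $f(u) = u^3$ one has $\phitan(u_L) = -2u_L$). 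The fourth property, the Strong Trace condition of Definition \ref{strongtrace}, is exactly the content of the remark immediately preceding the theorem, and follows from the classical one-sided trace theory for BV functions along Lipschitz curves. A minor technicality is to ensure $v \in L^\infty(\R^+\times\R;\statespace)$ globally in time rather than merely $L^\infty_{\text{loc}}$; for this I would invoke the one-dimensional embedding $BV(\R) \hookrightarrow L^\infty(\R)$ together with a total-variation-diminishing-type bound on each time slice derived from the two available entropy inequalities and the fact that $u^0$ is bounded.

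Having placed $v$ in $\weaktwo$, I would then apply Theorem \ref{theoremstability} with the constant sequence $u_n := v$ and $u_n^0 := u^0$. The convergence hypothesis $u_n^0 \to u^0$ in $L^2(\R)$ is trivial since the difference is identically zero, and uniform boundedness of $(u_n)$ in $L^\infty(\R^+\times\R)$ is automatic. The conclusion of Theorem \ref{theoremstability} then yields $u_n \to u$ in $L^\infty([0,T]; L^2(-R,R))$ for every $T, R > 0$, where $u$ is the Kru\v{z}kov solution with initial data $u^0$. Since $u_n$ is independent of $n$, this forces $v = u$ almost everywhere, so the Kru\v{z}kov solution is the unique element of the stated class with initial data $u^0$.

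The main obstacle, to the extent there is one, is bookkeeping rather than conceptual: one must cleanly extract the sign of $\phitan(\underline{b})$ from the definitions in Section 2 and the global-in-time $L^\infty$ bound on a BV solution satisfying only two entropy inequalities, both of which should reduce to short consequences of material already in place. Once these are recorded, the argument is a single invocation of Theorem \ref{theoremstability}.
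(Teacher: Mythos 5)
Your proposal is correct and follows essentially the same route as the paper: the paper derives Theorem \ref{niceuniqueness} from Theorem \ref{theoremstability} by the single observation that BV functions verify the Strong Trace property, so the stated class sits inside $\weaktwo$, and uniqueness in $\weaktwo$ (already part of the conclusion of Theorem \ref{theoremstability}) transfers. Your constant-sequence instantiation makes this explicit. Your check that $u^0 \geq \underline{b} > 0 > \phitan(\underline{b})$ is also what makes the $u_0 \geq \phitan(\underline{b})$ requirement in $\weaktwo$ hold, and the sign fact $\phitan(\underline{b}) < 0$ does follow from \eqref{phitan} and the concave-convex structure.

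One small correction: your proposed resolution of the $L^\infty(\R^+\times\R;\statespace)$ membership via a ``total-variation-diminishing-type bound derived from the two available entropy inequalities'' is a dead end. With only $\eta$ and a single Kru\v{z}kov entropy, TVD is not available and is not needed here — the class in Theorem \ref{niceuniqueness} already posits $v(t,\cdot) \in BV(\R)$ on each slice, and the paper works throughout with solutions taking values in the fixed bounded state space $\statespace$, so the global bound is implicit in the standing conventions rather than something to be re-derived. The $\eta_{\phitan(\underline{b})}$ entropy does give a one-sided maximum principle (the lower bound $v \geq \phitan(\underline{b})$, as used in the proof of Corollary \ref{uniformalarge}), but there is no matching upper bound from the two entropies alone; the upper bound is part of the $\statespace$-valued hypothesis. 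Dropping that tangent would tighten an otherwise correct argument.
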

This shows that in certain cases, less than the complete set of entropy conditions need to be imposed in order to obtain uniqueness, even in the case when the flux is not convex. This is sharp in the sense that one $C^3$ convex entropy is not enough to grant uniqueness for concave-convex fluxes, even in the case where the initial data is localized to the convex region (see Appendix \ref{app1}). Finally, we note that the methods in this article can be used to prove analogous results for Ole\u{\i}nik shocks with left-hand states in the concave region of the flux, and solutions with initial data localized to the concave region, although we do not address that here. 
\par The article is structured as follows. In Section \ref{sectionlarge}, we prove the most general result on $L^2$-stability for shock waves, Theorem \ref{dissipationfnllargeshock}. As a corollary, we obtain that the weight $a$ may be picked uniformly among shocks localized to a compact set in the convex region of the flux, the first half of the finer estimates needed in Section \ref{sectionstability}. In Section \ref{sectionsmall}, we prove Theorem \ref{theoremsmallshock}, providing the other half of the finer estimates needed. Finally, in Section \ref{sectionstability} we prove Theorem \ref{theoremstability} via a modified front tracking algorithm.

\section{Dissipation rates in the large shock regime}\label{sectionlarge}
\subsection{Outline of the proof of Theorem \ref{dissipationfnllargeshock}}
We fix a shock of interest $(u_L, u_R)$ with size $s_0=|u_L-u_R|>0$, which we wish to establish stability for. We desire to show \eqref{sufficientlarge}:
\begin{equation*}
\dot{h}(t)(a_1\eta(u_-(t)|u_L)-a_2\eta(u_+(t)|u_R))-a_1q(u_-(t);u_L)+a_2q(u_+(t);u_R) \leq 0,
\end{equation*}
for some appropriate shift $h$. An important subset of state space will be the following:
\begin{equation}\label{pia}
\Pi_a:=\{u \in \statespace|\frac{a_1}{a_2}\eta(u|u_L)-\eta(u|u_R) \leq 0\}.
\end{equation}
If the weak solution $u$ does not have any discontinuity at $h(t)$ and $u(t,h(t)) \in \partial \Pi_a$, then the term multiplied by $\dot{h}(t)$ in \eqref{sufficientlarge} vanishes. Thus, in order to have the non-positivity, we need that for every $u \in \partial \Pi_a$: 
\begin{equation}\label{Dcont}
D_{cont}(u):=(q(u;u_R)-\lambda(u)\eta(v|u_R))-\frac{a_1}{a_2}(q(u;u_L)-\lambda(u)\eta(u|u_L)) \leq 0,
\end{equation}
where $\lambda(u)=f'(u)$. In the large shock regime, we will denote $a:=\frac{a_2}{a_1}$ and show that for $a$ sufficiently small, $D_{cont} \leq 0$ for $u \in \Pi_a$ (not just on the boundary). This brings us to the first key proposition:
\begin{proposition}\label{Dcontlarge}
Consider \eqref{cl}, where $f$ verifies \eqref{concaveconvex}. Let $u_L > 0, u_R \in \statespace$ such that $(u_L, u_R)$ verifies the Ole\u{\i}nik condition. There exists $a^* > 0$ such that for any $a < a^*$, and any $u \in \Pi_a$:
\[
D_{cont}(u) \leq 0.
\]
Finally, $a^*$ depends only on the system and continuously on $s_0$ and $d(u_L,0)$, where $d(u_L,0)=|u_L-0|$.
\end{proposition}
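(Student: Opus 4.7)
The plan is to exploit the observation that $\Pi_a$ shrinks to the singleton $\{u_L\}$ at rate $\sqrt{a}$ as $a \to 0^+$, reducing the problem to a Taylor analysis of $D_{cont}$ at $u_L$. By Lemma \ref{quadraticentropy} and the strict convexity of $\eta$, we have $c^*(u-u_L)^2 \leq \eta(u|u_L)$ for $u$ near $u_L$ and $\eta(u|u_R) \leq c^{**}\,\mathrm{diam}(\statespace)^2$ globally, so the defining inequality $\frac{a_1}{a_2}\eta(u|u_L) \leq \eta(u|u_R)$ of $\Pi_a$ forces $|u-u_L| \leq C\sqrt{a}$ with $C$ depending only on the system.

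Using the identity $q'(u) = \eta'(u)f'(u)$, one checks that for fixed $b \in \statespace$ the auxiliary function $\Psi_b(u) := q(u;b) - \lambda(u)\eta(u|b)$ satisfies $\Psi_b'(u) = -f''(u)\eta(u|b)$. Iterating this identity yields $\Psi_{u_L}(u_L) = \Psi_{u_L}'(u_L) = \Psi_{u_L}''(u_L) = 0$ and $\Psi_{u_L}'''(u_L) = -f''(u_L)\eta''(u_L)$, so $\Psi_{u_L}$ vanishes to third order at $u_L$. Consequently, on $\Pi_a$,
\[
\frac{1}{a}\bigl|\Psi_{u_L}(u)\bigr| \leq C\frac{|u-u_L|^3}{a} \leq C'\sqrt{a}, \qquad \Psi_{u_R}(u) = \Psi_{u_R}(u_L) + O(\sqrt{a}),
\]
and therefore $D_{cont}(u) = \Psi_{u_R}(u_L) + O(\sqrt{a})$ on $\Pi_a$.

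It remains to establish strict negativity of $\Psi_{u_R}(u_L) = D_{cont}(u_L)$. A direct computation using Rankine--Hugoniot $\sigma(u_L - u_R) = f(u_L) - f(u_R)$ gives
\[
\Psi_{u_R}(u_L) = -D(u_L, u_R) + (\sigma - \lambda(u_L))\eta(u_L|u_R),
\]
where $D(u_L, u_R) = \sigma(\eta(u_L) - \eta(u_R)) - (q(u_L) - q(u_R)) \geq 0$ is the classical shock entropy dissipation, nonnegative by Ole\u{\i}nik admissibility. Since $u_L > 0$ lies in the strictly convex region of $f$, the chord slope $\sigma$ is strictly less than the tangent slope $\lambda(u_L)$ for any nontrivial Ole\u{\i}nik shock (as can be verified via the integral form of Taylor's theorem combined with the Ole\u{\i}nik bound), so both terms on the right are nonpositive with at least one strictly negative; hence $\Psi_{u_R}(u_L) < 0$. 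Choosing $a^*$ small enough that the $O(\sqrt{a^*})$ error is dominated by $|\Psi_{u_R}(u_L)|/2$ then yields $D_{cont}(u) \leq 0$ on $\Pi_a$ for every $a < a^*$.

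The main technical challenge is arranging for $a^*$ to depend continuously on $s_0$ and $d(u_L, 0)$ as the statement demands. Both $D(u_L, u_R)$ and the speed gap $\lambda(u_L) - \sigma$ vanish as $s_0 \to 0$, while the cubic coefficient $f''(u_L)\eta''(u_L)$ in the expansion of $\Psi_{u_L}$ degenerates as $u_L \to 0$, which in turn tightens the allowed scaling for $a$ against $|u - u_L|$. These competing rates must be tracked quantitatively so that the resulting $a^* = a^*(s_0, d(u_L, 0))$ is continuous and strictly positive in the claimed parameters.
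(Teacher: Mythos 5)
Your proof is correct and takes a genuinely different, more direct route than the paper's. The paper proves Proposition~\ref{Dcontlarge} as the $s=0$ special case of Proposition~\ref{DRHlarge1}: it introduces the auxiliary speed $v\in(\sigma,\lambda(u_L))$ from Lemma~\ref{localestimate}, splits $\lambda(u_-)=v+(\lambda(u_-)-v)$, and then relies on the integral formula of Lemma~\ref{entropyformula2} (integrating $\sigma'_{u_-}$ along the shock curve) to extract a definite negative term $-\kappa|\sigma_{u_-}(s_0/2)-\sigma_{u_-}(s^u_{u_R})|$. Your approach instead exploits the identity $\Psi_b'(u)=-f''(u)\eta(u|b)$ directly: the term $\Psi_{u_L}$ vanishes to third order at $u_L$, so on $\Pi_a$ (whose diameter is $\lesssim\sqrt a$ by Lemma~\ref{pistructurelarge}) the weighted contribution $\tfrac1a\Psi_{u_L}(u)$ is $O(\sqrt a)$, and the leading term $\Psi_{u_R}(u_L)$ is strictly negative because both $-D(u_L,u_R)<0$ (entropy dissipation of the Ole\u\i nik shock) and $(\sigma-\lambda(u_L))\eta(u_L|u_R)<0$ (Lax gap, which holds strictly since $\sigma'_{u_L}<0$ on $(0,s^{u_L}_{\phitan})$ by property~\ref{enum:3}). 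Both approaches hinge on Lemma~\ref{pistructurelarge} to localize $\Pi_a$ near $u_L$. What the paper's heavier machinery buys is that the same bounds (Lemmas~\ref{localestimate} and~\ref{entropyformula2}) carry over verbatim to the Rankine--Hugoniot case in Proposition~\ref{DRHlarge1}, where a pointwise Taylor expansion in $u$ alone no longer suffices; your Taylor argument is cleaner and more elementary but would need to be entirely reworked to handle $D_{RH}$. Incidentally, your derivative identity $\Psi_b'(u)=-f''(u)\eta(u|b)$ is precisely the ingredient the paper does use later in the small-shock regime (Lemma~\ref{critptbdry}), so the technique is very much in the spirit of the paper, just deployed at a different stage. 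The only part left at a sketch level is the quantitative tracking of the rates in $s_0$ and $d(u_L,0)$ to conclude continuity of $a^*$; you correctly identify this as the technical point, and it does go through, because the Taylor remainder constant is uniform over $\statespace$, $\Psi_{u_R}(u_L)$ is continuous in $(u_L,u_R)$, and the $\sqrt a$ constant from Lemma~\ref{pistructurelarge} already has the required dependence.
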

In the above, it is assumed that $u$ has no discontinuity at $x=h(t)$. However, situations arise where $x=h(t)$ corresponds locally to a Rankine-Hugoniot discontinuity curve of $u$. At this time, $(u(t,h(t)-),u(t,h(t)+,\dot{h}(t))$ corresponds to a shock $(u_-, u_+, \sigma_{\pm})$. As $u \in \weakone$ is $\eta,\eta_{\phitan(u_L)}$-entropic, we may assume the shock will be as well. The next two propositions ensure that the $a$-contraction remains valid in these cases. For any such shock $(u_-, u_+, \sigma_{\pm})$, we define:
\begin{equation}\label{DRH}
D_{RH}(u_-,u_+,\sigma_\pm)=q(u_+;u_R)-\sigma_\pm \eta(u_+|u_R)-\frac{1}{a}(q(u_-;u_L)-\sigma_\pm \eta(u_-|u_L)).
\end{equation}
Notice that $D_{RH}(u_-, u_+, \sigma_\pm)$ reduces to $D_{cont}(u)$ in the case when $u_-=u_+=u$ and $\sigma_\pm=\lambda(u)$. The first proposition will control this quantity when $u_- \in \Pi_a$:
\begin{proposition}\label{DRHlarge1}
Consider \eqref{cl}, where $f$ verifies \eqref{concaveconvex}. Let $u_L > 0$ and $u_R \in \statespace$ such that $(u_L, u_R)$ verifies the Ole\u{\i}nik condition. There exists $a^* > 0$ such that for any $a < a^*$, and any $\eta,\eta_{\phitan(u_L)}$-entropic shock $(u_-, u_+, \sigma_\pm)$ with $u_- \in \Pi_a$:
\[
D_{RH}(u_-,u_+,\sigma_\pm) \leq 0.
\]
Here, $a^*$ depends only on the system and continuously on $s_0$, $d(u_L,0)$, and \newline 
$d(u_R, \phitan(u_L))$ if $d(u_R, \phitan(u_L))$ > 0.
\end{proposition}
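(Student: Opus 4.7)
The plan is to use Proposition \ref{Dcontlarge} as an anchor: at $u_+ = u_-$, the Rankine-Hugoniot speed $\sigma_\pm$ coincides with $\lambda(u_-) = f'(u_-)$, so $D_{RH}$ collapses to $D_{cont}(u_-)$, which is nonpositive on $\Pi_a$ by the prior result. The task is then to track how $D_{RH}(u_-, u_+, \sigma_\pm)$ changes as $u_+$ is varied among the states that form an $\eta, \eta_{\phitan(u_L)}$-entropic Rankine-Hugoniot shock with the fixed $u_-$.

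The first step is to characterize this admissible-shock set. For a concave-convex flux, combining the secant identity $\sigma_\pm = (f(u_+)-f(u_-))/(u_+-u_-)$ with Ole\u{\i}nik's E-condition inherited from $\eta$-dissipation and the extra constraint coming from the Kru\v{z}kov entropy $\eta_{\phitan(u_L)}$ confines $u_+$ to a finite union of intervals whose endpoints are expressible through $\phitan$ and $f(u_-)$. The role of the second Kru\v{z}kov entropy is essential here: it excludes precisely the undercompressive branch of $\eta$-admissible shocks which would otherwise be incompatible with the $a$-contraction weight.

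The second step is to bound $D_{RH}$ on these intervals. Writing $F(u, v, \sigma) := q(u;v) - \sigma \eta(u|v)$, so that $a \cdot D_{RH}(u_-, u_+, \sigma_\pm) = a F(u_+, u_R, \sigma_\pm) - F(u_-, u_L, \sigma_\pm)$, I would differentiate with respect to $u_+$ along the shock curve using $q' = \eta' f'$ together with $\partial_{u_+}\sigma_\pm = (f'(u_+) - \sigma_\pm)/(u_+ - u_-)$. After simplification the derivative factors through the $\eta$-dissipation $\sigma_\pm(\eta(u_+) - \eta(u_-)) - (q(u_+) - q(u_-)) \ge 0$ and a weighted difference of the form $\eta(u_+|u_R) - \tfrac{1}{a}\eta(u_-|u_L)$ whose sign is governed by the hypothesis $u_- \in \Pi_a$ together with a Lipschitz continuity-in-$u_+$ estimate. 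Integrating from $u_+ = u_-$ then yields $D_{RH}(u_-, u_+, \sigma_\pm) \leq D_{cont}(u_-) + R$, with a remainder $R$ that is manifestly nonpositive once $a$ is small enough; simultaneously this calculation exhibits how the threshold $a^*$ degrades as $s_0$, $d(u_L,0)$, or $d(u_R, \phitan(u_L))$ shrink.

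The hardest case will be when the shock profile straddles the inflection point $u = 0$ or when $u_+$ lies near $\phitan(u_L)$. Near the inflection, the concave-convex geometry forces the admissible-shock set to become disconnected and reverses the monotonicity of $\sigma_\pm$ as a function of $u_+$; it is exactly here that the second entropy $\eta_{\phitan(u_L)}$ must be used in a non-trivial way to rule out the non-classical shocks unavoidable from imposing only $\eta$. Near $\phitan(u_L)$, the coefficient controlling the remainder $R$ degenerates, which is the source of the explicit continuous dependence of $a^*$ on $d(u_R, \phitan(u_L))$ in the statement, and forces $a^*$ to shrink as this distance shrinks.
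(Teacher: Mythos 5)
The core mechanism you propose---anchoring at $D_{RH}(u_-,u_-,\lambda(u_-)) = D_{cont}(u_-) \le 0$ and integrating along the shock curve with a ``manifestly nonpositive'' remainder---does not survive a sign check. Writing $u_+ = S_{u_-}(s)$, $\sigma_\pm = \sigma_{u_-}(s)$, and using the Lax identity of Lemma \ref{entropyformula} with $v=u_R$, one obtains exactly
\[
D_{RH}(u_-,S_{u_-}(s),\sigma_{u_-}(s)) = D_{cont}(u_-) + \int_0^s \sigma'_{u_-}(\tau)\bigl(\tilde\eta(u_-) + \eta(u_-|S_{u_-}(\tau))\bigr)\diff\tau,
\]
with $\tilde\eta = \tfrac1a\eta(\cdot\,|u_L) - \eta(\cdot\,|u_R)$. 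Since $\sigma'_{u_-}(\tau) < 0$ for Ole\u{\i}nik shocks, $\tilde\eta(u_-) \le 0$ on $\Pi_a$, and $\eta(u_-|S_{u_-}(\tau))$ starts at $0$ and increases, the integrand is nonnegative near $\tau = 0$: the remainder is positive for small $s$, and $D_{RH}$ rises strictly above $D_{cont}(u_-)$. Taking $u_- = u_L$, the integrand does not change sign until $\tau = s_0$, so the running maximum of $D_{RH}$ is attained near $s\approx s_0$ and is genuinely larger than $D_{cont}(u_L)$. Because Proposition \ref{Dcontlarge} gives only $D_{cont}(u_-) \le 0$ with no quantified margin, anchoring at $D_{cont}$ and claiming a nonpositive remainder cannot close the argument.

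What the paper actually does, and what you would need to reproduce, is to split on the magnitude of $\sigma_{u_-}(s)$ relative to a threshold $v$ slightly below $\lambda(u_L)$ coming from Lemma \ref{localestimate}. In the regime $\sigma_{u_-}(s) \le v$, the $\frac1a$-weighted left-hand piece of $D_{RH}$ is estimated via Lemma \ref{localestimate} to give a gain of order $-\frac{c^*\beta}{a}|u_--u_L|^2$, diverging as $a\to 0$; this amplification---not the bare nonpositivity of $D_{cont}$---is what absorbs the right-hand piece once the latter is bounded by $C|u_--u_L|^2$. Proving that quadratic bound is where $\eta_{\phitan(u_L)}$ enters through Lemmas \ref{Kru\v{z}kov1} and \ref{Kru\v{z}kov2}, restricting $s$ to $[0, s^{u_-}_{\phitan(u_L)}]$ or $[0, s^{u_-}_{\phicom}]$, and where the Lipschitz bound on the companion function $\phicom$ (Lemma \ref{companionlemma}) is indispensable; your ``Lipschitz continuity-in-$u_+$ estimate'' gestures at this but omits the companion-function device and the three sub-cases $u_- = u_L$, $u_-<u_L$, $u_->u_L$. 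In the complementary regime $\sigma_{u_-}(s) > v$, the argument is different: Lemma \ref{smallshockDRHlarge1} uses the Kru\v{z}kov entropy to force the shock to be small, after which Lemma \ref{entropyformula2} supplies the needed negativity; this case is outside your plan. So the overall orientation---derivative along the shock curve, role of the second entropy, dependence of $a^*$ on $d(u_R,\phitan(u_L))$---is reasonable, but the central analytic device, the $1/a$ amplification against a quadratic remainder, is missing, and the claimed monotonicity from $u_+=u_-$ fails.
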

The last proposition will control this quantity when $u_+ \in \Pi_a$, and $u_-$ is far away from $u_+$:
\begin{proposition}\label{DRHlarge2}
Consider \eqref{cl}, where $f$ verifies \eqref{concaveconvex}. Let $u_L > 0$ and $u_R \in \statespace$ such that $(u_L, u_R)$ verifies the Ole\u{\i}nik condition. Let $\eps > 0$. There exists $a^* > 0$ such that for any $a < a^*$ and any shock $(u_-, u_+, \sigma_\pm)$ such that $u_- \leq \phidiss(u_L)-\eps, u_+ \in \Pi_a$:
\[
D_{RH}(u_-,u_+,\sigma_\pm) \leq 0.
\]
Here, $a^*$ depends only on the system and continuously on $\eps,s_0$, and $d(u_L,0)$. 
\end{proposition}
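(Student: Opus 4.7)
The plan is to exploit the fact that when $a$ is small, the set $\Pi_a$ localizes around $u_L$: by Lemma \ref{quadraticentropy} applied to both sides of the defining inequality $\tfrac{1}{a}\eta(u_+|u_L)\leq \eta(u_+|u_R)$, any $u_+\in\Pi_a$ satisfies $|u_+-u_L|^2 \lesssim a$. The hypothesis $u_-\leq \phidiss(u_L)-\eps$ then gives $|u_+-u_-|\geq \eps/2$ for $a$ small, so by Rankine--Hugoniot the speed $\sigma_\pm=(f(u_+)-f(u_-))/(u_+-u_-)$ is uniformly bounded and satisfies $|\sigma_\pm-\sigma(u_-,u_L)|=O(\sqrt{a})$, where $\sigma(u_-,u_L)=(f(u_-)-f(u_L))/(u_-- u_L)$. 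Since $D_{RH}$ carries a factor of $1/a$ in front of the $u_-$-term, the conclusion should follow from a uniform strict positivity of that term.

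Concretely, I would split $D_{RH} = A - \tfrac{1}{a}B$ with
\[
A := q(u_+;u_R)-\sigma_\pm\eta(u_+|u_R), \qquad B := q(u_-;u_L)-\sigma_\pm\eta(u_-|u_L).
\]
The term $A$ is uniformly bounded since $u_+\in\statespace$ lies near $u_L$ and $\sigma_\pm$ is controlled. In the limit $u_+=u_L$, one has $\sigma_\pm=\sigma(u_-,u_L)$ and $B$ becomes
\[
B_0(u_-) := q(u_-;u_L)-\sigma(u_-,u_L)\,\eta(u_-|u_L).
\]
Using entropy--flux compatibility $q'=\eta' f'$ together with the Rankine--Hugoniot identity for the virtual shock $(u_-,u_L)$, one can rewrite $B_0$ as the entropy dissipation of $(u_-,u_L)$ plus an explicit correction; the key claim, which is precisely where the concave--convex structure of $f$ enters, is
\[
B_0(u_-) > 0 \qquad \text{for every } u_- < \phidiss(u_L).
\]
Intuitively, $\phidiss(u_L) = \phi_0^\flat(u_L)$ is by definition the state across which this relative-entropy quantity changes sign. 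Continuity of $B_0$ and compactness of $\{u\in\statespace:u\leq \phidiss(u_L)-\eps\}$ then give a uniform lower bound $B_0 \geq 2c$, where $c>0$ depends continuously on $\eps,\,s_0,\,d(u_L,0)$. Joint Lipschitzness of $B$ in $(u_+,\sigma_\pm)$ together with the $O(\sqrt{a})$ estimates above upgrades this to $B\geq c$ uniformly in $u_+\in\Pi_a$ for $a$ sufficiently small. Combined with $A\leq C$, we get $D_{RH}\leq C- c/a \leq 0$ for $a\leq a^*$ with $a^*$ depending only on the system, $\eps$, $s_0$, and $d(u_L,0)$.

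The main obstacle will be establishing the strict positivity $B_0(u_-)>0$ on $\{u_-\leq \phidiss(u_L)-\eps\}$. In the purely convex case the analogous quantity can become negative for large shocks (as one already checks on $f(u)=u^2$, $\eta(u)=u^2/2$, $u_L=1$, $u_-=-1$), so it is really the concave--convex geometry, matched against the precise definition of $\phidiss=\phi_0^\flat$, that closes the argument; the verification reduces to a geometric statement about entropy dissipation along Hugoniot chords for concave-convex fluxes. A secondary technical point is tracking continuous dependence of all constants on $\eps$, $s_0$, and $d(u_L,0)$, which holds because every estimate in the chain above depends continuously on these parameters on the relevant compact sets.
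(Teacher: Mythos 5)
Your proposal takes essentially the same route as the paper's proof: split $D_{RH}$ into a bounded part $A$ and the $\tfrac{1}{a}$-weighted part $B$, localize $\Pi_a$ around $u_L$ via Lemma \ref{quadraticentropy}/Lemma \ref{pistructurelarge}, reduce $B$ to its value $B_0$ at $u_+=u_L$, and observe that $B_0(u_-)$ is the entropy dissipation $E_\eta(u_L,u_-)$ of the virtual shock $(u_L,u_-)$, which is strictly positive and bounded away from zero on $\{u_-\le \phidiss(u_L)-\eps\}$ by Lemma \ref{entropydissipationlemma} and the definition of $\phidiss$. The only small inaccuracy is the phrase ``plus an explicit correction'': using Rankine--Hugoniot for $(u_L,u_-)$ one finds $B_0(u_-)=E_\eta(u_L,u_-)$ exactly, so no correction arises and the positivity is immediate from the already-available Lemma \ref{entropydissipationlemma} rather than a statement needing further geometric verification.
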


\subsection{Preliminaries}
\subsubsection{Auxiliary functions}
In this section, we introduce some important functions related to entropy dissipation for the concave-convex flux $f$. A detailed exposition of most of these functions may be found in \cite{MR1927887}, so we omit the proofs of results that may be found therein. 
\begin{definition}
We define the tangent function $\phitan$ as follows:
\begin{equation}\label{phitan}
\phitan(u)=\begin{cases} 
v \text{ such that } v \neq u, f'(v)=\frac{f(u)-f(v)}{u-v} & u \neq 0, \\
0 & u=0.
\end{cases}
\end{equation}
\end{definition}
The function $\phitan$ is uniquely defined owing to the concave-convex nature of the flux, and $\phitan \in C^1({\statespace})$ because $f \in C^4(\statespace)$, via the implicit function theorem. It is also monotone decreasing and thus invertible. We denote the inverse $\phitaninverse$.  
\par Recall that for a convex entropy $\eta$, a shock wave $(u_-, u_+)$ is $\eta$-entropic if and only if $E_\eta(u_-, u_+) \leq 0$, where $E_\eta$ is the following entropy dissipation function:
\begin{equation}\label{entropydissipation}
E_\eta(u_-, u_+):=-\sigma_\pm(\eta(u_+)-\eta(u_-))+q(u_+)-q(u_-),
\end{equation}
where $\sigma_\pm=\frac{f(u_-)-f(u_+)}{u_--u_+}$ and $q$ is the entropy flux associated to $\eta$. We reference \cite{MR1927887} for the following theorem regarding $E_\eta$:
\begin{lemma}[Theorem 3.1 in \cite{MR1927887}]\label{entropydissipationlemma}
Let $\eta$ be $C^1$. For any left state $u_- > 0$, the function $E_\eta(u_-,\cdot)$ is monotone decreasing in $(-\infty,\phitan(u_-)]$ and monotone increasing in $[\phitan(u_-),\infty)$. Further, $E_\eta(u_-,u_-)=0$.
\end{lemma}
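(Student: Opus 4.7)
The plan is to differentiate $E_\eta(u_-,u_+)$ with respect to $u_+$, factor the result into a product of two terms whose signs are easier to analyze, and then locate the critical points and determine their type.

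First, using $q'=\eta' f'$ together with the elementary identity $\partial_{u_+}\sigma_\pm = (f'(u_+)-\sigma_\pm)/(u_+ - u_-)$, I would derive the clean factorization
\begin{equation*}
\frac{\partial E_\eta}{\partial u_+}(u_-,u_+) = \bigl(f'(u_+) - \sigma_\pm\bigr)\left(\eta'(u_+) - \frac{\eta(u_+)-\eta(u_-)}{u_+ - u_-}\right).
\end{equation*}
Convexity of $\eta$, via the mean value theorem, immediately shows the second factor has the same sign as $u_+ - u_-$ (the chord slope of $\eta$ lies between $\eta'(u_-)$ and $\eta'(u_+)$). Thus the monotonicity of $E_\eta(u_-,\cdot)$ is governed by the sign of $B(u_+) := f'(u_+) - \sigma_\pm$ modulated by the sign of $u_+ - u_-$.

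Next, the zeros of $B$ correspond geometrically to points where the tangent to $f$ at $u_+$ passes through $(u_-,f(u_-))$. By the concave-convex hypothesis \eqref{concaveconvex} and $u_- > 0$, these are precisely $u_+ = u_-$ and $u_+ = \phitan(u_-)$, and no others. I would pin down the sign of $B$ on each of the three intervals they cut out by differentiating: a short Taylor expansion around $u_+ = u_-$ gives $\partial_{u_+} B\big|_{u_+=u_-} = f''(u_-)/2 > 0$, while at $u_+ = \phitan(u_-)$ one reads off $\partial_{u_+} B = f''(\phitan(u_-)) < 0$. Coupled with $f'(u_+) \to +\infty$ as $|u_+| \to \infty$, this pins down $B > 0$ on $(-\infty, \phitan(u_-)) \cup (u_-,\infty)$ and $B < 0$ on $(\phitan(u_-), u_-)$.

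Multiplying the signs of the two factors then gives $\partial_{u_+} E_\eta < 0$ on $(-\infty, \phitan(u_-))$ (positive times negative) and $\partial_{u_+} E_\eta > 0$ on $(\phitan(u_-),\infty)$ (in both subintervals the factors share a common sign), yielding the claimed monotonicity. The identity $E_\eta(u_-, u_-) = 0$ is immediate from the definition after interpreting $\sigma_\pm$ as its limit $f'(u_-)$. The one subtle step is the sign analysis of $B$ near $u_+ = u_-$, where both numerator and denominator in the expression for $\partial_{u_+} B$ degenerate; this is the only place the argument requires more than a one-line computation, but is handled cleanly by the Taylor expansion noted above.
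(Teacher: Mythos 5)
Your proof is correct and takes essentially the same route as the argument the paper defers to (Theorem 3.1 in \cite{MR1927887}) and reproduces for Kru\v{z}kov entropies in Lemma \ref{Kru\v{z}kov2}: differentiate in $u_+$ and factor. Your product $(f'(u_+)-\sigma_\pm)\bigl(\eta'(u_+)-\frac{\eta(u_+)-\eta(u_-)}{u_+-u_-}\bigr)$ is exactly $\eta(u_-|u_+)\,\partial_{u_+}\sigma_\pm$, i.e. the nonnegative convexity factor times the secant-slope derivative whose sign change at $\phitan(u_-)$ (your analysis of $B$) is the standard concave-convex geometry, so the stated monotonicity follows just as in the cited proof.
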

Notice that for a $C^1$ entropy $\eta$, this implies there are exactly two points where $E_\eta(u, \cdot)$ equals zero, one of which equals $u$. We will use this to implicitly define a zero entropy dissipation function:
\begin{definition} 
We define the zero entropy dissipation function as follows:
\begin{equation}\label{phidiss}
\phidiss(u)=\begin{cases}
v \text{ such that } v \neq u, \text{ and } E_\eta(u,v)=0 & u \neq 0, \\
0 & u=0.
\end{cases}
\end{equation}
\end{definition}
Again, $\phidiss$ is monotone decreasing and $\phidiss \in C^1({\statespace})$. 
\par Finally, we define the companion function to a fixed point of reference:
\begin{definition}
Define the companion function to $k$ as follows:
\[
\phi_k^\sharp=\begin{cases}
v \text{ such that } v \neq u, k, \frac{f(v)-f(u)}{v-u}=\frac{f(k)-f(u)}{k-u} & u \neq \phitaninverse(k), \\
k & u=\phitaninverse(k).
\end{cases}
\]
\end{definition}
\begin{remark}
We will most often use the companion function $\phicom$, for some $u_L > 0$:
\[
\phicom(u)=\begin{cases}
v \text{ such that } v \neq u, \phitan(u_L), \text{ and } \frac{f(v)-f(u)}{v-u}=\frac{f(\phitan(u_L))-f(u)}{\phitan(u_L)-u} & u \neq u_L, \\
\phitan(u_L) & u=u_L.
\end{cases}
\]
\end{remark}
For later purposes, we need a Lipschitz bound on $\phicom$ at $u_L$.
\begin{lemma}\label{companionlemma}
Let $u_L > 0$. Then, there exists $C> 0$ such that for any $u \in \statespace$ with $u \geq u_L$ and $\phicom(u) \in \statespace$:
\[
|\phicom(u)-\phicom(u_L)|\leq C|u-u_L|.
\]
Here, $C$ depends on the system and continuously on $d(u_L,0)$.
\end{lemma}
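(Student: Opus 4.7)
The plan is to reduce the Lipschitz bound to a single application of the implicit function theorem, after first factoring out a double degeneracy in the defining equation. Observe that by the definition of $\phitan$, the line through $(u_L, f(u_L))$ with slope $f'(\phitan(u_L))$ is tangent to the graph of $f$ at $v = \phitan(u_L)$, so $\phicom(u_L) = \phitan(u_L)$ and the Lipschitz estimate is equivalent to controlling how fast $\phicom(u)$ departs from $\phitan(u_L)$ as $u$ departs from $u_L$. To exploit this, I would rewrite the companion relation in polynomial form as
\[
\tilde{F}(u,v) := (f(v)-f(u))(\phitan(u_L)-u) - (f(\phitan(u_L))-f(u))(v-u) = 0,
\]
which is smooth in $(u,v)$ and vanishes identically whenever $v = u$ or $v = \phitan(u_L)$.

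By Hadamard's lemma, one can write $\tilde{F}(u,v) = (v-u)(v-\phitan(u_L))\, H(u,v)$ for some smooth $H$, and $\phicom(u)$ is characterized as the root of $H(u,\cdot) = 0$ distinct from $u$ and $\phitan(u_L)$. At the base point, the auxiliary polynomial $p(v) := f(v) - f(u_L) - f'(\phitan(u_L))(v-u_L)$ has a simple zero at $v = u_L$ and a double zero at $v = \phitan(u_L)$, which yields the factorization $p(v) = (v-u_L)(v-\phitan(u_L))^2 Q(v)$ with $Q(\phitan(u_L)) = -\tfrac{f''(\phitan(u_L))}{2(u_L-\phitan(u_L))}$. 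Tracking this through the factorization of $\tilde{F}$ gives $H(u_L, \phitan(u_L)) = 0$ and, crucially,
\[
\partial_v H(u_L, \phitan(u_L)) = \tfrac{1}{2} f''(\phitan(u_L)) \neq 0,
\]
since $\phitan(u_L) < 0$ and $f$ is strictly concave on the negative axis by \eqref{concaveconvex}. The implicit function theorem then furnishes a $C^1$ selection $u \mapsto \phicom(u)$ on a neighborhood of $u_L$, which is the desired Lipschitz bound near $u_L$.

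To extend the bound to all admissible $u \geq u_L$ with $\phicom(u) \in \statespace$, I would use a covering/compactness argument: away from $u_L$ the companion map is smooth by a direct application of the implicit function theorem to $H$ (no degeneracy since then $\phicom(u) \neq \phitan(u_L)$), so Lipschitz bounds hold on compact subsets. Stitching the local bound near $u_L$ to the global smooth estimate off a neighborhood of $u_L$ yields the uniform constant $C$. The continuous dependence of $C$ on $d(u_L,0)$ enters through the denominator $f''(\phitan(u_L))$ in the implicit function estimate: since $f''(0) = 0$, this quantity degenerates as $u_L \to 0$, but remains uniformly bounded away from zero as long as $u_L$ is bounded away from the inflection point.

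The main obstacle is the degenerate algebraic structure at $(u_L, \phitan(u_L))$: the two roots of $\tilde{F}(u, \cdot)$ that represent $u$ and $\phicom(u)$ coalesce into the already-factored root $\phitan(u_L)$ at $u = u_L$, so one has to be careful when splitting off the $(v - u)(v - \phitan(u_L))$ factor to verify that $H$ is genuinely smooth and to compute $\partial_v H$ at the degenerate point. Once this algebraic bookkeeping is done correctly, the implicit function theorem does the rest, and the remaining dependence tracking for $C$ follows from inspection of the constants produced.
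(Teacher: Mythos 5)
Your proposal takes a genuinely different route from the paper's. The paper's proof is a barrier argument: on $[-M, \phitan(u_L)/2]$, where the flux is uniformly concave, it majorizes $f$ by the parabola $p$ tangent to $f$ at $\phitan(u_L)$ with $p'' = c := \max_{[-M,\phitan(u_L)/2]} f'' < 0$, computes the companion function of $p$ explicitly by the quadratic formula, and reads off the bound $|\phicom(u)-\phitan(u_L)| \leq \tfrac{2}{|c|}|\sigma(u,\phitan(u_L)) - \sigma(u_L,\phitan(u_L))|$ directly and uniformly over the whole admissible range, with no compactness or implicit-function step. Your approach instead factors the secant relation via Hadamard's lemma and runs the implicit function theorem at the degenerate point; your key local computation $\partial_v H(u_L,\phitan(u_L)) = \tfrac12 f''(\phitan(u_L)) \neq 0$ is correct, and near $u_L$ this does give the bound.

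There is, however, a gap in your global extension. You assert that away from $u_L$ there is ``no degeneracy since then $\phicom(u) \neq \phitan(u_L)$,'' but that is not the relevant criterion. Unwinding the factorization, for $v=\phicom(u)$ (distinct from both $u$ and $\phitan(u_L)$) one finds $\partial_v H(u,v) = 0$ if and only if $f'(\phicom(u)) = \sigma(u,\phitan(u_L))$, i.e. the secant is tangent to the graph at the companion point — a condition with no a priori connection to whether $\phicom(u)$ coincides with $\phitan(u_L)$. The non-degeneracy does hold, but it needs an argument: since $f''$ changes sign exactly once, $f'(\cdot) = \sigma(u,\phitan(u_L))$ has at most two solutions, and Rolle already places one in $(\phitan(u_L),u)$ and another in $(\phicom(u),\phitan(u_L))$, so a third at $\phicom(u)$ is impossible. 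Without supplying this, the compactness step is unjustified. The barrier proof avoids the issue entirely and also makes the continuous dependence of $C$ on $d(u_L,0)$ transparent by inspection of $c$.
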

\begin{proof}
Fix $k:=\frac{\phitan(u_L)}{2} < 0$. By \eqref{concaveconvex}, there exists $c < 0$ such that  $f'' \leq c$ on $[-M, k]$ (note that $c:=\max_{x \in [-M,k]}f''(x)$ depends on $d(u_L,0)$). So, for any $x \in [-M, k]$, we obtain:
\[
f(x) \leq \frac{c}{2}(x-\phitan(u_L))^2+f'(\phitan(u_L))(x-\phitan(u_L))+f(\phitan(u_L))=:p(x).
\]
Next, for $u > u_L$, define $h$ as the companion function to $\phitan(u_L)$ associated to the barrier function $p$:
\[
h(u)=\begin{cases}
v \text{ such that } v \leq \phitan(u_L), \text{ and } \frac{p(v)-f(u)}{v-u}=\frac{f(\phitan(u_L))-f(u_L)}{\phitan(u_L)-u_L} & u \neq u_L, \\
\phitan(u_L) & u=u_L.
\end{cases}
\]
Note that for $u > u_L$, $|\phicom(u)-\phitan(u_L)| \leq |h(u)-\phitan(u_L)|$, so it suffices to show the bound $|h(u)-\phitan(u_L)| \leq C|u-u_L|$. We can compute $h(u)$ explicitly using the quadratic formula and find:
\[
h(u)=\phitan(u_L)-\frac{2|\sigma(u,\phitan(u_L))-\sigma(u_L,\phitan(u_L))|}{|c|},
\]
where $\sigma(u,v)=\frac{f(u)-f(v)}{u-v}$. So, we obtain:
\[
|h(u)-\phitan(u_L)| \leq \frac{2}{|c|}|\sigma(u,\phitan(u_L))-\sigma(u_L,\phitan(u_L))| \leq C|u-u_L|,
\]
where $C$ may again be taken to depend continuously on $d(u_L,0)$ by property \ref{enum:7} (see Section \ref{system properties}).  
\end{proof}
\subsubsection{Entropy dissipation for Kru\v{z}kov entropies}
In this section, we give useful necessary and sufficient conditions for a shock $(u_-, u_+, \sigma_\pm)$ to be entropic for a Kru\v{z}kov entropy $\eta_k(u)=|u-k|$. Lemma \ref{entropydissipationlemma} holds only for $C^1$ entropies, which the Kru\v{z}kov entropies fail to verify. Recall that the associated entropy-flux is $q_k(u)=sgn(u-k)(f(u)-f(k))$.
\begin{lemma}\label{Kru\v{z}kov1}
Let $f$ verify \eqref{concaveconvex}. Let $(u_-, u_+)$ be a shock with $u_- > 0,u_+$. Let $k \leq \phitan(u_-)$. Then, $(u_-, u_+)$ is $\eta_k$-entropic if and only if $u_+ \geq k$. 
\end{lemma}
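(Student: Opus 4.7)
The plan is to compute the entropy dissipation $E_{\eta_k}(u_-,u_+)$ from \eqref{entropydissipation} directly, splitting into cases by the position of $u_+$ relative to $k$. Because $u_->0$ and $k\leq\phitan(u_-)<0$, one always has $u_->k$, so only the sign of $u_+-k$ is in question. In the case $u_+\geq k$, both $\eta_k(u_\pm)=u_\pm-k$ and both $q_k(u_\pm)=f(u_\pm)-f(k)$; substituting into \eqref{entropydissipation} and invoking the Rankine--Hugoniot relation $\sigma_\pm(u_--u_+)=f(u_-)-f(u_+)$ collapses $E_{\eta_k}(u_-,u_+)$ identically to zero, so the shock is $\eta_k$-entropic. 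This yields the ``if'' direction at once.

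For the converse, suppose $u_+<k\leq\phitan(u_-)<u_-$. Now $\eta_k$ has opposite signs at the two states, and a short computation reduces the dissipation to
\[
E_{\eta_k}(u_-,u_+)\;=\;2\bigl(f(k)-L(k)\bigr),
\]
where $L$ denotes the affine chord through $(u_+,f(u_+))$ and $(u_-,f(u_-))$. The proof will therefore be complete once one shows $f(k)>L(k)$ for all $k\in(u_+,\phitan(u_-)]$; that is, the graph of $f$ sits strictly above the chord at any such $k$.

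This geometric step is the core of the argument. Let $T$ denote the tangent line to $f$ at $\phitan(u_-)$; by the definition \eqref{phitan}, $T$ also passes through $(u_-,f(u_-))$. The concavity of $f$ on $(-\infty,0]$ from \eqref{concaveconvex} gives $T\geq f$ on that interval, with strict inequality at $u_+$ because $u_+<\phitan(u_-)$. Writing the slopes of $L$ and $T$ from their common endpoint $(u_-,f(u_-))$ as $\sigma_L=(f(u_-)-f(u_+))/(u_--u_+)$ and $\hat\sigma=(f(u_-)-T(u_+))/(u_--u_+)$, the strict inequality $T(u_+)>f(u_+)$ immediately yields $\sigma_L>\hat\sigma$. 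Evaluating $L-T$ at $\phitan(u_-)$ (and using $\phitan(u_-)-u_-<0$) then forces $L(\phitan(u_-))<T(\phitan(u_-))=f(\phitan(u_-))$. Setting $\Phi:=f-L$, we have $\Phi(\phitan(u_-))>0$, $\Phi(u_+)=0$, and $\Phi''=f''\leq 0$ on $[u_+,0]$; concavity of $\Phi$ on $[u_+,\phitan(u_-)]$ combined with these boundary values gives $\Phi(k)>0$ for every $k\in(u_+,\phitan(u_-)]$ by a direct convex-combination estimate.

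The only mildly delicate point is the slope comparison $\sigma_L>\hat\sigma$: this is precisely where the concavity of $f$ on the left half of state space must be invoked, and it is what prevents the chord from rising above the graph between $u_+$ and $\phitan(u_-)$. The rest of the argument is routine Rankine--Hugoniot arithmetic together with elementary convexity.
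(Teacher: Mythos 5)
Your proof is correct and follows essentially the same strategy as the paper: compute $E_{\eta_k}$ from \eqref{entropydissipation} in the two cases $u_+\geq k$ and $u_+<k$, showing the dissipation vanishes in the first and is strictly positive in the second. Your reduction $E_{\eta_k}(u_-,u_+)=2(f(k)-L(k))$ is a clean repackaging of the paper's two-term split (each of the paper's terms equals $f(k)-L(k)$), and your tangent-line/concavity argument carefully supplies the geometric step that the paper merely asserts as ``clear geometrically.''
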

\begin{proof}
Let $u_+ \geq k$. Then, one may compute:
\[
E_{\eta_k}(u_-, u_+)=0,
\]
so $(u_-, u_+)$ is $\eta_k-$entropic. 
\par On the other hand, let $k > u_+$. It suffices to show that $E_{\eta_k}(u_-, u_+) >0$. We compute:
\[
E_{\eta_k}(u_-, u_+)=\sigma_\pm(u_+-k)+f(k)-f(u_+)+\sigma_\pm(u_--k)+f(k)-f(u_-).
\]
Now, $\sigma_\pm(u_+-k)+f(k)-f(u_+) > 0 \iff \sigma_\pm < \sigma(k, u_+)$ However, this is clear geometrically as both $k, u_+ \leq \phitan(u_-)$. For the other term, we see $\sigma_\pm(u_--k)+f(k)-f(u_-) > 0 \iff \sigma_\pm > \sigma(u_-,k)$. This is also clear as both $k, u_+ \leq \phitan(u_-)$. So, both terms are positive and we see $E_{\eta_k}(u_-, u_+) > 0$. 
\end{proof}
The situation is more delicate when $k > \phitan(u_-)$. The following proof is a modification of the proof of Theorem 3.1 in \cite{MR1927887}.
\begin{lemma}\label{Kru\v{z}kov2}
Let $f$ be concave-convex. Let $(u_-, u_+)$ be a shock with $u_- > 0,u_+$. Let $k \in (\phitan(u_-),u_-]$. Then, $(u_-, u_+)$ is $\eta_k$-entropic if and only if $u_+ \geq \phi_{k}^\sharp(u_-)$.
\end{lemma}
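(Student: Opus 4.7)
The plan is to split according to the sign of $u_+ - k$ and reduce the admissibility condition $E_{\eta_k}(u_-, u_+) \leq 0$ (see \eqref{entropydissipation}) to a geometric statement about the chord of $f$ joining $(u_-, f(u_-))$ and $(u_+, f(u_+))$. In the easy case $u_+ \geq k$, the hypothesis $k \leq u_-$ makes all sign factors in $\eta_k$ and $q_k$ equal to $+1$, and the Rankine--Hugoniot relation then forces $E_{\eta_k}(u_-, u_+) \equiv 0$. Hence the shock is automatically $\eta_k$-entropic in this regime, consistent with the inequality $u_+ \geq \phi_k^\sharp(u_-)$.

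In the nontrivial case $u_+ < k$, I would first establish by direct algebraic manipulation the identity
\[
E_{\eta_k}(u_-, u_+) = 2\bigl(f(k) - L_{u_+, u_-}(k)\bigr),
\]
where $L_{u_+, u_-}$ is the affine chord through $(u_-, f(u_-))$ and $(u_+, f(u_+))$. Thus the shock is $\eta_k$-entropic iff $(k, f(k))$ lies on or below this chord. By the defining collinearity property of the companion function, $u_+ = \phi_k^\sharp(u_-)$ is exactly the value that makes the chord pass through $(k, f(k))$, so $E_{\eta_k}(u_-, \phi_k^\sharp(u_-)) = 0$, and the problem reduces to a one-variable monotonicity analysis.

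I would then compute
\[
\frac{\partial E_{\eta_k}}{\partial u_+}(u_-, u_+) = -2(k - u_-)\,\frac{\partial \sigma(u_+, u_-)}{\partial u_+}, \qquad \sigma(a,b) := \frac{f(a) - f(b)}{a - b}.
\]
A short critical-point computation (parallel to the one powering Lemma \ref{entropydissipationlemma}) shows that the numerator of $\partial \sigma/\partial u_+$ vanishes precisely where $f'(u_+) = \sigma(u_+, u_-)$, whose only solutions are $u_+ = u_-$ and $u_+ = \phitan(u_-)$, with a strict minimum of $\sigma(\cdot, u_-)$ attained at $\phitan(u_-)$ on $(-\infty, u_-)$. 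Since $k - u_- < 0$, this translates into the claim that $E_{\eta_k}(u_-, \cdot)$ is strictly decreasing on $(-\infty, \phitan(u_-))$ and strictly increasing on $(\phitan(u_-), k)$.

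Finally I would verify the boundary and critical values. At $u_+ = k^-$, continuity gives $E_{\eta_k} \to 0$. At $u_+ = \phitan(u_-)$ the chord degenerates to the tangent line of $f$ at $\phitan(u_-)$, which by the concave-convex shape (concavity on $(-\infty, 0)$ together with convexity on $(0, u_-)$, tangency at $\phitan(u_-)$, and $L(u_-) = f(u_-)$) lies strictly above $f$ on the open interval $(\phitan(u_-), u_-)$; hence $E_{\eta_k}(u_-, \phitan(u_-)) < 0$. As $u_+$ decreases, the growth condition in \eqref{concaveconvex} makes $\sigma(u_+, u_-) \to +\infty$, so $E_{\eta_k}(u_-, u_+) \to +\infty$. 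Combining monotonicity with the intermediate value theorem, $E_{\eta_k}(u_-, \cdot)$ has a unique zero on $(-\infty, \phitan(u_-))$, which by the identity in step two must coincide with $\phi_k^\sharp(u_-)$; and merging the $u_+ < k$ and $u_+ \geq k$ regimes yields the stated equivalence. The main subtlety is confirming that the zero of $E_{\eta_k}$ sits in the decreasing branch, i.e.\ that $\phi_k^\sharp(u_-) \leq \phitan(u_-)$; this follows from the concave-convex shape, since the chord through $(u_-, f(u_-))$ and $(k, f(k))$ is a perturbation of the $\phitan(u_-)$-tangent and its third intersection with the graph of $f$ is forced to lie on the opposite side of $\phitan(u_-)$ from $k$.
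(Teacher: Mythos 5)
Your proposal is correct and follows essentially the same route as the paper: split on the sign of $u_+-k$, dispense with $u_+\geq k$ by observing that $\eta_k$ is affine on $[k,\infty)$ so the Rankine--Hugoniot relation forces $E_{\eta_k}=0$, then for $u_+<k$ differentiate $E_{\eta_k}$ in $u_+$, factor the derivative as $2(u_--k)\,\partial_{u_+}\sigma(u_+,u_-)$, read off the monotonicity on either side of $\phitan(u_-)$ from the concave-convex shape, and pin down the unique zero as $\phi_k^\sharp(u_-)$. Your identity $E_{\eta_k}(u_-,u_+)=2\bigl(f(k)-L_{u_+,u_-}(k)\bigr)$, with $L_{u_+,u_-}$ the chord through $(u_-,f(u_-))$ and $(u_+,f(u_+))$, is an equivalent and somewhat more geometric rewriting of the paper's last display (both state that the zero set is characterized by collinearity of $(u_-,f(u_-))$, $(k,f(k))$, $(u_+,f(u_+))$); it makes the tangency argument at $u_+=\phitan(u_-)$ and the location $\phi_k^\sharp(u_-)<\phitan(u_-)$ pleasantly transparent, but it does not change the underlying argument.
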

\begin{proof}
Firstly, by the Ole\u{\i}nik condition, $(u_-, u_+)$ is entropic for any entropy for $u_+ \geq k$, so it suffices to check what happens for $u_+ < k$. In this region, $\eta_k$ is smooth, so we differentiate the entropy dissipation with respect to $u_+$ and find:
\[
\partial_{u_+}E_{\eta_k}(u_-,u_+)=b(u_-u_+)\partial_{u_+}\sigma_\pm,
\]
where:
\begin{align*}
b(u_-,u_+)&:=\eta_k(u_-)-\eta_k(u_+)-\eta'(u_+)(u_--u_+)=2(u_--k) > 0, \\
\partial_{u_+}\sigma_\pm&=\frac{f'(u_+)-\sigma_\pm}{u_+-u_-}.
\end{align*}
As the flux is concave-convex, it is clear that:
\begin{align*}
\partial_{u_+}\sigma_\pm &< 0 \text { for } u_+ < \phitan(u_-), \\
\partial_{u_+}\sigma_\pm &> 0 \text { for } u_+ > \phitan(u_-). \\
\end{align*}
Thus, we see that $u_+ \mapsto E(u_-, u_+)$ attains exactly one zero for $u_+ < k$ and will be positive for all $u_+$ less than this value. It remains to determine what this decisive value of $u_+$ is. In the region $u_+ < k$, we compute:
\begin{align*}
E_{\eta_k}(u_-,u_+)&=-\sigma_\pm(2k-u_--u_+)+2f(k)-f(u_+)-f(u_-) \\
&=(k-u_+)(\sigma(k,u_+)-\sigma_\pm)+(k-u_-)(\sigma(k,u_-)-\sigma_\pm).
\end{align*}
We see that $E_{\eta_k}(u_-,u_+)=0$ for $u_+$ such that $\sigma_\pm=\sigma(u_+,k)=\sigma(u_-,k)$, or exactly $u_+= \phi_{k}^\sharp(u_-)$.
\end{proof}

\subsubsection{Properties of the system}\label{system properties}
In this section, we list some abstract properties satisfied by the system. \newline
\textbf{Properties 1.} Recall that we assume that the set of states $\statespace$ is bounded. Further, we have the following (all these properties are easily verified for $C^4$ concave-convex fluxes):
\begin{enumerate}[label=(\alph*)]
\item \label{enum:1} For any $0 \neq u \in \statespace, f''(u) \neq 0$.
\item \label{enum:2} For $u_L \in \statespace$, we denote $s \mapsto S_{u_L}(s)$ the shock curve through $u_L$, and $s \mapsto \sigma_{u_L}(s)$ the associated shock speed, defined for $s \geq 0$. We choose the parameterization such that $s=|u_L-S_{u_L}(s)|$, i.e. $(u_L, S_{u_L}(s))$ is the shock with left state $u_L$ and strength $s$. For every $u_L \in \statespace, 0 \leq s < \infty$, we have:
\[
f(S_{u_L}(s))-f(u_L)=\sigma_{u_L}(s)(S_{u_L}(s)-u_L).
\]
Further, $\sigma_{u_L}(0)=\lambda(u_L)$. We assume that these curves are defined globally in $\statespace$ for every $u_L \in \statespace$. 
\item \label{enum:3} There exist functions $u_L \mapsto s^{u_L}_{\phitan}, u_L \mapsto s^{u_L}_{\phidiss}$ defined as follows:
\begin{enumerate}
\item For $0 \leq s \leq s^{u_L}_{\phitan}$, $(u_L, S_{u_L}(s))$ is an Ole\u{\i}nik shock. Further, $\sigma_{u_L}'(s) < 0$ for $0 \leq s < s^{u_L}_{\phitan}$, $\sigma_{u_L}'(s)=0$ for $s=s^{u_L}_{\phitan}$, and $\sigma_{u_L}'(s)=0$ for $s=s^{u_L}_{\phitan}$, and $\sigma_{u_L}'(s) > 0$ for $s > s^{u_L}_{\phitan}$.
\item For $0 \leq s \leq s^{u_L}_{\phidiss}$, $(u_L, S_{u_L}(s))$ is a $\eta-$entropic Rankine-Hugoniot discontinuity. Further, if $s > s^{u_L}_{\phidiss}$, then $(u_L, S_{u_L}(s))$ fails to be $\eta$-entropic. 
\end{enumerate}
\item \label{enum:4} If $(u,v)$ is a $\eta$-entropic Rankine-Hugoniot discontinuity with velocity $\sigma$ such that $u >v > 0$, then $\sigma > \lambda(v)$. More generally, for any $\eta$-entropic Rankine-Hugoniot discontinuity, we have $\sigma \in I(\lambda(u_L),\lambda(u_R))$, where $I(a,b)$ is the closed interval with endpoints $a$ and $b$. 
\item \label{enum:5} If $(u,v)$ is a $\eta$-entropic Rankine-Hugoniot discontinuity, then $v=S_{u}(s)$ for some $0 \leq s \leq s^u_{\phidiss}$. 
\item \label{enum:6} For $u_L \in \statespace, s \geq 0$, $\dv{}{s}\eta(u_L|S_{u_L}(s)) > 0$ (the shock strengthens with $s$). 
\item \label{enum:7} We have the following regularity:
\begin{enumerate}
    \item $u_L \mapsto r(u_L)$ is smooth on $\statespace \setminus \{0\}$. 
    \item $u_L \mapsto s^{u_L}_{\phitan}, s^{u_L}_{\phidiss}$ are continuous on $\statespace$. 
    \item $(s,u_L) \mapsto S_{u_L}(s), (s,u_L) \mapsto \sigma_{u_L}(s)$ are $C^1$ on $\{(s,u):u \in \statespace \setminus \{0\}, s \in [0,\infty)\}$.
\end{enumerate}
\end{enumerate}
\begin{notation}
For $u$ fixed, we will denote $s^u_k$ to be such that $S_u(s^u_k)=k$. If the subscript is a function $g$ instead of a point $k$, it is to be understood as $s^u_g=s^u_{g(u)}$. This is consistent with the notation introduced in property \ref{enum:3}.
\end{notation}
\begin{remark}
As we are in the scalar setting, the shock curves are simple and explicit. Let $u_L > 0$. Then, $S_{u_L}(s)=u_L-s$, and $\sigma_{u_L}(s)=\frac{f(u_L-s)-f(u_L)}{-s}$. Computing the derivative, we see:
\[
\sigma'_{u_L}(s) \approx -\frac{1}{2}f''(u_L-s)^2,
\]
for $s$ small. In other words, the negativity of $\sigma'_{u_L}$ degenerates for $u_L$ close to zero. As a result of this, any bounds in terms of $\sigma'_{u_L}$ (crucially, in the proofs of Propositions \ref{Dcontlarge}, \ref{DRHlarge1}, and \ref{DRHlarge2}) will depend on $d(u_L,0)$.
\end{remark}

\subsubsection{Study of the set $\Pi_a$}
An important part of the analysis in both the large and small shock regimes is to study the structure of the set $\Pi_a$ (defined in \eqref{pia}). Note that it is a closed interval, as for $a < 1$, the function $\frac{1}{a}\eta(u|u_L)-\eta(u|u_R)$ is strictly convex, and the function is clearly positive near both $\pm \infty$ and negative at $u_L$. Here, we give a lemma that illuminates the structure in the large shock regime. It is comparable to Lemma 3.2 in \cite{MR4176349}, although the proof is based on Lemma 3.4 in \cite{MR4667839}. It gives precise asymptotics on the diameter of the set $\Pi_a$ as $a \to 0$.
\begin{lemma}\label{pistructurelarge}
For $a$ sufficiently small, $\Pi_a$ is compactly contained within $\statespace$ and $diam(\Pi_a) \leq C\sqrt{a}$, where the implicit constant depends only on the system and continuously on $s_0$.
\end{lemma}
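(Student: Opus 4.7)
The plan is to leverage Lemma \ref{quadraticentropy} to compare $\eta(u|u_L)$ and $\eta(u|u_R)$ with their quadratic majorants and minorants, and then exploit the triangle inequality with $|u_L - u_R| = s_0$. Setting $a := a_2/a_1 < 1$, the defining condition for $\Pi_a$ reads $\eta(u|u_L) \leq a\,\eta(u|u_R)$. Since $u_L \in \Pi_a$ always and $u_R \not\in \Pi_a$ for $a$ small, we expect $\Pi_a$ to shrink toward $u_L$, and the quadratic comparability of $\eta(\cdot|\cdot)$ with $|\cdot-\cdot|^2$ will give the predicted $\sqrt{a}$ scaling.

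First, I would apply Lemma \ref{quadraticentropy} with $V = \statespace$ (which is compact) to produce constants $c^*, c^{**} > 0$, depending only on the system, such that
\[
c^*|u-v|^2 \leq \eta(u|v) \leq c^{**}|u-v|^2 \qquad \text{for all } (u,v) \in \statespace \times \statespace.
\]
For any $u \in \Pi_a$, combining these bounds with the definition of $\Pi_a$ yields
\[
c^*|u-u_L|^2 \leq \eta(u|u_L) \leq a\,\eta(u|u_R) \leq a\,c^{**}|u-u_R|^2,
\]
so that $|u-u_L| \leq \sqrt{a c^{**}/c^*}\,|u-u_R|$. The triangle inequality then gives $|u-u_R| \leq |u-u_L| + s_0$, and rearranging yields
\[
\Bigl(1 - \sqrt{a c^{**}/c^*}\Bigr) |u-u_L| \leq \sqrt{a c^{**}/c^*}\, s_0.
\]

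Next, for $a$ small enough that $\sqrt{a c^{**}/c^*} \leq 1/2$ (a threshold depending only on the system), I would conclude $|u - u_L| \leq 2\sqrt{a c^{**}/c^*}\, s_0$. Since this holds for every $u \in \Pi_a$, we have $\Pi_a \subseteq \overline{B}(u_L, 2\sqrt{a c^{**}/c^*}\, s_0)$, and in particular
\[
\operatorname{diam}(\Pi_a) \leq 4\sqrt{c^{**}/c^*}\, s_0\, \sqrt{a} =: C\sqrt{a},
\]
with $C$ depending only on the system and continuously (in fact linearly) on $s_0$. This gives the diameter estimate. Compact containment then follows immediately: since $u_L$ lies in the interior of $\statespace$, for $a$ small enough (depending on the distance from $u_L$ to $\partial\statespace$) the ball $\overline{B}(u_L, C\sqrt{a})$ is contained in the interior of $\statespace$.

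This is a short calculation, and I do not expect any serious obstacle. The only place requiring care is ensuring the constants in Lemma \ref{quadraticentropy} can be applied uniformly for both the lower bound at $u_L$ and the upper bound at $u_R$; this is handled by taking $V = \statespace$, which is permissible since $\statespace$ is compact. The continuous dependence of $C$ on $s_0$ is transparent from the explicit formula $C = 4\sqrt{c^{**}/c^*}\, s_0$.
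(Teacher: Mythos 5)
Your proof is correct and takes essentially the same approach as the paper: both arguments rest on the quadratic comparability of Lemma \ref{quadraticentropy} together with the triangle inequality at scale $s_0=|u_L-u_R|$, the only difference being that you apply the two-sided quadratic bound directly (with $V=\statespace$) in place of the paper's integral decomposition of $\eta(u|u_R)$ through $\partial_v\eta(u|v)$ followed by Young's inequality, and both yield $|u-u_L|\lesssim s_0\sqrt{a}$ with the same dependence of the constant. The only caveat is that the compact containment uses that $u_L$ lies in the interior of $\statespace$, a point you flag explicitly and which the paper's own proof leaves implicit.
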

\begin{proof}
Let $u \in \Pi_a$. Then,
\begin{align*}
\eta(u|u_L)&\leq a\left(\eta(u|u_L)+\int_{u_L}^{u_R}\partial_v\eta(u|v)\diff v\right) \\
&\leq a(\eta(u|u_L)+s_0(|u-u_L|+|u-u_R|)).
\end{align*}
where $\partial_v\eta(u|v)=-\eta''(u)(u-v)$. By Lemma \ref{quadraticentropy}, this implies:
\begin{align*}
\frac{(1-a)}{a}|u-u_L|^2-Cs_0|u-u_L|-Cs_0^2 \leq 0,
\end{align*}
where $C$ depends only on the system. Using Young's inequality, we find that $|u-u_L| \leq C\sqrt{a}$ for $a$ sufficiently small, where $C$ now also depends on $s_0$ continuously. 
\end{proof}

\subsection{Proof of Theorem \ref{dissipationfnllargeshock}}\label{shiftconstruction}
In this section, we prove that Propositions \ref{Dcontlarge}, \ref{DRHlarge1}, and \ref{DRHlarge2} imply Theorem \ref{dissipationfnllargeshock}. We follow the construction of the shift function developed by Krupa \cite{MR4176349} and later used by Golding, Krupa, \& Vasseur \cite{MR4667839}, letting the shift be the generalized characteristic of the weak solution minus a correction. Because we will use the exact same shift in Section \ref{shiftconstructionsmall}, we introduce the notation used there. Define:
\begin{align*}
\tilde{\eta}(u)&=\frac{1}{a}\eta(u|u_L)-\eta(u|u_R), \\
\tilde{q}(u)&=\frac{1}{a}q(u;u_L)-q(u;u_R). \\
\end{align*}
These are identical definitions to \eqref{etatildedef}, \eqref{qtildedef}. Note that $\tilde{\eta}, \tilde{q}$ define an entropy, entropy-flux pair for the system (cf. Lemma \ref{pistructuresmall}). Then, we have:
\[
\Pi_a=\{u \in \statespace|\tilde{\eta}(u) \leq 0\}.
\]
\underline{\textbf{Step 1: Fixing the constant $a^*$.}}
Fix $a^*$ as follows. Firstly, pick $a^*$ satisfying the hypotheses of Propositions \ref{Dcontlarge} \ref{DRHlarge1}, and \ref{DRHlarge2}. Next, note that $\phidiss(\phidiss(u_L))=u_L$, and $\phidiss$ is monotone decreasing, so by taking $a^*$ smaller if needed, we may have 
\begin{equation}\label{midshockruleout}
\phidiss(v) < u \ \forall \ u \in \Pi_a \indent \text{and} \indent \Pi_a \subset B(u_L,\eps),
\end{equation} 
for all $v \geq \phidiss(u_L)+\eps$, where $\eps$ is the constant used to define $\weakone$. Note that as $\phidiss$ is continuous, $a^*$ depends continuously on $\eps$. Now, once and for all, fix $a < a^*$, which we will use to define the shift that gives the contraction. 
\par We need to control various terms in the dissipation for states outside of $\Pi_a$. This is contained in the following lemma:
\begin{lemma}\label{qcontrol}
For $u_L, u_R$ fixed, and any $a < a^*$, there exists constants $C_1=C_1(u_L, s_0, a)$ and $C_2=C_2(u_L,s_0,a)$ such that:
\begin{align*}
    \tilde{\eta}(u) &\geq C_1d(u, \partial \Pi_a) \text{ for any } u \in \statespace \setminus \Pi_a, \\
    |\tilde{q}(u)| &\leq C_2|\tilde{\eta}(u)| \text{ whenever } u \in \statespace \setminus \Pi_a \text{ verifies } \tilde{q}(u) \leq 0.
\end{align*}
\end{lemma}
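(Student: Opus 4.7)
The plan is to exploit the strict convexity of $\tilde{\eta}$, together with the pointwise identity $\tilde{q}'(u) = f'(u)\,\tilde{\eta}'(u)$ and the sign of $\tilde{q}$ at $\partial \Pi_a$ coming from Proposition \ref{Dcontlarge}. First I would pin down the structure of $\Pi_a$: since $a < a^* < 1$ and $\eta$ is strictly convex, $\tilde{\eta}'' = \frac{1-a}{a}\eta''$ is uniformly positive on $\statespace$; combined with $\tilde{\eta}(u_L) = -\eta(u_L|u_R) < 0$ and the fact that $\Pi_a$ is compactly contained in $\statespace$ for small $a$ (Lemma \ref{pistructurelarge}), this forces $\Pi_a = [u^-_a, u^+_a]$ to be a nondegenerate closed interval with $\tilde{\eta}(u^\pm_a) = 0$, $\tilde{\eta}'(u^+_a) > 0$, and $\tilde{\eta}'(u^-_a) < 0$.

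For the first estimate, convexity gives $\tilde{\eta}'(v) \geq \tilde{\eta}'(u^+_a) > 0$ for every $v \geq u^+_a$, so integration produces $\tilde{\eta}(u) \geq \tilde{\eta}'(u^+_a)(u - u^+_a)$ for $u > u^+_a$, and analogously on the left. Setting
\[
C_1 := \min\bigl(\tilde{\eta}'(u^+_a),\, -\tilde{\eta}'(u^-_a)\bigr) > 0
\]
handles both sides at once, since $d(u, \partial \Pi_a)$ is precisely $u - u^+_a$ or $u^-_a - u$ according to which side of $\Pi_a$ contains $u$.

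For the second estimate, the decisive observation is the identity
\[
\tilde{q}'(u) = f'(u)\,\tilde{\eta}'(u),
\]
which follows from $q'(u) = \eta'(u) f'(u)$ together with $\partial_u \eta(u|v) = \eta'(u) - \eta'(v)$. Next, rewriting $D_{cont}(u) = -\tilde{q}(u) + \lambda(u)\tilde{\eta}(u)$ and evaluating at $u_0 \in \partial \Pi_a$, where $\tilde{\eta}(u_0) = 0$, yields $\tilde{q}(u_0) = -D_{cont}(u_0) \geq 0$ by Proposition \ref{Dcontlarge}. Fix now $u \in \statespace \setminus \Pi_a$ with $\tilde{q}(u) \leq 0$, and let $u_0$ be the nearer endpoint of $\Pi_a$. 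Since $\tilde{\eta}'$ has a single sign on the interval between $u_0$ and $u$, integrating the derivative identity gives
\[
|\tilde{q}(u)| \leq \tilde{q}(u_0) - \tilde{q}(u) \leq \|f'\|_{L^\infty(\statespace)} \, \bigl|\tilde{\eta}(u) - \tilde{\eta}(u_0)\bigr| = \|f'\|_{L^\infty(\statespace)} \, |\tilde{\eta}(u)|,
\]
so $C_2 := \|f'\|_{L^\infty(\statespace)}$ works.

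The step I expect to be the main conceptual obstacle is recognizing that the boundary positivity $\tilde{q} \geq 0$ on $\partial \Pi_a$ is exactly what Proposition \ref{Dcontlarge} delivers when restricted to the zero set of $\tilde{\eta}$; once this is in hand, both estimates reduce to elementary one-variable calculus on the strictly convex function $\tilde{\eta}$, with no further use of the structural hypotheses on $f$ beyond the boundedness of $f'$.
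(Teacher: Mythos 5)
Your proof is correct, and it takes a somewhat different (and cleaner) route than the paper's, though both hinge on the same two ingredients: strict convexity of $\tilde{\eta}$ and the boundary positivity $\tilde{q}\ge 0$ on $\partial\Pi_a$ extracted from Proposition~\ref{Dcontlarge} via $D_{cont}=-\tilde{q}+\lambda\tilde{\eta}$. For the first estimate, the paper derives the lower bound $|\tilde{\eta}'(\overline{u})|\ge c^*\operatorname{diam}(\Pi_a)$ on the boundary by applying the quadratic bound of Lemma~\ref{quadraticentropy} to the relative quantity $\tilde{\eta}(u_1|u_2)$ across the two endpoints of $\Pi_a$, which gives an explicit, quantitative constant; you instead simply invoke strict convexity to conclude $\tilde{\eta}'(u_a^+)>0$ and $\tilde{\eta}'(u_a^-)<0$, which is enough for existence of $C_1$ but less explicit about its size. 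For the second estimate, the paper bounds $|\tilde{q}(u)|\le C_3\,d(u,\partial\Pi_a)$ with $C_3=\sup|\tilde{\eta}'f'|$ and then divides by the first estimate to get $C_2=C_3/C_1$; you observe instead that $\tilde{\eta}'$ keeps a single sign between the nearer endpoint and $u$, so $\int|\tilde{\eta}'|=|\tilde{\eta}(u)-\tilde{\eta}(u_0)|$, which short-circuits the intermediate $d(u,\partial\Pi_a)$ step and yields the system-only constant $C_2=\|f'\|_{L^\infty(\statespace)}$. Your $C_2$ is in fact stronger than the paper's (it is independent of $u_L,s_0,a$, which makes the later compactness argument in Corollary~\ref{uniformalarge} trivial); the one thing the paper's version of $C_1$ buys that yours does not is an explicit formula in terms of $c^*$ and $\operatorname{diam}(\Pi_a)$, which is convenient for tracking continuous dependence on $u_L,s_0,a$.
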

\begin{proof}
Firstly we show that there exists a constant $C_1$ (depending on $a$) such that:
\begin{align}\label{eta'bd}
|\tilde{\eta}'(\overline{u})| \geq C_1, \text{ for } \overline{u} \in \partial \Pi_a.
\end{align}
Indeed, let $u_1 \in \partial \Pi_a$, and let $u_2$ be the other point on the boundary. Then, by Lemma \ref{quadraticentropy} :
\[
c^*|u_1-u_2|^2 \leq \tilde{\eta}(u_1|u_2)=-\tilde{\eta}'(u_2)(u_1-u_2) \leq |\tilde{\eta}'(u_2)||u_1-u_2|,
\]
where the equality follows because $\tilde{\eta}(u_1)=\tilde{\eta}(u_2)=0$. As this argument is symmetric in $u_1, u_2$, taking $C_1=c^*diam(\Pi_a)$ gives \eqref{eta'bd}. Note that here, as $\tilde{\eta}$ depends continuously on $u_L, s_0$, and $a$, the constant $c^*$ will depend continuously on those quantities as well (for a precise statement of the asymptotics of $c^*$, see Appendix A in \cite{MR2807139}). Now, let $u \in \statespace \setminus \Pi_a$. Without loss of generality, assume $u > u_2 > u_1$. Then, define $\phi(t)=\tilde{\eta}(u_2+t(u-u_2))$, for $t \in [0,1]$. Then:
\[
\phi'(0)=(u-u_2)\tilde{\eta}'(u_2) \geq C_1d(u,\partial \Pi_a),
\]
by \eqref{eta'bd}. Further, $\phi$ is convex, so for all $0 \leq t \leq 1$:
\[
\phi(t) \geq \phi(0)+t\phi'(0) \geq \tilde{\eta}(u_2)+tC_1d(u,\partial \Pi_a).
\]
At $t=1$, we see:
\[
\tilde{\eta}(u)=\phi(1) \geq C_1d(u,\partial \Pi_a).
\]
Finally, let $u \in \statespace \setminus \Pi_a$, with $\tilde{q}(u) \leq 0$. Again, without loss of generality, let $u > u_2 > u_1$. By Proposition \ref{Dcontlarge}, 
$\tilde{q}(u_2) \geq 0$. So, we have:
\begin{align*}
|\tilde{q}(u)|=-\tilde{q}(u) &\leq \tilde{q}(u_2)-\tilde{q}(u)=-\int_0^1\tilde{q}'((1-t)u_2+tu)(u-u_2)dt \\
&=-\int_0^1\tilde{\eta}'((1-t)u_2+tu)f'((1-t)u_2+tu)(u-u_2)dt \\
&\leq C_3d(u, \partial \Pi_a),
\end{align*}
where $C_3=\sup_{u \in \statespace} |\tilde{\eta}'(u)f'(u)|$ depends continuously on $u_L, s_0$, and $a$ as well. Taking $C_2=\frac{C_3}{C_1}$ gives the result. 
\end{proof}
\underline{\textbf{Step 2: Construction of the shift.}} Consider the set $\Pi_a^c$, which is an open set. Therefore, the indicator function of this set is lower semi-continuous on $\statespace$, and $-\ind_{\Pi_a^c}$ is upper semi-continuous. Now, we define the velocity:
\begin{equation}\label{velocitylarge}
V(u):=\lambda(u)-(C_2+2L)\ind_{\Pi_a^c}(u),
\end{equation}
where $L=\sup_{u \in \statespace}\lambda(u)$, and $C_2$ is the constant from Lemma \ref{qcontrol}. This function is still upper semi-continuous in $u$ on $\statespace$. We solve the following ODE with discontinuous right-hand side,
\begin{equation}\label{shift}
\begin{cases}
\dot{h}(t)=V(u(h(t),t)), \\
h(\overline{t})=x_0.
\end{cases}
\end{equation}
The existence of such a shift satisfying \eqref{shift} in the sense of Filipov is given by the following lemma. A proof is given in \cite{MR2807139} (partially in the appendix). In the following, recall that $\eta$ is the fixed $C^3$ strictly convex entropy, while $\eta_k$ will be some Kru\v{z}kov entropy (we do not specify in the lemma as the Kru\v{z}kov entropy used will be different in the large and small regimes). 
\begin{lemma}\label{filipov}
Let $V:\statespace \to \R$ be bounded and upper semi-continuous on $\statespace$ and continuous on $U$ an open, full-measure subset of $\statespace$. Let $u$ be a $\eta, \eta_k$-entropic weak solution to \eqref{cl} satisfying Definition \ref{strongtrace}, let $x_0 \in \R$, and let $\overline{t}\in[0,\infty)$. Then, there exists a Lipschitz function $h:[\overline{t}, \infty) \to \R$ such that:
\begin{align*}
V_{min}(t) &\leq \dot{h}(t) \leq V_{max}(t), \\
h(\overline{t})&=x_0, \\
Lip[h] &\leq ||V||_{\infty},
\end{align*}
for almost every $t$, where $u_\pm=u(h(t)\pm,t)$ and:
\begin{align*}
V_{max}(t)=\max(V(u_+),V(u_-)) \indent \text{and} \indent V_{min}(t)=\begin{cases}
\min(V(u_+),V(u_-)) & u_+,u_- \in U \\
-||V||_\infty & \text{otherwise}
\end{cases}.
\end{align*}
Furthermore, for almost every $t$:
\begin{align*}
    f(u_+)-f(u_-)&=\dot{h}(u_+-u_-), \\
    q(u_+)-q(u_-)&\leq \dot{h}(\eta(u_+)-\eta(u_-)), \\
    q_{k}(u_+)-q_{k}(u_-)&\leq \dot{h}(\eta_{k}(u_+)-\eta_{k}(u_-)),
\end{align*}
i.e. for almost every $t$, either $(u_-, u_+, \dot{h})$ is a $\eta,\eta_{k}$-entropic shock or $u_+=u_-$. 
\end{lemma}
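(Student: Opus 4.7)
The plan is to build $h$ by a time-discretization scheme, extract a Lipschitz limit via Arzel\`a--Ascoli, and then deduce the a.e.\ velocity bounds and the Rankine--Hugoniot/entropy jump relations from the strong trace property combined with the weak formulation of \eqref{cl} and \eqref{entropic}. Concretely, fix $n \geq 1$, set $t_k^n = \bar t + k 2^{-n}$, and inductively build a piecewise linear curve $h_n$ with $h_n(\bar t) = x_0$. On each step $[t_k^n, t_{k+1}^n]$, for every candidate slope $c \in [-\|V\|_\infty, \|V\|_\infty]$ the strong trace property furnishes left/right traces of $u$ along the straight segment $s \mapsto h_n(t_k^n) + c(s - t_k^n)$. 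I would select $c_k^n$ lying in $[V_{\min,k}^n, V_{\max,k}^n]$ (averaged over the time step) by a Kakutani-type fixed point argument, using that $V$ is upper semi-continuous on $\statespace$ and continuous on the full-measure open set $U$. The uniform bound $|\dot h_n| \leq \|V\|_\infty$ is immediate by construction.

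Arzel\`a--Ascoli on compacts then extracts a subsequential uniform limit $h_n \to h$, Lipschitz with constant at most $\|V\|_\infty$ and $h(\bar t) = x_0$. To pass to the limit in the velocity constraint, one uses that the strong trace property is stable under uniform convergence of Lipschitz curves (so the traces along $h_n$ converge in $L^1_{\mathrm{loc}}$ to the traces along $h$), combined with the upper semi-continuity of $V$ to yield $V_{\min}(t) \leq \dot h(t) \leq V_{\max}(t)$ for a.e.\ $t$. The asymmetric definition of $V_{\min}$ at points where $u_\pm \notin U$ arises because at such times only the upper bound from the USC envelope survives the limiting procedure.

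For the Rankine--Hugoniot and entropy jump relations along $h$, I would test the weak formulations of \eqref{cl} and of the entropy inequalities for $\eta$, $\eta_k$ against $\psi_\varepsilon(t,x) = \varphi(t)\,\chi_\varepsilon(x - h(t))$, where $\varphi \in C^\infty_c((\bar t,\infty))$ is arbitrary and $\chi_\varepsilon$ is a smooth approximation to the indicator of $(-\infty,0]$. The derivative in $t$ picks up $\dot h$, and sending $\varepsilon \to 0$ is justified by the strong trace property. This yields, for all nonnegative $\varphi$,
\begin{align*}
\int \varphi(t)\bigl[f(u_+(t)) - f(u_-(t)) - \dot h(t)(u_+(t) - u_-(t))\bigr]\,dt &= 0, \\
\int \varphi(t)\bigl[q(u_+(t)) - q(u_-(t)) - \dot h(t)(\eta(u_+(t)) - \eta(u_-(t)))\bigr]\,dt &\leq 0,
\end{align*}
and the analogue with $(\eta_k, q_k)$. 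Arbitrariness of $\varphi$ upgrades these to the pointwise a.e.\ relations stated in the lemma; in particular, whenever $u_+ \neq u_-$, $(u_-, u_+, \dot h(t))$ is an $\eta, \eta_k$-entropic shock.

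The main obstacle is making the fixed-point selection of step one compatible with the limit in step two: the traces along $h_n$ depend on $h_n$ itself, and at steps where $h$ traverses a shock one must avoid the slope $c_k^n$ ``jumping off'' the shock in the limit. Upper semi-continuity of $V$, together with the measure-zero structure of the shock set guaranteed by the strong trace property, ensures the Kakutani-type argument closes and that the limit $\dot h$ lies in the correct Filipov cone for a.e.\ $t$.
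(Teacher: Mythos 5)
The paper does not prove this lemma itself; it cites \cite{MR2807139} for a proof. Your blueprint (discretize in time, extract a Lipschitz limit by Arzel\`a--Ascoli, then test the weak formulations of \eqref{cl} and \eqref{entropic} against $\varphi(t)\chi_\eps(x-h(t))$ to recover the Rankine--Hugoniot and entropy-jump relations along $h$) is in the right family as the argument in \cite{MR2807139}, and your last step is sound and standard. However, the first two steps as written contain claims that are genuinely false, not merely unpolished.

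The critical problem is the assertion that ``the strong trace property is stable under uniform convergence of Lipschitz curves.'' It is not. Definition~\ref{strongtrace} furnishes traces along \emph{each} Lipschitz curve separately but says nothing about their behavior as the curve varies. A simple counterexample: if $u$ has a stationary shock at $x=0$, and $h_n \equiv -1/n \to h \equiv 0$ uniformly, then $u(h_n(t)\pm,t)$ both equal $u(0-,t)$ for every $n$, yet $u(h(t)+,t)=u(0+,t)\neq u(0-,t)$. Thus the traces along $h_n$ do \emph{not} converge to those along $h$, and your passage to the limit in the Filippov cone condition has no justification. Related to this, the remark about the ``measure-zero structure of the shock set guaranteed by the strong trace property'' is also false --- the whole point of the Filippov construction here is that $h$ may track a shock of $u$ over a set of times of \emph{positive} measure (this is exactly the generic situation in Cases~2--4 of Step~3 in Section~\ref{shiftconstruction}), so one cannot argue by discarding the jump set. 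Finally, the per-step Kakutani selection is not well-posed as stated: the map $c \mapsto (u^c_-,u^c_+)$ sending a candidate slope to the traces of $u$ along the corresponding segment is not known to be upper semi-continuous (or even measurable) in $c$, for exactly the same trace-instability reason. The actual construction in \cite{MR2807139} avoids all of this by regularizing $u$ in space (mollification) so that the approximate ODE has a genuinely continuous right-hand side, and then passes to the limit using the strong trace property in an averaged, Lebesgue-point sense rather than assuming pointwise convergence of traces; you would need to replace your Steps 1--2 with something along those lines.
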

\underline{\textbf{Step 3: Proof of \eqref{sufficientlarge}.}} Denote $u_{\pm}:=u(h(t)\pm,t)$. We will prove \eqref{sufficientlarge} for almost every fixed time $t$. We split into the following four cases (which allow for $u_+=u_-$):
\begin{align*}
\text{Case 1. } & u_-, u_+ \in \Pi_a^c, \\ 
\text{Case 2. } & u_- \in \Pi_a^c, u_+ \in \Pi_a, \\
\text{Case 3. } & u_- \in \Pi_a, u_+ \in \Pi_a^c, \\
\text{Case 4. } & u_-,u_+ \in \Pi_a.
\end{align*}
As we only need to prove \eqref{sufficientlarge} for almost every $t$, by Lemma \ref{filipov}, we may consider only times for which either $(u_-, u_+, \dot{h}(t))$ is a $\eta,\eta_{\phitan(u_L)}$-entropic shock or $u_+=u_-$. 
\newline \underline{Case 1:} In this case, by \eqref{velocitylarge}, we see $\dot{h}(t) < \inf_{u \in \statespace}\lambda(u)$. If $u_- \neq u_+$, then this is a contradiction to property \ref{enum:4}. So necessarily $u_-=u_+$. Denote $u=u_-=u_+$. Then, as $u \not \in \Pi_a$, $\tilde{\eta}(u) > 0$. So, using $a=\frac{a_2}{a_1}$ and $\dot{h}(t) \leq C_2$:
\begin{align*}
\dot{h(t)}(a_1\eta(u_-|u_L)-a_2\eta(u_+|u_R))-a_1q(u_-;u_L)+a_2q(u_+;u_R)&=a_2(-\tilde{q}(u)+\dot{h}(t)\tilde{\eta}(v)) \\
&\leq a_2(-\tilde{q}(u)-C_2\tilde{\eta}(u)).
\end{align*}
If $\tilde{q}(u) \geq 0$, then the right-hand side is non-positive. If $\tilde{q}(u) \leq 0$, Lemma \ref{qcontrol} grants that the right-hand side is non-positive. 
\newline \underline{Case 2:} In this case, we must have $u_- \neq u_+$. Further:
\[
\dot{h}(t) \in [\lambda(u_-)-(C_2+2L), \lambda(u_+)].
\]
If $u_- > u_+$, then this is a contradiction to property \ref{enum:4}. So, we must have $u_- < u_+$. In this case, if $u_->\phidiss(u_L)+\eps$, then $(u_-, u_+, \dot{h}(t))$ cannot be $\eta$-entropic by \eqref{midshockruleout} and property \ref{enum:3}. So, by the definition of $\weakone$, $u_- < \phidiss(u_L)-\eps$. This implies:
\[
\dot{h(t)}(a_1\eta(u_-|u_L)-a_2\eta(u_+|u_R))-a_1q(u_-;u_L)+a_2q(u_+;u_R)=a_2D_{RH}(u_-,u_+,\sigma_\pm) \leq 0
\]
by Proposition \ref{DRHlarge2}.
\newline \underline{Case 3:} In this case necessarily $u_- \neq u_+$. Proposition \ref{DRHlarge1} gives \eqref{sufficientlarge} immediately. 
\newline \underline{Case 4:} If $u_- \neq u_+$, then Proposition \ref{DRHlarge1} gives \eqref{sufficientlarge}. If $u_-=u_+$, we use Proposition \ref{Dcontlarge} instead. This completes the proof of Theorem \ref{dissipationfnllargeshock}.
\subsubsection{Proof of Corollary \ref{thmlargeshock}}
The idea behind the proof of the $a$-contraction property is to integrate \eqref{relativeentropic} with $b=u_L$ on $x < h(t)$, and $b=u_R$ on $x > h(t)$. The strong trace property gives:
\begin{equation}\label{basiccomputation}
\dv{}{t}E_t(u) \leq \dot{h}(t)(a_1\eta(u_-(t)|u_L)-a_2\eta(u_+(t)|u_R))-a_1q(u_-(t);u_L)+a_2q(u_+(t);u_R).
\end{equation}
It suffices to show that this is non-positive for almost every $t \geq 0$, but this is precisely Theorem \ref{dissipationfnllargeshock}, giving the result. We take $\overline{t}=0$ and $x_0=0$, and use Lemma \ref{quadraticentropy} to obtain the $L^2$-stability. 

\subsubsection{Proof of Corollary \ref{uniformalarge}}
Fix a closed interval $[\underline{b}, \overline{b}]$ with $0 < \underline{b} < \overline{b}$. We begin with a lemma that shows for $\eps$ sufficiently small, all the classes $\weakone$ with $u_L \in [\underline{b}, \overline{b}]$ are contained in $\weaktwo$. 
\begin{lemma}
There exists $\eps_0$ sufficiently small such that:
\[
\weaktwo \subset \bigcap_{u_L \in [\underline{b}, \overline{b}]}\mathcal{S}_{\text{weak},u_L,\eps_0}.
\]
\end{lemma}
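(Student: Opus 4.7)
The plan is to show that every $u \in \weaktwo$ satisfies the pointwise bound $u \geq \phitan(\underline{b})$ a.e.\ on $\R^+ \times \R$, from which both the missing $\eta_{\phitan(u_L)}$-entropic property and the avoidance of the bad product neighborhood (for each $u_L \in [\underline{b}, \overline{b}]$) follow once $\eps_0$ is chosen small. To establish this lower bound, I would exploit that $u$ is simultaneously a weak solution, so $u_t + (f(u))_x = 0$ distributionally, and $\eta_{\phitan(\underline{b})}$-entropic. Adding and subtracting these distributional identities produces the semi-Kru\v{z}kov inequality
\[
\partial_t(\phitan(\underline{b}) - u)_+ + \partial_x\!\bigl[(f(\phitan(\underline{b})) - f(u))\,\ind_{u < \phitan(\underline{b})}\bigr] \leq 0,
\]
and testing against a nonnegative compactly supported cutoff together with finite speed of propagation (from the $L^\infty$ bound on $f'$ over $\statespace$), in the classical Kru\v{z}kov fashion, the hypothesis $u_0 \geq \phitan(\underline{b})$ propagates to $u \geq \phitan(\underline{b})$ a.e.

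Next I would transfer the bound to the traces and verify the extra entropy condition. For any Lipschitz curve $h$, the map $(t,y) \mapsto (t, h(t)+y)$ has Jacobian determinant $1$ and is measure-preserving, so by Fubini there exists a sequence $y_k \downarrow 0$ with $u(\cdot, h(\cdot) + y_k) \geq \phitan(\underline{b})$ a.e.\ in $t$. Passing to a subsequence along which the integrand in Definition \ref{strongtrace} vanishes pointwise in $t$ yields $u_+(t) \geq \phitan(\underline{b})$ a.e., and symmetrically $u_-(t) \geq \phitan(\underline{b})$ a.e. Since $\phitan$ is monotone decreasing, $u \geq \phitan(\underline{b}) \geq \phitan(u_L)$ a.e.\ for every $u_L \in [\underline{b}, \overline{b}]$, so $\eta_{\phitan(u_L)}(u) = u - \phitan(u_L)$ and $q_{\phitan(u_L)}(u) = f(u) - f(\phitan(u_L))$ a.e., and the $\eta_{\phitan(u_L)}$-entropy inequality reduces to the weak solution identity $u_t + (f(u))_x = 0$, hence is automatic.

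Finally I would select $\eps_0$. For each $u_L > 0$, Lemma \ref{entropydissipationlemma} together with the strict dissipation $E_\eta(u_L, \phitan(u_L)) < 0$ (which holds for genuinely concave-convex $f$ and strictly convex $\eta$) forces the second zero $\phidiss(u_L)$ of $E_\eta(u_L, \cdot)$ to lie strictly to the left of $\phitan(u_L)$. Using monotonicity of $\phidiss$, for every $u_L \in [\underline{b}, \overline{b}]$ we obtain $\phidiss(u_L) \leq \phidiss(\underline{b}) < \phitan(\underline{b}) \leq u_-(t)$ a.e., so $u_-(t) - \phidiss(u_L) \geq \phitan(\underline{b}) - \phidiss(\underline{b}) =: \delta > 0$. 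Taking $\eps_0 := \delta/2$ gives $u_-(t) \notin B(\phidiss(u_L), \eps_0)$ a.e., which excludes the bad product set and completes the verification that $u \in \mathcal{S}_{\text{weak}, u_L, \eps_0}$. The principal obstacle is the first step: the semi-Kru\v{z}kov splitting and Kru\v{z}kov cutoff argument are standard once both the weak form and the single $\eta_{\phitan(\underline{b})}$-inequality are available, yet they are essential for upgrading one Kru\v{z}kov condition into a pointwise bound, which then drives the rest of the proof.
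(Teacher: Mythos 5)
Your argument is correct and matches the paper's proof in all essentials: both rest on the maximum principle $u \geq \phitan(\underline{b})$ deduced from the $\eta_{\phitan(\underline{b})}$-entropy condition and the initial-data constraint (the paper simply cites Theorem~6.2.3 in Dafermos' book where you re-derive the semi-Kru\v{z}kov inequality and cutoff argument), on the observation that $\eta_{\phitan(u_L)}$ becomes affine above $\phitan(u_L)$, and on $\phidiss(u_L) < \phitan(u_L)$ with monotonicity to pick $\eps_0$. Your extra step transferring the pointwise bound to the traces $u_\pm(t)$ is a reasonable elaboration of a step the paper leaves implicit, and your choice $\eps_0 = (\phitan(\underline{b})-\phidiss(\underline{b}))/2$ is an equivalent variant of the paper's $\eps_0 < \min_{u_L}(\phitan(u_L)-\phidiss(u_L))$.
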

\begin{proof}
Let $u \in \weaktwo$, $u_L \in [\underline{b}, \overline{b}]$. It suffices to show that $u$ is $\eta_{\phitan(u_L)}$-entropic and $u(t,x) \not \in B(\phidiss(u_L), \eps_0)$ for $\eps_0$ sufficiently small. Firstly, as $u$ is $\eta_{\phitan(\underline{b})}-$entropic, and $u_0 \geq \phitan(\underline{b})$, $u$ obeys the maximum principle, i.e. $u(t,x) \geq \phitan(\underline{b})$ for all $(t,x) \in \R^+ \times \R$ (for a proof of this, see the proof of Theorem 6.2.3 in \cite{MR3468916}). As $\phitan$ is monotone decreasing, $\phitan(\underline{b}) \geq \phitan(u_L)$ for all $u_L \in [\underline{b}, \overline{b}]$. So, $u$ is clearly $\eta_{\phitan(u_L)}$-entropic, as it is a weak solution to \eqref{cl} and the Kru\v{z}kov entropies/entropy fluxes $\eta_{\phitan(u_L)}$ reduce to:
\begin{align*}
\eta_{\phitan(u_L)}(u(t,x))&=|u(t,x)-\phitan(u_L)|=u(t,x)-\phitan(u_L), \\
q_{\phitan(u_L)}(u(t,x))&=sgn(u(t,x)-\phitan(u_L))(f(u(t,x))-f(\phitan(u_L))) \\
&=(f(u(t,x))-f(\phitan(u_L))).
\end{align*}
Next, for any fixed $u_L$, $\phidiss(u_L) < \phitan(u_L)$, so by taking $\eps_0 < \min_{u_L \in [\underline{b},\overline{b}]}\phitan(u_L)-\phidiss(u_L)$, we obtain:
\[
u(t,x) \geq \phitan(\underline{b}) \geq \phitan(u_L) > \phidiss(u_L)+\eps_0.
\]
In other words, $u(t,x) \not \in B(\phidiss(u_L), \eps_0)$.
\end{proof}
Finally, note that in the proof of Theorem \ref{dissipationfnllargeshock}, we have shown that $a^*$ depends continuously on $\eps, s_0, d(u_L,0)$, and $d(u_R,\phitan(u_L))$ (in the case that the last quantity is positive, which it will always be if $u_R \in [\underline{b}, \overline{b}]$ as well). So, using the fact that $\phitan$ is monotone decreasing, taking the minimum of $a^*$ over the compact set:
\[
(\eps_0, s_0, d(u_L,0), d(u_R,\phitan(u_L))) \in \{\eps_0\} \times [\tilde{s_0}, \overline{b}-\underline{b}] \times [\underline{b}, \overline{b}], \times [d(\underline{b},\phitan(\underline{b})), d(\overline{b}, \phitan(\overline{b}))],
\]
we see that $a^*$ may be taken uniformly over all such shocks. Finally, by Lemma \ref{filipov}, for any shock $(u_L, u_R)$, we have the following bound on the shift:
\[
-C_2-3|L| \leq |\dot{h}(t)| \leq C_2+3|L|,
\]
where $C_2$ is the constant from Lemma \ref{qcontrol}. Noting that $C_2$ depends continuously on $u_L, s_0$, and $a$, taking $C^*$ to be the maximum of $C_2$ over the compact set:
\[
(a^*, s_0, u_L) \in \{a^*\} \times [\tilde{s_0} \times \overline{b}-\underline{b}] \times [\underline{b}, \overline{b}],
\]
and setting $\maximalspeedlarge=C^*+3|L|$, we obtain the result.

\subsection{Proof of Propositions \ref{Dcontlarge} and \ref{DRHlarge1}}
In this section, we prove Propositions \ref{Dcontlarge} and \ref{DRHlarge1}. The proof of Proposition \ref{Dcontlarge} is a special case of a specific part of the proof of Proposition \ref{DRHlarge1}. The first lemma needed gives an explicit formula for the entropy lost at a shock $(u_-,u_+)$ with $u_+=S_{u_-}(s)$. The formula originated in work of Lax \cite{MR93653} and has been used frequently in past works on the $a$-contraction theory \cite{MR3537479}, \cite{MR3519973}. For a proof, see \cite{MR3537479}.
\begin{lemma}\label{entropyformula}
Let $(u_-,u_+)$ be a $\eta$-entropic Rankine-Hugoniot discontinuity with velocity $\sigma_\pm$. Then, for any $v \in \statespace$:
\[
q(u_+;v)-\sigma_\pm \eta(u_+|v) \leq q(u_-;v)-\sigma_\pm \eta(u_-|v).
\]
Further, if there exists $s \geq 0$ such that $u_+=S_{u_-}(s)$, and $\sigma_\pm=\sigma_{u_-}(s)$, then:
\[
q(u_+;v)-\sigma_\pm \eta(u_+|v)=q(u_-;v)-\sigma_\pm \eta(u_-|v)+\int_0^s\sigma'_{u_-}(\tau)\eta(u_-|S_{u_-}(\tau)) \diff \tau.
\]
\end{lemma}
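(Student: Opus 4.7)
For the first inequality, I would proceed by direct computation. Expanding
\[
\bigl[q(u_+;v) - \sigma_\pm \eta(u_+|v)\bigr] - \bigl[q(u_-;v) - \sigma_\pm \eta(u_-|v)\bigr]
\]
via the definitions of the relative entropy and its flux, together with the scalar compatibility identity $q'(u) = \eta'(u) f'(u)$, all $v$-dependent terms collapse to a factor of $(f(u_+) - f(u_-)) - \sigma_\pm(u_+-u_-)$, which vanishes by Rankine-Hugoniot. What remains is precisely the classical entropy dissipation $E_\eta(u_-,u_+)$ from \eqref{entropydissipation}, so $\eta$-entropicity of the shock yields $\leq 0$ at once. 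Note that the equality in the second part cannot by itself recover this bound, since for an $\eta$-entropic Ole\u{\i}nik shock on a non-genuinely-nonlinear flux the sign of $\sigma'_{u_-}(\tau)$ is not definite throughout $[0,s]$.

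For the equality, I plan to introduce the auxiliary scalar function
\[
g(\tau) := q(S_{u_-}(\tau);v) - \sigma_{u_-}(\tau)\, \eta(S_{u_-}(\tau)|v), \qquad \tau \in [0,s],
\]
so that $g(s) = q(u_+;v) - \sigma_\pm \eta(u_+|v)$ and, by property \ref{enum:2} (which gives $\sigma_{u_-}(0) = \lambda(u_-)$), one has $g(0) = q(u_-;v) - \lambda(u_-)\eta(u_-|v)$. Writing $w = S_{u_-}(\tau)$ and differentiating with the chain rule, using $q'(w) = \eta'(w)f'(w)$, I expect
\[
g'(\tau) = (f'(w) - \sigma_{u_-}(\tau))\, w'(\tau)\, \bigl[\eta'(w) - \eta'(v)\bigr] - \sigma'_{u_-}(\tau)\, \eta(w|v).
\]
The crucial simplification comes from differentiating the shock curve's Rankine-Hugoniot relation $f(S_{u_-}(\tau)) - f(u_-) = \sigma_{u_-}(\tau)(S_{u_-}(\tau) - u_-)$ in $\tau$, which gives $(f'(w) - \sigma_{u_-}(\tau))w'(\tau) = \sigma'_{u_-}(\tau)(w-u_-)$. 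After substitution, a direct expansion of the relative-entropy definition establishes the algebraic identity
\[
(w - u_-)[\eta'(w) - \eta'(v)] - \eta(w|v) = \eta(u_-|w) - \eta(u_-|v),
\]
leading to $g'(\tau) = \sigma'_{u_-}(\tau)\bigl[\eta(u_-|S_{u_-}(\tau)) - \eta(u_-|v)\bigr]$.

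Integrating from $0$ to $s$ and using $\int_0^s \sigma'_{u_-}(\tau)\,\diff \tau = \sigma_\pm - \lambda(u_-)$, the $-\eta(u_-|v)$ contribution in the integrand is exactly the correction needed to convert the $\lambda(u_-)$-coefficient in $g(0)$ into the $\sigma_\pm$-coefficient appearing in the statement, producing the stated identity. The main obstacle is essentially bookkeeping: verifying the algebraic identity for $(w-u_-)[\eta'(w)-\eta'(v)] - \eta(w|v)$ cleanly, and tracking how $\sigma'_{u_-}$ enters via differentiation of the shock curve relation. Once these two ingredients are in hand, the rest of the argument is a straightforward integration along the shock curve.
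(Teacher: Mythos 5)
Your proof is correct. The paper does not prove this lemma itself---it cites \cite{MR3537479}---so your argument is a valid self-contained derivation, and it follows the standard route in the $a$-contraction literature: a direct expansion using the Rankine--Hugoniot relation to remove the $v$-dependent terms and expose $E_\eta(u_-,u_+)$ for the inequality, and differentiation of $g(\tau) := q(S_{u_-}(\tau);v) - \sigma_{u_-}(\tau)\eta(S_{u_-}(\tau)|v)$ along the shock curve for the equality. Both of the two ``bookkeeping'' steps you flag check out: the shock-curve identity $(f'(w) - \sigma_{u_-}(\tau))w'(\tau) = \sigma'_{u_-}(\tau)(w - u_-)$ follows by differentiating $f(w) - f(u_-) = \sigma_{u_-}(\tau)(w - u_-)$, and the algebraic identity $(w - u_-)[\eta'(w) - \eta'(v)] - \eta(w|v) = \eta(u_-|w) - \eta(u_-|v)$ holds because expanding both sides shows all $\eta'(v)$-terms cancel. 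Integrating and using $\int_0^s \sigma'_{u_-} = \sigma_\pm - \lambda(u_-)$ with $g(0) = q(u_-;v) - \lambda(u_-)\eta(u_-|v)$ then gives exactly the stated identity.

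One small inaccuracy in your side remark: you assert that the equality cannot recover the inequality ``since for an $\eta$-entropic Ole\u{\i}nik shock \ldots\ the sign of $\sigma'_{u_-}(\tau)$ is not definite throughout $[0,s]$.'' In fact, for an Ole\u{\i}nik shock one has $s \leq s^{u_-}_{\phitan}$, and property \ref{enum:3} gives $\sigma'_{u_-}(\tau) \leq 0$ on that range, so the inequality \emph{does} follow from the integral formula in that case. The sign-indefiniteness---and hence the real need for the separate direct argument in Part 1---arises for shocks that are $\eta$-entropic for the single entropy $\eta$ but \emph{not} Ole\u{\i}nik, i.e.\ those with $s^{u_-}_{\phitan} < s \leq s^{u_-}_{\phidiss}$, which the hypothesis of the lemma permits. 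This mislabeling does not affect the validity of your actual proof.
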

The next lemma is a corollary of this which is also classical for the $a$-contraction theory (it first appeared in \cite{MR2807139}. See also \cite{MR3537479}, \cite{MR3519973}). We follow the proof in the appendix of \cite{MR3519973}, although we give it here to emphasize that $a$ and $\kappa$ depend continuously on $s_0$.
\begin{lemma}\label{entropyformula2}
For any $u \in \statespace$ and any $s \geq 0, s' > 0$, we have:
\begin{equation}\label{entropyformulaequation}
q(S_u(s);S_u(s'))-\sigma_u(s)\eta(S_u(s)|S_s(s'))=\int_{s'}^s\sigma_u'(\tau)(\eta(u|S_u(\tau))-\eta(u|S_u(s')))\diff \tau.
\end{equation}
In particular, for any $u \in \Pi_a$, for $a$ sufficiently small, if $s'=s^u_{u_R}$ and $s \leq \frac{s_0}{2}$, then:
\[
q(S_u(s);u_R)-\sigma_u(s)\eta(S_u(s)|u_R) \leq -\kappa |\sigma_{u}(s)-\sigma_u(s^u_{u_R})|,
\]
where $\kappa$ depends on the system and continuously on $s_0$, $d(u_L,0)$, and $a$.
\end{lemma}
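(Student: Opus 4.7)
My plan is to apply Lemma \ref{entropyformula} twice, in each case with reference state $v = S_u(s')$. The first application, to the shock $(u, S_u(s))$ with speed $\sigma_u(s)$, yields
\[
q(S_u(s); S_u(s')) - \sigma_u(s) \eta(S_u(s)|S_u(s')) = q(u; S_u(s')) - \sigma_u(s) \eta(u|S_u(s')) + \int_0^s \sigma'_u(\tau) \eta(u|S_u(\tau)) \diff\tau.
\]
The second, applied to $(u, S_u(s'))$ at speed $\sigma_u(s')$, has a vanishing left-hand side because $u_+ = v$, and so it rearranges to $q(u; S_u(s')) = \sigma_u(s') \eta(u|S_u(s')) - \int_0^{s'} \sigma'_u(\tau) \eta(u|S_u(\tau)) \diff\tau$. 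Substituting this into the first identity and writing $(\sigma_u(s) - \sigma_u(s')) \eta(u|S_u(s')) = \eta(u|S_u(s')) \int_{s'}^{s} \sigma'_u(\tau) \diff\tau$ collapses the three resulting integrals into $\int_{s'}^s \sigma'_u(\tau)(\eta(u|S_u(\tau)) - \eta(u|S_u(s'))) \diff\tau$, which is \eqref{entropyformulaequation}.

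\textbf{Setting up the estimate.} For the second assertion, I specialize \eqref{entropyformulaequation} to $s' = s^u_{u_R}$. By Lemma \ref{pistructurelarge}, for $a$ small every $u \in \Pi_a$ satisfies $|u - u_L| \leq C\sqrt{a}$, and in the scalar setting $s^u_{u_R} = |u - u_R| \geq s_0 - C\sqrt{a} > s_0/2 \geq s$. Reversing the orientation of the integral gives
\[
R(s) := q(S_u(s);u_R) - \sigma_u(s)\eta(S_u(s)|u_R) = -\int_s^{s^u_{u_R}} \sigma'_u(\tau) \bigl(\eta(u|S_u(s^u_{u_R})) - \eta(u|S_u(\tau))\bigr) \diff\tau.
\]
Property \ref{enum:6} makes the factor $\eta(u|S_u(s^u_{u_R})) - \eta(u|S_u(\tau))$ non-negative on $[s,s^u_{u_R}]$ and vanishing at $\tau = s^u_{u_R}$, and by the $C^1$-regularity in property \ref{enum:7} it admits a lower bound of the form $\kappa_0 (s^u_{u_R} - \tau)$ on this interval, with $\kappa_0$ continuous in the system, $s_0$, $d(u_L,0)$, and $a$.

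\textbf{Sign analysis (the main obstacle).} The difficulty is the varying sign of $\sigma'_u(\tau)$: by property \ref{enum:3}, $\sigma'_u < 0$ on $[0,s^u_{\phitan})$ and $\sigma'_u > 0$ on $(s^u_{\phitan},\infty)$. When $[s,s^u_{u_R}] \subset [0,s^u_{\phitan}]$ (the small-shock regime), both factors combine to make the integrand pointwise non-positive; using the lower bound on the $\eta$-difference together with $|\sigma_u(s) - \sigma_u(s^u_{u_R})| = \int_s^{s^u_{u_R}} |\sigma'_u(\tau)| \diff\tau$, and the fact that $s \leq s_0/2$ forces $s^u_{u_R} - \tau$ to be bounded below on a subinterval of definite length, immediately yields the claim. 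When $[s,s^u_{u_R}]$ straddles $s^u_{\phitan}$, I would split the integral at $\tau = s^u_{\phitan}$: on $[s,s^u_{\phitan}]$ the integrand is still favorably signed and delivers the dominant negative contribution (controlled from below by the $\eta$-gap at $\tau = s$, which is of order $s_0$ since $s \leq s_0/2$), while on $[s^u_{\phitan},s^u_{u_R}]$ the sign flip produces a positive term which must be shown to be strictly smaller. The control in this second piece comes from the fact that $\sigma'_u$ vanishes at $s^u_{\phitan}$ and from the Oleinik geometry of $(u_L, u_R)$, which constrains the cumulative variation of $\sigma_u$ on $[s^u_{\phitan}, s^u_{u_R}]$. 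Assembling the pieces delivers $R(s) \leq -\kappa |\sigma_u(s) - \sigma_u(s^u_{u_R})|$, with $\kappa$ depending continuously on $s_0$, $d(u_L,0)$, and $a$ as required.
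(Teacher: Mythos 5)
Your derivation of the identity \eqref{entropyformulaequation} is correct and follows the same route as the paper: apply Lemma \ref{entropyformula} twice with $v = S_u(s')$ and $u_- = u$, once with $u_+ = S_u(s)$ and once with $u_+ = S_u(s')$, then subtract. For the estimate, there is first a sign error: when you reverse the orientation of the integral \emph{and} simultaneously swap the two terms in the parentheses, the two minus signs cancel, so the correct formula is
\[
R(s) = \int_s^{s^u_{u_R}} \sigma_u'(\tau)\bigl(\eta(u|u_R) - \eta(u|S_u(\tau))\bigr)\diff\tau,
\]
without a leading minus; as written, your formula evaluates to $-R(s)$. Your subsequent reasoning (integrand pointwise non-positive when $\sigma_u' < 0$) is consistent with the corrected sign, so this reads as a typo, but it should be fixed.

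The deeper issue is the step you call ``immediately yields the claim.'' Bounding the $\eta$-difference below by a positive constant $\kappa_1$ on the subinterval $[s, s_0/2]$ gives only $-R(s) \geq \kappa_1 |\sigma_u(s) - \sigma_u(s_0/2)|$, and this tends to zero as $s \to s_0/2$ while the target $|\sigma_u(s) - \sigma_u(s^u_{u_R})|$ stays bounded away from zero. So a direct comparison cannot close. The paper handles this with an absorption argument: after the triangle inequality introduces the error term $\kappa_1|\sigma_u(s_0/2) - \sigma_u(s^u_{u_R})|$, that error is dominated by the strictly negative, $s$-independent contribution of the integral over $[s_0/2, s^u_{u_R}]$, which is bounded away from zero uniformly on $\Pi_a$; choosing $\kappa$ small enough so that $\kappa|\sigma_u(s_0/2) - \sigma_u(s^u_{u_R})|$ is below that quantity finishes the proof. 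This absorption step is what is missing from your write-up. Your observation that $[s,s^u_{u_R}]$ may straddle $s^u_{\phitan}$ is a legitimate concern the paper is terse about, but ``the Oleinik geometry constrains the cumulative variation of $\sigma_u$'' is too vague to serve as the missing quantitative control.
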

\begin{proof}
Use the estimate of Lemma \ref{entropyformula} twice with $v=S_u(s')$ and $u_-=u$. The first time take $u_+=S_u(s)$, and the second time $u_+=S_u(s')$. The difference of the two resulting identities gives the first result. The second follows from $\eqref{entropyformulaequation}$. Indeed, let $a$ be small enough such that $S_u(\frac{s_0}{2}) > u_R$ for all $u \in \Pi_a$ (this is possible by Lemma \ref{pistructurelarge}), and let $s'=s^u_{u_R}$ and $s \leq \frac{s_0}{2}$. Then:
\begin{align*}
q(S_u(s);u_R)-\sigma_u(s)\eta(S_u(s)|u_R) &= \int_{{\frac{s_0}{2}}}^{s}\sigma_u'(\tau)(\eta(u|S_u(\tau))-\eta(u|S_u(s')))\diff \tau \\
&+ \int_{s^u_{u_R}}^{\frac{s_0}{2}}\sigma_u'(\tau)(\eta(u|S_u(\tau))-\eta(u|S_u(s')))\diff \tau\\
&=:I_1+I_2.
\end{align*}
Now, choose a positive constant $\kappa$ satisfying:
\[
\kappa \leq \min_{u\in \Pi_a}(\eta(u|u_R)-\eta(u|S_u(\frac{s_0}{2})).
\]
Then, using properties \ref{enum:3} and \ref{enum:6}, we see:
\begin{align*}
I_1 &\leq -\kappa|\sigma_u(\frac{s_0}{2})-\sigma_u(s)| \\
&\leq -\kappa|\sigma_u(s^u_{u_R})-\sigma_u(s)|+\kappa |\sigma_u(s^u_{u_R})-\sigma_u(\frac{s_0}{2})|.
\end{align*}
Now, $I_2 < 0$ for all $u \in \Pi_a$, so we choose $\kappa$ sufficiently small so that additionally:
\[
\kappa |\sigma_u(s^u_{u_R})-\sigma_u(\frac{s_0}{2})| \leq \min_{u\in \Pi_a}-I_2,
\]
which yields:
\[
q(S_u(s);u_R)-\sigma_u(s)\eta(S_u(s)|u_R) \leq -\kappa |\sigma_{u}(s)-\sigma_u(s^u_{u_R})|.
\]
\end{proof}
Now, we show a lemma that specifies why we desire for $\Pi_a$ to be localized around $u_L$. The proof follows that of Lemma 4.3 in \cite{MR3519973}, although we take particular care to note that the implicit constants depend continuously on the parameters $s_0, d(u_L,0)$.
\begin{lemma}\label{localestimate}
There exist $C_0, \beta > 0$ and $v \in (\sigma,\lambda(u_L))$, such that for any $u \in B(u_L,C_0)$:
\begin{align*}
    v &< \lambda(u), \\
    -q(u;u_L)+v\eta(u|u_L) &\leq -\beta \eta(u|u_L), \\
    q(u;u_R)-v\eta(u|u_R) &\leq -\beta \eta(u|u_R).
\end{align*}
Here, $C_0, \beta$, and $v$ depend only on the system, and continuously on $s_0$ and  $d(u_L,0)$.
\end{lemma}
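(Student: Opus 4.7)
The overall strategy is to pick $v$ strictly between the explicit quantity $v_0 := q(u_L;u_R)/\eta(u_L|u_R) \in [\sigma, \lambda(u_L))$ and $\lambda(u_L)$, verify the three inequalities at $u = u_L$ (with strict slack where possible), and extend them to a ball $B(u_L, C_0)$ via continuity (for the first and third) and a second-order Taylor expansion (for the second).

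\textbf{Key preliminary inequality.} The crux is the strict inequality $v_0 < \lambda(u_L)$. Introduce $g(s) := q(u_L; S_{u_L}(s)) - \lambda(u_L)\eta(u_L \mid S_{u_L}(s))$ on $[0, s^{u_L}_{u_R}]$. Using the identities $\partial_u q(u_L;u) = -\eta''(u)(f(u_L) - f(u))$ and $\partial_u \eta(u_L|u) = -\eta''(u)(u_L - u)$ together with $S'_{u_L}(s) = -1$ and $u_L - S_{u_L}(s) = s$ in the scalar case, a direct chain-rule computation yields
\[
g'(s) = -\eta''(u_L - s)\,s\,(\lambda(u_L) - \sigma_{u_L}(s)).
\]
By property \ref{enum:3}, $\sigma_{u_L}(0) = \lambda(u_L)$ and $\sigma'_{u_L} < 0$ on $(0, s^{u_L}_{\phitan})$, so $\lambda(u_L) - \sigma_{u_L}(s) > 0$ on the Oleinik range. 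Combined with $\eta'' > 0$ by strict convexity, this gives $g'(s) < 0$ on $(0, s^{u_L}_{u_R}]$. Since $g(0) = 0$, integration yields $g(s^{u_L}_{u_R}) < 0$, which rearranges to $v_0 < \lambda(u_L)$. Conversely, applying Lemma~\ref{entropyformula} with $u_- = u_L$, $u_+ = u_R$ and test state $v = u_R$ yields $q(u_L;u_R) - \sigma\eta(u_L|u_R) \geq 0$, i.e., $v_0 \geq \sigma$.

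\textbf{Construction and extension.} Set $v := \tfrac12(v_0 + \lambda(u_L)) \in (\sigma, \lambda(u_L))$ and fix $\beta \in (0, \min(v - v_0, \lambda(u_L) - v))$. At $u = u_L$: the first inequality holds immediately; the second holds with equality $0 = 0$; and the third reduces to $(v - \beta - v_0)\eta(u_L|u_R) \geq 0$, which is strict by choice of $\beta$. The first inequality extends to a neighborhood by continuity of $\lambda$, and the third because its left-hand side is strictly negative at $u_L$. For the second, define $F_2(u) := -q(u;u_L) + (v+\beta)\eta(u|u_L)$; one checks $F_2(u_L) = F_2'(u_L) = 0$ and $F_2''(u_L) = \eta''(u_L)(v + \beta - \lambda(u_L)) < 0$ by choice of $\beta$, so $u_L$ is a strict local maximum of $F_2$ at value $0$. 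A second-order Taylor comparison then gives $F_2(u) \leq -\beta'(u - u_L)^2$ in a neighborhood, which combined with Lemma~\ref{quadraticentropy} yields the desired bound (after relabeling $\beta$).

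\textbf{Main obstacle.} The main technical difficulty is tracking the continuous dependence of $v_0, v, \beta$, and $C_0$ on $s_0$ and $d(u_L, 0)$. The quantitative lower bound on $\lambda(u_L) - v_0$ is extracted from the integral representation of $g(s^{u_L}_{u_R})/\eta(u_L|u_R)$, whose integrand has uniform positive lower bounds on compact subsets of parameter space bounded away from $u_L = 0$. The degeneracy as $u_L \to 0$ enters exactly through $\sigma'_{u_L}(s) \approx -\tfrac12 f''(u_L - s)^2$, flagged in the remark at the end of Section~\ref{system properties}, and accounts precisely for the dependence on $d(u_L, 0)$. The radius $C_0$ is then determined by the second-derivative test on $F_2$ and the size of the strict negativity at $u_L$ in the third inequality, both controlled by the same continuous data.
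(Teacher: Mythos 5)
Your proof is correct and follows essentially the same route as the paper: establish the strict inequality $q(u_L;u_R) < \lambda(u_L)\eta(u_L|u_R)$ (you re-derive it via the monotone function $g(s)$, whereas the paper cites Lemma~\ref{entropyformula2}, but your $g'$ is precisely the integrand appearing there), choose $v$ strictly between the resulting ratio and $\lambda(u_L)$ with $\beta$ fitting in the gap, and extend the first and third inequalities by continuity and the second by a second-order Taylor expansion about $u_L$ where $F_2$ has a strict quadratic maximum. The only (cosmetic) differences are your parametrization of $v$ as the midpoint rather than $\lambda(u_L)-\beta_1$, and your making explicit that $v > \sigma$ via Lemma~\ref{entropyformula} (which the paper leaves implicit).
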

\begin{proof}
Use Lemma \ref{entropyformula2} with $S_{u_L}(s_0)=u_R, s'=s_0$, and $s=0$, obtaining:
\[
q(u_L;u_R)-\lambda(u_L)\eta(u_L|u_R) < 0,
\]
which we may write as:
\[
q(u_L;u_R)-\lambda(u_L)\eta(u_L|u_R)+3\beta_1\eta(u_L|u_R) < 0,
\]
for $\beta_1$ sufficiently small and depending continuously on $s_0$ and $d(u_L,0)$ (more precisely, $\beta_1 < \bigl(-\frac{\frac{1}{6}\int_{s_0}^0\sigma_{u_L}'(\tau)(\eta(u_L|S_{u_L}(\tau))-\eta(u_L|u_R))\diff \tau}{\eta(u_L|u_R)}\bigr)$). Letting $v=\lambda(u_L)-\beta_1$, we see:
\[
q(u_L;u_R)-v \eta(u_L|u_R)+2\beta_1\eta(u_L|u_R) < 0.
\]
Now, using the continuity of $q(\cdot;u_R)$, $\eta(\cdot|u_R)$, and $\lambda(\cdot)$ on $\statespace$, there exists $C_{0,1}$ small enough such that:
\begin{align*}
v &< \lambda(u), \\
q(u;u_R)-v \eta(u|u_R) &< -\beta_1\eta(u|u_R), 
\end{align*}
for $u \in B(u_L, C_{0,1})$. Note that $C_{0,1}$ may be taken to depend continuously on $s_0$ and $d(u_L,0)$ by the continuous dependence of $\beta_1$ on these same quantities and the continuity of the partial derivatives of $q(\cdot, \cdot), \lambda(\cdot)$, and $\eta(\cdot, \cdot)$ on $\statespace$.
\par Now, doing an expansion at $u=u_L$, we find:
\[
-q(u;u_L)+v \eta(u|u_L)=(v-\lambda(u_L))\eta''(u_L)(u-u_L)^2+\mathcal{O}(|u-u_L|^3).
\]
Hence:
\begin{align*}
-q(u;u_L)+v \eta(u|u_L)&=-\beta_1 \eta''(u_L)(u-u_L)^2+\mathcal{O}(|u-u_L|^3) \\
&\leq -\frac{C\beta_1}{2}\eta(u|u_L), \indent \text{for $u \in B(u_L, C_{0,2})$,}
\end{align*}
for $C$ depending on the system and $C_{0,2}$ sufficiently small depending on the system and continuously on $s_0$ and $d(u_L,0)$ (due to the continuity of $\eta'''$ on $\statespace$), where we have used Lemma \ref{quadraticentropy}. Taking $\beta=\min(\beta_1, \frac{C\beta_1}{2})$ and $C_0=\min(C_{0,1}, C_{0,2})$ gives the result. 
\end{proof}
The last lemma will be crucially used in the proof of Proposition \ref{DRHlarge1} to show that, in the case $\sigma_{u_-}(s) > v$, in fact $(u_-, u_+)$ is very small.
\begin{lemma}\label{smallshockDRHlarge1}
There exists $a^*$ sufficiently small such that the following holds. Let $a < a^*$, $(u_-,u_+)$ be a $\eta_{\phitan(u_L)}$-entropic shock with $u_- \in \Pi_a$, and $u_+=S_{u_-}(s)$ for some $s \geq 0$. Let $\sigma_{u_-}(s) > v$, where $v$ is defined in Lemma \ref{localestimate}.  Then, $s \leq \frac{s_0}{2}$. Here, $a^*$ depends only on the system and continuously on $s_0$ and $d(u_L,0)$. 
\end{lemma}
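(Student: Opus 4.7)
The plan is to use the localization of $\Pi_a$ near $u_L$ (Lemma \ref{pistructurelarge}), combined with the $\eta_{\phitan(u_L)}$-entropy condition, to force $u_+ = S_{u_-}(s) = u_- - s$ to lie close to $\phitan(u_L)$ or above. The chord-slope function $s \mapsto \sigma_{u_-}(s)$ is strictly decreasing on the Ole\u{\i}nik branch $[0, s^{u_-}_{\phitan}]$ and strictly increasing afterwards (property \ref{enum:3}), with minimum $f'(\phitan(u_-))$. Consequently, the set $\{s \geq 0 : \sigma_{u_-}(s) > v\}$ consists of an initial interval $[0, s_1(u_-))$ in the Ole\u{\i}nik regime plus a possibly non-empty second branch $(s_2(u_-), \infty)$ in the non-Ole\u{\i}nik regime. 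The second branch will be excluded via the entropy condition, while the first is bounded by $s_0/2$ through a sufficient choice of $v$.

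For preliminaries, since $(u_L, u_R)$ is Ole\u{\i}nik, property \ref{enum:3} gives $s_0 \leq s^{u_L}_{\phitan}$, so $\sigma = \sigma_{u_L}(s_0) \geq f'(\phitan(u_L))$. Together with $v = \lambda(u_L) - \beta_1 \in (\sigma, \lambda(u_L))$ from Lemma \ref{localestimate}, this gives $f'(\phitan(u_L)) \leq \sigma < v < \lambda(u_L)$, and by continuity the analogous inequalities hold with $u_L$ replaced by $u_-$ once $a^*$ is small, using $|u_- - u_L| \leq C\sqrt{a}$ from Lemma \ref{pistructurelarge}. Depending on whether $u_- \leq u_L$ or $u_- > u_L$, I then apply Lemma \ref{Kru\v{z}kov1} (with $k = \phitan(u_L) \leq \phitan(u_-)$) or Lemma \ref{Kru\v{z}kov2} together with Lemma \ref{companionlemma} (with $k = \phitan(u_L) \in (\phitan(u_-), u_-]$) to extract the lower bound $u_+ \geq \phitan(u_L) - C\sqrt{a}$.

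To rule out the second branch, I exploit the vanishing of $\partial_x \sigma(u_-, x)$ at $x = \phitan(u_-)$: locally $\sigma(u_-, x) - f'(\phitan(u_-)) \sim c(\phitan(u_-) - x)^2$ for some $c > 0$ depending only on the system and $d(u_L, 0)$. Hence $\sigma_{u_-}(s) > v$ on the second branch forces $\phitan(u_-) - u_+ \geq \delta_0$ for a fixed $\delta_0 > 0$ determined by $v - f'(\phitan(u_L)) > 0$; combined with $|\phitan(u_-) - \phitan(u_L)| \leq C\sqrt{a}$, this gives $u_+ \leq \phitan(u_L) - \delta_0/2$ for $a^*$ small, contradicting the entropy lower bound from the preceding paragraph.

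On the first branch, the strict monotonicity of $\sigma_{u_-}$ on $[0, s^{u_-}_{\phitan}]$ reduces the claim $s \leq s_0/2$ to showing $v > \sigma_{u_-}(s_0/2)$. Since the proof of Lemma \ref{localestimate} only requires $\beta_1$ to be sufficiently small, I further restrict $\beta_1 < \lambda(u_L) - \sigma_{u_L}(s_0/2)$, a strictly positive quantity depending continuously on the system, $s_0$, and $d(u_L, 0)$; joint continuity of $\sigma_u(s)$ then transfers $v > \sigma_{u_L}(s_0/2)$ to $v > \sigma_{u_-}(s_0/2)$ for $a^*$ small. The main difficulty is the second-branch step, where the Kru\v{z}kov entropy inequality must be converted into a strictly-below-$\phitan(u_L)$ bound on $u_+$ that is uniform over $u_- \in \Pi_a$; this is what dictates the smallness and continuous dependence of $a^*$ on the stated parameters.
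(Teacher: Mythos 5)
Your proof is correct and uses the same essential ingredients as the paper (Lemmas \ref{Kru\v{z}kov1}, \ref{Kru\v{z}kov2}, \ref{companionlemma}, and \ref{pistructurelarge}) in the same way; the only difference is presentational, as the paper argues by contradiction (assume $s > s_0/2$, then show $s > s^{u_-}_{\phitan}$ and derive a contradiction from a case split on $u_- \lessgtr u_L$) whereas you directly characterize $\{s : \sigma_{u_-}(s) > v\}$ as two branches and show the entropy condition excludes the non-Ole\u{\i}nik branch. Your quadratic approximation $\sigma(u_-,x) - f'(\phitan(u_-)) \sim c(\phitan(u_-)-x)^2$ is a bit more than needed for the second-branch contradiction — the paper instead bounds $\sigma_{u_-}(s^{u_-}_{\phitan(u_L)}) - \sigma_{u_-}(s^{u_-}_{\phitan})$ by a Lipschitz estimate $\leq C|u_--u_L| \leq C\sqrt{a}$, which is simpler and suffices — but both routes close the same gap.
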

\begin{proof}
Firstly, we take $a$ sufficiently small so that $S_{u_-}(\frac{s_0}{2}) > u_R$ for all $u_- \in \Pi_a$. Then, assume that $s > \frac{s_0}{2}$. Then, by property \ref{enum:3}, if $\frac{s_0}{2} < s \leq s^{u_-}_{\phitan}$, then there exists $v_0$ (depending continuously on $d(u_L,0)$ and $a$) such that $\sigma_{u_-}(s) < v_0< v$, uniformly in $u_-$, a contradiction (where we have taken $v$ closer to $\lambda(u_L)$ and $a$ smaller if needed). So, we must have $s \geq s^{u_-}_{\phitan}$. Now, we split into three cases.
\newline \underline{Case 1:} Assume that $u_-=u_L$. Then, $(u_-,u_+)$ is $\eta_{\phitan(u_L)}-$entropic, so by Lemma \ref{Kru\v{z}kov1} we must have $s \leq s^{u_-}_{\phitan}$, in which case $\sigma_{u_-}(s) < v_0$, a contradiction. 
\newline \underline{Case 2:} Assume that $u_- < u_L$. Then, $\phitan(u_L)<\phitan(u_-)$, so by Lemma \ref{Kru\v{z}kov1} we must have $s \leq s^{u_-}_{\phitan(u_L)}$. So, by property \ref{enum:3}, we have:
\begin{align*}
\sigma_{u_-}(s)-v &\leq \sigma_{u_-}(s^{u_-}_{\phitan(u_L)})-v \\
&\leq \sigma_{u_-}(s^{u_-}_{\phitan(u_L)})-\sigma_{u_-}(s^{u_-}_{\phitan})+\sigma_{u_-}(s^{u_-}_{\phitan})-v \\
&\leq C|u_--u_L|+(v_0-v) \\
&\leq C\sqrt{a}+(v_0-v),
\end{align*}
where we have used also the smoothness of $\phitan$ and Lemma \ref{pistructurelarge}. So, by taking $a$ sufficiently small, we see $\sigma_{u_-}(s) < v$, a contradiction.
\newline \underline{Case 3:} Assume that $u_- > u_L$. Then, by Lemma \ref{Kru\v{z}kov2}, we have $s \leq s^{u_-}_{\phicom}$. So, by property \ref{enum:3}, we have:
\begin{align*}
\sigma_{u_-}(s)-v &\leq \sigma_{u_-}(s^{u_-}_{\phicom})-v \\
&\leq \sigma_{u_-}(s^{u_-}_{\phicom})-\sigma_{u_-}(s^{u_-}_{\phitan})+\sigma_{u_-}(s^{u_-}_{\phitan})-v \\
&\leq C|u_--u_L|+(v_0-v) \\
&\leq C\sqrt{a}+(v_0-v),
\end{align*}
where we have used Lemmas \ref{companionlemma} and \ref{pistructurelarge}. So, by taking $a$ sufficiently small, we see $\sigma_{u_-}(s) < v$, a contradiction.
\end{proof}
\underline{\textbf{Proof of Proposition \ref{DRHlarge1} (and \ref{Dcontlarge} along the way).}}
\par Now, we give the proofs of Propositions \ref{Dcontlarge} and \ref{DRHlarge1}. Firstly, take $a$ sufficiently small so that $\Pi_a \subset B(u_L,C_0)$, where $C_0$ is the constant in Lemma \ref{localestimate} (this is possible by Lemma \ref{pistructurelarge}), and also $a < a^*$, where $a^*$ is the constant from Lemma \ref{smallshockDRHlarge1}. Now, let $(u_-,u_+)$ be a $\eta,\eta_{\phitan(u_L)}$-entropic shock with $u_- \in \Pi_a$. Then, by property \ref{enum:5}, $u_+=S_{u_-}(s)$ for some $0 \leq s \leq s^{u_-}_{\phidiss}$. So, we may write $D_{RH}(u_-,u_+,\sigma_\pm)$ as follows:
\begin{align*}
D_{RH}(u_-,u_+,\sigma_\pm)&=\frac{1}{a}(-q(u_-;u_L)+\sigma_{u_-}(s)\eta(u_-|u_L)) \\
&+q(S_{u_-}(s);u_R)-\sigma_{u_-}(s)\eta(S_{u_-}(s)|u_R).
\end{align*}
Firstly, assume that $\sigma_{u_-}(s) > v$. Then, by Lemma \ref{smallshockDRHlarge1}, we see $s \leq \frac{s_0}{2}$, so Lemma \ref{entropyformula2} grants:
\[
q(S_{u_-}(s);u_R)-\sigma_{u_-}(s)\eta(S_{u_-}(s)|u_R) \leq -\kappa |\sigma_{u_-}(s)-\sigma_{u_-}(s^u_{u_R})|.
\]
In fact, property \ref{enum:3} gives:
\[
q(S_{u_-}(s);u_R)-\sigma_{u_-}(s)\eta(S_{u_-}(s)|u_R) \leq -\kappa |\sigma_{u_-}(\frac{s_0}{2})-\sigma_{u_-}(s^u_{u_R})|.
\]
Next, we estimate:
\begin{align*}
-q(u_-;u_L)+\sigma_{u_-}(s)\eta(u_-|u_L) &\leq -q(u_-;u_L)+\lambda(u_-)\eta(u_-|u_L) \\
&=-q(u_-;u_L)+v\eta(u_-|u_L)+(\lambda(u_-)-v)\eta(u_-|u_L) \\
&\leq (\lambda(u_-)-v)\eta(u_-|u_L) \\
&\leq a(\lambda(u_-)-v)\eta(u_-|u_R) \\
& \leq aC(\lambda(u_-)-v),
\end{align*}
where we have used Lemma $\ref{localestimate}$ and the definition of $\Pi_a$, and $C=\sup_{u_- \in \statespace}\eta(u_-|u_R)$ depends only on the system, $s_0$, and $d(u_L,0)$. Consolidating both estimates, we obtain:
\[
D_{RH}(u_-,u_+,\sigma_\pm) \leq C(\lambda(u_-)-v)-\kappa |\sigma_{u_-}(\frac{s_0}{2})-\sigma_{u_-}(s^u_{u_R})|
\]
This can be made non-positive uniformly in $u_- \in \Pi_a$ by taking $\lambda(u)-v$ sufficiently small (i.e. picking $\beta$ sufficiently small, cf. the proof of Lemmas \ref{localestimate} and Lemma \ref{smallshockDRHlarge1}). This completes the proof of Proposition \ref{Dcontlarge}, which is a special case of the above considerations with $s=0$ (recall that $\sigma_{u_-}(0)=\lambda(u_-)$).
\par Now, assume that $\sigma_{u_-}(s) \leq v$. Then, we see:
\begin{align*}
D_{RH}(u_-,u_+,\sigma_\pm) &\leq \frac{1}{a}(-q(u_-;u_L)+v \eta(u_-|u_L)) \\
&+q(S_{u_-}(s);u_R)-\sigma_{u_-}(s)\eta(S_{u_-}(s)|u_R).
\end{align*}
Using Lemmas \ref{localestimate} and \ref{quadraticentropy}, we bound the first term by:
\begin{equation*}
D_{RH}(u_-,u_+,\sigma_\pm) \leq \frac{-c^*\beta}{a}|u_--u_L|^2+q(S_{u_-}(s);u_R)-\sigma_{u_-}(s)\eta(S_{u_-}(s)|u_R).
\end{equation*}
So, it suffices to show a bound:
\begin{equation}\label{sufficientbound}
q(S_{u_-}(s);u_R)-\sigma_{u_-}(s)\eta(S_{u_-}(s)|u_R) \leq C|u_--u_L|^2,
\end{equation}
for $a$ sufficiently small, where $a, C$ depend only on the system, and continuously on $s_0$, $d(u_L,0)$, and $d(u_R, \phitan(u_L))$  if $u_R \neq \phitan(u_L)$. Note that a priori, $\beta$ depends on $a$, but a careful look at the proof of Lemmas \ref{localestimate} and \ref{smallshockDRHlarge1} show that we may take $\beta$ fixed as $a \downarrow 0$. 
Using Lemma \ref{entropyformula2}, we see:
\begin{equation}\label{+termDRHlarge1}
q(S_{u_-}(s);u_R)-\sigma_{u_-}(s)\eta(S_{u_-}(s)|u_R)=\int_{s^{u_-}_{u_R}}^s\sigma_{u_-}'(\tau)(\eta(u_-|S_{u_-}(\tau))-\eta(u_-|u_R))\diff \tau.
\end{equation}
We split into two cases, as in the first case below, $a$ will depend on $d(u_R, \phitan(u_L))$, and in the second case it will not. In the first case, we will be able to show that right-hand side of \eqref{sufficientbound} is actually negative, but in the second, we will only be able to obtain a quadratic bound.
\newline \underline{Case 1:} Assume $u_R > \phitan(u_L)$. Then, by the smoothness of $\phitan$, by taking $a$ sufficiently small we may have $\phitan(u_-) < u_R$ for all $u_- \in \Pi_a$, where now $a$ also depends continuously on $d(u_R, \phitan(u_L))$. If $s \leq s^{u_-}_{\phitan}$, then by property \ref{enum:3}, the right-hand side of \eqref{+termDRHlarge1} is non-positive and we are done immediately. Otherwise, we split into three sub-cases.
\newline \underline{Case 1a:} Assume that $u_-=u_L$. Then, $(u_- u_+)$ is $\eta_{\phitan(u_L)}$-entropic, so by Lemma \ref{Kru\v{z}kov1} we must have $s \leq s^{u_-}_{\phitan}$. In this case, the right-hand side of \eqref{+termDRHlarge1} is non-positive.
\newline \underline{Case 1b:} Assume $u_- < u_L$. Then, $\phitan(u_L) < \phitan(u_-)$, so by Lemma \ref{Kru\v{z}kov1}, $s \leq s^{u_-}_{\phitan(u_L)}$. So, we compute:
\begin{align*}
&\int_{s^{u_-}_{u_R}}^s\sigma_{u_-}'(\tau)(\eta(u_-|S_{u_-}(\tau))-\eta(u_-|u_R))\diff \tau \\
&= \int_{s^{u_-}_{u_R}}^{s^{u_-}_{\phitan}}\sigma_{u_-}'(\tau)(\eta(u_-|S_{u_-}(\tau))-\eta(u_-|u_R))\diff \tau \\
&+\int_{s^{u_-}_{\phitan}}^s\sigma_{u_-}'(\tau)(\eta(u_-|S_{u_-}(\tau))-\eta(u_-|u_R))\diff \tau \\
& \leq -C+\int_{s^{u_-}_{\phitan}}^s\sigma_{u_-}'(\tau)(\eta(u_-|S_{u_-}(\tau))-\eta(u_-|u_R))\diff \tau,
\end{align*}
where $C$ may be taken uniformly in $u_- \in \Pi_a$ and depends continuously on $s_0, d(u_L,0)$ and $d(u_R, \phitan(u_L))$. Now, by property \ref{enum:3}, the second term is bounded by:
\begin{align*}
&\int_{s^{u_-}_{\phitan}}^s\sigma_{u_-}'(\tau)(\eta(u_-|S_{u_-}(\tau))-\eta(u_-|u_R))\diff \tau \\
&\leq \int_{s^{u_-}_{\phitan}}^{s_{\phitan(u_L)}^{u_-}}\sigma_{u_-}'(\tau)(\eta(u_-|S_{u_-}(\tau))-\eta(u_-|u_R))\diff \tau \\
&\leq C|\phitan(u_L)-\phitan(u_-)| \\
&\leq C|u_--u_L| \\
& \leq C\sqrt{a},
\end{align*}
where in the second to last inequality we have used the smoothness of $\phitan$ and in the last inequality we have used Lemma \ref{pistructurelarge}. So, taking $a$ sufficiently small, we see that in this case, \eqref{+termDRHlarge1} may be made non-positive as well. 
\newline \underline{Case 1c:} Assume that $u_->u_L$. Then, $\phitan(u_L) > \phitan(u_-)$, so by Lemma \ref{Kru\v{z}kov2}, $s \leq s^{u_-}_{\phicom}$. So, following the same computation as in Case 1b:
\begin{align*}
&\int_{s^{u_-}_{u_R}}^s\sigma_{u_-}'(\tau)(\eta(u_-|S_{u_-}(\tau))-\eta(u_-|u_R))\diff \tau \\
&= \int_{s^{u_-}_{u_R}}^{s^{u_-}_{\phitan}}\sigma_{u_-}'(\tau)(\eta(u_-|S_{u_-}(\tau))-\eta(u_-|u_R))\diff \tau \\
&+\int_{s^{u_-}_{\phitan}}^s\sigma_{u_-}'(\tau)(\eta(u_-|S_{u_-}(\tau))-\eta(u_-|u_R))\diff \tau \\
& \leq -C+\int_{s^{u_-}_{\phitan}}^{s^{u_-}_{\phicom}}\sigma_{u_-}'(\tau)(\eta(u_-|S_{u_-}(\tau))-\eta(u_-|u_R))\diff \tau.
\end{align*}
Estimating the integral on the right-hand side:
\begin{align*}
\int_{s^{u_-}_{\phitan}}^{s^{u_-}_{\phicom}}\sigma_{u_-}'(\tau)(\eta(u_-|S_{u_-}(\tau))-\eta(u_-|u_R))\diff \tau &\leq C|\phitan(u_-)-\phicom(u_-)| \\
&\leq C|\phitan(u_L)-\phicom(u_-)| \\
&= C|\phicom(u_L)-\phicom(u_-)| \\
&\leq C|u_--u_L| \\
&\leq C\sqrt{a},
\end{align*}
where in the second to last inequality we have used Lemma \ref{companionlemma} and in the last we have used Lemma \ref{pistructurelarge}. So, taking $a$ sufficiently small, we see that in this case, \eqref{+termDRHlarge1} may be made non-positive as well.
\newline \underline{Case 2:} Now, we treat the critical case $u_R=\phitan(u_L)$. Again, we split into three sub-cases.
\newline \underline{Case 2a:} Assume $u_-=u_L$. Then, by Lemma \ref{Kru\v{z}kov1}, $s \leq s^{u_-}_{u_R}$, so the left-hand side of \eqref{+termDRHlarge1} is non-positive.
\newline \underline{Case 2b:} Assume $u_- < u_L$. Then, as $(u_-,u_+)$ is $\eta_{\phitan(u_L)}$-entropic, by Lemma \ref{Kru\v{z}kov1}, $s \leq s^{u_-}_{\phitan(u_L)}$. So, we get:
\begin{align*}
&\int_{s^{u_-}_{u_R}}^s\sigma_{u_-}'(\tau)(\eta(u_-|S_{u_-}(\tau))-\eta(u_-|u_R))\diff \tau \\
&\leq \int_{s^{u_-}_{u_R}}^{s^{u_-}_{\phitan(u_L)}}\sigma_{u_-}'(\tau)(\eta(u_-|S_{u_-}(\tau))-\eta(u_-|u_R))\diff \tau \\
&\leq C|\phitan(u_-)-\phitan(u_L)|^2 \\
&\leq C|u_--u_L|^2,
\end{align*}
by the smoothness of $\phitan$, where $C$ depends only on the system. 
\newline \underline{Case 2c:} Assume $u_- > u_L$. Then, $s \leq s^{u_-}_{\phicom}$. Again by property \ref{enum:3}, the left-hand side is bounded by the dissipation when $s=s^{u_-}_{\phicom}$. So, we get:
\begin{align*}
&\int_{s^{u_-}_{u_R}}^s\sigma_{u_-}'(\tau)(\eta(u_-|S_{u_-}(\tau))-\eta(u_-|u_R))\diff \tau \\
&\leq \int_{s^{u_-}_{u_R}}^{s^{u_-}_{\phicom}}\sigma_{u_-}'(\tau)(\eta(u_-|S_{u_-}(\tau))-\eta(u_-|u_R))\diff \tau \\
&\leq C|\phicom(u_-)-\phitan(u_L)|^2 \\
&\leq C|u_--u_L|^2,
\end{align*}
by Lemma \ref{companionlemma}, where $C$ depends on the system and continuously on $d(u_L,0)$. 

\subsection{Proof of Proposition \ref{DRHlarge2}}
Firstly, let $u_+=u_L$. Recall from \eqref{entropydissipation} that the entropy dissipation is defined as:
\[
E_\eta(u_-, u_+)=-\sigma_\pm(\eta(u_+)-\eta(u_-))+q(u_+)-q(u_-).
\]
For $u_+=u_L$, we see from \eqref{DRH}: 
\[
D_{RH}(u_-,u_+,\sigma_\pm) \leq C-\frac{1}{a}E_\eta(u_L,u_-),
\]
where $C$ depends only on the system. Now, $u_- \leq \phidiss(u_L)-\eps$, so by Lemma \ref{entropydissipationlemma}, there exists $\kappa$ such that $E_\eta(u_L,u_-) \geq \kappa$, where $\kappa$ depends continuously on $\eps, d(u_L,0)$. So, we obtain:
\[
D_{RH}(u_-,u_+,\sigma_\pm) \leq -\frac{1}{a}\kappa+C.
\]
Taking $a$ sufficiently small, we obtain $D_{RH} \leq 0$. Now, let $u_+ \in \Pi_a$. Then,
\[
\sigma_\pm \eta(u_-|u_L)-q(u_-;u_L)=\sigma(u_-,u_L)\eta(u_-|u_L)-q(u_-;u_L)+C|u_+-u_L|,
\]
where $C$ depends only on the system. Now, invoking Lemma \ref{pistructurelarge}, we obtain:
\[
\sigma_\pm \eta(u_-|u_L)-q(u_-;u_L) \leq -E_\eta(u_L, u_-)+C\sqrt{a} \leq -\kappa+C\sqrt{a}.
\]
So, in total:
\[
D_{RH}(u_-,u_+,\sigma_\pm) \leq C+\frac{1}{a}(-\kappa+C\sqrt{a}) \leq 0,
\]
for $a$ sufficiently small.

\section{Dissipation rates in the small shock regime}\label{sectionsmall}
\subsection{Outline of the proof of Theorem \ref{theoremsmallshock}}
The approach taken in the proof of Theorem \ref{theoremsmallshock} is vastly different than in the large shock regime. In particular, we need much finer control on the weight $a=\frac{a_2}{a_1}$, which we can only hope to obtain by working locally around a degenerate shock (constant state). To this end, we will first work with shocks localized around a fixed state $d \in [\underline{b}, \overline{b}]$, and extend to a global statement via a Lebesgue number argument. The local work is a recasting of \cite{MR4667839} in the scalar setting, where many of the proofs are simplified. However, we have the added complication that the characteristic field is not globally genuinely nonlinear. The significant change is that we must take care to show that all the computations take place in the region where $f''$ does not degenerate (see Lemma \ref{maximalshock}, and the remark beforehand). The macroscopic structure of the proof is the same as in Section \ref{sectionlarge}. We need analogs of Propositions \ref{Dcontlarge} and \ref{DRHlarge1} (Proposition \ref{DRHlarge2} is now unimportant as we are working in the more restrictive class of weak solutions $\weaktwo$). In order to obtain the finer control on $a$, we will parameterize $\frac{1}{a}=\frac{a_1}{a_2}$ as:
\[
\frac{a_1}{a_2}=1+Cs_0,
\]
where $C,s_0 > 0$ are now parameters that may be varied. In particular, for any $C>0, s_0 > 0$, consider shocks $(u_L,u_R)$ with strength $s_0$. We define:
\begin{align}
\tilde{\eta}(u)&=(1+Cs_0)\eta(u|u_L)-\eta(u|u_R), \label{etatildedef} \\
\tilde{q}(u)&=(1+Cs_0)q(u;u_L)-q(u;u_R). \label{qtildedef}
\end{align}
Further, in the small shock regime, we will rename the set $\Pi_a$ to $\Pismall$ to illustrate the dependence on these new variables:
\[
\Pismall:=\{u \in \statespace|\tilde{\eta}(u) \leq 0\}.
\]
The analogs of $D_{cont}$ and $D_{RH}$ in our new terminology are as follows:
\begin{align*}
D_{cont}(u)&=-\tilde{q}(u)+\lambda(u)\tilde{\eta}(u), \\
D_{RH}(u_-,u_+,\sigma_\pm)&=q(u_+;u_R)-\sigma_\pm \eta(u_+|u_R)-(1+Cs_0)(q(u_-;u_L)-\sigma_\pm \eta(u_-|u_L)).
\end{align*}
The first proposition needed is an analog of Proposition \ref{Dcontlarge} in the small shock regime:
\begin{proposition}\label{Dcontsmall}
Consider \eqref{cl} with $f$ satisfying $\eqref{concaveconvex}$. Fix $d \in [\underline{b}, \overline{b}]$. There is a universal constant $K$ depending only on the system and $d$, such that for any $C_1 > 0$ large enough, there exists $\tilde{s_0}(C_1) > 0$, such that for any $C > 0$ with $\frac{C_1}{2} \leq C \leq 2C_1$, any shock $(u_L, u_R)$ with $|u_L-d|+|u_R-d| \leq \tilde{s_0}$, and $|u_L-u_R|=s_0$ with $0 < s_0 < \tilde{s_0}$, and any $u \in \Pismall$, we have:
\[
D_{cont}(u) \leq -Ks_0^3.
\]
\end{proposition}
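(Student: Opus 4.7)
The plan is to exploit a clean algebraic identity that drastically simplifies the analysis: using $q'(u) = \eta'(u) f'(u)$ together with $\partial_u q(u;v) = f'(u)\partial_u \eta(u|v)$, a direct differentiation gives
\[
\frac{d}{du} D_{cont}(u) \;=\; \lambda'(u)\tilde{\eta}(u) \;=\; f''(u)\,\tilde{\eta}(u).
\]
Since $d\in[\underline{b},\overline{b}]\subset(0,\infty)$, we have $f''(d)>0$; taking $C_1$ large enough and $\tilde{s_0}$ small enough, the set $\Pi_{C,s_0}$ is contained in a neighborhood of $[\underline{b},\overline{b}]$ on which $f''>0$ uniformly (this uses the explicit size bound $\mathrm{diam}(\Pi_{C,s_0})\le 2/C + O(s_0) \le 4/C_1 + O(\tilde{s_0})$, which is analogous to Lemma \ref{pistructurelarge}). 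By definition $\tilde{\eta}\le 0$ on $\Pi_{C,s_0}$, so $D_{cont}$ is non-increasing on $\Pi_{C,s_0}$. Hence the maximum of $D_{cont}$ is attained at the \emph{left} endpoint $u_\ell$, i.e. the smaller solution of $\tilde{\eta}(u_\ell)=0$; at this point $D_{cont}(u_\ell) = -\tilde{q}(u_\ell)+\lambda(u_\ell)\cdot 0 = -\tilde{q}(u_\ell)$. The proposition therefore reduces to showing $\tilde{q}(u_\ell) \geq Ks_0^3$ for a universal $K$.

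Next I would carry out a Taylor expansion around the midpoint $m:=(u_L+u_R)/2$. Writing $u_L=m+s_0/2$, $u_R=m-s_0/2$ and parameterizing $u_\ell=u_R+\theta s_0$, $\theta\in(0,1)$, I expand $\eta(u|v)=\tfrac{\eta''(v)}{2}(u-v)^2+\tfrac{\eta'''(v)}{6}(u-v)^3+O((u-v)^4)$ and match both sides of $(1+Cs_0)\eta(u_\ell|u_L)=\eta(u_\ell|u_R)$. The leading order gives $\theta=\tfrac{1}{2}+O(s_0)$, and the first correction is
\[
\theta \;=\; \tfrac12 + \frac{\alpha'(m)+C\alpha(m)-\delta(m)}{8\alpha(m)}\,s_0 + O(s_0^2),
\]
with $\alpha=\eta''/2$, $\delta=\eta'''/6$. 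The analogous expansion of $\tilde{q}(u_\ell)$, using $q(u;v)=\tfrac{\eta''(v)f'(v)}{2}(u-v)^2+\gamma(v)(u-v)^3+O((u-v)^4)$, produces a cubic-in-$s_0$ leading term whose coefficient is
\[
\tfrac{1}{4}\Bigl\{ \beta'(m)-\gamma(m) + C\beta(m) - \tfrac{\beta(m)}{\alpha(m)}\bigl[\alpha'(m)+C\alpha(m)-\delta(m)\bigr]\Bigr\}.
\]
The crucial algebraic observation is that the two $C\beta(m)$ contributions cancel, and the remaining combination simplifies (via $\alpha'/\alpha=\eta'''/\eta''$ and $\beta=\eta''f'/2$, $\gamma=\eta''' f'/6+\eta''f''/3$) to the clean universal expression $\eta''(m)f''(m)/6$. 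Therefore
\[
\tilde{q}(u_\ell) \;=\; \frac{\eta''(m)f''(m)}{24}\,s_0^3 \;+\; E,
\]
and since $|m-d|\le \tilde{s_0}$, the leading coefficient differs from $\eta''(d)f''(d)/24$ by at most $O(\tilde{s_0})$.

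To close the argument I bound the error $E$. Collecting all remainders (Taylor remainders of order four, coefficient variations $\alpha(u_L)-\alpha(u_R)=O(s_0)$ and $\alpha(m)-\alpha(d)=O(\tilde{s_0})$, and terms involving $\xi=(\theta-\tfrac12)s_0=O(Cs_0^2)$), one checks $|E|\le C'\bigl(s_0^4 + C^2 s_0^4 + \tilde{s_0}s_0^3\bigr)$. Using $C\le 2C_1$ and $s_0\le\tilde{s_0}$, this is majorized by $C''(C_1^2 \tilde{s_0} + \tilde{s_0})s_0^3$, which is smaller than half the leading term once $\tilde{s_0}(C_1)$ is chosen small enough (a bound of the form $\tilde{s_0}\lesssim 1/C_1^2$ suffices). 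With this choice, $\tilde{q}(u_\ell)\ge \tfrac{\eta''(d)f''(d)}{48}s_0^3$, and we set $K:=\eta''(d)f''(d)/48$; monotonicity of $D_{cont}$ on $\Pi_{C,s_0}$ yields the bound for every $u\in\Pi_{C,s_0}$.

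The main obstacle is the bookkeeping in the previous step: because $u_\ell$ and the Taylor coefficients of $\tilde{q}(u_\ell)$ both depend on $C$, one must track this dependence carefully to see the non-obvious cancellation that makes the leading coefficient $C$-independent, and then verify that all error contributions are uniformly controlled over $C\in[C_1/2,2C_1]$ once $\tilde{s_0}$ is chosen small relative to $C_1$. The fact that $K$ ends up depending only on $d$ and the system (in particular, that it does not degenerate with $C_1$) is precisely what makes this estimate usable in the modified front tracking scheme of Section \ref{sectionstability}.
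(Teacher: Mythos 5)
Your argument is correct, and its first half coincides with the paper's: you use the same identity $D_{cont}'(u)=f''(u)\tilde{\eta}(u)$ to conclude that the maximum of $D_{cont}$ over $\Pismall$ sits at the left endpoint $u_\ell$, where $D_{cont}(u_\ell)=-\tilde{q}(u_\ell)$. From there the routes genuinely diverge. The paper never expands $\tilde{q}$: it writes $D_{cont}=D_R+(1+Cs_0)D_L$ with $D_L(u)=-q(u;u_L)+\lambda(u)\eta(u|u_L)$, $D_R(u)=q(u;u_R)-\lambda(u)\eta(u|u_R)$, notes $D_L'=f''\eta(\cdot|u_L)$ and $D_R'=-f''\eta(\cdot|u_R)$ vanish quadratically at $u_L$, $u_R$, and integrates these sign-definite derivatives, using only $|u^*-u_L|\sim s_0$ (from Lemmas \ref{quadraticentropy} and \ref{negativityinpi}); the $C$-dependence is harmless there because the weight $(1+Cs_0)\ge 1$ multiplies a term that is already negative of order $s_0^3$, so no cancellation ever has to be tracked. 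You instead locate $u_\ell$ to first order (your correction $\theta-\tfrac12=\bigl(\tfrac{C}{8}+\tfrac{\eta'''(m)}{12\eta''(m)}\bigr)s_0+O(s_0^2)$ is right) and expand $\tilde{q}(u_\ell)$ directly; I checked that the $C\beta(m)$ contributions do cancel and the leading coefficient is $\eta''(m)f''(m)/24$, consistent with what the paper's estimate gives. What your route buys is an explicit, sharp constant $K$ and a transparent reason why $K$ is $C$-independent; what it costs is the bookkeeping you describe, which the paper's decomposition avoids entirely.

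Two small repairs. First, the size bound on $\Pismall$ you invoke is Lemma \ref{pistructuresmall} (diam $\sim C^{-1}$), not Lemma \ref{pistructurelarge}; alternatively, $\Pismall\subset(u_R,\infty)$ with $u_R\ge \underline{b}-\tilde{s_0}>0$ already places $\Pismall$ in a compact subset of the convex region. Second, the paper only assumes $\eta\in C^3$, so your Taylor remainders of the form $O((u-v)^4)$ for $\eta(\cdot|v)$ and $q(\cdot;v)$ are not available; with $C^3$ regularity they are $\omega(\tilde{s_0})\,|u-v|^3$ with $\omega$ the modulus of continuity of $\eta'''$ near $d$ (the flux terms are fine since $f\in C^4$). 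This weaker remainder still closes your argument, since it is absorbed exactly like your $\tilde{s_0}s_0^3$ error by shrinking $\tilde{s_0}(C_1)$, but the error bound should be stated in that form.
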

Although we only need non-positivity of $D_{cont}$ in $\Pismall$ in the proof of Theorem \ref{theoremsmallshock}, the negativity is in fact needed to prove the next proposition, which is the analog of Proposition \ref{DRHlarge1}:
\begin{proposition}\label{DRHsmall}
Consider \eqref{cl} with $f$ satisfying $\eqref{concaveconvex}$. Fix $d \in [\underline{b}, \overline{b}]$. For any $C_1 > 0$ large enough, there exists $\tilde{s_0}(C_1) > 0$, such that for any $C >0$ with $\frac{C_1}{2} \leq C \leq C_1$, any shock $(u_L, u_R)$ with $|u_L-d|+|u_R-d| \leq \tilde{s_0}$, and $|u_L-u_R|=s_0$ with $0 < s_0 < \tilde{s_0}$, and any $\eta,\eta_{\phitan(\underline{b})}$-entropic shock $(u_-, u_+)$ with $u_- \in \Pismall$ and $u_+ >\phitan(\underline{b})$, we have:
\[
D_{RH}(u_-,u_+,\sigma_\pm) \leq 0.
\]
\end{proposition}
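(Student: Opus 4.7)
My plan is to mirror the structure of the proof of Proposition \ref{DRHlarge1}, now exploiting the strong quantitative bound $D_{cont}(u_-) \leq -Ks_0^3$ from Proposition \ref{Dcontsmall} as the main source of negativity. First, since $(u_-, u_+)$ is $\eta$-entropic, by property \ref{enum:5} I parameterize $u_+ = S_{u_-}(s)$ and $\sigma_\pm = \sigma_{u_-}(s)$ for some $s \in [0, s^{u_-}_{\phidiss}]$. Applying Lemma \ref{entropyformula} with $v = u_R$ to rewrite the $u_+$-dependent part of $D_{RH}$, and splitting $-\sigma_\pm = -\lambda(u_-) + (\lambda(u_-) - \sigma_\pm)$ on both $u_-$-dependent terms so as to extract $D_{cont}(u_-)$, I obtain the identity
\[
D_{RH}(u_-, u_+, \sigma_\pm) = D_{cont}(u_-) - (\lambda(u_-) - \sigma_{u_-}(s))\,\tilde{\eta}(u_-) + \int_0^s \sigma'_{u_-}(\tau)\,\eta(u_-|S_{u_-}(\tau))\,d\tau.
\]

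The three terms have clear signs: Proposition \ref{Dcontsmall} gives $D_{cont}(u_-) \leq -Ks_0^3$; since $u_- \in \Pismall$ one has $\tilde{\eta}(u_-) \leq 0$, and since $u_-$ lies in the convex region while $(u_-, u_+)$ is an Ole\u{\i}nik shock one has $\lambda(u_-) \geq \sigma_{u_-}(s)$, so the middle ``bad'' term is non-negative; the integral is non-positive for $s \leq s^{u_-}_{\phitan}$ by property \ref{enum:3}. I then split on the size of $s$. For $s \leq s^{u_-}_{\phitan}$, Taylor expansion about $\tau = 0$ yields a lower bound $\bigl|\int_0^s \sigma'_{u_-}(\tau)\eta(u_-|S_{u_-}(\tau))\,d\tau\bigr| \geq c s^3$, and an analog of Lemma \ref{pistructurelarge} shows $|\tilde{\eta}(u_-)| = O(s_0/C)$ uniformly in $\Pismall$; combined with $\lambda(u_-) - \sigma_{u_-}(s) = O(s)$, the bad term is bounded above by $O(s \cdot s_0/C)$. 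For $s > s^{u_-}_{\phitan}$ the $\eta_{\phitan(\underline{b})}$-entropy condition (Lemmas \ref{Kru\v{z}kov1} and \ref{Kru\v{z}kov2}) together with the assumption $u_+ > \phitan(\underline{b})$ and the smoothness of the companion function (Lemma \ref{companionlemma}) restrict how far $s$ can exceed $s^{u_-}_{\phitan}$, which allows a direct quadratic estimate as in Case 2 of the proof of Proposition \ref{DRHlarge1}.

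The main obstacle is the intermediate range of $s$ in the small-jump regime, where neither $D_{cont}$ alone (which handles $s \ll s_0^2$) nor the integral alone (which handles $s \gg \sqrt{s_0/C}$) absorbs the bad term of size $O(s \cdot s_0/C)$. The closing argument combines the two negative contributions by an AM-GM comparison $K s_0^3 + c s^3 \geq 2\sqrt{Kc}\,(s_0 s)^{3/2}$, which beats $(C'/C)\, s_0 s$ provided $C_1$ is taken large enough (so that $C'/C \leq 2C'/C_1$ is sufficiently small) and $\tilde{s_0}(C_1)$ is taken correspondingly small so that the lower-order corrections in the Taylor expansions and the $\Pismall$-geometry estimates are negligible. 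Uniformity of the constants $c, C'$ over $d \in [\underline{b}, \overline{b}]$ follows from the compactness of this interval and the continuity of the relevant system quantities, mirroring the uniformization argument already used in the proof of Corollary \ref{uniformalarge}.
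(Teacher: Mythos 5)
The decomposition identity you write down is correct, and it is in fact the same decomposition the paper uses (in the paper it appears in the proof of Lemma \ref{maximalshock} and in \eqref{maximaldissipationpf}, with the middle and last terms kept combined as $\int_0^s\sigma'_{u_-}(\tau)(\tilde{\eta}(u_-)+\eta(u_-|S_{u_-}(\tau)))\,\diff\tau$). But the closing step — combining $D_{cont}(u_-)\le -Ks_0^3$ and the cubic integral bound by AM–GM to beat the ``bad'' term — cannot work, because the inequality you are trying to prove is an \emph{equality} at $u_-=u_L$. Indeed, $D_{RH}(u_L,u_R,\sigma(u_L,u_R))=0$ identically (it is the dissipation functional of the reference shock against itself), and at $u_-=u_L$ with $s=s_0$ all three terms in your decomposition are of the same order $s_0^3$ and cancel exactly. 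So there is no universal strict margin between the negative contributions and the bad term, and no crude constant-comparison can close. This is precisely why the paper shows instead that $u_L$ is a \emph{local maximum} of $D_{max}$ with $D_{max}(u_L)=0$ (Proposition \ref{Rsmallnotbd}), using the second-derivative estimate $D_{max}''(u_L)\lesssim -s_0$, and rules out other critical points in $\Rsmallplus\cup\Rsmallminus$ by an ODE argument (Lemmas \ref{critptcharlemma}–\ref{nocritptminus}).

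There is also a second, independent gap in the AM–GM step itself. You claim $Ks_0^3+cs^3\ge 2\sqrt{Kc}(s_0 s)^{3/2}$ dominates the bad term $\sim s\,s_0\,d(u_-,\partial\Pismall)$. Dividing by $s_0 s$ requires $(s_0 s)^{1/2}\gtrsim C^{-1}$, which fails as $s\to 0$; and for intermediate $s$ near $s^*\sim\sqrt{-\tilde{\eta}(u_-)}$, a direct expansion shows the sum of the linear (bad) term and the cubic integral is actually \emph{positive} of order $(-\tilde{\eta}(u_-))^{3/2}$, which is why the paper crucially needs to treat the case $u_+\in\Pismall$ separately (Lemmas \ref{Kstar} and \ref{bothinPi}, bounding $D_{RH}$ by $D_{cont}(u_+)$ rather than by $D_{cont}(u_-)$), and needs the improved rate $D_{cont}\lesssim -C^{1/3}s_0^3$ away from $u^*$ (Proposition \ref{improvedDcontsmall}) to absorb the leftover. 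Your outline recognizes the intermediate-$s$ difficulty as the obstacle but the proposed resolution does not overcome it.
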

\begin{notation} In this section, we adhere to the following notation: \newline
\begin{enumerate}
    \item In this section, $C$ denotes the specific constant used in the definition of $\Pismall$. This is in contrast with Section 2. Constants denoted $K$ will denote universal constants depending only on the system (where the ``system'' is now composed of $f, \eta, \statespace$, and $[\underline{b}, \overline{b}]$), and continuously on $d \in [\underline{b}, \overline{b}]$. In particular, they will not depend on $C$ or $s_0$.
    \item We will use the notation $a \lesssim b$, which means there exists $K$ (with dependence as above) such that $a \leq Kb$. We also write $a \sim b$ to mean $a \lesssim b$ and $b \lesssim a$. 
    \item The notation $a=b+\mathcal{O}(c)$ is to be interpreted as $|a-b| \lesssim c$.
\end{enumerate}
\end{notation}
\begin{remark}\label{uniformityremarksmall}
In everything that follows, similar to the computations done in the large shock regime, the implicit constants $K$ will depend continuously on $d$. We will be less explicit about this at each step in this section because the continuous dependence on $d$ may be tracked in the same manner as the continuous dependence of the implicit constants on various quantities in the large shock regime. Thus, the thresholds for $C_1$ in Propositions \ref{Dcontsmall} and \ref{DRHsmall} will depend continuously on $d$. This will be used crucially in the proof of Proposition \ref{localtheoremsmallshock}.
\end{remark}
\subsection{Preliminaries}
\subsubsection{Properties of the system}
We retain all the properties stated in Properties $1$ in Section \ref{system properties}.
\subsubsection{Study of the set $\Pismall$}
In this section, we present lemmas that illuminate the geometry of $\Pismall$. We refer to \cite{MR4667839} for the proof of all the results in this section. The first is the analogous statement to Lemma \ref{pistructurelarge} in the small shock regime.
\begin{lemma}[Lemma 3.4 in \cite{MR4667839}]\label{pistructuresmall}
For any $C>0$ and $s_0 > 0$, $\tilde{\eta}$ and $\tilde{q}$ are an entropy, entropy-flux pair for $\eqref{cl}$, and $\Pismall$ is a closed interval. For $C$ sufficiently large and $s_0$ sufficiently small, $\Pismall$ is compactly contained in $\statespace$, and $diam(\Pismall) \sim C^{-1}$, where the implicit constant depends only on the system. We have formulae:
\begin{align*}
\tilde{\eta}'(u)&=Cs_0(\eta'(u)-\eta'(u_L))+(\eta'(u_R)-\eta'(u_L)), \\
\tilde{\eta}''(u)&=Cs_0\eta''(u). 
\end{align*}
Finally, for any $C$ sufficiently big, $s_0$ sufficiently small, and $u \in \Pismall$:
\begin{align*}
    |\tilde{\eta}(u)| &\lesssim s_0, \\
    \tilde{\eta}''(u) &\sim Cs_0.
\end{align*}
\end{lemma}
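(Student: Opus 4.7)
The plan is to read everything off the explicit formulae for $\tilde{\eta}$, so I would start by differentiating directly. Using $\partial_u \eta(u|v) = \eta'(u) - \eta'(v)$, a one-line computation yields the stated expressions for $\tilde{\eta}'$ and $\tilde{\eta}''$. In particular $\tilde{\eta}''(u) = Cs_0\, \eta''(u)$, which is strictly positive and of order $Cs_0$ on $\statespace$ by the strict convexity of $\eta$. That $(\tilde{\eta},\tilde{q})$ is an entropy/entropy-flux pair follows because for each fixed constant $b$ the pair $(\eta(\cdot|b),q(\cdot;b))$ is one -- a short relative entropy computation gives $\partial_u q(u;b) = (\eta'(u)-\eta'(b))f'(u) = \partial_u \eta(u|b)\, f'(u)$ -- and this compatibility is preserved by linear combinations with constant coefficients.

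The qualitative structure of $\Pismall$ then drops out of strict convexity of $\tilde{\eta}$: any sublevel set of a strictly convex continuous function on $\R$ is a closed interval. To see that $\Pismall$ is nonempty and compactly contained in $\statespace$ (for $C$ large and $s_0$ small), I would note that $\tilde{\eta}(u_L) = -\eta(u_L|u_R) \sim -s_0^2$ by Lemma \ref{quadraticentropy}, placing $u_L$ in the interior of $\Pismall$. For the diameter estimate, I would Taylor-expand $\tilde{\eta}$ about $u_L$ with $h := u - u_L$:
\[
\tilde{\eta}(u) = -\eta(u_L|u_R) + (\eta'(u_R) - \eta'(u_L))\, h + \tfrac{1}{2} C s_0\, \eta''(\xi)\, h^2,
\]
for some $\xi$ between $u_L$ and $u$. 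The inequality $\tilde{\eta}(u) \leq 0$ becomes a quadratic in $h$ with leading coefficient $\sim Cs_0$, linear coefficient $\sim s_0$, and constant $\sim -s_0^2$. Dividing through by $s_0$ and solving, provided $Cs_0$ is small the roots scale like $h \sim C^{-1}$, so $\mathrm{diam}(\Pismall) \sim C^{-1}$ and compact containment in $\statespace$ follows for $C$ large and $s_0$ small.

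The remaining pointwise estimates on $\Pismall$ come from the same expansion. The identity $\tilde{\eta}''(u) = Cs_0 \eta''(u)$ together with uniform two-sided bounds on $\eta''$ over the compact set gives $\tilde{\eta}''(u) \sim Cs_0$. For $|\tilde{\eta}(u)| \lesssim s_0$, plugging $|h| \lesssim C^{-1}$ into the Taylor expansion above and bounding each term yields $\mathcal{O}(s_0^2) + \mathcal{O}(s_0/C) + \mathcal{O}(s_0/C) = \mathcal{O}(s_0)$. The principal obstacle, as I see it, is choosing the threshold $\tilde{s_0}(C_1)$ correctly so that the quadratic analysis lands in the right regime: one needs $Cs_0$ to stay bounded by a small absolute constant, so that the discriminant of the quadratic behaves like $1 + \mathcal{O}(Cs_0)$ and the roots have the asserted scaling $h \sim C^{-1}$. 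If instead $Cs_0$ were allowed to grow, one would only get the weaker diameter bound $\sqrt{s_0/C}$ and lose the sharp control needed downstream in Propositions \ref{Dcontsmall} and \ref{DRHsmall}. Once that threshold is fixed correctly, everything else is bookkeeping of the same Taylor expansion.
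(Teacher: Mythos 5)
Your proposal is correct, and it reconstructs what is essentially the only reasonable argument; the paper itself does not reprove this lemma but defers to Lemma 3.4 of \cite{MR4667839}. The identities for $\tilde{\eta}'$, $\tilde{\eta}''$ and the entropy-pair compatibility follow exactly as you say by linearity (noting that the paper's displayed formula for $q(a;b)$ should read $\eta'(b)(f(a)-f(b))$ rather than $q'(b)(f(a)-f(b))$, which is the convention you correctly use in computing $\partial_u q(u;b)=(\eta'(u)-\eta'(b))f'(u)$), and the interval structure of $\Pismall$ comes from strict convexity of $\tilde{\eta}$, which here uses $Cs_0>0$.

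One small imprecision worth flagging: after dividing the quadratic by $s_0$ it is not quite true that \emph{both} roots scale like $C^{-1}$. Writing $a_0\sim Cs_0$, $b_0=\eta'(u_R)-\eta'(u_L)\sim -s_0$, $c_0=-\eta(u_L|u_R)\sim -s_0^2$ for the un-normalized quadratic, the root on the $u_R$-side of $u_L$ has magnitude
\[
|h_-|=\frac{2|c_0|}{\sqrt{b_0^2+4a_0|c_0|}+|b_0|}\sim s_0,
\]
while $h_+\sim |b_0|/a_0\sim C^{-1}$; the two endpoints of $\Pismall$ sit at genuinely different scales from $u_L$. (This asymmetry is what the paper later uses: $u^*$, the left endpoint, satisfies $|u^*-u_L|\sim s_0$, which would be false if both roots were $\sim C^{-1}$.) The stated diameter bound $\mathrm{diam}(\Pismall)\sim C^{-1}$ is still correct because the upper bound requires $s_0\lesssim C^{-1}$, i.e.\ $Cs_0\lesssim 1$ as you note, while the lower bound $h_+\gtrsim C^{-1}$ holds unconditionally since $4a_0|c_0|\geq 0$. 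Your observation that the threshold $\tilde{s_0}(C_1)$ must keep $Cs_0$ uniformly bounded is exactly the structural point that makes the downstream estimates in Propositions \ref{Dcontsmall} and \ref{DRHsmall} work.
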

The second lemma gives control on the derivative of $\tilde{\eta}$ on the boundary of $\Pismall$:
\begin{lemma}[Lemma 3.5 in \cite{MR4667839}]\label{pigradientcontrolbdry}
For $C$ sufficiently large, $s_0$ sufficiently small, and any $\overline{u} \in \partial \Pismall$:
\[
s_0 \lesssim |\tilde{\eta}'(\overline{u})|.
\]
The implicit constant depends only on the system.
\end{lemma}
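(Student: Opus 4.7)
The plan is to mirror the Taylor expansion argument used in the proof of Lemma \ref{qcontrol} (the large shock analog), but now to leverage the sharp asymptotics on $\Pismall$ provided by Lemma \ref{pistructuresmall} rather than rely on a generic quadratic lower bound for the relative entropy. Since $\Pismall$ is a closed interval by Lemma \ref{pistructuresmall}, I would let $u_1, u_2$ denote its two boundary points, so that $\tilde{\eta}(u_1) = \tilde{\eta}(u_2) = 0$ by definition. Fix one boundary point, say $\overline{u} = u_2$, and expand $\tilde{\eta}$ around $\overline{u}$ via Taylor's theorem:
\[
0 = \tilde{\eta}(u_1) - \tilde{\eta}(u_2) = \tilde{\eta}'(\overline{u})(u_1-\overline{u}) + \tfrac{1}{2}\tilde{\eta}''(\xi)(u_1-\overline{u})^2,
\]
for some intermediate $\xi$ in the interval with endpoints $u_1, u_2$, which in particular lies inside $\Pismall$. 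Solving and taking absolute values gives
\[
|\tilde{\eta}'(\overline{u})| = \tfrac{1}{2}|\tilde{\eta}''(\xi)||u_1 - u_2|.
\]

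At this point, the proof reduces to applying the two asymptotic bounds from Lemma \ref{pistructuresmall}. That lemma gives $\tilde{\eta}''(\xi) \sim C s_0$ for $\xi \in \Pismall$, and also $\operatorname{diam}(\Pismall) \sim C^{-1}$. Since $|u_1 - u_2| = \operatorname{diam}(\Pismall)$, multiplying these asymptotics produces $|\tilde{\eta}'(\overline{u})| \gtrsim Cs_0 \cdot C^{-1} = s_0$, which is the desired estimate. The argument is symmetric in $u_1$ and $u_2$, so the bound holds at both endpoints of $\Pismall$. The thresholds for how large $C$ must be and how small $s_0$ must be are inherited directly from Lemma \ref{pistructuresmall}, so no new smallness conditions need to be extracted.

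I do not anticipate a genuine obstacle here; the content of the lemma is essentially a consistency statement between the size of the interval $\Pismall$ (which scales like $C^{-1}$) and the curvature of $\tilde{\eta}$ on that interval (which scales like $Cs_0$). The only care needed is to ensure that $\xi$ lies in $\Pismall$ so the curvature asymptotics apply — but this is automatic from the intermediate value property. The cubic dependence that appeared in the large shock case (where $C_1 = c^* \operatorname{diam}(\Pi_a)$ was used with $\operatorname{diam}(\Pi_a) \sim \sqrt{a}$) is now replaced by the linear-in-$s_0$ scaling, reflecting the refined information available in the small shock regime.
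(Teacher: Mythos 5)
Your argument is correct and complete. The paper itself does not reproduce a proof of this lemma (it cites \cite{MR4667839}), but the Taylor-plus-asymptotics computation you give is the natural one: since $\tilde\eta$ vanishes at both endpoints of $\Pismall$, the relation $|\tilde\eta'(\overline u)| = \tfrac12|\tilde\eta''(\xi)|\,\operatorname{diam}(\Pismall)$ holds with $\xi\in\Pismall$, and then the two-sided asymptotics $\tilde\eta''\sim Cs_0$ and $\operatorname{diam}(\Pismall)\sim C^{-1}$ from Lemma \ref{pistructuresmall} give exactly $|\tilde\eta'(\overline u)|\gtrsim s_0$. This is the same identity underlying the large-shock bound \eqref{eta'bd} in the proof of Lemma \ref{qcontrol} (there phrased via $\tilde\eta(u_1|u_2)\geq c^*|u_1-u_2|^2$, which is just the Taylor remainder bound with a generic lower bound for $\tilde\eta''$), now sharpened by the precise $Cs_0$ scaling of the curvature in the small-shock regime.
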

The last lemma gives precise asymptotics on the negativity of $\tilde{\eta}(u)$ inside of $\Pismall$. 
\begin{lemma}[Lemma 3.6 in \cite{MR4667839}]\label{negativityinpi}
For $C$ sufficiently large, $s_0$ sufficiently small, and $u \in \Pismall$, we have:
\[
-\tilde{\eta}(u) \lesssim s_0d(u,\partial \Pismall).
\]
The implicit constant depends only on the system.
\end{lemma}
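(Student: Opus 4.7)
The plan is to exploit the strict convexity of $\tilde{\eta}$ together with the vanishing of $\tilde{\eta}$ on $\partial \Pismall$. From Lemma \ref{pistructuresmall} we already know that $\tilde{\eta}''(u) = Cs_0\eta''(u) > 0$, so $\tilde{\eta}$ is strictly convex; since $\Pismall$ is a closed interval on whose two endpoints $\tilde{\eta}$ vanishes, convexity immediately recovers $\tilde{\eta} \leq 0$ on $\Pismall$.

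First, given $u \in \Pismall$, I would pick $\overline{u}$ to be the endpoint of $\Pismall$ closest to $u$, so that $|u - \overline{u}| = d(u, \partial \Pismall)$ and $\tilde{\eta}(\overline{u}) = 0$. The tangent-line inequality from convexity gives
\[
\tilde{\eta}(u) \geq \tilde{\eta}(\overline{u}) + \tilde{\eta}'(\overline{u})(u - \overline{u}) = \tilde{\eta}'(\overline{u})(u - \overline{u}).
\]
A quick sign check (either $\overline{u}$ is the left endpoint, in which case $\tilde{\eta}'(\overline{u}) \leq 0$ and $u - \overline{u} \geq 0$, or the right endpoint, in which case the signs are reversed) confirms $\tilde{\eta}'(\overline{u})(u - \overline{u}) \leq 0$, so rearranging yields
\[
-\tilde{\eta}(u) \leq |\tilde{\eta}'(\overline{u})|\, d(u, \partial \Pismall).
\]

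The remaining task is to establish the upper bound $|\tilde{\eta}'(\overline{u})| \lesssim s_0$. Here I would invoke the explicit formula from Lemma \ref{pistructuresmall},
\[
\tilde{\eta}'(\overline{u}) = Cs_0(\eta'(\overline{u}) - \eta'(u_L)) + (\eta'(u_R) - \eta'(u_L)),
\]
combined with $|\overline{u} - u_L| \leq diam(\Pismall) \lesssim C^{-1}$ (recalling that $u_L \in \Pismall$, since $\tilde{\eta}(u_L) = -\eta(u_L|u_R) < 0$). By the $C^3$-regularity of $\eta$ one then has $|\eta'(\overline{u}) - \eta'(u_L)| \lesssim C^{-1}$ and $|\eta'(u_R) - \eta'(u_L)| \lesssim s_0$, so
\[
|\tilde{\eta}'(\overline{u})| \lesssim Cs_0 \cdot C^{-1} + s_0 \lesssim s_0,
\]
which when combined with the previous display completes the proof.

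There is no genuine obstacle here: the argument is a clean one-sided Taylor estimate. The only thing to watch is the bookkeeping that makes the product $Cs_0 \cdot C^{-1}$ collapse to $s_0$, which is precisely how the diameter bound $diam(\Pismall) \sim C^{-1}$ compensates for the scaling $Cs_0$ built into $\tilde{\eta}''$, and this is what makes the final estimate scale linearly (rather than sublinearly) in $s_0$.
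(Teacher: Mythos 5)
Your proof is correct, and it is a clean, self-contained argument. Note that the paper does not actually reprove this lemma — it defers entirely to Lemma~3.6 of the Golding–Krupa–Vasseur reference — so there is no in-paper proof to compare against.

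Your approach (tangent-line inequality at the nearest endpoint of the interval $\Pismall$, combined with an upper bound $|\tilde{\eta}'(\overline{u})| \lesssim s_0$ on the boundary obtained from the explicit derivative formula and the diameter bound $\operatorname{diam}(\Pismall) \lesssim C^{-1}$) is the natural companion to Lemma~\ref{pigradientcontrolbdry}, which supplies the matching lower bound $s_0 \lesssim |\tilde{\eta}'(\overline{u})|$. One small bookkeeping remark: the step $|\eta'(\overline{u}) - \eta'(u_L)| \lesssim |\overline{u} - u_L|$ uses that $\eta'$ is Lipschitz on $\statespace$ (with constant $\sup_{\statespace}|\eta''|$), and you correctly justify $u_L \in \Pismall$ from $\tilde{\eta}(u_L) = -\eta(u_L|u_R) < 0$; both points are worth spelling out but pose no obstacle. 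The scaling observation at the end — that $Cs_0 \cdot C^{-1}$ collapses to $s_0$, and this is exactly how the $\tilde{\eta}'' \sim Cs_0$ scaling and the $\operatorname{diam}(\Pismall) \sim C^{-1}$ bound conspire to produce a linear-in-$s_0$ estimate — is the right intuition for why the lemma holds.
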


\subsection{Proof of Theorem \ref{theoremsmallshock}}\label{shiftconstructionsmall}
In this section, we prove that Propositions \ref{Dcontsmall} and \ref{DRHsmall} imply Theorem \ref{theoremsmallshock}. The construction of the shift is almost the same as in the proof of Theorem \ref{dissipationfnllargeshock}, but with two key differences. The first is that after constructing the shift, we need to perform the local-to-global argument. Secondly, additional care must be taken to obtain the maximal shock speed $\maximalspeedsmall$, as for $u_R$ very close to $u_L$, the constant $C_2$ in the proof of Lemma \ref{qcontrol} degenerates. Due to this, we give an analog to Lemma \ref{qcontrol} that provides greater control on the constant $C_2$ in the small shock regime. This is possible because we now have more control on the geometry of $\Pismall$ due to the localization.
\begin{lemma}[Lemma 4.1 in \cite{MR4667839}]\label{qcontrolsmall}
There exist constants $\delta > 0$, $C^* > 0$ such that for any $u_L, u_R \in \statespace$ with $|u_L-d|+|u_R-d| \leq \tilde{s_0}$:
\begin{align*}
    \tilde{\eta}(u) &\geq \delta s_0 d(u, \partial \Pismall) \text{ for any } u \in \statespace \setminus \Pismall, \\
    |\tilde{q}(u)| &\leq C^*|\tilde{\eta}(u)| \text{ whenever } u \in \statespace \setminus \Pismall \text{ verifies } \tilde{q}(u) \leq 0.
\end{align*}
The constant $\delta$ depends only on the system, while $C^*$ depends continuously on $C$ as well.
\end{lemma}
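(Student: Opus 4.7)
The plan is to mirror the structure of the proof of Lemma \ref{qcontrol} from the large shock regime, but to use the finer geometric information about $\Pismall$ provided by Lemmas \ref{pistructuresmall} and \ref{pigradientcontrolbdry}, together with Proposition \ref{Dcontsmall}, to obtain the correct scaling in $s_0$.

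\textbf{Step 1: The linear-in-$s_0$ lower bound on $\tilde{\eta}$.} I would first establish the inequality $\tilde{\eta}(u) \geq \delta s_0\, d(u,\partial\Pismall)$ for $u \in \statespace \setminus \Pismall$. Since $\tilde{\eta}$ is strictly convex (as $\tilde{\eta}''(u) \sim Cs_0 > 0$ by Lemma \ref{pistructuresmall}), and $\Pismall$ is the closed interval on which $\tilde{\eta} \leq 0$, for any $u \notin \Pismall$ I pick the nearest boundary point $\overline{u} \in \partial \Pismall$ (so $\tilde{\eta}(\overline{u}) = 0$) and write
\[
\tilde{\eta}(u) \geq \tilde{\eta}(\overline{u}) + \tilde{\eta}'(\overline{u})(u-\overline{u}) = \tilde{\eta}'(\overline{u})(u-\overline{u}).
\]
Since $u$ lies on the same side of $\Pismall$ as the outward direction at $\overline{u}$, the sign of $\tilde{\eta}'(\overline{u})(u-\overline{u})$ is nonnegative and equals $|\tilde{\eta}'(\overline{u})|\cdot |u-\overline{u}| = |\tilde{\eta}'(\overline{u})|\, d(u,\partial\Pismall)$. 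Lemma \ref{pigradientcontrolbdry} then yields $|\tilde{\eta}'(\overline{u})| \gtrsim s_0$, producing the claimed bound with $\delta$ depending only on the system.

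\textbf{Step 2: Positivity of $\tilde{q}$ on $\partial\Pismall$.} To bootstrap from the first bound to the second, I need that $\tilde{q}(\overline{u}) \geq 0$ on $\partial \Pismall$, which is where Proposition \ref{Dcontsmall} enters. On the boundary, $\tilde{\eta}(\overline{u})=0$, so
\[
D_{cont}(\overline{u}) = -\tilde{q}(\overline{u}) + \lambda(\overline{u})\tilde{\eta}(\overline{u}) = -\tilde{q}(\overline{u}) \leq -K s_0^3,
\]
hence $\tilde{q}(\overline{u}) \geq K s_0^3 > 0$.

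\textbf{Step 3: The upper bound on $|\tilde{q}(u)|$.} For $u \in \statespace \setminus \Pismall$ with $\tilde{q}(u) \leq 0$, again take $\overline{u} \in \partial\Pismall$ to be the nearest boundary point. Using $\tilde{q}(\overline{u}) \geq 0$ from Step 2,
\[
|\tilde{q}(u)| = -\tilde{q}(u) \leq \tilde{q}(\overline{u}) - \tilde{q}(u) = -\int_0^1 \tilde{q}'\bigl((1-t)\overline{u} + tu\bigr)(u-\overline{u})\,\diff t.
\]
Since $\tilde{q}' = \tilde{\eta}' f'$ and the explicit formula $\tilde{\eta}'(v) = Cs_0(\eta'(v)-\eta'(u_L)) + (\eta'(u_R)-\eta'(u_L))$ from Lemma \ref{pistructuresmall} shows $\sup_{v \in \statespace}|\tilde{\eta}'(v)| \lesssim Cs_0$ (using $|u_R-u_L| = s_0$ and the $C^3$ regularity of $\eta$), I obtain
\[
|\tilde{q}(u)| \leq K\,C s_0 \cdot d(u, \partial \Pismall).
\]
Combining with Step 1 gives
\[
|\tilde{q}(u)| \leq \frac{K\,C s_0}{\delta s_0}\, \tilde{\eta}(u) = C^* \tilde{\eta}(u),
\]
with $C^* = K C/\delta$ depending continuously on $C$ as required.

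\textbf{Main obstacle.} The argument is short once Proposition \ref{Dcontsmall} is in hand, so the genuine difficulty is entirely upstream: Step 2 requires the strict negativity bound $D_{cont} \leq -K s_0^3$ rather than mere non-positivity, and that strict bound is precisely the cubic-in-$s_0$ content of Proposition \ref{Dcontsmall}. One must take care to track that the threshold $\tilde{s_0}$ chosen here is compatible with the one imposed by Proposition \ref{Dcontsmall} (and to verify that when $u \notin \Pismall$ but $\tilde{q}(u) > 0$, there is nothing to prove since the condition $\tilde{q}(u) \leq 0$ is hypothetical). The scaling in the first bound forces the factor $s_0$ on the right-hand side, and this is exactly what enables the uniform $C^*$ after division in Step 3, so losing the scaling at any stage would destroy the conclusion.
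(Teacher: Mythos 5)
Your proof is correct. The paper itself does not reproduce an argument for this lemma (it refers the reader to \cite{MR4667839}), but what you have written is the natural adaptation of the paper's own proof of Lemma \ref{qcontrol}: Step~1 replaces the ad hoc lower bound $|\tilde{\eta}'(\overline{u})| \geq C_1 = c^*\operatorname{diam}(\Pi_a)$ from the large-shock case with the sharp boundary gradient bound $|\tilde{\eta}'(\overline{u})| \gtrsim s_0$ from Lemma \ref{pigradientcontrolbdry}; Step~2 invokes the small-shock dissipation estimate to get $\tilde{q} \geq 0$ on $\partial\Pismall$, exactly as Lemma \ref{qcontrol} invokes Proposition \ref{Dcontlarge}; and Step~3 uses $\tilde{q}' = f'\tilde{\eta}'$ together with the explicit formula for $\tilde{\eta}'$ from Lemma \ref{pistructuresmall} to get the upper bound $\lesssim Cs_0$, so that the $s_0$'s cancel and $C^*$ is independent of $s_0$ — which is the whole point of the small-shock version.

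One small correction to your closing commentary: Step~2 does \emph{not} require the strict cubic bound $D_{cont}(\overline{u}) \leq -Ks_0^3$; mere non-positivity of $D_{cont}$ on $\Pismall$ already gives $\tilde{q}(\overline{u}) \geq 0$, which is all Step~3 uses. The quantitative $-Ks_0^3$ content of Proposition \ref{Dcontsmall} is genuinely needed downstream (in Propositions \ref{improvedDcontsmall} and \ref{DRHsmall}), not here. Identifying Lemma \ref{pigradientcontrolbdry} as the engine of the $s_0$ scaling in Step~1 is the more accurate diagnosis of where the small-shock refinement enters.
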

For the proof, we refer to \cite{MR4667839}. Now, we prove the following proposition, which shows the non-positivity of the dissipation functional for shocks localized around a fixed state $d$. 
\begin{proposition}\label{localtheoremsmallshock}
Consider \eqref{cl} with $f$ verifying \eqref{concaveconvex}. Fix a closed interval $[\underline{b},\overline{b}]$, with $0 < \underline{b} < \overline{b}$. There exists $C_0 > 0$ such that the following holds. Fix $d \in [\underline{b},\overline{b}]$. Then, there exists $\eps_d > 0$ such that the following is true: consider any Ole\u{\i}nik shock $(u_L,u_R)$ with $|u_L-d|+|u_R-d| \leq \eps_d$, and let $|u_L-u_R|=s_0$. Then, for any $a_1, a_2 > 0$ satisfying:
\[
1+\frac{C_0}{2}s_0 \leq \frac{a_1}{a_2}\leq 1+2C_0s_0,
\]
for any $u \in \weaktwo$, any $\overline{t} \in [0,\infty)$, and any $x_0 \in \R$, there exists a Lipschitz shift function $h: [\overline{t} \to \infty) \to \R$, with $h(\overline{t})=x_0$, such that the following dissipation functional verifies:
\begin{equation*}
a_1(\dot{h}(t)\eta(u_-(t)|u_L)-q(u_-(t);u_L))-a_2(\dot{h}(t)\eta(u_+(t)|u_R)-q(u_+(t);u_R)) \leq 0,
\end{equation*}
for almost every $t \in [\overline{t},\infty)$.
\end{proposition}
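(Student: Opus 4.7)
The plan is to closely mirror the shift-construction argument used to derive Theorem \ref{dissipationfnllargeshock} from its contraction estimates in Section \ref{shiftconstruction}, substituting Propositions \ref{Dcontsmall}, \ref{DRHsmall} and Lemma \ref{qcontrolsmall} for their large-shock counterparts. Writing $\frac{a_1}{a_2}=1+C s_0$ with $C\in[C_0/2,2C_0]$, I would first choose $C_0$ large enough that both Propositions \ref{Dcontsmall} and \ref{DRHsmall} apply for every such $C$, with associated small-shock thresholds $\tilde s_0(C,d)$. By Remark \ref{uniformityremarksmall}, these thresholds depend continuously on $d$, so the choice
\[
\eps_d := \min_{C\in[C_0/2,2C_0]} \tilde s_0(C,d) > 0
\]
makes both propositions simultaneously available across the entire admissible parameter range.

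Imitating \eqref{velocitylarge}, define the upper semi-continuous velocity
\[
V(u) := \lambda(u) - (C^*+2L)\,\ind_{\Pismall^c}(u),
\]
where $C^*$ is the constant from Lemma \ref{qcontrolsmall} and $L=\sup_{\statespace}|\lambda|$. Applying Lemma \ref{filipov} produces a Lipschitz path $h$ with $h(\overline t)=x_0$ such that at almost every $t$ either $u_+(t)=u_-(t)$ or $(u_-,u_+,\dot h(t))$ is a $\eta,\eta_{\phitan(\underline b)}$-entropic Rankine--Hugoniot discontinuity. At such a $t$, I would split into the same four cases as in Section \ref{shiftconstruction}, according to whether $u_\pm\in\Pismall$ or $\Pismall^c$. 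In Case 1 ($u_\pm\in\Pismall^c$), the large negative jump in $V$ forces $\dot h(t)<\inf_{u\in\statespace}\lambda(u)$, so property \ref{enum:4} yields $u_-=u_+=:u$ and the dissipation collapses to $a_2(-\tilde q(u)-C^*\tilde\eta(u))\le 0$ by Lemma \ref{qcontrolsmall}. When $u_-\in\Pismall$ (Cases 3--4), Proposition \ref{DRHsmall} handles $u_-\neq u_+$, using that $u_+\ge\phitan(\underline b)$ follows from the maximum principle for $\weaktwo$ (argued in the lemma preceding Corollary \ref{uniformalarge}), and Proposition \ref{Dcontsmall} handles $u_-=u_+$.

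The main obstacle is Case 2 ($u_-\in\Pismall^c$, $u_+\in\Pismall$), for which no small-shock analog of Proposition \ref{DRHlarge2} has been proved. The plan is to rule this case out outright using the sharp localization of $\Pismall$ supplied by Lemma \ref{pistructuresmall}. Since $V$ is continuous at $u_+$ in the interior of $\Pismall$ (with the boundary case treated by a standard perturbation argument), Lemma \ref{filipov} gives $\dot h(t)\le V_{\max}(t)=\lambda(u_+)$ once $C^*+2L$ is taken large. If $u_->u_+>0$, property \ref{enum:4} yields $\sigma_\pm>\lambda(u_+)$, contradicting $\dot h(t)=\sigma_\pm$. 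For the delicate sub-case $u_-<u_+$ with $u_-$ potentially in the concave region (permitted by the bound $u_-\ge\phitan(\underline b)$), I would combine the $\eta_{\phitan(\underline b)}$-entropy condition with Lemmas \ref{Kru\v{z}kov1}--\ref{Kru\v{z}kov2} and property \ref{enum:5} to pin $u_-$ onto an explicit shock curve through $u_+$, and then use the smallness of $\Pismall$ (diameter $\sim C^{-1}$) together with continuity of $(u,s)\mapsto S_u(s)$ to force $u_-$ into $\Pismall$ as well, a contradiction. This localization-plus-maximum-principle mechanism is the natural small-shock substitute for Proposition \ref{DRHlarge2}, and is what I anticipate requires the most care in the write-up.
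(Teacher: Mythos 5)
Most of your outline coincides with the paper's proof: the same artificial velocity built from Lemma \ref{qcontrolsmall}, the same appeal to Lemma \ref{filipov}, the same four-case splitting with Propositions \ref{Dcontsmall} and \ref{DRHsmall} replacing their large-shock counterparts, and the same use of Remark \ref{uniformityremarksmall} to make the constants uniform in $d$. The genuine gap is in your treatment of Case 2 with $u_-<u_+$. Your proposed mechanism --- pin $u_-$ on a shock curve through $u_+$ and use $diam(\Pismall)\sim C^{-1}$ plus continuity of $(u,s)\mapsto S_u(s)$ to force $u_-\in\Pismall$ --- cannot work, because the left states of $\eta$-entropic upward jumps with right state $u_+\in\Pismall$ are not close to $\Pismall$ at all: by Lemma \ref{entropydissipationlemma} such a discontinuity requires $u_+\leq \phidiss(u_-)$, i.e. $u_-\leq \phidiss(u_+)<0$, so every candidate $u_-$ lies deep in the concave region, far from $\Pismall\subset B(d,\eps)$ with $d\geq\underline{b}>0$. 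Shrinking $\Pismall$ gives no contradiction with $u_-\notin\Pismall$. Moreover, the Kru\v{z}kov entropy $\eta_{\phitan(\underline{b})}$ is of no help here: for a jump with both states $\geq\phitan(\underline{b})$ its dissipation vanishes identically (see the first line of the proof of Lemma \ref{Kru\v{z}kov1}), and Lemmas \ref{Kru\v{z}kov1}--\ref{Kru\v{z}kov2} are anyway stated for left states $u_->0$ with downward jumps, so they cannot ``pin'' $u_-$ in the way you suggest.

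The correct (and paper's) mechanism is different: the contradiction comes from the maximum principle, not from membership in $\Pismall$. Since $u\in\weaktwo$ satisfies $u\geq\phitan(\underline{b})$, one has $u_-\geq\phitan(\underline{b})$, hence $\phidiss(u_-)\leq\phidiss(\phitan(\underline{b}))$ ($\phidiss$ is decreasing). On the other hand, because $\phidiss(\underline{b})<\phitan(\underline{b})$ one gets $\phidiss(\phitan(\underline{b}))<\underline{b}\leq d$, so one may fix $\eps>0$ (uniform in $d\in[\underline{b},\overline{b}]$) with $\phidiss(\phitan(\underline{b}))<v$ for all $v\in B(d,\eps)$, and then choose $C_0$ large and $\tilde{s_0}$ small so that additionally $\Pismall\subset B(d,\eps)$ (Lemma \ref{pistructuresmall}); this is the analogue of \eqref{midshockruleout} and is missing from your setup of $\eps_d$. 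With it, any $u_+\in\Pismall$ satisfies $u_+>\phidiss(\phitan(\underline{b}))\geq\phidiss(u_-)$, so an $\eta$-entropic discontinuity with $u_-<u_+$ and $u_+\in\Pismall$ simply cannot occur, and Case 2 reduces to the $u_->u_+$ sub-case you already handle via property \ref{enum:4}. This is why no small-shock analogue of Proposition \ref{DRHlarge2} is needed; as written, your argument for this case would not close.
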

\begin{proof}
Fix $d \in [\underline{b},\overline{b}]$. We claim that there exists $\eps$ (uniform in $d$) so that:
\begin{equation}\label{midshockruleoutsmall}
\phidiss(\phitan(\underline{b})) < v \text{ for all } v \in B(d,\eps).
\end{equation}
As $d \geq \underline{b}$, it suffices to assume $d=\underline{b}$. Then, $\phidiss(\underline{b}) < \phitan(\underline{b})$, so by taking $\eps$ sufficiently small, by the continuity of $\phidiss$, we may have:
\[
\phidiss(v) < \phitan(\underline{b}),
\]
for all $v \in B(d,\eps)$. But this implies:
\[
\phidiss(\phitan(\underline{b})) < \phidiss(\phidiss(v))=v.
\]
Now, for the last time, fix $C_1$ sufficiently large and $\tilde{s_0}$ sufficiently small so that $\Pismall \subset B(d,\eps)$ for any $u_L \in B(d,\frac{\eps}{2})$ and any $C > \frac{C_1}{2}$ (this is possible by Lemma \ref{pistructuresmall}), and also that satisfy Propositions \ref{Dcontsmall} and \ref{DRHsmall}. Note that as $C_1$ varies continuously in $d$ (see Remark \ref{uniformityremarksmall}), we may take $C_0=\sup_{d \in [\underline{b},\overline{b}]}C_1$ to be uniform in $d$, by taking $\tilde{s_0}$ smaller if needed. At this point, for the rest of the proof, $C$ is proportional to the fixed constant $C_0$. Then, let $\eps_d=\min(\frac{\eps}{2}, \frac{\tilde{s_0}}{2})$, where $\tilde{s_0}=\tilde{s_0}(C_0,d)$, and fix a $\eta$-entropic shock $(u_L,u_R)$  with $u_L,u_R \in B(d, \eps_d)$. From here, the construction of the shift is exactly the same as in Section \ref{shiftconstruction}, with the following changes:
\begin{enumerate}
    \item We use $C^*$, the constant from Lemma \ref{qcontrolsmall} as the constant in the definition of the artifical velocity rather than the constant $C_2$ from Lemma \ref{qcontrol}. Note that we can take the same constant $C^*$ for shocks which are as small as we want, as long as the shock is localized around $d$. Note that $C^*$ is independent of both the state $d$ and $C$, as we have fixed $C$ to be proportional to the fixed constant $C_{0}$.
    \item We use Propositions \ref{Dcontsmall} and \ref{DRHsmall} anywhere Propositions \ref{Dcontlarge} and \ref{DRHlarge1} are used in the large shock regime.
    \item In case 2, we use \eqref{midshockruleoutsmall} instead of \eqref{midshockruleout}, and note shocks with $u_- < \phitan(\underline{b})$ are not possible via the definition of $\weaktwo$ and the maximum principle.
\end{enumerate}
\end{proof}
Now, we prove Theorem \ref{theoremsmallshock}. Consider the following open cover of $[\underline{b}, \overline{b}]$:
\[
\{(d,\eps_d)\}, \text{ where } d \in [\underline{b}, \overline{b}],
\]
where $\eps_d$ is such that Proposition \ref{localtheoremsmallshock} holds with constant $C_0$. By compactness, we can extract a finite sub-cover:
\[
\{(d_n, \eps_{d_n})\}, \text{ for } n \in [1,N],
\]
where for a shock $(u_L, u_R)$ contained in $B(d_n, \eps_{d_n})$, Proposition \ref{localtheoremsmallshock} holds with constant $C_0$. Further, the velocity of the shift is bounded by:
\[
|\dot{h(t)}| \leq C^*+3L,
\]
where $L=\sup_{u \in \statespace}\lambda(u)\}$.
Next, using the Lebesgue number lemma, there exists $\delta > 0$ such that for $|u_L-u_R| < \delta$, $\{u_L,u_R\} \subset B(d_n, \eps_{d_n})$ for some $n$. Finally, take $\tilde{s_0}=\min_n(\delta, \eps_{d_n})$. It remains to show \eqref{weightestimate}. Indeed, let $u_L, u_R \in [\underline{b}, \overline{b}]$, with $|u_L-u_R|=s_0 < \tilde{s_0}$. Then, $\{u_L,u_R\} \subset B(d_n, \eps_{d_n})$ for some $n$, so Proposition \ref{localtheoremsmallshock} holds with:
\[
1+\frac{C_0}{2}s_0 \leq \frac{a_1}{a_2} \leq 1+2C_0s_0.
\]
Finally, the velocity of the shift in all cases is bounded by:
\[
|\dot{h}(t)| \leq C^*+3L=:\maximalspeedsmall.
\]

\subsection{Proof of Proposition \ref{Dcontsmall}}
In this section, we prove Proposition \ref{Dcontsmall}. For this section, we fix a state $d$, and a single Ole\u{\i}nik shock $(u_L,u_R)$ with $u_R=S_{u_L}(s_0)$, and both $u_L, u_R \in [\underline{b}, \overline{b}]$. Firstly, note that $\Pismall$ is compactly contained in $(0,\infty)$ ($\tilde{\eta}(u_R) > 0$, so $\Pismall$ must be contained in $(u_R, \infty)$). We recall that the continuous entropy dissipation is:
\begin{align}\label{Dcontsmallpf}
D_{cont}(u)=-\tilde{q}(u)+\lambda(u)\tilde{\eta}(u),
\end{align}
where $\tilde{\eta}(u)=(1+Cs_0)\eta(u|u_L)-\eta(u|u_R)$ and $\tilde{q}(u)=(1+Cs_0)q(u;u_L)-q(u;u_R)$. It suffices to show $D_{cont}(u) \lesssim -s_0^3$ for $C$ sufficiently large, $s_0$ sufficiently small, and $u \in \Pismall$.
\par The dissipation rate is motivated by the scalar case with convex flux, in which the rate is computed explicitly for Burger's equation (see \cite{MR3322780}, \cite{MR4305935}, \cite{MR4101366}). The dissipation rate was proven in the system case in \cite{MR4667839}. Although we are working in the scalar case, the fact that the flux is not genuinely nonlinear introduces some system-like qualities (we need to use a weight $a$, and there are in some sense two distinct characteristic fields separated by the inflection point). For this reason, we follow the proof of \cite{MR4667839}, although many of the proofs are greatly simplified as a consequence of being in the scalar setting. The strategy is to show that there is a unique maximizer $u^*$ for $D_{cont}$ in $\Pismall$, and then show that $D_{cont}(u^*) \lesssim -s_0^3$. Firstly, we show that a maxima cannot occur in the interior of $\Pismall$:
\begin{lemma}[Lemma 5.1 in \cite{MR4667839}]\label{critptbdry}
Let $u$ be a critical point of $D_{cont}$. Then, $u \in \partial \Pismall$. 
\end{lemma}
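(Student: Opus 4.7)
The plan is to derive an explicit factorization of $D_{cont}'(u)$ that exposes the zero set directly, and then use the localization of $\Pismall$ in the convex region of $f$ to rule out interior critical points.

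First, I would exploit the fact, established in Lemma \ref{pistructuresmall}, that $(\tilde\eta,\tilde q)$ is an entropy/entropy-flux pair for \eqref{cl}. Since for scalar conservation laws any entropy-flux pair satisfies the compatibility relation $\tilde q'(u)=\lambda(u)\tilde\eta'(u)$ (verify this by differentiating the defining identity $\partial_u q(u;b)=f'(u)\partial_u\eta(u|b)$ and taking the linear combination defining $\tilde q$), differentiating
\[
D_{cont}(u)=-\tilde q(u)+\lambda(u)\tilde\eta(u)
\]
produces the clean cancellation
\[
D_{cont}'(u)=-\tilde q'(u)+\lambda'(u)\tilde\eta(u)+\lambda(u)\tilde\eta'(u)=f''(u)\,\tilde\eta(u).
\]

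Next I would verify that $f''$ does not vanish on $\Pismall$. By Lemma \ref{pistructuresmall}, for $C$ large and $s_0$ small the set $\Pismall$ is compactly contained in $\statespace$ with diameter $\sim C^{-1}$, and it lies near $u_L\in[\underline b,\overline b]$ with $\underline b>0$; hence for $\tilde s_0$ sufficiently small (depending on $d$, $\underline b$, and $C_1$) one has $\Pismall\subset(\tfrac{\underline b}{2},\infty)$, so by the concave-convex assumption \eqref{concaveconvex} $f''(u)>0$ throughout $\Pismall$.

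Finally, combining these two facts, $D_{cont}'(u)=0$ forces $\tilde\eta(u)=0$, i.e.\ $u\in\partial\Pismall$. Equivalently, on the interior of $\Pismall$ Lemma \ref{negativityinpi} gives $\tilde\eta(u)<0$ strictly, so $D_{cont}'(u)=f''(u)\tilde\eta(u)<0$, ruling out any interior critical point. The entire argument is elementary; the only subtlety is checking that the localization hypotheses one is already forced to impose (for $C$ large and $s_0$ small) are enough to place $\Pismall$ entirely inside the convex part of the flux, so that $f''$ is bounded away from zero on $\Pismall$ uniformly in the parameters. No genuine obstacle is expected.
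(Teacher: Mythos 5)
Your proof is correct and essentially identical to the paper's: compute $D_{cont}'(u)=f''(u)\,\tilde{\eta}(u)$ via the compatibility $\tilde{q}'=\lambda\tilde{\eta}'$, note that $\Pismall$ lies in the convex region of the flux (the paper gets this even more directly from $\tilde{\eta}(u_R)>0$, which forces $\Pismall\subset(u_R,\infty)\subset(0,\infty)$), and conclude that a critical point must satisfy $\tilde{\eta}(u)=0$, i.e.\ $u\in\partial\Pismall$. One small nit: the strict negativity of $\tilde{\eta}$ on the interior of $\Pismall$ follows from the strict convexity of $\tilde{\eta}$ (Lemma \ref{pistructuresmall}), not from Lemma \ref{negativityinpi}, which only bounds $-\tilde{\eta}$ from above.
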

\begin{proof}
We compute:
\begin{align*}
D_{cont}'(u)=\tilde{\eta}(u)f''(u).
\end{align*}
As $u$ is a critical point:
\begin{equation*}
\tilde{\eta}(u)f''(u)=0
\end{equation*}
As the characteristic field is genuinely nonlinear in $\Pismall$ by property \ref{enum:1}, we see $\tilde{\eta}(u)=0$.
\end{proof}
As $\Pismall$ is a closed interval, there are exactly two points which may be maximizers of $D_{cont}$ (for existence, note that $D_{cont}$ is a continuous function on the compact set $\Pismall$, so it must attain a maximum). The next lemma shows which one of the two points on the boundary of $\Pismall$ is the maximizer. 
\begin{lemma}[Lemma 5.2 in \cite{MR4667839}]
Let $u^*$ be a maximizer of $D_{cont}$ on $\Pismall$. Then, $u^* < u_L$.
\end{lemma}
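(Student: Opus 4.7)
The plan is to exploit Lemma \ref{critptbdry}, which localizes maximizers to $\partial\Pismall$, and then directly compare the values of $D_{cont}$ at the two endpoints via a clean integration by parts. Since $\Pismall$ is a closed interval (Lemma \ref{pistructuresmall}), write $\partial\Pismall = \{u_1, u_2\}$ with $u_1 < u_2$. The first step is to observe that $u_L$ is in the interior of $\Pismall$: indeed $\tilde{\eta}(u_L) = -\eta(u_L \mid u_R) < 0$ by strict convexity of $\eta$, so $u_1 < u_L < u_2$. It then suffices to show $D_{cont}(u_1) > D_{cont}(u_2)$.

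For this, note that on $\partial\Pismall$ we have $\tilde{\eta} = 0$, so $D_{cont}(u_i) = -\tilde{q}(u_i)$ for $i=1,2$ by \eqref{Dcontsmallpf}. Consequently
\begin{equation*}
D_{cont}(u_1) - D_{cont}(u_2) = \tilde{q}(u_2) - \tilde{q}(u_1) = \int_{u_1}^{u_2} \tilde{q}'(u)\diff u = \int_{u_1}^{u_2} f'(u)\,\tilde{\eta}'(u)\diff u,
\end{equation*}
where the last identity uses the entropy/entropy-flux relation $\tilde{q}' = f' \tilde{\eta}'$ (which follows at once from the definitions \eqref{etatildedef}, \eqref{qtildedef}). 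Integrating by parts and using $\tilde{\eta}(u_1) = \tilde{\eta}(u_2) = 0$,
\begin{equation*}
D_{cont}(u_1) - D_{cont}(u_2) = -\int_{u_1}^{u_2} f''(u)\,\tilde{\eta}(u)\diff u.
\end{equation*}

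The key remaining point is to argue that the integrand is strictly negative on $(u_1, u_2)$, forcing the difference to be strictly positive. By Lemma \ref{pistructuresmall}, for $C$ sufficiently large and $s_0$ sufficiently small, $\text{diam}(\Pismall) \sim C^{-1}$, so $\Pismall$ is squeezed into a small neighborhood of $u_L \geq \underline{b} > 0$, which lies in the strictly convex region of $f$. Hence property \ref{enum:1} and \eqref{concaveconvex} give $f''(u) > 0$ for all $u \in \Pismall$. Meanwhile $\tilde{\eta}(u) < 0$ on the interior of $\Pismall$. Therefore $-f''(u)\tilde{\eta}(u) > 0$ on $(u_1, u_2)$, yielding $D_{cont}(u_1) > D_{cont}(u_2)$, so the maximizer must be $u^* = u_1 < u_L$.

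The only subtlety—which is not really an obstacle once one invokes Lemma \ref{pistructuresmall}—is verifying that $\Pismall$ lies entirely in the convex region of $f$. In the system setting of \cite{MR4667839} this step is more delicate because one must track the correct characteristic family, but in the scalar concave-convex setting it reduces to the shrinkage $\text{diam}(\Pismall) \lesssim C^{-1}$ together with $u_L \geq \underline{b} > 0$. Everything else is a one-line integration by parts, and no use is made of the $\eta$-entropic assumption beyond what is already baked into the definition of $\Pismall$.
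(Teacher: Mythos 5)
Your proof is correct and takes a genuinely different route from the paper's. The paper establishes the result via a second-derivative test: it computes $D_{cont}''(u^*) = f''(u^*)\tilde{\eta}'(u^*)$ at boundary points (using $\tilde{\eta}(u^*)=0$), notes that a maximizer must have $D_{cont}''(u^*) \leq 0$, and observes that if $u^*$ were the right endpoint $\tilde{u}$ then $\tilde{\eta}'(\tilde{u})>0$ and $f''(\tilde{u})>0$ would give the opposite sign. (The paper has an apparent typo writing $D_{cont}'''$ where it means $D_{cont}''$.) Your argument instead computes the difference $D_{cont}(u_1)-D_{cont}(u_2)$ directly by integrating $\tilde{q}' = f'\tilde{\eta}'$ and integrating by parts, obtaining the exact formula $-\int_{u_1}^{u_2} f''(u)\tilde{\eta}(u)\,\diff u$, which is manifestly positive since $f''>0$ on $\Pismall$ and $\tilde{\eta}<0$ in the interior. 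Your approach is a little more transparent and even quantitative (it gives the gap explicitly, not just the ordering), whereas the paper's second-derivative test is shorter but slightly opaque, relying on the reader to recall which second-order inequality is necessary at a boundary extremum. One minor remark: the paper gives a simpler justification that $\Pismall \subset (0,\infty)$ — namely $\tilde{\eta}(u_R)>0$ forces $\Pismall \subset (u_R,\infty)$ — which does not require invoking the shrinkage $\text{diam}(\Pismall)\sim C^{-1}$ at all; either route is fine.
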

\begin{proof}
We compute:
\[
D_{cont}''(u^*)=f''(u^*)\tilde{\eta}'(u^*).
\]
As $u^*$ is a maximizer,  we must have $D_{cont}'''(u^*) \leq 0$. So, if $u^*$ is the right endpoint of $\Pismall$, then $\tilde{\eta}'(u^*) > 0$, a contradiction.
\end{proof}
The previous two lemmas show that it suffices to compute $D_{cont}(u^*)$, where $u^*$ is the left endpoint of $\partial \Pismall$, and show that $D_{cont}(u^*) \lesssim -s_0^3$. To compute this, we introduce some notation. We define $D_R(u)$, and $D_L(u)$ as follows:
\begin{align*}
D_R(u)&:=q(u;u_R)-\lambda(u)\eta(u|u_R), \\
D_L(u)&:=-q(u;u_L)+\lambda(u)\eta(u|u_L).
\end{align*}
Both $D_R$ and $D_L$ are approximately half of the entropy dissipation, i.e. they roughly sum to $D_{cont}$:
\begin{equation*}
D_{cont}(u)=D_R(u)+(1+Cs_0)D_L(u).
\end{equation*}
So, it suffices to show that actually, $D_L(u^*) \lesssim -s_0^3$ and $D_R(u^*) \lesssim -s_0^3$. This is precisely the content of the following lemma.
\begin{lemma}[Lemma 5.7 in \cite{MR4667839}]
For $C$ sufficiently large and $s_0$ sufficiently small:
\begin{align*}
    D_R(u^*) &\lesssim -s_0^3, \\
    D_L(u^*) &\lesssim -s_0^3.
\end{align*}
\end{lemma}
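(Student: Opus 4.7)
The plan is to combine Taylor expansions of $D_L$ around $u_L$ and of $D_R$ around $u_R$ with a precise asymptotic localization of the left endpoint $u^*$ of $\partial\Pismall$. Because $D_L$ and $D_R$ are relative-entropy-based quantities, using $q'(u)=\eta'(u)f'(u)$ one computes $D_L'(u)=f''(u)\eta(u|u_L)$ and $D_R'(u)=-f''(u)\eta(u|u_R)$, whence the value, first, and second derivatives of $D_L$ all vanish at $u_L$ and of $D_R$ at $u_R$, while $D_L'''(u_L)=\eta''(u_L)f''(u_L)$ and $D_R'''(u_R)=-\eta''(u_R)f''(u_R)$. This yields
\[
D_L(u_L+\xi)=\frac{\eta''(u_L)f''(u_L)}{6}\xi^3+O(\xi^4), \qquad D_R(u_R+\mu)=-\frac{\eta''(u_R)f''(u_R)}{6}\mu^3+O(\mu^4),
\]
with remainder constants depending only on the system and continuously on $d$. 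Since $\underline{b}>0$, the flux is strictly convex on $[\underline{b},\overline{b}]$, so $\eta''(u_L)f''(u_L)\sim 1$ and $\eta''(u_R)f''(u_R)\sim 1$ uniformly.

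Next, I would identify $\xi^*:=u^*-u_L$. The boundary condition $\tilde{\eta}(u^*)=0$ reads $(1+Cs_0)\eta(u^*|u_L)=\eta(u^*|u_R)$. Writing $u^*-u_R=\xi^*+s_0$ and Taylor expanding both sides, the quadratic balance is
\[
(1+Cs_0)\frac{\eta''(u_L)}{2}(\xi^*)^2=\frac{\eta''(u_R)}{2}(\xi^*+s_0)^2+O(s_0^3+|\xi^*|^3),
\]
and using $\eta''(u_R)=\eta''(u_L)+O(s_0)$ this reduces at leading order to $Cs_0(\xi^*)^2-2s_0\xi^*-s_0^2=O(s_0^3)$. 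Selecting the negative root gives
\[
\xi^*=\frac{1-\sqrt{1+Cs_0}}{C}+O(s_0^2)=-\frac{s_0}{2}+O(s_0^2),
\]
valid because $C\le 2C_1$ and $\tilde{s_0}$ can be shrunk so that $Cs_0$ is as small as we like. In particular $\xi^*<0$, $\mu^*:=\xi^*+s_0>0$, and both $|\xi^*|$ and $\mu^*$ equal $\tfrac{s_0}{2}+O(s_0^2)$.

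Substituting these asymptotics into the cubic expansions yields
\[
D_L(u^*)=-\frac{\eta''(u_L)f''(u_L)}{48}s_0^3+O(s_0^4), \qquad D_R(u^*)=-\frac{\eta''(u_R)f''(u_R)}{48}s_0^3+O(s_0^4),
\]
so both $D_L(u^*),D_R(u^*)\lesssim -s_0^3$ for $s_0$ small enough. The main technical obstacle is keeping the $O(s_0^4)$ errors, both in the cubic expansions of $D_L, D_R$ and in the identification of $\xi^*$, with implicit constants uniform in $C\in[C_1/2,2C_1]$. This uses crucially that $Cs_0$ can be made arbitrarily small by shrinking $\tilde{s_0}$ after $C_1$ is fixed, so the $O(Cs_0^2)$ correction to $\xi^*$ is of genuine order $s_0^2$ and does not contaminate the cubic leading term. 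Combined with the uniform lower bound $\eta''f''\gtrsim 1$ on $[\underline{b},\overline{b}]$, this produces a positive constant $K$, depending only on the system and continuously on $d$, with $D_L(u^*),D_R(u^*)\le -Ks_0^3$, as required.
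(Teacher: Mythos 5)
Your proof is correct and reaches the same conclusion by essentially the same mechanism—the key observation $D_L'(u)=f''(u)\eta(u|u_L)$ (and its analog for $D_R$), combined with the fact that $u^*$ sits at distance $\sim s_0$ from both $u_L$ and $u_R$—but you carry out a more explicit computation where the paper is terser. The paper simply shows $|u^*-u_L|\sim s_0$ (via Lemmas \ref{quadraticentropy} and \ref{negativityinpi}) and then integrates $D_L'$, asserting ``the same argument applied to $D_R$''; you instead solve the boundary equation $\tilde{\eta}(u^*)=0$ explicitly to leading order, getting $u^*-u_L=-\tfrac{s_0}{2}+O(s_0^2)$, which has the benefit of making the estimate $|u^*-u_R|\sim s_0$ transparent (the paper glosses over this, and one actually needs to observe that $\eta(u^*|u_R)=(1+Cs_0)\eta(u^*|u_L)\sim s_0^2$ to justify the $D_R$ half of their ``same argument''). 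So your version is a useful expansion of a step the paper leaves compressed.

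One technical caveat: your Taylor expansions $D_L(u_L+\xi)=\tfrac{1}{6}D_L'''(u_L)\xi^3+O(\xi^4)$ and the analog for $D_R$ require $D_L,D_R\in C^4$, i.e.\ $f\in C^5$, since $D_L^{(4)}$ involves $f^{(5)}$; the paper only assumes $f\in C^4$. This is harmless but should be fixed: since $D_L\in C^3$ on the compact $\statespace$, uniform continuity of $D_L'''$ gives $D_L(u_L+\xi)=\tfrac{1}{6}D_L'''(u_L)\xi^3+o(\xi^3)$ with a modulus uniform in $u_L\in[\underline{b},\overline{b}]$, and this weaker form suffices (take $\tilde{s_0}$ small enough that the $o(s_0^3)$ remainder is, say, at most half of the leading term). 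A second small omission is that selecting the negative root of the quadratic relies on $u^*<u_L$, which is Lemma 5.2 in the paper (Lemma immediately preceding the one being proved); worth an explicit citation, since the positive root $\sim C^{-1}$ corresponds to the \emph{other} boundary point $\tilde{u}$, for which your truncated expansion of $\eta(\cdot|u_L)$ is not valid.
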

\begin{proof}
Firstly, we show that $|u^*-u_L| \sim s_0$. As $u^* > u_R$, $|u^*-u_L| \lesssim s_0$, so it suffices to show the reverse inequality. We have:
\[
s_0^2=|u_L-u_R|^2 \sim \eta(u_L|u_R)=-\tilde{\eta}(u_L) \lesssim s_0d(u_L,\partial \Pismall),
\]
by Lemmas \ref{quadraticentropy} and \ref{negativityinpi}. Dividing by $s_0$ gives the result. 
\par Next, we estimate $D_L$ as follows. Firstly, we compute:
\[
D_L'(u)=f''(u)\eta(u|u_L) \sim |u-u_L|^2.
\]
Then:
\begin{equation*}
D_L(u^*)=\int_{u_L}^{u^*}D_L'(u)\diff u \lesssim -\int_{u^*}^{u_L}|u-u_L|^2\diff u \lesssim -|u^*-u_L|^3,
\end{equation*}
so we obtain:
\begin{equation*}
D_L(u^*) \lesssim -s_0^3,
\end{equation*}
since $|u^*-u_L| \sim s_0$. The same argument applied to $D_R$ grants the result. 
\end{proof}
This completes the proof of Proposition \ref{Dcontsmall}.
\subsection{Proof of Proposition \ref{DRHsmall}}
In this section, we prove Proposition \ref{DRHsmall}. Again, we follow the same proof outline pioneered in \cite{MR4667839}. This section is more delicate than the proof of Proposition \ref{Dcontsmall}, and in fact the negativity computed in Proposition \ref{Dcontsmall} is needed to show Proposition \ref{DRHsmall}. For convenience, we now give a brief recount of the strategy taken in \cite{MR4667839}, which we also follow here. 
\par Firstly, for any state $u \in \Pismall$, we determine that there is a ``maximal'' shock $(u,u_+)$, such that:
\[
D_{max}(u):=\max_{s \leq s^u_{\phitan(\underline{b}))}}D_{RH}(u,S_u(s),\sigma_u(s))=D_{RH}(u,u_+,\sigma_\pm).
\]
This is proved in Lemma \ref{maximalshock}. Recall that by the definition of $\weaktwo$, we need only consider shocks with both states bounded from below by $\phitan(\underline{b})$, which explains the maximum over the restricted range in the above definition. Next, we decompose $\Pismall$ into multiple sets and show $D_{max}(u) \leq 0$ on each set separately. We divide the proof into two parts based on the length scale of sets in each part:
\newline \underline{Part 1:} $D_{max}(u) \leq 0$ for $u$ in the set $\Pismall \setminus \Rsmall$, which has length scale $diam(\Pismall)\sim C^{-1}$.
\newline \underline{Part 2:} $D_{max}(u) \leq 0$ for $u$ in the set $\Rsmall$, which has length scale $s_0$. 
\newline Within part 1, we decompose the proof into two steps.
\newline \underline{Step 1.1:} Firstly, we show that $D_{max}(u) \leq 0$ for $u$ outside of a set $\Qsmall$. In this case, we use that $u$ is sufficiently far from $u^*$, the unique maximizer of $D_{cont}$ in $\Pismall$, to gain an improvement of Proposition \ref{Dcontsmall} (see Proposition \ref{improvedDcontsmall}).
\newline \underline{Step 1.2:} Secondly, we show that $D_{max}(u) \leq 0$ for $u$ in the set $\Qsmall \setminus \Rsmall$. 
\newline Within part 2, we decompose the proof into four steps, corresponding to a decomposition of $\Rsmall$ into three parts:
\newline \underline{Step 2.1:} We show $D_{max}(u) \leq 0$ in a set $\Rsmallbd$, via methods similar to Step 1.1.
\newline \underline{Step 2.2:} We show $D_{max}(u) \leq 0$ is a set $\Rsmallnot$, a set of states close to $u_L$, by showing that $u_L$ is a local maximizer of $D_{max}$, and $D_{max}(u_L)=0$.
\newline \underline{Step 2.3:} We show $D_{max}(u) \leq 0$ on the remaining region $\Rsmall \setminus (\Rsmallbd \cup \Rsmallnot)$ via an ODE argument. 
\newline Before we begin part 1, we justify the definition of $D_{max}(u)$, i.e. we show that the function:
\[
s \mapsto \mathcal{F}(s,u):=D_{RH}(u,S_u(s),\sigma_u(s))
\]
has a unique maximum $s^*(u)$, with $s^*(u) \lesssim C^{-\frac{1}{2}}s_0^{\frac{1}{2}}$. The proof is based on Lemma 6.1 in \cite{MR4667839}, although the lack of genuine nonlinearity makes things slightly more subtle here.

\begin{lemma}\label{maximalshock}
For $C$ sufficiently large and $s_0$ sufficiently small, the function $s \mapsto \mathcal{F}(s,u)$ has a unique maximum, denoted $s^*$, for $s \in [0, s^u_{\phitan(\underline{b})}]$, which is uniquely defined by the equation:
\[
\eta(u|S_u(s^*))=-\tilde{\eta}(u).
\]
Further, for $u^+=S_u(s^*), |u-u_+| \lesssim C^{\frac{-1}{2}}s_0^{\frac{1}{2}}$, so that for $C$ sufficiently large and $s_0$ sufficiently small, $\mathcal{M}$ is compactly contained in $(0,\infty)$. 
\end{lemma}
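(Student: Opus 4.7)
The plan is to reduce everything to a sign analysis of $\partial_s \mathcal{F}(s,u)$. I would first apply Lemma \ref{entropyformula} with $u_- = u$, $u_+ = S_u(s)$, $\sigma_\pm = \sigma_u(s)$, and $v = u_R$ to rewrite the first bracket of $D_{RH}(u, S_u(s), \sigma_u(s))$ in terms of quantities evaluated at the fixed point $u$ plus the explicit integral $\int_0^s \sigma_u'(\tau)\eta(u|S_u(\tau))\, d\tau$. Since the $u_L$-term of $D_{RH}$ depends on $s$ only through $\sigma_u(s)$, direct differentiation in $s$ produces the clean identity
\begin{equation*}
\partial_s \mathcal{F}(s,u) = \sigma_u'(s)\bigl[\eta(u|S_u(s)) + \tilde{\eta}(u)\bigr].
\end{equation*}

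Next, I would analyze the sign of each factor on the relevant $s$-interval. For $u \in \Pismall$, Lemma \ref{pistructuresmall} together with the localization of the shock places $u$ within $O(C^{-1})$ of $u_L \in [\underline{b}, \overline{b}]$. Since $\phitan$ is continuous and strictly decreasing, property \ref{enum:3}(a) yields $\sigma_u'(s) < 0$ strictly on $[0, s^u_{\phitan(u)})$ with non-degenerate bounds (the Remark after property \ref{enum:7} does not bite because $u$ stays uniformly bounded away from $0$). For the bracket, using $S_u(s) = u-s$ and $\partial_v \eta(u|v) = -\eta''(v)(u-v)$ gives $\partial_s [\eta(u|S_u(s))] = \eta''(u-s)\, s > 0$ for $s > 0$. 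Hence $s \mapsto \eta(u|S_u(s)) + \tilde{\eta}(u)$ is strictly increasing from the value $\tilde{\eta}(u) \leq 0$ at $s=0$, and crosses zero exactly once, at some $s^*$ defined implicitly by $\eta(u|S_u(s^*)) = -\tilde{\eta}(u)$.

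It follows that $\partial_s \mathcal{F}(s,u)$ changes sign from positive to negative at $s^*$, so $s^*$ is a strict local maximum of $\mathcal{F}(\cdot, u)$. For the size estimate, Lemma \ref{quadraticentropy} gives $\eta(u|S_u(s^*)) \sim (s^*)^2$, and Lemma \ref{negativityinpi} together with $\text{diam}(\Pismall) \sim C^{-1}$ from Lemma \ref{pistructuresmall} gives $-\tilde{\eta}(u) \lesssim s_0\, d(u, \partial \Pismall) \lesssim s_0 C^{-1}$. Combining, $(s^*)^2 \lesssim C^{-1}s_0$, which yields $|u - u_+| = s^* \lesssim C^{-1/2} s_0^{1/2}$ as claimed.

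The main subtlety I anticipate is upgrading the unique local maximum on $[0, s^u_{\phitan(u)}]$ to a unique global maximum on $[0, s^u_{\phitan(\underline{b})}]$. When $u < \underline{b}$ (which can occur since $\Pismall$ extends slightly to the left of $u_L$), the interval can reach past $s^u_{\phitan(u)}$ into the region where $\sigma_u'$ flips sign, introducing in principle a second critical point. However, the gap $|s^u_{\phitan(u)} - s^u_{\phitan(\underline{b})}| \lesssim C^{-1}$ is much larger than $s^* \lesssim C^{-1/2}s_0^{1/2}$ once $s_0$ is chosen small compared to $C^{-1}$, and on this thin residual strip the smallness of $\sigma_u'$ (which vanishes at $s^u_{\phitan(u)}$) combined with the explicit integral representation of $\mathcal{F}$ shows directly that $\mathcal{F}$ cannot exceed the value $\mathcal{F}(s^*, u)$.
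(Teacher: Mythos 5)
Your derivative identity $\partial_s\mathcal{F}(s,u)=\sigma_u'(s)\bigl[\tilde\eta(u)+\eta(u|S_u(s))\bigr]$, the characterization of $s^*$ through the strictly increasing bracket, and the size estimate $(s^*)^2\lesssim -\tilde\eta(u)\lesssim C^{-1}s_0$ are exactly the paper's argument, and that part is fine. The gap is precisely in the step you flag as the main subtlety. When $\Pismall$ contains states $u<\underline b$, the interval reaches past $s^u_{\phitan}$, and you conclude by asserting that the smallness of $\sigma_u'$ on the residual strip ``shows directly'' that $\mathcal{F}$ cannot exceed $\mathcal{F}(s^*,u)$. That is not enough: on the strip the bracket $g(s)=\tilde\eta(u)+\eta(u|S_u(s))$ is of order one (there $S_u(s)\approx\phitan(\underline b)$ is order-one away from $u$), so the gain $\int_{s^u_{\phitan}}^{s^u_{\phitan(\underline b)}}\sigma_u'(s)g(s)\diff s$ is small but genuinely positive, of size up to the square of the strip length, and smallness alone does not yield $\mathcal{F}(s^u_{\phitan(\underline b)},u)\le\mathcal{F}(s^*,u)$. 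You must weigh this gain against the decrease of $\mathcal{F}$ over $[s^*,s^u_{\phitan}]$ and prove a uniform, order-one lower bound on that decrease. This is what the paper's proof supplies: it fixes $K$ with $\phitan(\underline b)<-\frac{2\underline b}{K}$ and takes $C$ large, $s_0$ small so that $S_u(s^*)\ge\frac{\underline b}{K}$, $\phitan(u)<-\frac{\underline b}{K}$, and $u>\frac{\underline b}{2}$ for all $u\in\Pismall$; on the order-one subinterval $\{s:S_u(s)\in[-\frac{\underline b}{K},\frac{\underline b}{K}]\}\subset(s^*,s^u_{\phitan})$ both $-\sigma_u'$ and $g$ are bounded below uniformly, so the drop is bounded above by a negative constant, which dominates the small gain. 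Without this lower bound your final inequality is asserted, not proved.

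A smaller point: the comparison you invoke, that the gap $s^u_{\phitan(\underline b)}-s^u_{\phitan}$ is ``much larger than $s^*$,'' is not the relevant one; it only guarantees $s^*<s^u_{\phitan}$, which is immediate since $s^*\lesssim C^{-1/2}s_0^{1/2}$ while $s^u_{\phitan}\gtrsim 1$. What matters is that the gap itself is small, of order $C^{-1}+\tilde s_0$ (the $\tilde s_0$ contribution appears because $u_L$ is only within $\tilde s_0$ of $d\in[\underline b,\overline b]$, so states of $\Pismall$ can sit below $\underline b$ by about $\tilde s_0+C^{-1}$, not merely $C^{-1}$), while the drop over $[s^*,s^u_{\phitan}]$ is of order one.
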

\begin{proof}
We split into two cases, depending on whether $\Pismall$ is contained in the interval $[\underline{b},\overline{b}]$ or not.
Firstly, suppose $u > \underline{b}$ for all $u \in \Pismall$. In this case, we also have $\phitan(u) < \phitan(\underline{b})$, as $\phitan$ is monotonically decreasing. Then, we expand $\mathcal{F}(s,u)$ as:
\begin{align*}
\mathcal{F}(s,u)&=\left(q(u;u_R)-\sigma_u(s)\eta(u|u_R)+\int_0^s\sigma_u'(\tau)\eta(u|S_u(\tau))\diff \tau\right) \\
&-(1+Cs_0)(q(u;u_L)-\sigma_u(s)\eta(u|u_L),
\end{align*}
by Lemma \ref{entropyformula}. Taking a derivative:
\[
\dv{\mathcal{F}}{s}=\sigma_u'(s)\left(\tilde{\eta}(u)+\eta(u|S_u(s))\right).
\]
Note that $\sigma_u'(s) < 0$ for all $s \in [0, s^u_{\phitan(\underline{b})})$ by property \ref{enum:3}. So, for $s \in [0, s^u_{\phitan(\underline{b})})$, we obtain $\mathcal{F}(s,u)$ is decreasing in $s$ if and only if:
\[
g(s)=\tilde{\eta}(u)+\eta(u|S_u(s)) \geq 0.
\]
Now, $g(0)=\tilde{\eta}(u) \leq 0$, as $u \in \Pismall$. Further, using property \ref{enum:6}, we see $g'(s) > 0$. So, there is a unique point $s^*$ such that $g(s^*)=0$ for $s \in [0, s^u_{\phitan(\underline{b})})$. It follows that $s^*$ is the unique maximum for $s \mapsto \mathcal{F}(s,u)$ on the interval $[0, s^u_{\phitan(\underline{b})})$, and is characterized by $g(s^*)=0$. It remains to compare $\mathcal{F}(s^*,u)$ and $\mathcal{F}(s^u_{\phitan(\underline{b})},u)$. Using property \ref{enum:3} again, and the fact that $g(s) > 0$ for all $s \in (s^*, s^u_{\phitan(\underline{b})}]$:
\[
\mathcal{F}(s^u_{\phitan(\underline{b})},u)-\mathcal{F}(s^*,u)=\int_{s^*}^{s^u_{\phitan(\underline{b})}}\sigma_u'(s)\left(\tilde{\eta}(u)+\eta(u|S_u(s))\right)  \diff s< 0,
\]
so $s^*$ is the unique maximizer. Further, by letting $u_+=S_u(s^*)$, we estimate:
\[
|u-u_+|^2 \lesssim \eta(u|u_+)=-\tilde{\eta}(u) \lesssim s_0d(u, \partial \Pismall) \lesssim C^{-1}s_0
\]
by using Lemmas \ref{quadraticentropy}, \ref{negativityinpi}, and \ref{pistructuresmall}.
\par The case when there exists $u \in \Pismall$ with $u < \underline{b}$ is more delicate. The main issue is that for $u < \underline{b}$, $\phitan(u) < \phitan(\underline{b})$, and so the derivative does not have only one critical point as was the case before. We deal with this as follows. Making the same considerations as above and using property \ref{enum:3}, we see that $\mathcal{F}(s,u)$ has critical points at $s^*$ and $s^u_{\phitan}$, and the derivative has the following sign in between:
\begin{align*}
    \begin{cases}
        \dv{\mathcal{F}}{s} \geq 0 & s \in [0, s^*], \\
        \dv{\mathcal{F}}{s} \leq 0 & s \in [s^*, s^u_{\phitan}], \\
        \dv{\mathcal{F}}{s} \geq 0 & s \in [ s^u_{\phitan}, s^u_{\phitan(\underline{b})}].
    \end{cases}
\end{align*}
So it suffices to once again compare $\mathcal{F}(s^*,u)$ and $\mathcal{F}(s^u_{\phitan(\underline{b})},u)$. Firstly, $\phitan(\underline{b}) < 0$, so there exists $K$ such that $\phitan(\underline{b})<-\frac{2\underline{b}}{K}$. Take $C$ sufficiently large and $s_0$ sufficiently small so that $S_u(s^*) \geq \frac{\underline{b}}{K}$ for all $u \in \Pismall$, $\phitan(u) < -\frac{\underline{b}}{K}$, and also $u > \frac{\underline{b}}{2}$ for all $u \in \Pismall$. 
We check:
\begin{align*}
\mathcal{F}(s^u_{\phitan(\underline{b})},u)-\mathcal{F}(s^*,u)&=\int_{s^*}^{s^u_{\phitan(\underline{b})}}\sigma_u'(s)\left(\tilde{\eta}(u)+\eta(u|S_u(s))\right) \diff s \\
&=\int_{s^*}^{s^u_{\phitan}}\sigma_u'(s)\left(\tilde{\eta}(u)+\eta(u|S_u(s))\right) \diff s \\
&+\int_{s^u_{\phitan}}^{s^u_{\phitan(\underline{b})}}\sigma_u'(s)\left(\tilde{\eta}(u)+\eta(u|S_u(s))\right) \diff s \\
&\lesssim \int_{s^u_{\frac{\underline{b}}{K}}}^{s^u_{\frac{-\underline{b}}{K}}}\sigma_u'(s)\left(\tilde{\eta}(u)+\eta(u|S_u(s))\right) \diff s+ |u-\underline{b}| \\
&\lesssim -1+|u-\underline{b}| \\
&\lesssim -1+(C^{-1}+\tilde{s_0}),
\end{align*}
where we have used the smoothness of $\phitan$, Lemma \ref{pistructuresmall}, and $|u_L-d| < \tilde{s_0}$. So, taking $C$ larger and $\tilde{s_0}$ smaller as needed, we obtain that $s^*$ is the unique maximizer in this case as well.
\end{proof}

\begin{remark}
As a consequence of the bound $s^*(u) \lesssim C^{-\frac{1}{2}}s_0^{\frac{1}{2}}$, we obtain that for $C$ sufficiently large and $s_0$ sufficiently small, the set of maximal shocks:
\[
\mathcal{M}=\{u_+|u \in \Pismall\}
\]
where $u^+=S_u(s^*(u))$, is compactly contained in $(0,\infty)$. This is particularly significant as we obtain $\sigma_u'(s),\lambda'(u) \sim -1$ for all $u \in \Pismall$, $s \leq s^*$. This is crucially used in the proof of Proposition \ref{OutsideQsmall}.
\end{remark}

\subsubsection{Step 1.1: Shocks outside $\Qsmall$}
In this section, we show that there exists a set $\Qsmall$ such that in the region $\Pismall \setminus \Qsmall$, $D_{cont}$ satisfies an improved estimate over Proposition \ref{Dcontsmall}. This lets us show the negativity of $D_{max}$ on the same region. In particular, take $C$ sufficiently large and $s_0$ sufficiently small such that Proposition \ref{Dcontsmall} is satisfied, i.e. $D_{cont}$ has a unique maximizer $u^*$ on $\Pismall$ (recall that $u^*$ is actually the left endpoint of $\Pismall$). Then, define the interval:
\begin{equation}\label{Qsmalldef}
\Qsmall:=\{p=u^*+b|p \in \Pismall, 0 \leq b \leq C^{\frac{1}{6}}s_0\}.
\end{equation}
\begin{remark}
Note that for $s_0$ sufficiently small, $u^*+C^{\frac{1}{6}}s_0 <\tilde{u}$, where $\tilde{u}$ is the right endpoint of $\Pismall$. This follows from Lemma \ref{pistructuresmall}.
\end{remark}
The main objective in this step is:
\begin{proposition}[[Proposition 6.1 in \cite{MR4667839}]\label{OutsideQsmall}
For $C$ sufficiently large, $s_0$ sufficiently small, and any $u \in \Pismall \setminus \Qsmall$, the maximal entropy dissipation satisfies:
\[
D_{max}(u) \lesssim -s_0^3.
\]
\end{proposition}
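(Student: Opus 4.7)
The overall strategy is to write the maximal Rankine--Hugoniot dissipation $D_{max}(u)$ as $D_{cont}(u)$ plus a correction, and then exploit the outside-$\Qsmall$ condition $u-u^*\geq C^{1/6}s_0$ to gain a quantitative improvement on $D_{cont}$ that dominates this correction.

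The first step is an identity from Lemma \ref{entropyformula}. Taking $u_-=u$, $u_+=S_u(s^*)$, $\sigma_\pm=\sigma_u(s^*)$, applied once with $v=u_R$ and then trivially with $v=u_L$, and using $\sigma_u(s^*)-\lambda(u)=\int_0^{s^*}\sigma_u'(\tau)\diff \tau$, one obtains
\[
D_{max}(u) = D_{cont}(u) + \int_0^{s^*}\sigma_u'(\tau)\bigl(\tilde\eta(u)+\eta(u|S_u(\tau))\bigr)\diff \tau =: D_{cont}(u)+R(u).
\]
By Lemma \ref{maximalshock}, $\eta(u|S_u(s^*))=-\tilde\eta(u)$, so combined with the monotonicity of $\tau\mapsto\eta(u|S_u(\tau))$ (property \ref{enum:6}) the bracket $\tilde\eta(u)+\eta(u|S_u(\tau))$ lies in $[\tilde\eta(u),0]$ on $[0,s^*]$. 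The remark after Lemma \ref{maximalshock} gives $\sigma_u'(\tau)\sim -1$ uniformly, so the integrand has a fixed sign and
\[
0\leq R(u) \lesssim s^*|\tilde\eta(u)| \lesssim |\tilde\eta(u)|^{3/2} \lesssim \bigl(s_0\,d(u,\partial\Pismall)\bigr)^{3/2},
\]
where the second inequality uses $(s^*)^2\sim|\tilde\eta(u)|$ and the last uses Lemma \ref{negativityinpi}.

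The second step upgrades Proposition \ref{Dcontsmall} quantitatively in terms of $|u-u^*|$. Since $D_{cont}'(u)=\tilde\eta(u)f''(u)$ and $\tilde\eta(u^*)=0$, the point $u^*$ is a critical point of $D_{cont}$, and
\[
D_{cont}''(u^*)=\tilde\eta'(u^*)f''(u^*)\leq -Ks_0,
\]
via Lemma \ref{pigradientcontrolbdry} for $|\tilde\eta'(u^*)|\gtrsim s_0$ together with $f''(u^*)\sim 1$ (since $u^*<u_L$ is in the convex region, away from the inflection point) and $\tilde\eta'(u^*)<0$ (since $u^*$ is the left zero of the convex function $\tilde\eta$). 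Integrating $D_{cont}'$ from $u^*$ to $u$ and using the Taylor expansion of $\tilde\eta$ at $u^*$ to get $\tilde\eta(v)\leq -Ks_0(v-u^*)$ in the left portion of $\Pismall$ where the linear part of $\tilde\eta$ dominates its quadratic part yields
\[
D_{cont}(u)\leq D_{cont}(u^*)-Ks_0(u-u^*)^2\lesssim -s_0^3-s_0(u-u^*)^2;
\]
for $u$ in the right portion one reduces to this case via monotonicity of $D_{cont}$ on $\Pismall$ (recall $D_{cont}'=\tilde\eta\,f''\leq 0$ on $\Pismall$).

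Combining everything, for $u\in\Pismall\setminus\Qsmall$ with $u-u^*\geq C^{1/6}s_0$,
\[
D_{max}(u)\lesssim -s_0^3 - s_0(u-u^*)^2 + \bigl(s_0(u-u^*)\bigr)^{3/2}.
\]
Factoring the last two terms as $s_0(u-u^*)^{3/2}\bigl(\sqrt{s_0}-K\sqrt{u-u^*}\bigr)$ and inserting $u-u^*\geq C^{1/6}s_0$, the parenthesized factor is at most $-K'C^{1/12}\sqrt{s_0}$, so the $3/2$-power correction is absorbed into the quadratic improvement and $D_{max}(u)\lesssim -C^{1/3}s_0^3\lesssim -s_0^3$ once $C$ is large enough. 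The main obstacle is precisely this calibration: the threshold $C^{1/6}s_0$ in the definition of $\Qsmall$ is tuned so that the quadratic gain on $D_{cont}$ strictly dominates the fractional correction $R$, and it is essential to use the sharp distance-dependent bound of Lemma \ref{negativityinpi} rather than the global $|\tilde\eta|\lesssim s_0$ of Lemma \ref{pistructuresmall}, which would be too coarse to close the estimate.
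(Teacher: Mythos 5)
Your proposal is correct, but it organizes the argument differently from the paper. Both start from the same identity $D_{max}(u)=D_{cont}(u)+\int_0^{s^*}\sigma_u'(\tau)\bigl(\tilde{\eta}(u)+\eta(u|S_u(\tau))\bigr)\diff\tau$ coming from Lemma \ref{entropyformula}, and both rely on $\sigma_u'\sim-1$ up to $s^*$, Lemma \ref{negativityinpi}, and Lemma \ref{quadraticentropy}. The paper, however, (i) proves the uniform improvement $D_{cont}\lesssim-C^{1/3}s_0^3$ on $\Pismall\setminus\Qsmall$ by evaluating at the two endpoints $u^*+C^{1/6}s_0$ and $\tilde{u}$ (Taylor at $u^*$ using the third-derivative bound of Lemma \ref{thirdderivDcont}, plus a separate estimate at $\tilde{u}$), and (ii) splits according to whether the maximal state $u_+$ lies in $\Pismall$: if it does, Lemma \ref{bothinPi} closes the estimate with only the basic rate of Proposition \ref{Dcontsmall}; if not, the correction is bounded by $s_0s^*d(u,\partial\Pismall)\lesssim s_0^3$ with a $C$-independent constant and absorbed by the $-C^{1/3}s_0^3$ gain. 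You instead prove a pointwise, distance-dependent improvement $D_{cont}(u)\lesssim-s_0^3-s_0(u-u^*)^2$ by integrating $D_{cont}'=\tilde{\eta}f''\le 0$ against the linear lower bound $-\tilde{\eta}(v)\gtrsim s_0(v-u^*)$ near $u^*$ (Lemma \ref{pigradientcontrolbdry} together with $\tilde{\eta}''\sim Cs_0$), and you bound the correction by $|\tilde{\eta}(u)|^{3/2}\lesssim\bigl(s_0(u-u^*)\bigr)^{3/2}$ via $(s^*)^2\sim|\tilde{\eta}(u)|$; the constraint $u-u^*\ge C^{1/6}s_0$ then gives $\bigl(s_0(u-u^*)\bigr)^{3/2}\le C^{-1/12}s_0(u-u^*)^2$, so the correction is absorbed for $C$ large with no case split at all. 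Your route dispenses with Lemmas \ref{bothinPi} and \ref{thirdderivDcont} and the endpoint computation at $\tilde{u}$, and yields the slightly stronger pointwise conclusion $D_{max}(u)\lesssim-s_0^3-s_0(u-u^*)^2$, while the paper's route keeps the improvement as a statement purely about $D_{cont}$ (Proposition \ref{improvedDcontsmall}), imported verbatim from \cite{MR4667839}. Two small points you should make explicit, though neither affects validity: the reduction of the ``right portion'' to the linear-dominance region via monotonicity needs the observation that $diam(\Pismall)\lesssim C^{-1}$, so that $u-u^*$ is comparable, up to a system constant, to the width of that region; and the factorization of the last two terms should carry its constants, reading $s_0(u-u^*)^{3/2}\bigl(K_2\sqrt{s_0}-K_1\sqrt{u-u^*}\bigr)$, so that the sign conclusion requires precisely $C^{1/12}\gtrsim K_2/K_1$.
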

The key ingredient needed in the proof of Proposition \ref{OutsideQsmall} is the following improvement of Proposition \ref{Dcontsmall} in this region:
\begin{proposition}[Proposition 6.2 in \cite{MR4667839}]\label{improvedDcontsmall}
For $C$ sufficiently large, $s_0$ sufficiently small, and any $u \in \Pismall \setminus \Qsmall$, the continuous entropy dissipation satisfies:
\[
D_{cont}(u) \lesssim -C^{\frac{1}{3}}s_0^3.
\]
\end{proposition}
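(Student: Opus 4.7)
The strategy is a Taylor expansion of $D_{cont}$ around its global maximizer $u^* \in \partial \Pismall$, leveraging the strict concavity of $D_{cont}$ at $u^*$ to extract an improved quadratic decay away from the maximizer. From the computation in Lemma \ref{critptbdry} we have $D_{cont}'(u) = f''(u)\tilde{\eta}(u)$, which vanishes at $u^*$ because $\tilde{\eta}(u^*) = 0$. Differentiating once more,
\[
D_{cont}''(u) = f'''(u)\tilde{\eta}(u) + f''(u)\tilde{\eta}'(u),
\]
so $D_{cont}''(u^*) = f''(u^*)\tilde{\eta}'(u^*)$. Since $\Pismall$ is localized near $d > 0$, $f''(u^*) \sim 1$; since $\tilde{\eta}$ is convex on $\Pismall$ (via $\tilde{\eta}'' \sim Cs_0 > 0$) with $u^*$ its left endpoint, $\tilde{\eta}'(u^*) < 0$; and Lemma \ref{pigradientcontrolbdry} gives $|\tilde{\eta}'(u^*)| \gtrsim s_0$. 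Therefore $D_{cont}''(u^*) \lesssim -s_0$.

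Next I would upgrade this pointwise bound to uniform negativity on a neighborhood of $u^*$. Using $|\tilde{\eta}''| \lesssim Cs_0$ from Lemma \ref{pistructuresmall} and integrating,
\[
\tilde{\eta}'(u) = \tilde{\eta}'(u^*) + \mathcal{O}(Cs_0 (u - u^*)),
\]
so there exists a universal $\delta > 0$ such that $\tilde{\eta}'(u) \leq \tfrac{1}{2}\tilde{\eta}'(u^*) \lesssim -s_0$ for $u \in [u^*, u^* + \delta/C]$. Integrating once more yields $\tilde{\eta}(u) \lesssim -s_0(u - u^*)$ on the same interval, and inserting into
\[
D_{cont}(u) - D_{cont}(u^*) = \int_{u^*}^u f''(s)\tilde{\eta}(s)\,ds
\]
produces $D_{cont}(u) - D_{cont}(u^*) \lesssim -s_0 (u - u^*)^2$ for $u \in [u^*, u^* + \delta/C]$.

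For $u \in \Pismall \setminus \Qsmall$ with $u - u^* \leq \delta/C$, the definition of $\Qsmall$ forces $u - u^* \geq C^{1/6}s_0$, so
\[
D_{cont}(u) \leq D_{cont}(u^*) - K s_0 (C^{1/6}s_0)^2 \lesssim -C^{1/3}s_0^3,
\]
using $D_{cont}(u^*) \leq 0$ (in fact $\lesssim -s_0^3$ by Proposition \ref{Dcontsmall}). If instead $u - u^* > \delta/C$, I would invoke monotonicity: throughout $\Pismall$, $D_{cont}'(u) = f''(u)\tilde{\eta}(u) \leq 0$ (since $\tilde{\eta} \leq 0$ and $f'' > 0$ near $d$), hence $D_{cont}$ is non-increasing on $\Pismall$ and
\[
D_{cont}(u) \leq D_{cont}(u^* + \delta/C) \lesssim -s_0/C^2,
\]
which dominates $-C^{1/3}s_0^3$ as soon as $s_0 \leq K_0 C^{-7/6}$; this smallness is absorbed into the quantifier ``$s_0$ sufficiently small'' depending on $C$.

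The principal obstacle is that $D_{cont}''$ need not retain its sign across all of $\Pismall$, since $\tilde{\eta}'$ changes sign somewhere in the interior where $\tilde{\eta}$ attains its minimum; a direct Taylor estimate with remainder on the full $\Pismall$ therefore does not close. The fix above confines the quadratic argument to the window $[u^*, u^* + \delta/C]$ of size $1/C$ on which $D_{cont}''$ can be controlled uniformly in terms of the boundary data at $u^*$, and dispatches the complement within $\Pismall$ by monotonicity of $D_{cont}$, thereby avoiding any need for second-derivative control beyond this window.
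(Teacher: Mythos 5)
Your proof is correct and takes a genuinely different route from the paper's. The paper also starts from $D_{cont}''(u^*)\lesssim -s_0$ (its Lemma 5.3), but then closes the estimate at the point $u^*+C^{1/6}s_0$ via a full Taylor expansion with integral remainder, which requires the separate third-derivative bound $|D_{cont}'''(u)|\lesssim Cs_0$ on all of $\Pismall$ (Lemma 5.9). It then appeals to the absence of interior critical points of $D_{cont}$ to reduce to the two endpoints of $\Pismall\setminus\Qsmall$, and so must also estimate $D_{cont}(\tilde u)$ separately (Lemma 6.5, which again passes through a linear lower bound on $\tilde\eta$ near the right endpoint). You replace both of these ingredients with a single, more elementary step: by integrating $\tilde\eta''\lesssim Cs_0$ just once from $u^*$, you propagate $\tilde\eta'\lesssim -s_0$ across a window of width $\delta/C$, obtain the linear bound $\tilde\eta(u)\lesssim -s_0(u-u^*)$ and hence the quadratic decay of $D_{cont}$ on that window, and dispatch the rest of $\Pismall\setminus\Qsmall$ by the monotonicity $D_{cont}'=f''\tilde\eta\le 0$. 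This avoids any control of $D_{cont}'''$ and avoids the endpoint case split entirely; it yields the same thresholds (the same constraint $s_0\lesssim C^{-7/6}$ appears in both arguments). One minor caveat worth flagging: your quadratic bound uses $f''\ge c>0$ on $\Pismall$, which is valid here only because $\Pismall$ is compactly contained in $(0,\infty)$ (noted in the paper just before Lemma~\ref{critptbdry}); since the flux is concave-convex rather than globally convex, this localization is essential and should be cited explicitly.
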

\underline{\textbf{Proof of Proposition \ref{improvedDcontsmall}:}}
By Lemma \ref{critptbdry}, $D_{cont}$ does not have any critical points in the interior of $\Pismall$, so we need only show the improved dissipation rate on the boundary of the region $\Pismall \setminus \Qsmall$. This consists of two points: $u^*+C^{\frac{1}{6}}s_0$ and $\tilde{u}$, where $\tilde{u}$ is the right endpoint of $\Pismall$. We first give a couple of lemmas needed to show the improved estimate at $u^*+C^{\frac{1}{6}}s_0$.
\begin{lemma}[Lemma 5.3 in \cite{MR4667839}]\label{secondderivDcont}
For $C$ sufficiently large and $s_0$ sufficiently small:
\[
D_{cont}''(u^*) \lesssim -s_0.
\]
\end{lemma}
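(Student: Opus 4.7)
The plan is to start from the identity $D_{cont}'(u)=f''(u)\tilde{\eta}(u)$ that was already established in the proof of Lemma \ref{critptbdry} (using that $(\tilde{\eta},\tilde{q})$ is an entropy/flux pair, so $\tilde{q}'=\lambda\tilde{\eta}'$, and the two terms involving $\lambda\tilde{\eta}'$ cancel). Differentiating once more gives
\[
D_{cont}''(u)=f'''(u)\tilde{\eta}(u)+f''(u)\tilde{\eta}'(u).
\]
Evaluating at the critical point $u^*\in\partial\Pismall$, where $\tilde{\eta}(u^*)=0$ by definition of the set $\Pismall$, the first term drops out and I am left with the clean identity $D_{cont}''(u^*)=f''(u^*)\tilde{\eta}'(u^*)$.

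Next I would estimate the two factors separately. For $f''(u^*)$: by Lemma \ref{pistructuresmall}, for $C$ large and $s_0$ small, $\Pismall$ is compactly contained in $\statespace$ and its diameter is $\sim C^{-1}$; since the shock $(u_L,u_R)$ is localized near $d\in[\underline{b},\overline{b}]\subset(0,\infty)$, for $\tilde{s_0}$ small enough $\Pismall$ sits in the convex region of $f$, away from the inflection point at $0$. Consequently $f''(u^*)\gtrsim 1$ (with implicit constant depending continuously on $d$, per Remark \ref{uniformityremarksmall}). For $\tilde{\eta}'(u^*)$: from the earlier lemma showing that the maximizer $u^*$ must be the left endpoint of $\Pismall$, and the fact that $\tilde{\eta}\leq 0$ inside $\Pismall$ with $\tilde{\eta}(u^*)=0$, one-sided monotonicity forces $\tilde{\eta}'(u^*)\leq 0$. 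Lemma \ref{pigradientcontrolbdry} then upgrades this to the quantitative bound $|\tilde{\eta}'(u^*)|\gtrsim s_0$, so in fact $\tilde{\eta}'(u^*)\lesssim -s_0$.

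Combining these two estimates gives $D_{cont}''(u^*)=f''(u^*)\tilde{\eta}'(u^*)\lesssim -s_0$, which is exactly the claim. There is no real obstacle here beyond bookkeeping: the sign of $\tilde{\eta}'(u^*)$ relies on knowing that $u^*$ is specifically the left endpoint of $\Pismall$ (a fact already isolated in the preceding lemma in the proof of Proposition \ref{Dcontsmall}), and the quantitative size $s_0$ is an immediate consequence of the boundary gradient lemma for $\tilde{\eta}$. The only mild subtlety to flag is ensuring the smallness thresholds on $C^{-1}$ and $\tilde{s_0}$ are chosen simultaneously so that $\Pismall$ stays in the convex part of the flux, which is precisely where Lemmas \ref{pistructuresmall} and \ref{pigradientcontrolbdry} apply with the stated implicit constants.
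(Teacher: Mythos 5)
Your proof is correct and follows essentially the same route as the paper: differentiate to get $D_{cont}''(u)=f'''(u)\tilde{\eta}(u)+f''(u)\tilde{\eta}'(u)$, kill the first term via $\tilde{\eta}(u^*)=0$, and then bound $f''(u^*)\tilde{\eta}'(u^*)$. The paper's justification is terser (it cites only Lemma~\ref{pistructuresmall}); you correctly supply the missing details by combining the sign information from $u^*$ being the left endpoint of $\Pismall$ with the quantitative lower bound $|\tilde{\eta}'(u^*)|\gtrsim s_0$ from Lemma~\ref{pigradientcontrolbdry} and the nondegeneracy $f''(u^*)\gtrsim 1$ guaranteed by the localization of $\Pismall$ away from the inflection point.
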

\begin{proof}
We compute:
\[
D_{cont}''(u)=\tilde{\eta}(u)f'''(u)+f''(u)\tilde{\eta}'(u).
\]
Using $\tilde{\eta}(u^*)=0$, we obtain:
\[
D_{cont}''(u^*)=f''(u^*)\tilde{\eta}'(u^*).
\]
Lemma $\ref{pistructuresmall}$ then gives the result.
\end{proof}
The second lemma is an estimate on $D_{cont}'''$ in $\Pismall$.
\begin{lemma}[Lemma 5.9 in \cite{MR4667839}]\label{thirdderivDcont}
For $C$ sufficiently large, $s_0$ sufficiently small, and any $u \in \Pismall$:
\[
|D_{cont}'''(u)| \lesssim Cs_0.
\]
\end{lemma}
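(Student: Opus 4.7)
The plan is to differentiate the simple expression for $D_{cont}'$ twice and bound each resulting term using the structural information in Lemma \ref{pistructuresmall}, together with the $C^4$-regularity of $f$. Starting from the formula $D_{cont}'(u) = \tilde{\eta}(u) f''(u)$ derived in the proof of Lemma \ref{critptbdry}, two further differentiations give
\[
D_{cont}'''(u) = \tilde{\eta}''(u) f''(u) + 2\tilde{\eta}'(u) f'''(u) + \tilde{\eta}(u) f^{(4)}(u),
\]
so it suffices to show each of the three factors $\tilde{\eta}''$, $\tilde{\eta}'$, and $\tilde{\eta}$ is $\lesssim Cs_0$ on $\Pismall$, since $f'', f''', f^{(4)}$ are uniformly bounded on the compact state space $\statespace$.

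The first and third factors are handled directly by Lemma \ref{pistructuresmall}, which already gives $\tilde{\eta}''(u) \sim Cs_0$ and $|\tilde{\eta}(u)| \lesssim s_0$ on $\Pismall$. So the corresponding terms contribute $\lesssim Cs_0$ and $\lesssim s_0 \leq Cs_0$ respectively.

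The middle factor requires a short computation. Using the explicit formula
\[
\tilde{\eta}'(u) = Cs_0(\eta'(u) - \eta'(u_L)) + (\eta'(u_R) - \eta'(u_L))
\]
from Lemma \ref{pistructuresmall}, I would observe that $u_L \in \Pismall$ (since $\tilde{\eta}(u_L) = -\eta(u_L|u_R) \leq 0$), hence $|u - u_L| \leq diam(\Pismall) \sim C^{-1}$. The smoothness of $\eta$ then yields $|\eta'(u) - \eta'(u_L)| \lesssim C^{-1}$, while $|\eta'(u_R) - \eta'(u_L)| \lesssim s_0$. Plugging back in gives $|\tilde{\eta}'(u)| \lesssim Cs_0 \cdot C^{-1} + s_0 \lesssim s_0$, which in fact is better than required.

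Combining the three estimates yields $|D_{cont}'''(u)| \lesssim Cs_0 + s_0 + s_0 \lesssim Cs_0$ for $C$ large, as desired. There is essentially no obstacle here beyond carefully unpacking the formulas for the derivatives of $\tilde{\eta}$; the proof is a direct consequence of Lemma \ref{pistructuresmall} and the uniform bounds on the derivatives of $f$.
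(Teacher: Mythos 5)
Your proof is correct and follows essentially the same route as the paper: differentiate $D_{cont}'(u)=\tilde{\eta}(u)f''(u)$ twice and bound the three resulting terms via Lemma \ref{pistructuresmall} and the boundedness of $f'',f''',f^{(4)}$ on $\statespace$. The only (harmless) difference is on the term $\tilde{\eta}(u)f^{(4)}(u)$, where the paper invokes Lemma \ref{negativityinpi} to get the sharper bound $C^{-1}s_0$, while your use of $|\tilde{\eta}(u)|\lesssim s_0$ already suffices for the stated estimate.
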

\begin{proof}
A routine computation gives:
\[
D_{cont}'''(u)=\tilde{\eta}(u)f^{(4)}(u)+2\tilde{\eta}'(u)f'''(u)+f''(u)\tilde{\eta}''(u).
\]
The two terms with derivatives of $\tilde{\eta}$ are $\lesssim Cs_0$ by Lemma \ref{pistructuresmall}, and by Lemmas \ref{negativityinpi} and \ref{pistructuresmall}, the remaining term verifies:
\[
|\tilde{\eta}(u)f^{(4)}(u)| \lesssim s_0d(u,\partial \Pismall) \lesssim C^{-1}s_0.
\]
\end{proof}
Now, we prove the improved dissipation rate.
\begin{lemma}[Lemma 6.2 in \cite{MR4667839}]
For $C$ sufficiently small and $s_0$ sufficiently small, we have:
\[
D_{cont}(u^*+C^{\frac{1}{6}}s_0) \lesssim -C^{\frac{1}{3}}s_0^3.
\]
\end{lemma}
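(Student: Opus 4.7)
The plan is to prove the estimate by a Taylor expansion of $D_{cont}$ around the critical point $u^*$, where $u^*$ is the left endpoint of $\partial\Pi_{C,s_0}$ (so that $\tilde{\eta}(u^*)=0$). Since we already showed $D_{cont}'(u) = \tilde{\eta}(u)f''(u)$, we have $D_{cont}'(u^*)=0$, so the first-order term in the expansion vanishes and the analysis reduces to balancing the second-order term (which is negative and of order $s_0$ by Lemma \ref{secondderivDcont}) against the third-order remainder (which is $\mathcal{O}(Cs_0)$ by Lemma \ref{thirdderivDcont}).

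More precisely, set $h := C^{1/6}s_0$ and apply Taylor's theorem with Lagrange remainder: there exists $\xi \in (u^*, u^*+h)$ such that
\[
D_{cont}(u^*+h) = D_{cont}(u^*) + \tfrac{1}{2}D_{cont}''(u^*)\,h^2 + \tfrac{1}{6}D_{cont}'''(\xi)\,h^3.
\]
By Proposition \ref{Dcontsmall} the first term is $\lesssim -s_0^3 \leq 0$, so it can be dropped from the upper bound. By Lemma \ref{secondderivDcont} there exists a constant $K_1 > 0$ (uniform in $C$ and $s_0$) with $D_{cont}''(u^*) \leq -K_1 s_0$, hence
\[
\tfrac{1}{2}D_{cont}''(u^*)\,h^2 \leq -\tfrac{K_1}{2}\, s_0\cdot C^{1/3}s_0^2 = -\tfrac{K_1}{2} C^{1/3}s_0^3.
\]
By Lemma \ref{thirdderivDcont} there exists $K_2 > 0$ with $|D_{cont}'''(\xi)| \leq K_2 C s_0$ (provided $\xi \in \Pi_{C,s_0}$, which holds since $u^*+h \leq u^*+C^{1/6}s_0$ lies inside $\Pi_{C,s_0}$ for $s_0$ small by Lemma \ref{pistructuresmall}), so
\[
\bigl|\tfrac{1}{6}D_{cont}'''(\xi)\,h^3\bigr| \leq \tfrac{K_2}{6}\, Cs_0 \cdot C^{1/2}s_0^3 = \tfrac{K_2}{6} C^{3/2} s_0^4.
\]

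Combining these estimates,
\[
D_{cont}(u^*+C^{1/6}s_0) \leq -\tfrac{K_1}{2} C^{1/3}s_0^3 + \tfrac{K_2}{6} C^{3/2} s_0^4 = -C^{1/3}s_0^3\Bigl(\tfrac{K_1}{2} - \tfrac{K_2}{6} C^{7/6} s_0\Bigr).
\]
Thus, for any fixed $C$ large and $s_0$ small enough that $C^{7/6} s_0 \leq \tfrac{3K_1}{2K_2}$, the parenthesis exceeds $\tfrac{K_1}{4}$, giving $D_{cont}(u^*+C^{1/6}s_0) \lesssim -C^{1/3}s_0^3$ as claimed. The only delicate point is making sure the hierarchy of smallness is consistent with the convention used throughout the section: $C$ is chosen first, and then $s_0$ small depending on $C$, which is exactly the order of quantifiers Proposition \ref{improvedDcontsmall} assumes, so no obstruction arises.
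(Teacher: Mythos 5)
Your proof is correct and follows essentially the same route as the paper: a Taylor expansion at $u^*$ using $D_{cont}'(u^*)=0$, the bounds $D_{cont}''(u^*)\lesssim -s_0$ and $|D_{cont}'''|\lesssim Cs_0$ from Lemmas \ref{secondderivDcont} and \ref{thirdderivDcont}, and then taking $s_0$ small depending on $C$ so the quartic remainder is absorbed by the $-C^{1/3}s_0^3$ term. The only differences are cosmetic (Lagrange versus integral remainder, and dropping the nonpositive $D_{cont}(u^*)$ term rather than carrying it along).
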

\begin{proof}
Using Taylor's theorem, for $u=u^*+C^{\frac{1}{6}}s_0$, we estimate:
\begin{align*}
D_{cont}(u)&=D_{cont}(u^*)+D_{cont}'(u^*)(u-u*) \\
&+\frac{1}{2}D_{cont}''(u^*)(u-u^*)^2+\frac{1}{2}\int_{u^*}^uD_{cont}'''(u^*)(u-\tau)^2 \diff \tau \\
&\lesssim -s_0^3-(s_0)(C^{\frac{1}{6}}s_0)^2+(Cs_0)(C^{\frac{1}{6}}s_0)^3 \\
&\lesssim -C^{\frac{1}{3}}s_0^3,
\end{align*}
for $s_0$ sufficiently small, where we have used Proposition \ref{Dcontsmall}, $D_{cont}'(u^*)=0$, and Lemmas \ref{secondderivDcont} and \ref{thirdderivDcont}.
\end{proof}
The next step is to prove the improved dissipation rate at $\tilde{u}$ (recall that $\tilde{u}$ is the right endpoint of $\Pismall$):
\begin{lemma}[Lemma 6.5 in \cite{MR4667839}]
For $C$ sufficiently large and $s_0$ sufficiently small, we have:
\[
D_{cont}(\tilde{u}) \lesssim -C^{-2}s_0.
\]
\end{lemma}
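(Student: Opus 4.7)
The plan is to exploit the integral identity
\[
D_{cont}(\tilde{u}) = D_{cont}(u^*) + \int_{u^*}^{\tilde{u}} f''(u)\tilde{\eta}(u) \diff u,
\]
which follows from the formula $D_{cont}'(u) = f''(u)\tilde{\eta}(u)$ derived in the proof of Lemma \ref{critptbdry}. Both terms on the right are non-positive: $D_{cont}(u^*) \lesssim -s_0^3$ by Proposition \ref{Dcontsmall}, while on $\Pismall$ we have $\tilde{\eta}\leq 0$ by definition and $f''\sim 1$ uniformly (using that $\Pismall\subset(0,\infty)$ is compactly contained for $C$ large and $s_0$ small, by Lemma \ref{pistructuresmall}). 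The task therefore reduces to producing a quantitative lower bound $\int_{u^*}^{\tilde{u}}|\tilde{\eta}(u)|\diff u \gtrsim C^{-2}s_0$.

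First, I would locate the unique interior minimizer $u_0 \in (u^*,\tilde{u})$ of $\tilde{\eta}$ on $\Pismall$, characterized by $\tilde{\eta}'(u_0)=0$. Using $|\tilde{\eta}'(u^*)| \sim s_0$ (the lower bound is Lemma \ref{pigradientcontrolbdry}; the matching upper bound follows from the explicit formula for $\tilde{\eta}'$ in Lemma \ref{pistructuresmall} combined with the diameter bound $\sim C^{-1}$ on $\Pismall$) together with the curvature estimate $\tilde{\eta}''(u)\sim Cs_0$ from Lemma \ref{pistructuresmall}, the identity
\[
\tilde{\eta}'(u_0)-\tilde{\eta}'(u^*) = \int_{u^*}^{u_0}\tilde{\eta}''(\tau)\diff \tau
\]
gives $u_0-u^*\sim C^{-1}$. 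A second integration of $\tilde{\eta}'$ on $[u^*,u_0]$, where $\tilde{\eta}'$ transitions essentially affinely from a value of order $-s_0$ at $u^*$ down to $0$ at $u_0$, yields the sharp interior estimate $\tilde{\eta}(u_0) \sim -C^{-1}s_0$.

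Propagating this bound by a Taylor expansion around $u_0$, for $u \in [u_0-\alpha C^{-1},\, u_0+\alpha C^{-1}]$ with $\alpha$ a small universal constant one obtains
\[
\tilde{\eta}(u) = \tilde{\eta}(u_0) + \tfrac{1}{2}\tilde{\eta}''(\xi)(u-u_0)^2 \leq -K C^{-1}s_0 + K'\alpha^2 C^{-1}s_0 \lesssim -C^{-1}s_0,
\]
so $|\tilde{\eta}(u)| \gtrsim C^{-1}s_0$ on an interval of length $\sim C^{-1}$ contained in $\Pismall$ for $C$ large. Integrating gives $\int_{u^*}^{\tilde{u}}f''(u)\tilde{\eta}(u)\diff u \lesssim -C^{-2}s_0$, and combining with the non-positive boundary term $D_{cont}(u^*)$ completes the proof.

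The hard part will be producing the sharp interior asymptotic $|\tilde{\eta}(u_0)| \sim C^{-1}s_0$; this in turn rests on two-sided control $|\tilde{\eta}'(u^*)|\sim s_0$, which is not quite an immediate corollary of the cited lemmas and must be extracted by hand from the explicit representation of $\tilde{\eta}'$. The remaining steps reduce to routine Taylor expansion using the quantitative estimates already available from this subsection.
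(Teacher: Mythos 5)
Your proposal is correct and rests on the same core mechanism as the paper: write $D_{cont}(\tilde{u}) = D_{cont}(u) + \int D_{cont}'$ with $D_{cont}' = f''\tilde{\eta} \sim \tilde{\eta}$ on $\Pismall$, then extract a quantitative bound $|\tilde{\eta}| \gtrsim C^{-1}s_0$ on an interval of length $\sim C^{-1}$ inside $\Pismall$ to get the extra $-C^{-2}s_0$. The only difference is where you anchor the Taylor expansion: the paper expands directly at the boundary point $\tilde{u}$, combining $|\tilde{\eta}'(\tilde{u})| \sim s_0$ (Lemmas \ref{pigradientcontrolbdry}, \ref{pistructuresmall}) with $\tilde{\eta}'' \sim Cs_0$ to conclude $\tilde{\eta}'(v) \gtrsim s_0$ on a left-neighborhood of $\tilde{u}$ of size $\sim C^{-1}$, and integrates once to get $\tilde{\eta}(v) \lesssim -s_0(\tilde{u}-v)$ there; you instead locate the interior minimizer $u_0$ of $\tilde{\eta}$, establish $u_0 - u^* \sim C^{-1}$ and $\tilde{\eta}(u_0) \lesssim -C^{-1}s_0$, then propagate by a second-order expansion around $u_0$. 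Both give the same bound, but the paper's version saves a couple of steps (no need to locate $u_0$ or prove the two-sided $|\tilde{\eta}'(u^*)| \sim s_0$); one small point to watch in your version is that the symmetric interval $[u_0 - \alpha C^{-1}, u_0 + \alpha C^{-1}]$ must be contained in $\Pismall$ — safest is to use only the left half $[u_0 - \alpha C^{-1}, u_0] \subset [u^*, u_0]$, which still has length $\sim C^{-1}$ and avoids needing control on $\tilde{u} - u_0$.
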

\begin{proof}
For any $u > u^*$, we have:
\begin{align}\label{Dcontattildeu}
D_{cont}(\tilde{u})&=D_{cont}(u)+\int_0^{\tilde{u}-u}D_{cont}'(v) \diff v \notag \\ 
&\lesssim -s_0^3+\int_0^{\tilde{u}-u}\tilde{\eta}(v)f''(v)\diff v \notag \\ 
&\lesssim -s_0^3+\int_0^{\tilde{u}-u}\tilde{\eta}(v)\diff v,
\end{align}
where we have used Proposition \ref{Dcontsmall}. Now, we leverage Lemma \ref{pigradientcontrolbdry}, which implies $|\tilde{\eta}'(\tilde{u})| \sim s_0$ (where we have also used Lemma \ref{pistructuresmall}), to show that in fact, for $v$ sufficiently close to $\tilde{u}$, $\tilde{\eta}(v) \lesssim -s_0(\tilde{u}-v)$. Indeed, as $\tilde{\eta}$ is increasing at $\tilde{u}$, we see:
\[
-\tilde{\eta}'(\tilde{u}) \sim -s_0.
\]
So, by the mean value theorem, there exists $s$ with $|s-\tilde{u}| \leq |v-\tilde{u}|$ such that:
\begin{align*}
-\tilde{\eta}'(v)&=\tilde{\eta}'(\tilde{u})-\tilde{\eta}''(s)(s-\tilde{u}) \\
&\lesssim -s_0+Cs_0|v-\tilde{u}| \\
&\lesssim -s_0,
\end{align*}
by taking $|v-\tilde{u}| \lesssim C^{-1}$. So, for such $v$, we obtain:
\[
\tilde{\eta}(v)=-\int_0^{\tilde{u}-v}\tilde{\eta}'(s)\diff s \lesssim -s_0(\tilde{u}-v).
\]
Substituting this into \eqref{Dcontattildeu}, we see:
\[
D_{cont}(\tilde{u}) \lesssim -s_0^3+\int_0^{\tilde{u}-u}\tilde{\eta}(v)\diff v \lesssim -s_0^3-C^{-2}s_0 \lesssim -C^2s_0,
\]
for $C$ sufficiently large and $s_0$ sufficiently small. 
\end{proof}
\begin{remark}
Note that for $C$ sufficiently large and $s_0$ sufficiently small $-C^{-2}s_0 \lesssim -C^{\frac{1}{3}}s_0^3$. 
\end{remark}
This completes the proof of Proposition \ref{improvedDcontsmall}.

\underline{\textbf{Proof of Proposition \ref{OutsideQsmall}:}}
Now, we give the proof of Proposition \ref{OutsideQsmall}. We split into two cases, depending on whether $u_+=S_u(s^*)$, the unique maximum of $D_{max}$ established in Lemma \ref{maximalshock}, is inside $\Pismall$ or not. Firstly, we prove a lemma that shows that if $u_+ \in \Pismall$, there is no need for the improved estimate Proposition \ref{improvedDcontsmall}. 
\begin{lemma}[Lemma 6.6 in \cite{MR4667839}]\label{bothinPi}
Suppose that the maximal shock $(u,u_+)$ satisfies $u,u_+ \in \Pismall$. Then:
\[
D_{RH}(u,u_+, \sigma_\pm) \leq D_{cont}(u_+) \lesssim -s_0^3.
\]
\end{lemma}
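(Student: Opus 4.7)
The plan is to establish the first inequality $D_{RH}(u, u_+, \sigma_\pm) \leq D_{cont}(u_+)$ directly from the entropy-loss formula of Lemma \ref{entropyformula}, then apply Proposition \ref{Dcontsmall} at the point $u_+ \in \Pismall$ for the quantitative $-s_0^3$ bound.

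First, I note that the shock $(u,u_+,\sigma_\pm)$ is $\eta$-entropic: both states lie in $\Pismall$, which by Lemma \ref{pistructuresmall} is compactly contained in $(0,\infty)$, so $(u,u_+)$ is a classical Ole\u{\i}nik shock in the genuinely nonlinear convex region. Applying Lemma \ref{entropyformula} with $v=u_L$ yields
\[
q(u_+; u_L) - \sigma_\pm \eta(u_+|u_L) \leq q(u; u_L) - \sigma_\pm \eta(u|u_L).
\]
Since this quantity enters $D_{RH}(u,u_+,\sigma_\pm)$ with the negative prefactor $-(1+Cs_0)$, I replace the larger side by the smaller side to get
\[
D_{RH}(u, u_+, \sigma_\pm) \leq \bigl[q(u_+; u_R) - \sigma_\pm \eta(u_+|u_R)\bigr] - (1+Cs_0)\bigl[q(u_+; u_L) - \sigma_\pm \eta(u_+|u_L)\bigr].
\]

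Next, I compare this right-hand side with $D_{cont}(u_+) = -\tilde{q}(u_+) + \lambda(u_+)\tilde{\eta}(u_+)$, which has the same algebraic structure but with $\lambda(u_+)$ in place of $\sigma_\pm$. A direct expansion yields
\[
\text{RHS} - D_{cont}(u_+) = \bigl(\lambda(u_+) - \sigma_\pm\bigr)\bigl(-\tilde{\eta}(u_+)\bigr).
\]
The factor $-\tilde{\eta}(u_+)$ is non-negative because $u_+ \in \Pismall$. The factor $\lambda(u_+) - \sigma_\pm$ is non-positive: in the scalar setting $S_u(s) = u - s$, and I may assume $s^* > 0$ (else the claim is trivial with equality), so $u > u_+ > 0$ with both states in $\Pismall \subset (0,\infty)$; property \ref{enum:4} then gives $\sigma_\pm > \lambda(u_+)$. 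Hence the product is $\leq 0$, establishing $D_{RH}(u, u_+, \sigma_\pm) \leq D_{cont}(u_+)$.

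Finally, since $u_+ \in \Pismall$, Proposition \ref{Dcontsmall} directly yields $D_{cont}(u_+) \lesssim -s_0^3$, completing the proof. The argument is essentially algebraic: the reduction of the shock dissipation to the continuous one is possible precisely because both endpoints of the shock lie inside $\Pismall$, and no further quantitative work is required beyond Proposition \ref{Dcontsmall} and the entropy-loss inequality. The only mildly delicate step is the sign of the correction $(\lambda(u_+) - \sigma_\pm)(-\tilde{\eta}(u_+))$, which reduces to the Lax condition from property \ref{enum:4}.
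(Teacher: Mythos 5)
Your proof is correct and follows essentially the same route as the paper: replace the $u$-terms by $u_+$-terms via the entropy-loss inequality of Lemma \ref{entropyformula} with $v=u_L$, identify the difference with $D_{cont}(u_+)$ as $(\lambda(u_+)-\sigma_\pm)(-\tilde{\eta}(u_+))\leq 0$ using $u_+\in\Pismall$ and property \ref{enum:4}, and conclude with Proposition \ref{Dcontsmall}. Your explicit computation of the correction term (in terms of $\tilde{\eta}(u_+)$) is in fact the cleaner statement of what the paper's displayed formula intends.
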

\begin{proof}
The entropy dissipation is bounded by:
\begin{align*}
D_{RH}(u,u_+, \sigma_\pm) &\leq (q(u^+;u_R)-\sigma_\pm\eta(u^+|u_R))-(1+Cs_0)(q(u_+;u_L)-\sigma_\pm\eta(u_+|u_L)),
\end{align*}
by Lemma \ref{entropyformula}, as $(u,u_+)$ is $\eta$-entropic. Then, adding and subtracting $\lambda(u_+)\eta(u_+|u_R)$ and $ \lambda(u_+)(1+Cs_0)\eta(u_+|u_L)$, we obtain:
\begin{align*}
D_{RH}(u,u_+, \sigma_\pm) &\leq D_{cont}(u_+)+(\sigma_\pm-\lambda(u_+))(\eta(u_+|u_R)+(1+Cs_0)\eta(u_+|u_L)) \\
&\leq D_{cont}(u_+) \lesssim -s_0^3,
\end{align*}
where in the second to last inequality we used $u_+ \in \Pi$, and $\sigma_\pm > \lambda(u_+)$ by property \ref{enum:4}. The last inequality follows from Proposition \ref{Dcontsmall}.
\end{proof}
Now, recall the notation $\mathcal{F}(s,u)=D_{RH}(u,S_u(s), \sigma_u(s))$. By Lemma \ref{maximalshock}, it suffices to show that $\mathcal{F}(s^*,u)$ satisfies the estimate $\mathcal{F}(s^*,u) \lesssim -s_0^3$. By Lemma \ref{bothinPi}, we may assume that $u_+ \not \in \Pismall$. Then, via the fundamental theorem of calculus:
\begin{align}\label{maximaldissipationpf}
\mathcal{F}(s^*,u)&=D_{cont}(u)+\int_0^{s^*}\sigma_u'(\tau)(\tilde{\eta}(u)+\eta(u|S_u(\tau)))\diff \tau \notag \\
&\lesssim -C^{\frac{1}{3}}s_0^3-\int_0^{s^*}\sigma_u'(\tau)(\tilde{\eta}(u)+\eta(u|S_u(\tau)))\diff \tau \notag \\
&\lesssim -C^{\frac{1}{3}}s_0^3-\int_0^{s^*}\left(|u-S_u(\tau)|^2-s_0d(u,\partial \Pismall)\right) \diff \tau \notag \\
&\lesssim -C^{\frac{1}{3}}s_0^3-(s^*)^3+s_0s^*d(u,\partial \Pismall),
\end{align}
where we have used Proposition \ref{improvedDcontsmall}, $\sigma_u' \sim -1$, Lemma \ref{negativityinpi}, and Lemma \ref{quadraticentropy}. Next, since $u_+ \not \in \Pismall$, we have $d(u,\partial \Pismall) \lesssim |u-u_+| \lesssim s^*$. Further, we see:
\[
(s^*)^2 \lesssim |u-u_+|^2 \lesssim \eta(u|u_+)=-\tilde{\eta}(u)\lesssim s_0d(u,\partial \Pismall) \lesssim s_0s^*,
\]
which grants $d(u,\partial \Pismall) \lesssim s^* \lesssim s_0$. Substituting this into \eqref{maximaldissipationpf}, we see:
\[
\mathcal{F}(s^*,u) \lesssim -s_0^3,
\]
for $C$ sufficiently large. 

\subsubsection{Step 1.2: Shocks in $\Qsmall \setminus \Rsmall$}
In this section, we show that there exists a set $\Rsmall$ of order $s_0$ around $u_L$ such that $D_{max}(u) \leq 0$ except on $R_{C,s_0}$. We define a family of intervals $\Rsmall(K_h)$ as:
\begin{equation*}
\Rsmall(K_h):=\{p=u^*+b|p \in \Pismall, 0 \leq b \leq K_hs_0\}.
\end{equation*}
\begin{remark}
Note that for $C$ sufficiently large, and $K_h$ fixed (to be fixed later), $\Rsmall \subset \Qsmall$.
\end{remark}
The main proposition to be proved in this step is the following:
\begin{proposition}[Proposition 6.3 in \cite{MR4667839}]\label{improvedDcontsmallRsmall}
There exists $K_h > 0$ depending only on the system such that for $C$ sufficiently large, $s_0$ sufficiently small, and $u \not \in \Rsmall(K_h)$:
\[
D_{max}(u) \lesssim -s_0^3.
\]
\end{proposition}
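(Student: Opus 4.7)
By Proposition \ref{OutsideQsmall}, it suffices to treat states $u \in \Qsmall \setminus \Rsmall(K_h)$, i.e.\ $u = u^* + b$ with $K_h s_0 \leq b \leq C^{1/6} s_0$. The plan is to refine Proposition \ref{Dcontsmall} via a Taylor expansion around the unique maximizer $u^*$, then show the additional shock term controlled in Lemma \ref{maximalshock} is small compared to the gain.

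First I would handle the easy sub-case: if the maximal shock $(u, u_+) = (u, S_u(s^*))$ satisfies $u_+ \in \Pismall$, then Lemma \ref{bothinPi} immediately yields $D_{RH}(u, u_+, \sigma_\pm) \leq D_{cont}(u_+) \lesssim -s_0^3$, which is exactly what we want. So in what follows I assume $u_+ \notin \Pismall$, in which case $d(u, \partial \Pismall) \leq |u - u_+|$ and, combining $(s^*)^2 \lesssim \eta(u|u_+) = -\tilde{\eta}(u) \lesssim s_0 d(u, \partial \Pismall)$ with $d(u, \partial \Pismall) \leq b$ (since $u \in \Qsmall$ is close to the left endpoint $u^*$ of $\Pismall$ and the diameter of $\Pismall$ is $\sim C^{-1} \gg C^{1/6} s_0$), one obtains $s^* \lesssim (s_0 b)^{1/2}$.

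Next, the Taylor expansion of $D_{cont}$ at $u^*$: using $D_{cont}(u^*) \lesssim -s_0^3$ (Proposition \ref{Dcontsmall}), $D_{cont}'(u^*) = 0$ (Lemma \ref{critptbdry}), $D_{cont}''(u^*) \lesssim -s_0$ (Lemma \ref{secondderivDcont}), and $|D_{cont}'''| \lesssim C s_0$ on $\Pismall$ (Lemma \ref{thirdderivDcont}), I get
\[
D_{cont}(u) \lesssim -s_0^3 - K_1 s_0 b^2 + K_2 C s_0 b^3
\]
for some universal constants $K_1, K_2$. Since $b \leq C^{1/6} s_0$, the cubic remainder is $\leq K_2 C^{7/6} s_0 \cdot s_0 b^2 \cdot (s_0/s_0)$; more cleanly, $C b \leq C^{7/6} s_0$, which is $\leq K_1/(2K_2)$ once $s_0$ is small compared to $C^{-7/6}$. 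Thus
\[
D_{cont}(u) \lesssim -s_0^3 - \tfrac{K_1}{2} s_0 b^2.
\]

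To control the shock correction, I would use Lemma \ref{entropyformula} together with the representation from \eqref{maximaldissipationpf}:
\[
\mathcal{F}(s^*,u) = D_{cont}(u) + \int_0^{s^*}\sigma_u'(\tau)\tilde{\eta}(u) \diff\tau + \int_0^{s^*}\sigma_u'(\tau)\eta(u|S_u(\tau))\diff\tau.
\]
The last integral is $\leq 0$ since $\sigma_u' < 0$ and $\eta(u|\cdot) \geq 0$. For the first, since $\sigma_u' \sim -1$ on $\Pismall$ (remark after Lemma \ref{maximalshock}) and $|\tilde{\eta}(u)| \lesssim s_0\, d(u, \partial \Pismall) \lesssim s_0 b$ by Lemma \ref{negativityinpi},
\[
\int_0^{s^*}\sigma_u'(\tau)\tilde{\eta}(u) \diff\tau \lesssim s^* \cdot s_0 b \lesssim s_0^{3/2} b^{3/2}.
\]
Combining everything,
\[
\mathcal{F}(s^*,u) \lesssim -s_0^3 - \tfrac{K_1}{2} s_0 b^2 + K_3 s_0^{3/2} b^{3/2} = -s_0^3 - s_0 b^2\bigl(\tfrac{K_1}{2} - K_3 (s_0/b)^{1/2}\bigr).
\]
For $b \geq K_h s_0$, we have $(s_0/b)^{1/2} \leq K_h^{-1/2}$, so choosing $K_h$ large enough (depending only on the system through $K_1, K_3$) makes the parenthetical term $\geq K_1/4$, yielding $\mathcal{F}(s^*, u) \lesssim -s_0^3 - s_0 b^2 \lesssim -s_0^3$.

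The main obstacle is the quantifier ordering: $K_h$ must be chosen depending only on universal system constants, \emph{before} taking $C$ large and $s_0$ small, whereas the cubic Taylor remainder involves $C$ and the bound $s^* \lesssim (s_0 b)^{1/2}$ requires $u_+ \notin \Pismall$. The key observation making it work is that all $C$-dependence can be pushed into the smallness thresholds for $s_0$ (since $b \leq C^{1/6} s_0$ gives the Taylor remainder $C s_0 b^3 \ll s_0 b^2$ once $s_0 \ll C^{-7/6}$), leaving the final balance purely between $s_0 b^2$ and $s_0^{3/2} b^{3/2}$, which is a universal inequality as soon as $b/s_0 \geq K_h$ is large enough.
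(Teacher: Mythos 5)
Your proof is correct, but it takes a genuinely different route from the paper's. The paper proves the proposition by lower-bounding $-\tilde{\eta}(u)$ directly: expanding $\tilde{\eta}'$ around $u_L$ via Lemma \ref{pistructuresmall}, it shows $-\tilde{\eta}(u) \geq K_1 K_h s_0^2 - K_2 C^{4/3}s_0^3 \geq K^* s_0^2$ once $K_h = 2K^*/K_1$ and $s_0$ is small. Then Lemma \ref{Kstar} forces $u_+ \in \Pismall$, so that Lemma \ref{bothinPi} gives $D_{max}(u) \lesssim -s_0^3$. In other words, the paper shows that outside $\Rsmall(K_h)$ one is \emph{always} in the easy sub-case where the maximal post-shock state stays in $\Pismall$. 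Your argument instead treats the hard sub-case $u_+ \notin \Pismall$ head-on: you re-use the Taylor expansion of $D_{cont}$ at $u^*$ (Lemmas \ref{secondderivDcont}, \ref{thirdderivDcont}) together with the shock-correction bound $s^* \lesssim (s_0 b)^{1/2}$ already established in the proof of Proposition \ref{OutsideQsmall}, then balance the quadratic gain $-s_0 b^2$ against the correction $s_0^{3/2}b^{3/2}$, which is universal once $b/s_0 \geq K_h$. The trade-off: the paper's proof is shorter and avoids reusing the Taylor machinery, but relies on the geometric Lemma \ref{Kstar}; yours is more self-contained and makes explicit where the $b^2$ margin comes from, at the price of a slightly longer computation. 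Your closing remark about the quantifier ordering — that all $C$-dependence can be absorbed into the threshold for $s_0$, leaving a universal balance determining $K_h$ — is exactly the point that makes both arguments go through, and is the same observation the paper exploits with $K_2 C^{4/3}s_0^3 \leq \tfrac12 K_1 K_h s_0^2$ for $s_0 \ll C^{-4/3}$.
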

Informally, in this step, we prove $D_{max}(u) \lesssim -s_0^3$ for $u$ at a sort of ``medium'' distance from $u_L$; states in $\Qsmall \setminus \Rsmall$ are at a distance more than of order $s_0$ from $u_L$, but are also far from the boundary of $\Pismall$ in some sense. A quantitative justification for this is the following lemma:
\begin{lemma}[Lemma 6.7 in \cite{MR4667839}]\label{Kstar}
There exists a universal constant $K^*$ such that for $C$ sufficiently large, $s_0$ sufficiently small, and any $u \in \Pismall$ satisfying:
\[
-\tilde{\eta}(u) \geq K^*s_0^2,
\]
$u_+ \in \Pismall$.
\end{lemma}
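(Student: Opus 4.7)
The plan is to reduce $u_+ \in \Pismall$ to the single inequality $s^* := u - u_+ \leq u - u^*$, where $u^*$ is the left endpoint of $\Pismall$. Since $u_+ < u \leq \tilde u$ (the right endpoint), the only thing to verify is that $u_+ \geq u^*$. I intend to compare two length scales: the shock size $s^*$, which turns out to be of order $\sqrt{-\tilde\eta(u)}$, and the distance $u - u^*$, which is at least of order $-\tilde\eta(u)/s_0$. When $-\tilde\eta(u) \gtrsim s_0^2$ with a big enough implicit constant, the second scale dominates.

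First I would invoke Lemma \ref{maximalshock}, which characterizes $u_+ = u - s^*$ by the equation $\eta(u|u_+) = -\tilde\eta(u)$. Since $\mathcal{M}$ is compactly contained in $(0, \infty)$ for $C$ large and $s_0$ small, Lemma \ref{quadraticentropy} applies on a fixed compact set, yielding $(s^*)^2 \sim \eta(u|u_+) = -\tilde\eta(u)$; hence $s^* \leq K_1\sqrt{-\tilde\eta(u)}$ for a universal constant $K_1$ independent of $C$ and $s_0$.

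Next, Lemma \ref{negativityinpi} gives $-\tilde\eta(u) \leq K_2 s_0 \cdot d(u, \partial\Pismall) \leq K_2 s_0 (u - u^*)$, where the second inequality uses that the boundary distance is at most the distance to either endpoint. Rearranging yields $u - u^* \geq \frac{-\tilde\eta(u)}{K_2 s_0}$.

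Putting these together, the desired bound $s^* \leq u - u^*$ reduces to $K_1\sqrt{-\tilde\eta(u)} \leq \frac{-\tilde\eta(u)}{K_2 s_0}$, which is equivalent to $-\tilde\eta(u) \geq (K_1 K_2)^2 s_0^2$. Taking $K^* = (K_1 K_2)^2$, which depends only on the system via Lemmas \ref{quadraticentropy} and \ref{negativityinpi} and is independent of $C$ and $s_0$, closes the argument. There is no real obstacle; the only subtlety is ensuring Lemma \ref{quadraticentropy} is applied on the compact set $\mathcal{M} \subset (0,\infty)$ so that its constants are uniform as $C \to \infty$ and $s_0 \to 0$, which is precisely the content of the remark following Lemma \ref{maximalshock}.
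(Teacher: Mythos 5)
Your proof is correct and uses the same ingredients as the paper's (Lemma \ref{negativityinpi}, Lemma \ref{quadraticentropy}, Lemma \ref{maximalshock}, and the remark on uniformity of constants on the compact set $\mathcal{M}$); the only difference is cosmetic, in that you argue directly that $s^* \leq u - u^*$ while the paper assumes $u_+ \notin \Pismall$ and derives a contradiction.
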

\begin{proof}
Let $-\tilde{u} \geq K^*s_0^2$, and assume $u_+ \not \in \Pismall$. Then, letting $K_1$ be the universal constant from Lemma \ref{negativityinpi}, we see:
\[
K^*s_0^2 \leq -\tilde{\eta}(u) \leq K_1s_0d(u,\partial \Pismall) \leq K_1s_0|u-u_+|.
\]
Letting $K_2$ be the universal constant from Lemma \ref{quadraticentropy}, Lemma \ref{maximalshock} implies: 
\[
K_2|u-u_+|^2 \leq \eta(u|u_+)=-\tilde{\eta}(u) \leq K_1s_0|u-u_+|.
\]
Combining these, we see:
\[
K^*s_0 \leq \frac{K_1^2}{K_2}s_0.
\]
Taking $K^*$ strictly larger than $\frac{K_1^2}{K_2}$ gives a contradiction, so we must have $u_+ \in \Pismall$.
\end{proof}
\begin{remark}
By Lemma \ref{bothinPi}, we then see that if $-\tilde{\eta}(u) \geq K^*s_0^2$, then $D_{max}(u) \lesssim -s_0^3$. By Lemma \ref{negativityinpi}, this roughly says that if $s_0 \lesssim d(u,\partial \Pismall)$, then $D_{max}(u) \lesssim -s_0^3$ holds. 
\end{remark}
\underline{\textbf{Proof of Proposition \ref{improvedDcontsmallRsmall}:}} Now, we prove Proposition \ref{improvedDcontsmallRsmall}. The strategy is to show that for $u \in \Qsmall$, there exists a univeral constant $K_h$ such that for $b > K_hs_0$, $C$ sufficiently large, and $s_0$ sufficiently small, one has $-\tilde{\eta}(u) \geq K^*s_0^2$, where $K^*$ is the universal constant from Lemma \ref{Kstar}. Combined with Lemma \ref{bothinPi}, this will complete the proof of Proposition \ref{improvedDcontsmallRsmall}. 
\par So, suppose $u \in \Qsmall$, with $K_hs_0 < b \leq C^{\frac{1}{6}}s_0$ (where $K_h$ is yet to be determined). Then, using the fundamental theorem of calculus, we write:
\begin{equation}\label{tildeetacalc}
-\tilde{\eta}(u)=-\tilde{\eta}(u^*)+\int_{u}^{u^*}\tilde{\eta}'(v)\diff v.
\end{equation}
Using $|u-u_L| \leq C^{\frac{1}{6}}s_0$ and Lemma \ref{pistructuresmall}, we obtain:
\begin{equation}\label{integrandcalulcation}
\tilde{\eta}'(u)=Cs_0(\eta'(u)-\eta'(u_L))+(\eta'(u_R)-\eta'(u_L))=-s_0\eta''(u_L)+\mathcal{O}(C^{\frac{7}{6}}s_0^2).
\end{equation}
So, writing $v$ as $v(t)=tu^*+(1-t)u$, we have $v'(t)=-b$. Re-parameterizing the integral in \eqref{tildeetacalc} and using \eqref{integrandcalulcation}:
\begin{align*}
-\tilde{\eta}(u)&=\int_0^1\left(-s_0\eta''(u_L)+\mathcal{O}(C^{\frac{7}{6}}s_0^2)\right)(-b)\diff s \\
&=\int_0^1\left(bs_0\eta''(u_L)+\mathcal{O}(C^{\frac{4}{3}}s_0^3)\right) \diff s \\
&=bs_0\eta''(u_L)+\mathcal{O}(C^{\frac{4}{3}}s_0^3)).
\end{align*}
Since $\eta''(u_L) \geq K_1 > 0$, and $b \geq K_hs_0$, we can write this as:
\[
-\tilde{\eta}(u) \geq K_1K_hs_0^2-K_2C^{\frac{4}{3}}s_0^3.
\]
So, taking $s_0$ sufficiently small (depending on $C$), we may obtain:
\[
-\tilde{\eta}(u) \geq \frac{K_1K_h}{2}s_0^2.
\]
Choosing $K_h=\frac{2K^*}{K_1}$ grants the result. 
\subsubsection{Step 2.1: Shocks close to $\partial \Pismall$}  In this step, we prove that there exists a constant $K_{bd} > 0$ such that if $|u-u^*| \leq K_{bd}s_0$, then $D_{max}(u) \lesssim -s_0^3$. Consequently, if we define the set:
\begin{equation*}
    \Rsmallbd:=\{u \in \Rsmall| |u-u^*| \leq K_{bd}s_0\},
\end{equation*}
then we have $D_{max} \lesssim -s_0^3$ on $\Rsmallbd$. This is contained in the following proposition:
\begin{proposition}\label{Rsmallbdprop}
There exists a universal constant $K_{bd} > 0$ such that for $C$ sufficiently large, $s_0$ sufficiently small, and any $u \in \Rsmallbd$, $D_{max}(u) \lesssim -s_0^3$.
\end{proposition}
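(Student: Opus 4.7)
The plan is to apply the fundamental theorem of calculus decomposition
\begin{equation*}
\mathcal{F}(s^*,u) = D_{cont}(u) + \int_0^{s^*}\sigma_u'(\tau)\bigl(\tilde{\eta}(u)+\eta(u|S_u(\tau))\bigr)\diff \tau,
\end{equation*}
used in the proof of Proposition \ref{OutsideQsmall}, but with the roles swapped: here I invoke the (un-improved) estimate $D_{cont}(u)\lesssim -s_0^3$ from Proposition \ref{Dcontsmall} directly, since $u\in\Rsmallbd\subset\Rsmall\subset\Qsmall$ so the refined rate of Proposition \ref{improvedDcontsmall} is no longer available. The strategy is to exploit proximity to $\partial\Pismall$ to show that the remainder integral is of size at most $K_{bd}^{3/2}s_0^3$, which is dominated by $-s_0^3$ once $K_{bd}$ is taken to be a sufficiently small universal constant.

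The first step is to extract sharp small-scale estimates from $u\in\Rsmallbd$. Since $u^*\in\partial\Pismall$, the proximity $|u-u^*|\leq K_{bd}s_0$ yields $d(u,\partial\Pismall)\leq K_{bd}s_0$; Lemma \ref{negativityinpi} then upgrades this to $|\tilde{\eta}(u)|\lesssim K_{bd}s_0^2$. Combining with Lemmas \ref{maximalshock} and \ref{quadraticentropy},
\begin{equation*}
|u-u_+|^2\lesssim \eta(u|u_+)=-\tilde{\eta}(u)\lesssim K_{bd}s_0^2,
\end{equation*}
so $s^*\lesssim \sqrt{K_{bd}}\, s_0$. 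This square-root gain (as opposed to the naive linear bound $s^*\lesssim K_{bd}s_0$) is the crucial structural input, and it comes for free from the definition of $s^*$ via $\eta(u|S_u(s^*))=-\tilde{\eta}(u)$. In particular, for $C$ large and $s_0$ small, $s^*$ is small enough that $\sigma_u'(\tau)\sim -1$ on $[0,s^*]$ by the remark following Lemma \ref{maximalshock}, and the set of maximal shocks is compactly contained in $(0,\infty)$.

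The second step is a direct estimate of the remainder integral. For $\tau\in[0,s^*]$, Lemma \ref{quadraticentropy} gives $\eta(u|S_u(\tau))\lesssim |u-S_u(\tau)|^2 \lesssim \tau^2\lesssim (s^*)^2\lesssim K_{bd}s_0^2$. Hence
\begin{align*}
\left|\int_0^{s^*}\sigma_u'(\tau)\bigl(\tilde{\eta}(u)+\eta(u|S_u(\tau))\bigr)\diff \tau\right| &\lesssim \int_0^{s^*}\bigl(|\tilde{\eta}(u)|+|u-S_u(\tau)|^2\bigr)\diff \tau \\
&\lesssim s^*\cdot K_{bd}s_0^2 \\
&\lesssim K_{bd}^{3/2}s_0^3.
\end{align*}
Combining with Proposition \ref{Dcontsmall}, there exist universal constants $K_1,K_2>0$ with $D_{max}(u)=\mathcal{F}(s^*,u)\leq -K_1 s_0^3 + K_2 K_{bd}^{3/2}s_0^3$, and fixing $K_{bd}$ small enough (universal, depending only on $K_1,K_2$) yields the claim.

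The main obstacle is ensuring the square-root improvement $s^*\lesssim\sqrt{K_{bd}}\,s_0$ is actually used; without it the integral is only controlled by $K_{bd}s_0^3$, which cannot be absorbed into $-s_0^3$ by making $K_{bd}$ small, and the argument fails. A secondary point is verifying that $K_{bd}$ ends up genuinely universal, that is, independent of $C$ (and of $s_0$), which follows because all the implicit constants in Lemmas \ref{quadraticentropy}, \ref{negativityinpi}, \ref{maximalshock}, and Proposition \ref{Dcontsmall} are universal in the sense of the notation of this section.
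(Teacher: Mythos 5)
Your proof is correct and follows the paper's route: decompose $D_{max}(u)=\mathcal{F}(s^*,u)=D_{cont}(u)+\int_0^{s^*}\sigma_u'(\tau)\bigl(\tilde{\eta}(u)+\eta(u|S_u(\tau))\bigr)\diff\tau$, bound $D_{cont}$ via Proposition \ref{Dcontsmall}, and control the remaining integral by exploiting $d(u,\partial\Pismall)\leq K_{bd}s_0$ before taking $K_{bd}$ small. One correction to your commentary: the claim that the square-root gain $s^*\lesssim\sqrt{K_{bd}}\,s_0$ is essential and that an error bound of size $K_{bd}s_0^3$ ``cannot be absorbed'' is mistaken---any error of the form $K_{bd}^{\alpha}s_0^3$ with $\alpha>0$ and implicit constant independent of $K_{bd}$ is absorbed into $-s_0^3$ by taking $K_{bd}$ small, and indeed the paper's coarser bound $D_{max}(u)\lesssim D_{cont}(u)+s_0^2d(u,\partial\Pismall)\lesssim -s_0^3+K_{bd}s_0^3$ already suffices.
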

\begin{proof}
Following the proof of Proposition \ref{OutsideQsmall}, we use the fundamental theorem of calculus to estimate:
\begin{equation}
    D_{max}(u) \lesssim D_{cont}(u)+s_0^2d(u, \partial \Pismall).
\end{equation}
The key difference is that in this region, we cannot use Lemma \ref{improvedDcontsmall} to blow up the first term (cf. \eqref{maximaldissipationpf}). So, we need a bound on the second. By definition, there are universal constants $K_{bd}, K > 0$ such that for $d(u, \partial \Pismall) \leq \frac{1}{2}K_{bd}s_0$, then $D_{max}(u) \leq -Ks_0^3$ (via Proposition \ref{Dcontsmall}). Finally, for $u \in \Rsmall$, $d(u, \partial \Pismall) \leq \frac{1}{2}K_{bd}s_0$ implies $|u-u^*| \leq K_{bd}s_0$ for $C$ sufficiently large and $s_0$ sufficiently small.
\end{proof}
\subsubsection{Step 2.2: Shocks close to $u_L$} Now, we prove that states near $u_L$ satisfy $D_{max}(u) \leq 0$. In this region, we cannot hope to obtain $D_{max}(u) \lesssim -s_0^3$, because $u_+(u_L)=u_R$, and so $D_{max}(u_L)=D_{RH}(u_L, u_R, \sigma(u_L,u_R))=0$. However, we can still show $D_{max}(u) \leq 0$ by showing that $u_L$ is a local maximum for $D_{max}$. We begin by stating some algebraic identities and estimates. The proofs of the following lemmas are straightforward and follow from taking derivatives of the defining equations for $u^+$, and using local asymptotics on $\sigma_\pm$ (cf. Lemma 3.2 in \cite{MR4667839}, and the references to \cite{MR546634} cited therein), so we simply refer to  \cite{MR4667839} for the proofs. 
\begin{lemma}[Lemma 6.8 in \cite{MR4667839}]
Let $u_+=u_+(u)=S_u(s^*)$ be such that $D_{max}(u)=D_{RH}(u,u_+, \sigma_\pm)$. Then, the following identities are valid:
\begin{align}
&(f'(u_+)-\sigma_\pm)u_+'=f'(u)-\sigma_\pm+(u_+-u)\sigma_\pm', \label{implicitderiv1} \\
&\eta'(u_R)-\eta'(u_+)+(1+Cs_0)(\eta'(u)-\eta'(u_L))=\eta''(u_+)u_+'(u-u_+), \label{implicitderiv2} \\ 
&D_{max}'(u)=(\eta'(u_+)-\eta'(u_R)-(1+Cs_0)(\eta'(u)-\eta'(u_L)))(f'(u)-\sigma_\pm). \label{implicitderiv3}
\end{align}
\end{lemma}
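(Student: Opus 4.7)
The plan is to prove the three identities by direct differentiation of the defining relations for $u_+ = u_+(u)$ and $\sigma_\pm = \sigma_\pm(u)$. The first is an immediate consequence of Rankine-Hugoniot: differentiating $f(u_+) - f(u) = \sigma_\pm(u_+ - u)$ in $u$ gives
$$f'(u_+) u_+' - f'(u) = \sigma_\pm'(u_+ - u) + \sigma_\pm(u_+' - 1),$$
which rearranges to \eqref{implicitderiv1}.

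For \eqref{implicitderiv2}, I will use the characterization of $s^*$ from Lemma \ref{maximalshock}, namely $\eta(u|u_+) = -\tilde{\eta}(u)$. Expanding both sides explicitly as
$$\eta(u) - \eta(u_+) - \eta'(u_+)(u - u_+) = -(1+Cs_0)\bigl[\eta(u) - \eta(u_L) - \eta'(u_L)(u - u_L)\bigr] + \bigl[\eta(u) - \eta(u_R) - \eta'(u_R)(u - u_R)\bigr],$$
differentiating in $u$, and using the product-rule identity $\tfrac{d}{du}[\eta'(u_+)(u - u_+)] = \eta''(u_+) u_+'(u - u_+) + \eta'(u_+)(1 - u_+')$, the $\eta'(u)$ and $\eta'(u_+)u_+'$ terms cancel cleanly on the left, leaving exactly \eqref{implicitderiv2} after rearrangement.

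For \eqref{implicitderiv3}, I will directly differentiate
$$D_{max}(u) = \bigl[q(u_+; u_R) - \sigma_\pm \eta(u_+|u_R)\bigr] - (1+Cs_0)\bigl[q(u; u_L) - \sigma_\pm \eta(u|u_L)\bigr],$$
using $\partial_a q(a;b) = f'(a)(\eta'(a) - \eta'(b))$ and $\partial_a \eta(a|b) = \eta'(a) - \eta'(b)$. Grouping terms gives
\begin{align*}
D_{max}'(u) &= (\eta'(u_+) - \eta'(u_R)) u_+' (f'(u_+) - \sigma_\pm) \\
&\quad - (1+Cs_0)(\eta'(u) - \eta'(u_L))(f'(u) - \sigma_\pm) \\
&\quad - \sigma_\pm' \bigl[\eta(u_+|u_R) - (1+Cs_0)\eta(u|u_L)\bigr].
\end{align*}
Substituting \eqref{implicitderiv1} into the first line turns $(f'(u_+) - \sigma_\pm)u_+'$ into $f'(u) - \sigma_\pm + (u_+ - u)\sigma_\pm'$. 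The pieces carrying $(f'(u) - \sigma_\pm)$ then combine into precisely the prefactor in \eqref{implicitderiv3}, leaving a residual proportional to $\sigma_\pm'$.

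The main obstacle, and the only delicate step, is to verify that this residual $\sigma_\pm'$ contribution vanishes identically. The residual equals $\sigma_\pm' \bigl[(\eta'(u_+) - \eta'(u_R))(u_+ - u) - \eta(u_+|u_R) + (1+Cs_0)\eta(u|u_L)\bigr]$. I will show the bracket is zero by invoking the envelope condition $\eta(u|u_+) = -\tilde{\eta}(u)$: solving for $(1+Cs_0)\eta(u|u_L)$ yields $\eta(u_+) - \eta(u_R) - \eta'(u_R)(u - u_R) + \eta'(u_+)(u - u_+)$, and substituting this along with the definition $\eta(u_+|u_R) = \eta(u_+) - \eta(u_R) - \eta'(u_R)(u_+ - u_R)$ into the bracket produces termwise cancellation of the $\eta(u_+)$, $\eta(u_R)$, $\eta'(u_R)$-multiple, and $\eta'(u_+)$-multiple contributions. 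This cancellation is precisely where the maximality of $s^*$ (beyond being a generic parameter) is used; without it, an extra $\sigma_\pm'$ term would survive. With the bracket gone, \eqref{implicitderiv3} follows.
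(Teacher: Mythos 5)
Your proof is correct and follows precisely the approach the paper alludes to (``taking derivatives of the defining equations for $u^+$''), which is also the method in the cited reference \cite{MR4667839}. The key step you correctly isolated — using the envelope condition $\eta(u|u_+)=-\tilde{\eta}(u)$ to show that the residual bracket multiplying $\sigma_\pm'$ vanishes termwise — is exactly what makes \eqref{implicitderiv3} work, and your verification of this cancellation is sound.
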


\begin{lemma}[Lemmas 6.9, 6.10 in \cite{MR4667839}]\label{u+estimates}
For $C$ suficiently large, $s_0$ sufficiently small, and any $u \in \Rsmall \setminus \Rsmallbd$:
\begin{align*}
    |u-u_+| &\sim s_0, \\
    \lambda(u)-\sigma_\pm &\sim s_0, \\
    \sigma_\pm-\lambda(u_+) &\sim s_0, \\
    |u_+'|+|\sigma_\pm'| &\lesssim 1, \\
    |u_+''|+|\sigma_\pm''| &\lesssim s_0^{-1}, \\
    |D_{max}'''(u)| &\lesssim 1.
\end{align*}
\end{lemma}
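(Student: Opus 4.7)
\textbf{Proof proposal for Lemma \ref{u+estimates}.} The plan is to first nail down the two $s_0$-scale estimates $|u-u_+| \sim s_0$ and the speed gaps, and then to bootstrap implicit differentiation of the identities \eqref{implicitderiv1}--\eqref{implicitderiv3} to obtain the derivative bounds. I will first establish $|u-u_+| \sim s_0$. By the characterization in Lemma \ref{maximalshock}, $\eta(u|u_+) = -\tilde\eta(u)$, and Lemma \ref{quadraticentropy} yields $|u-u_+|^2 \sim -\tilde\eta(u)$. On the other hand, for $u \in \Rsmall \setminus \Rsmallbd$ we have $b := u-u^* \sim s_0$, and the first-order expansion of $\tilde\eta$ from Step 1.2 gives $-\tilde\eta(u) = b s_0 \eta''(u_L) + \mathcal{O}(C^{4/3}s_0^3) \sim s_0^2$. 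Taking square roots gives $|u-u_+| \sim s_0$. Next, Taylor expanding $\sigma_\pm = (f(u)-f(u_+))/(u-u_+)$ around $u$ and $u_+$ separately produces
\begin{equation*}
\lambda(u)-\sigma_\pm = \tfrac{1}{2}f''(u)(u-u_+) + \mathcal{O}(s_0^2), \qquad \sigma_\pm - \lambda(u_+) = \tfrac{1}{2}f''(u_+)(u-u_+) + \mathcal{O}(s_0^2).
\end{equation*}
Since $\mathcal{M}$ is compactly contained in $(0,\infty)$ by the remark after Lemma \ref{maximalshock}, both $f''(u)$ and $f''(u_+)$ are uniformly $\sim 1$, and combining with $u-u_+ \sim s_0 > 0$ gives the two speed gap estimates.

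For the first derivatives I will use the identity \eqref{implicitderiv2}. The right-hand side equals $\eta''(u_+) u_+'(u-u_+)$. The left-hand side is bounded by $|\eta'(u_R)-\eta'(u_+)| + (1+Cs_0)|\eta'(u)-\eta'(u_L)| \lesssim |u_+-u_R| + |u-u_L| \lesssim s_0$, where I use $|u-u_L| \lesssim s_0$ (which follows from $|u_L-u^*| \sim s_0$, itself a consequence of $-\tilde\eta(u_L) = \eta(u_L|u_R) \sim s_0^2$ combined with Lemmas \ref{negativityinpi}--\ref{pigradientcontrolbdry}) and the triangle inequality $|u_+-u_R| \le |u_+-u|+|u-u_L|+|u_L-u_R| \lesssim s_0$. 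Dividing by $\eta''(u_+)(u-u_+) \sim s_0$ yields $|u_+'| \lesssim 1$. Substituting into \eqref{implicitderiv1} and solving for $\sigma_\pm'$ gives
\begin{equation*}
\sigma_\pm' = \frac{(f'(u_+)-\sigma_\pm)u_+' - (f'(u)-\sigma_\pm)}{u_+-u},
\end{equation*}
whose numerator is $\mathcal{O}(s_0)$ by the already-established speed gaps and denominator is $\sim s_0$, so $|\sigma_\pm'| \lesssim 1$.

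Differentiating \eqref{implicitderiv2} once more and isolating the second derivative gives an identity of the form $\eta''(u_+)(u-u_+) u_+'' = \mathcal{R}$, where $\mathcal{R}$ collects terms involving $u_+'$, $\eta''(u)$, $\eta'''(u_+)$, and $(u-u_+)$, all of which are $\mathcal{O}(1)$ from the previous step. Dividing by $(u-u_+) \sim s_0$ yields $|u_+''| \lesssim s_0^{-1}$, and an analogous differentiation of \eqref{implicitderiv1} gives $|\sigma_\pm''| \lesssim s_0^{-1}$. Finally, write $D_{max}'(u) = A(u)B(u)$ using \eqref{implicitderiv3}, with $A = \eta'(u_+)-\eta'(u_R)-(1+Cs_0)(\eta'(u)-\eta'(u_L))$ and $B = f'(u)-\sigma_\pm$. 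Then $|A|,|B| \lesssim s_0$, $|A'|,|B'| \lesssim 1$, and $|A''|,|B''| \lesssim s_0^{-1}$ from the above. Applying the product rule twice, $D_{max}''' = A''B + 2A'B' + AB''$, and every summand is $\mathcal{O}(s_0^{-1}) \cdot \mathcal{O}(s_0) + \mathcal{O}(1) \cdot \mathcal{O}(1) + \mathcal{O}(s_0) \cdot \mathcal{O}(s_0^{-1}) = \mathcal{O}(1)$. The main obstacle throughout is simply bookkeeping: ensuring every implicit constant is truly independent of $C$ and $s_0$, which rests on the compact containment $\mathcal{M} \subset (0,\infty)$ so that $f''$ and higher derivatives of $f,\eta$ do not degenerate on the range of $(u,u_+)$ under consideration.
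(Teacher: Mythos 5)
Your proposal is correct and follows essentially the same route as the paper, which does not reproduce the argument but defers to Lemmas 6.9--6.10 of \cite{MR4667839} after describing the method as exactly what you do: differentiate the defining identities \eqref{implicitderiv1}--\eqref{implicitderiv3} for $u_+$ and use local asymptotics of $\sigma_\pm$, with the $s_0$-scales coming from $\eta(u|u_+)=-\tilde{\eta}(u)\sim s_0^2$ on $\Rsmall\setminus\Rsmallbd$ and the nondegeneracy of $f''$, $\eta''$ on the compact set containing $\Pismall$ and $\mathcal{M}$. Your bookkeeping (in particular $|u-u_L|\lesssim s_0$, $|u_+-u_R|\lesssim s_0$, and dividing by $\eta''(u_+)(u-u_+)\sim s_0$) is sound, so no gap to report.
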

Now, we state and prove the main proposition of this step.
\begin{proposition}[Proposition 6.5 in \cite{MR4667839}]\label{Rsmallnotbd}
There exists a universal constant $K_0 > 0$ such that for $C$ sufficiently large $s_0$ sufficiently small, and any $u \in \Rsmallnot$, where:
\[
\Rsmallnot:=\{u \in \Rsmall||u-u_L| \leq K_0s_0\},
\]
we have:
\[
D_{max}(u) \leq 0.
\]
\end{proposition}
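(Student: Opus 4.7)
The plan is to show that $u_L$ is a local maximum of $D_{max}$ by establishing $D_{max}(u_L)=0$, $D_{max}'(u_L)=0$, and $D_{max}''(u_L) \lesssim -s_0$, and then invoking the bound $|D_{max}'''(u)| \lesssim 1$ from Lemma \ref{u+estimates} together with a Taylor expansion. The universal constant $K_0$ will arise as the ratio between the (negative) second-order term and the (bounded) cubic remainder.

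First, observe that at $u = u_L$ the maximal shock from Lemma \ref{maximalshock} must be the Ole\u{\i}nik shock itself, i.e. $u_+(u_L) = u_R$ with $\sigma_\pm = \sigma(u_L, u_R)$; this is because $\eta(u_L|u_+) = -\tilde{\eta}(u_L) = (1+Cs_0) \cdot 0 + \eta(u_L|u_R)$ has the unique solution $u_+ = u_R$ in the admissible range (the uniqueness in Lemma \ref{maximalshock}). A direct substitution into the definition of $D_{RH}$ then gives $D_{max}(u_L) = 0$. For the first derivative, I would use the identity \eqref{implicitderiv3}: plugging in $u = u_L$, $u_+ = u_R$ makes both differences $\eta'(u_+) - \eta'(u_R)$ and $\eta'(u) - \eta'(u_L)$ vanish, so $D_{max}'(u_L) = 0$.

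For the second derivative, differentiating \eqref{implicitderiv3} yields
\[
D_{max}''(u) = \bigl(\eta''(u_+)u_+' - (1+Cs_0)\eta''(u)\bigr)(f'(u)-\sigma_\pm) + \bigl(\eta'(u_+)-\eta'(u_R)-(1+Cs_0)(\eta'(u)-\eta'(u_L))\bigr)(f''(u)-\sigma_\pm').
\]
At $u = u_L$ the second bracket vanishes, and the key step is to show $u_+'(u_L) = 0$. This follows by plugging $u = u_L$, $u_+ = u_R$ into \eqref{implicitderiv2}: the left-hand side becomes identically zero, while the right-hand side equals $\eta''(u_R) u_+'(u_L)(u_L - u_R)$, and since $\eta''(u_R) > 0$ and $u_L \neq u_R$, we obtain $u_+'(u_L) = 0$. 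Therefore
\[
D_{max}''(u_L) = -(1+Cs_0)\eta''(u_L)(f'(u_L)-\sigma_\pm),
\]
which by Lemma \ref{u+estimates} (giving $\lambda(u_L) - \sigma_\pm \sim s_0$) and the strict convexity of $\eta$ satisfies $D_{max}''(u_L) \lesssim -s_0$ with a universal implicit constant.

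Finally, by Taylor's theorem with Lagrange remainder and the bound $|D_{max}'''| \lesssim 1$ from Lemma \ref{u+estimates},
\[
D_{max}(u) = \tfrac{1}{2}D_{max}''(u_L)(u-u_L)^2 + \tfrac{1}{6}D_{max}'''(\xi)(u-u_L)^3 \leq (u-u_L)^2\bigl(-K_1 s_0 + K_2 |u-u_L|\bigr),
\]
for universal $K_1, K_2 > 0$. Choosing $K_0 := K_1/K_2$ makes the right-hand side non-positive whenever $|u-u_L| \leq K_0 s_0$, which completes the proof. The main conceptual obstacle is the identification $u_+'(u_L) = 0$, since without this cancellation the second derivative would be of order $Cs_0^2$ rather than $s_0$ and would not dominate the cubic remainder for the required range of $K_0$; everything else is bookkeeping using the identities and asymptotics already provided.
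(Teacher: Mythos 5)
Your proposal is correct and follows essentially the same route as the paper: establishing $D_{max}(u_L)=0$ and $D_{max}'(u_L)=0$ from \eqref{implicitderiv3}, deducing $u_+'(u_L)=0$ from \eqref{implicitderiv2} so that $D_{max}''(u_L)=-(1+Cs_0)\eta''(u_L)(f'(u_L)-\sigma_\pm)\lesssim -s_0$, and closing with Taylor's theorem and the bound $|D_{max}'''|\lesssim 1$ from Lemma \ref{u+estimates}. The only difference is cosmetic: you justify $u_+(u_L)=u_R$ explicitly via the characterizing equation of Lemma \ref{maximalshock}, which the paper simply asserts.
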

\begin{proof}
Firstly $u_+(u_L)=u_R$, which implies $D_{max}(u_L)=D_{RH}(u_L, u_R, \sigma(u_L,u_R))=0$. Secondly, evaluating \eqref{implicitderiv3} at $u_L$:
\[
D_{max}'(u_L)=0.
\]
So, $u_L$ is a critical point of $D_{max}$. Next, we compute $D_{max}''(u_L)$, and obtain:
\begin{equation}\label{Dmax''ul}
D_{max}''(u_L)=(\eta''(u_R)u_+'-(1+Cs_0)\eta''(u_L))(f'(u_L)-\sigma(u_L,u_R)).
\end{equation}
We show that $D_{max}''(u_L) \lesssim -s_0$. Indeed, evaluating \eqref{implicitderiv2} at $u_L$, we obtain:
\[
s_0^{-1}(u_R-u_L)\eta''(u_R)u_+'(u_L)=0,
\]
implying $u_+'(u_L)=0$. Returning to \eqref{Dmax''ul}:
\begin{equation}
D_{max}''(u_L)=-(1+Cs_0)\eta''(u_L)(f'(u_L)-\sigma(u_L,u_R))
\end{equation}
By Lemma \ref{u+estimates}, we see $D_{max}''(u_L) \lesssim -s_0$. In other words, there exists a constant $K_1 > 0$ such that for $C$ sufficiently large, and $s_0$ sufficiently small:
\[
D_{max}''(u_L) \leq -K_1s_0.
\]
Finally, we construct $\Rsmallnot$. Let $u \in \Rsmall \setminus \Rsmallbd$, and let $K_2$ be the constant from Lemma \ref{u+estimates} such that $|D_{max}'''(u)| \leq K_3$. Then, by Taylor's theorem, there exists $v$ between $u$ and $u_L$ such that:
\[
D_{max}(u)=\frac{1}{2}D_{max}''(u_L)(u-u_L)^2+\frac{1}{6}D_{max}'''(v)(u-u_L)^3.
\]
This implies:
\[
D_{max}(u) \leq \frac{-K_1}{2}s_0|u-u_L|^2+\frac{K_3}{6}|u-u_L|^3,
\]
which implies that there is a constant $K_0$ such that if $|u-u_L| < K_0$, then $D_{max}(u) \leq 0$ for $|u-u_L| \leq K_0s_0$, for $C$ sufficiently large and $s_0$ sufficiently small. 
\end{proof}

\subsubsection{Step 2.3: Shocks in $\Rsmallplus$ and $\Rsmallminus$} Finally, we show that $D_{max}(u) \leq 0$ for $u$ in the rest of $\Rsmall$. For any $u \in \Rsmall$ we can write:
\[
u=u_L-b(u).
\]
Then, $\Rsmall \setminus (\Rsmallbd \cup \Rsmallnot)$ is partitioned into two disconnected intervals, which we denote as follows:
\begin{align*}
\Rsmallplus&:=\{u \in \Rsmall \setminus (\Rsmallbd \cup \Rsmallnot)|b(u) > 0\}, \\
\Rsmallminus&:=\{u \in \Rsmall \setminus (\Rsmallbd \cup \Rsmallnot)|b(u) < 0\}. \\
\end{align*}
In both regions, the proof follows via an ODE argument. The strategy is to show that there are no critical points for $D_{max}$ in either region. We start with a characterization of critical points which will let us define the relevant ODE:
\begin{lemma}[Lemma 6.11 in \cite{MR4667839}]\label{critptcharlemma}
Let $u$ be in the interior of $\Pismall$. Then, $u$ is a critical point of $D_{max}$ if and only if:
\begin{equation}\label{critptchar}
\eta'(u_+)-\eta'(u_R)-(1+Cs_0)(\eta'(u)-\eta'(u_L))=0.
\end{equation}
\end{lemma}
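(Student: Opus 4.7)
The plan is to read off the result directly from identity \eqref{implicitderiv3}, which factors $D_{max}'(u)$ as a product of two terms: (i) the bracketed expression appearing in \eqref{critptchar}, and (ii) the factor $f'(u)-\sigma_\pm$. The ``if'' direction of the equivalence is then immediate, so the whole content of the lemma reduces to verifying that the second factor $f'(u)-\sigma_\pm$ never vanishes when $u$ lies in the interior of $\Pismall$; once this is shown, $D_{max}'(u)=0$ forces the first factor to vanish, which is exactly \eqref{critptchar}.

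To control the second factor, I would use Lemma \ref{maximalshock} twice. First, since $\Pismall$ is a closed interval and $\tilde{\eta}$ is continuous with $\tilde{\eta}<0$ in the interior, the implicit characterization $\eta(u\,|\,S_u(s^*))=-\tilde{\eta}(u)>0$ forces $s^*>0$, so $u_+=S_u(s^*)=u-s^*$ is strictly less than $u$; in particular the shock $(u,u_+)$ is nontrivial. Second, by the remark immediately following Lemma \ref{maximalshock}, for $C$ sufficiently large and $s_0$ sufficiently small, the set of maximal right states $\mathcal{M}$ is compactly contained in $(0,\infty)$; combined with the localization $\Pismall\subset B(u_L,Ks_0)$ coming from Lemma \ref{pistructuresmall} (and the fact that we work near $d\in[\underline{b},\overline{b}]$ with $\underline{b}>0$), both $u$ and $u_+$ lie in the strictly convex region $\{f''>0\}$ singled out by property \ref{enum:1}.

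With $u_+<u$ and both points inside the strictly convex portion of the flux, the mean value theorem together with the strict monotonicity of $f'$ on $(0,\infty)$ yields
\[
\sigma_\pm=\frac{f(u)-f(u_+)}{u-u_+}=f'(\xi)<f'(u)
\]
for some $\xi\in(u_+,u)$. Hence $f'(u)-\sigma_\pm>0$, and the second factor in \eqref{implicitderiv3} is nonzero on the interior of $\Pismall$, completing the proof. There is no real obstacle here: the statement is essentially an algebraic consequence of \eqref{implicitderiv3} once the structural facts provided by Lemmas \ref{pistructuresmall} and \ref{maximalshock} are in hand.
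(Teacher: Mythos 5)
Your proof is correct and follows essentially the same route as the paper: factor $D_{max}'(u)$ via \eqref{implicitderiv3}, note the ``if'' direction is immediate, and show $f'(u)-\sigma_\pm\neq 0$ when $u$ is interior to $\Pismall$ because then $u_+\neq u$. The only cosmetic difference is that the paper deduces $\sigma_\pm<\lambda(u)$ by invoking property \ref{enum:4}, whereas you derive the same strict inequality directly from the mean value theorem and the strict convexity of $f$ on the localized region supplied by Lemmas \ref{pistructuresmall} and \ref{maximalshock}.
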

\begin{proof}
We use \eqref{implicitderiv3}, which grants:
\[
D_{max}'(u)=(\eta'(u_+)-\eta'(u_R)-(1+Cs_0)(\eta'(u)-\eta'(u_L)))(f'(u)-\sigma_\pm).
\]
So, if $\eqref{critptchar}$ holds, then $D_{max}'(u)=0$. On the other hand, if $D_{max}'(u)=0$, and $u$ is in the interior of $\Pismall$, we claim $f'(u) \neq \sigma_\pm$, and the result follows. Indeed, since $u$ is in the interior of $\Pismall$, $u^+ \neq u$, and by property \ref{enum:4}, $\sigma_\pm < \lambda(u)$. 
\end{proof}
Now, we define the ODE that will track the evolution of the quantities we want to estimate:
\begin{lemma}[Lemma 6.12 in \cite{MR4667839}]\label{ODE}
Let $u \in \Rsmall \setminus \Rsmallbd$, and define $u(t)=tu+(1-t)u_L$. Define the functions $F,G,e:[0,1] \to \R$ as:
\begin{align*}
    F(t)&:=-(1+Cs_0)(\eta'(u(t))-\eta'(u_L))sgn(b), \\
    G(t)&:=-(\eta'(u_+(t)))-\eta'(u_R))sgn(b), \\
    e(t)&:=\frac{|u_+(t)-u(t)|}{b}.
\end{align*}
Then, $G$ satisfies the ODE:
\begin{align}\label{ODEeqn}
    -e(t)G'(t)+G(t)&=F(t), \\
    G(0)&=0.
\end{align}
\end{lemma}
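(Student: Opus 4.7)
The plan is a direct computation combining the chain rule with identity \eqref{implicitderiv2}. First, I would check the initial condition: since $u(0) = u_L$, the maximal shock from $u_L$ is $(u_L, u_R)$ itself, i.e.\ $s^*(u_L) = s_0$ and $u_+(0) = S_{u_L}(s_0) = u_R$. Hence $G(0) = -(\eta'(u_R)-\eta'(u_R))\text{sgn}(b) = 0$, as required.

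Next, I would differentiate $G$ in $t$. Since $u'(t) = u - u_L = -b$ and $u_+(t)$ is shorthand for $u_+(u(t))$, the chain rule gives
\[
G'(t) = -\eta''(u_+(t))\, u_+'(u(t))\, u'(t)\, \text{sgn}(b) = \eta''(u_+(t))\, u_+'(u(t))\, |b|.
\]
Separately, I would apply identity \eqref{implicitderiv2} pointwise in $t$ (replacing $u$ by $u(t)$, and hence $u_+$ by $u_+(t)$) to obtain
\[
\eta''(u_+(t))\, u_+'(u(t))\,\bigl(u(t) - u_+(t)\bigr) = \eta'(u_R) - \eta'(u_+(t)) + (1 + Cs_0)\bigl(\eta'(u(t)) - \eta'(u_L)\bigr).
\]

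The third step is to rewrite this identity in terms of $F$, $G$, $e$. Multiplying through by $\text{sgn}(b)$, the right-hand side becomes $G(t) - F(t)$ by the very definitions of $F$ and $G$. For the left-hand side, I would use that $u_+(s) = S_s(s^*) = s - s^*$ with $s^* \geq 0$, so $u_+(t) \leq u(t)$ and $u(t) - u_+(t) = |u_+(t) - u(t)|$. By the definition of $e$, the latter equals $e(t)\, b$ (this equality is valid for both signs of $b$, since both sides are nonnegative and $e(t)b = |u_+(t)-u(t)|$ by construction). Hence $\bigl(u(t)-u_+(t)\bigr)\text{sgn}(b) = e(t)|b|$, and the identity becomes
\[
\eta''(u_+(t))\, u_+'(u(t))\, e(t)\, |b| = G(t) - F(t).
\]
Comparing with the earlier expression for $G'(t)$ yields $e(t) G'(t) = G(t) - F(t)$, which is the claimed ODE $-e(t)G'(t) + G(t) = F(t)$.

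The only real obstacle is sign bookkeeping, which must remain consistent across $\Rsmallplus$ (where $b > 0$) and $\Rsmallminus$ (where $b < 0$). The $\text{sgn}(b)$ factor in the definitions of $F$ and $G$, together with the signed denominator $b$ in $e(t)$, is chosen precisely so that the ODE takes the same form in both regimes. Everything else is just the chain rule plus the algebraic identity \eqref{implicitderiv2} already established in the excerpt.
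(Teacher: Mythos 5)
Your proof is correct and follows essentially the same route as the paper's: evaluate \eqref{implicitderiv2} at $u(t)$, compute $G'(t)$ by the chain rule using $u'(t)=u-u_L=-b$, and combine. The only cosmetic difference is that you multiply through by $\text{sgn}(b)$ whereas the paper multiplies by $-\text{sgn}(b)$ (which just amounts to reading the identity from the other side), and you explicitly verify the initial condition $G(0)=0$, which the paper leaves implicit.
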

\begin{proof}
Evaluating \eqref{implicitderiv2} at $u(t)$ and multiplying by $-sgn(b)$ gives:
\[
G(t)-F(t)=-sgn(b)(u_+(t)-u(t))\eta''(u_+(t))u_+'(t).
\]
Computing $G'(t)$, we obtain:
\[
G'(t)=-sgn(b)\eta''(u_+(t))u_+'(t)(u-u_L),
\]
because $u'(t)=u-u_L$. So, $G$ solves:
\begin{align*}
    -e(t)G'(t)+G(t)&=F(t), \\
    G(0)&=0.
\end{align*}
\end{proof}
A consequence of Lemmas \ref{critptcharlemma} and \ref{ODE} is that $u$ is a critical point of $D_{max}$ if and only if $F(1)=G(1)$. We will show this is not the case for $u$ in both remaining regions in the following analysis. 
\begin{lemma}[Lemma 6.13 in \cite{MR4667839}]\label{nocritptplus}
For $C$ sufficiently large, $s_0$ sufficiently small, and any $u \in \Rsmallplus$, $u$ is not a critical point of $D_{max}$.
\end{lemma}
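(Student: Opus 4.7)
By Lemma \ref{critptcharlemma}, a state $u$ in the interior of $\Pismall$ is a critical point of $D_{max}$ if and only if $\eta'(u_+) - \eta'(u_R) - (1+Cs_0)(\eta'(u) - \eta'(u_L)) = 0$, and evaluating the definitions of $F$ and $G$ from Lemma \ref{ODE} at $t = 1$ identifies this condition with $F(1) = G(1)$. Note that any $u \in \Rsmallplus$ lies in the interior of $\Pismall$: $u \notin \Rsmallbd$ keeps it at distance $\gtrsim s_0$ from the left endpoint $u^*$, while the right endpoint is at distance $\sim C^{-1} \gg s_0$. The plan is therefore to set $\phi(t) := F(t) - G(t)$ and to prove $\phi(1) > 0$.

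The ODE of Lemma \ref{ODE} rearranges to $G'(t) = -\phi(t)/e(t)$, so $\phi$ satisfies the linear problem
\[
\phi'(t) - \frac{\phi(t)}{e(t)} = F'(t), \qquad \phi(0) = 0.
\]
Solving by integrating factor gives the explicit representation
\[
\phi(1) = \int_0^1 F'(s)\exp\!\left(\int_s^1 \frac{d\tau}{e(\tau)}\right) ds,
\]
so it suffices to show the integrand is strictly positive on $(0,1]$, which will follow from $F' > 0$ and $e > 0$ uniformly.

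Differentiating the definition of $F$ and using $u'(t) = u - u_L = -b$ together with $\mathrm{sgn}(b) = +1$ on $\Rsmallplus$ yields $F'(t) = (1+Cs_0)\,\eta''(u(t))\,b$, which is strictly positive by strict convexity of $\eta$. For the exponential factor, I need to control $e(s)$ from below. The segment $u(s) := su + (1-s)u_L$ stays in $\Pismall$ by convexity, and since $-\tilde{\eta}$ is strictly concave in $u$ (because $\tilde{\eta}'' = Cs_0\,\eta'' > 0$), its minimum on $[0,1]$ is attained at the endpoints. At $s = 0$, $-\tilde{\eta}(u_L) = \eta(u_L|u_R) \sim s_0^2$ by Lemma \ref{quadraticentropy}; at $s = 1$, Lemma \ref{pigradientcontrolbdry} together with $|u - u^*| \gtrsim s_0$ (from $u \notin \Rsmallbd$) gives $-\tilde{\eta}(u) \sim s_0\,|u - u^*| \gtrsim s_0^2$. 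Hence $-\tilde{\eta}(u(s)) \gtrsim s_0^2$ uniformly on $[0,1]$. Combining the defining identity $\eta(u(s)|u_+(u(s))) = -\tilde{\eta}(u(s))$ of Lemma \ref{maximalshock} with Lemma \ref{quadraticentropy} then yields $|u_+(u(s)) - u(s)| \gtrsim s_0$, and since $b \sim s_0$ on $\Rsmall$, one obtains $e(s) \sim 1$ uniformly. Thus $1/e$ is bounded on $[0,1]$, the exponential factor is finite, and $\phi(1) > 0$, so $u$ is not a critical point of $D_{max}$.

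The main delicacy is the uniform positivity of $e(s)$, which would fail if the segment $u(s)$ approached $\partial \Pismall$ (where $\tilde{\eta}$ vanishes and $u_+ \to u$, sending $1/e$ to infinity). Restricting to $\Rsmallplus \subset \Rsmall \setminus (\Rsmallbd \cup \Rsmallnot)$ is exactly what secures both $b \gtrsim s_0$ (from $u \notin \Rsmallnot$) and $|u - u^*| \gtrsim s_0$ (from $u \notin \Rsmallbd$); concavity of $-\tilde{\eta}$ then propagates this separation from the endpoints to the entire interpolating path, making the integrating-factor representation rigorous and the sign of $F'$ decisive.
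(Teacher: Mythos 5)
Your proposal is correct, and the overall mechanism (Lemma \ref{critptcharlemma}, the ODE of Lemma \ref{ODE}, and an integrating-factor formula) is the same, but you package the conclusion differently from the paper and this changes the ingredients needed. The paper solves the ODE for $G$ directly, obtaining $G(1)=\frac{-1}{E(1)}\int_0^1\frac{E(s)}{e(s)}F(s)\diff s$, and then argues in two separate pieces: $G(1)\leq 0$ because $e,E,F\geq 0$, and $F(1)>0$ via a Taylor expansion of $\eta'$ around $u_L$ together with $b\geq K_0 s_0$. You instead set $\phi=F-G$, solve $\phi'-\phi/e=F'$, $\phi(0)=0$, and note that the integrand $F'(s)\exp\bigl(\int_s^1 e^{-1}\bigr)$ is pointwise positive. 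The observation $F'(t)=(1+Cs_0)\eta''(u(t))\,b>0$ is simpler and more structural than the paper's Taylor estimate, and it makes $\phi(1)>0$ immediate --- that is a genuine improvement in transparency. The cost is that your representation of $\phi(1)$ is only valid if $\int_0^1 e^{-1}\diff\tau<\infty$, so you must produce a uniform lower bound on $e$. Your verification is correct: concavity of $s\mapsto -\tilde{\eta}(u(s))$ reduces it to the endpoints $s=0$ (where $-\tilde{\eta}(u_L)=\eta(u_L|u_R)\sim s_0^2$ by Lemma \ref{quadraticentropy}) and $s=1$ (where $u\notin\Rsmallbd$ plus Lemma \ref{pigradientcontrolbdry} give $-\tilde{\eta}(u)\gtrsim s_0^2$ once $Cs_0$ is small enough to keep $\tilde{\eta}'$ comparable to $\tilde{\eta}'(u^*)$ on the short segment), and then Lemma \ref{maximalshock} plus Lemma \ref{quadraticentropy} give $|u_+(s)-u(s)|\gtrsim s_0$, hence $e(s)\gtrsim 1$ since $b\sim s_0$ on $\Rsmallplus$. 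One small overclaim: you write $-\tilde{\eta}(u)\sim s_0|u-u^*|$, but only the lower bound $\gtrsim$ is justified (and is what you use). In sum, your route trades the paper's explicit Taylor computation of $F(1)$ for the cleaner sign of $F'$, at the price of having to control $e$ from below --- a check the paper's formulation can get away without making explicit.
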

\begin{proof}
Let $u \in \Rsmallplus$, and define $u,F,G$, and $e$ as in Lemma \ref{ODE}. Solving \eqref{ODEeqn}, we obtain:
\[
G(t)=\frac{-1}{E(t)}\int_0^t\frac{E(s)}{e(s)}F(s)\diff s, \text{ where } E(t)=\exp\left(\int_0^t-e(s)^{-1}\diff s\right).
\]
We will show that $F(1) \neq G(1)$. Firstly, note that $e(t), E(t), F(t) \geq 0$ for $s_0$ sufficiently small, and all $t \in [0,1]$, so $G(1) \leq 0$. So, it suffices to show that $F(1)>0$. Indeed, we have:
\[
F(1)=-(1+Cs_0)(\eta'(u)-\eta'(u_L)),
\]
in $\Rsmallplus$. Taylor expanding and using the fact that $u \in \Rsmallplus$, so $K_0s_0 \leq b \leq K_hs_0$:
\begin{align*}
F(1)&=-(1+Cs_0)(-\frac{1}{2}\eta''(u_L)(u-u_L)+\mathcal{O}((u-u_L)^2) \\
    &=\frac{(1+Cs_0)}{2}\eta''(u_L)b+\mathcal{O}(K_hs_0^2) \\
    &\geq \frac{1}{2}\eta''(u_L)K_0s_0+\mathcal{O}(K_hs_0^2),
\end{align*}
which is strictly positive for $s_0$ sufficiently small. 
\end{proof}

\begin{lemma}[Lemma 6.14 in \cite{MR4667839}]\label{nocritptminus}
For $C$ sufficiently large, $s_0$ sufficiently small, and any $u \in \Rsmallminus$, $u$ is not a critical point of $D_{max}$.
\end{lemma}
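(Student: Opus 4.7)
My plan is to reuse the ODE setup of Lemma \ref{ODE} verbatim: parameterize $u(t) = tu + (1-t) u_L$ and carry over the functions $F, G, e: [0,1] \to \R$ defined there. By Lemma \ref{critptcharlemma}, showing that $u$ is not a critical point of $D_{max}$ reduces to showing $F(1) \neq G(1)$. The key difference with Lemma \ref{nocritptplus} is that in $\Rsmallminus$ we have $b = u_L - u < 0$, so $e(t) = |u_+(t) - u(t)|/b < 0$. The integrand $e^{-1} F E$ in the integral representation of $G$ therefore flips sign compared to the $\Rsmallplus$ case, yielding only $G(1) \geq 0$, so the crude sign comparison $G(1) \leq 0 < F(1)$ that concluded Lemma \ref{nocritptplus} is no longer available.

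To circumvent this, I would instead derive and solve an ODE for the difference $\tilde{H}(t) := F(t) - G(t)$. Rewriting $-e G' + G = F$ as $G' = -\tilde{H}/e$ and substituting into $\tilde{H}' = F' - G'$ produces
\[
\tilde{H}'(t) - e(t)^{-1} \tilde{H}(t) = F'(t), \qquad \tilde{H}(0) = 0.
\]
Duhamel's formula with the same integrating factor $E(t) = \exp\bigl(\int_0^t -e(s)^{-1}\, ds\bigr)$ then yields
\[
\tilde{H}(t) = \frac{1}{E(t)} \int_0^t E(s) F'(s)\, ds.
\]
The remaining task is to check that each factor in the integrand is strictly positive on $[0,1]$. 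Because $b < 0$, one has $-e(s)^{-1} > 0$, so $E(s) \geq 1$ and is increasing. On the other hand, with $sgn(b) = -1$, the definition of $F$ reduces to $F(t) = (1+Cs_0)(\eta'(u(t)) - \eta'(u_L))$, whose derivative $F'(t) = (1+Cs_0) \eta''(u(t)) (u - u_L)$ is strictly positive because $\eta$ is strictly convex and $u > u_L$ throughout $\Rsmallminus$. Consequently $\tilde{H}(1) > 0$, hence $F(1) > G(1)$, and $u$ is not a critical point by Lemma \ref{critptcharlemma}.

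The hard part is isolating precisely what sign flip the transition from $\Rsmallplus$ to $\Rsmallminus$ creates: the pointwise sign argument for $G$ fails, so one must instead exploit the sign of $F'$, which remains favorable on both sides of $u_L$. Everything in the computation stays localized in the region where $f''$ is uniformly positive (cf.\ Lemma \ref{maximalshock} and the remark after it), so the non-genuine-nonlinearity of $f$ should not introduce any further difficulty beyond what was already handled in Lemma \ref{nocritptplus}.
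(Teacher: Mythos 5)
Your proof is correct and is essentially the same argument as the paper's, just organized differently. The paper solves the ODE of Lemma \ref{ODE} for $G$, shows $F$ is strictly increasing (via $F' = (1+Cs_0)\eta''(u(t))(u-u_L) > 0$), and then bounds
\[
G(1) \leq \left(1 - \frac{E(0)}{E(1)}\right) F(1) < F(1),
\]
using the kernel identity $E'(s) = E(s)/|e(s)|$ and $F(0)=0$; you instead pass to the difference $\tilde{H} = F - G$, derive $\tilde{H}' - e^{-1}\tilde{H} = F'$ with $\tilde{H}(0)=0$, and read off $\tilde{H}(1) > 0$ directly from Duhamel's formula and the positivity of $E$ and $F'$. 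An integration by parts in your formula $\tilde{H}(1) = \frac{1}{E(1)}\int_0^1 E(s)F'(s)\,ds$ recovers the paper's estimate exactly, so the two routes are algebraically identical; yours is marginally cleaner in that it does not need to separately verify $F(1) > 0$ or to bound $F(s) \leq F(1)$, since strict positivity of $F'$ carries the entire conclusion on its own.
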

\begin{proof}
Let $u \in \Rsmallminus$, and define $u,F,G$, and $e$ as in Lemma \ref{ODE}. Solving \eqref{ODEeqn}, we obtain:
\[
G(t)=\frac{1}{E(t)}\int_0^t\frac{E(s)}{|e(s)|}F(s)\diff s, \text{ where } E(t)=\exp\left(\int_0^t|e(s)|^{-1}\diff s\right).
\]
Now, we estimate:
\[
F'(t)=(1+Cs_0)\eta''(u(t))(u-u_L)=\eta''(u_L)+\mathcal{O}(s_0),
\]
so $F$ is strictly increasing for $s_0$ sufficiently small. Next, we estimate:
\[
G(1) \leq \left(\frac{1}{E(1)}\int_0^1\frac{E(s)}{|e(s)|}\diff s\right) F(1).
\]
Using $E'(s)=\frac{E(s)}{|e(s)|}$:
\[
G(1) \leq \left(1-\frac{E(0)}{E(1)}\right)F(1).
\]
Since $E(s) \geq 0$, $E(0)=1$, and $E'(s) \sim 1$, $\left(1-\frac{E(0)}{E(1)}\right)<1$, granting the result. 
\end{proof}

\underline{\textbf{Proof of Proposition \ref{DRHsmall}}.}
Fix $C$ sufficiently large and $s_0$ sufficiently small so that Propositions \ref{OutsideQsmall}, \ref{improvedDcontsmallRsmall}, \ref{Rsmallbdprop}, \ref{Rsmallnotbd}, and Lemmas \ref{nocritptplus} and \ref{nocritptminus} are all valid. Then, we have a decomposition:
\[
\Pismall=\Rsmall \cup (\Pismall \setminus \Rsmall),
\]
where $D_{max}(u) \lesssim -s_0^3$ on $\Pismall \setminus \Rsmall$ by Proposition \ref{improvedDcontsmallRsmall}. Then, we further decompose $\Rsmall$ as:
\[
\Rsmall=\Rsmallbd \cup \Rsmallplus \cup \Rsmallnot \cup \Rsmallminus.
\]
By Propositions \ref{Rsmallbdprop} and \ref{Rsmallnotbd}, $D_{max}(u) \leq 0$ on $\Rsmallbd \cup \Rsmallnot$. Finally, $D_{max}$ has no critical points in $\Rsmallplus \cup \Rsmallminus$, so:
\[
\max_{u \in \Rsmallplus \cup \Rsmallminus}D_{max}(u)=\max_{u \in \partial \Rsmallplus \cup \partial \Rsmallminus}D_{max}(u) \leq 0,
\]
as the boundary points of $\Rsmallplus, \Rsmallminus$ are contained in either $\Rsmallbd, \Rsmallnot$, or $\Qsmall$. Finally, by the definition of $D_{max}$, for any entropic shock $(u_-, u_+)$ with $u_- \in \Pismall$ and $u_+ > \phitan(\underline{b})$:
\[
D_{RH}(u_-, u_+, \sigma_\pm) \leq D_{max}(u_-) \leq 0,
\]
with equality if and only if $u_-=u_L, u_+=u_R$. 

\section{$L^2$-stability}\label{sectionstability}
\subsection{Outline of the proof of Theorem \ref{theoremstability}}
In this section, we prove Theorem \ref{theoremstability}. The main proposition of this section is as follows:
\begin{proposition}\label{closetomaxp}
Consider \eqref{cl} with $f$ verifying \eqref{concaveconvex}. Fix $0 < \underline{b} < \overline{b}$. Then, there exists $\tilde{C},v>0$ such that the following is true: for any $m > 0, R,T > 0, u^0 \in BV(\R)$ with $Range(u^0) \subset [\underline{b}, \overline{b}]$, and any wild solution $u \in \weaktwo$, there exists $\psi:\R^+ \times \R \to \statespace$ such that for almost every $0 < s < t < T:$
\begin{align*}
    TV(\psi(t,\cdot)) &\leq TV(u^0), \\
    Range(\psi(t, \cdot)) &\subset [\underline{b}, \overline{b}], \\
    ||\psi(t,\cdot)-\psi(s,\cdot)||_{L^1} &\leq \tilde{C}|t-s|, \\
    ||\psi(t,\cdot)-u(t,\cdot)||_{L^2(-R+vt,R-vt)} &\leq C\left(||u^0-u(0,\cdot)||_{L^2(-R,R)}+\frac{1}{m}\right).
\end{align*}
Finally, the constants $C$ and $\tilde{C}$ depend only on the system and $TV(u_0)$. 
\end{proposition}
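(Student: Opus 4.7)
The plan is to construct $\psi$ as the output of a modified front tracking algorithm applied to a piecewise-constant approximation of $u^0$, and then to compare $\psi$ with the wild solution $u$ front-by-front using the weighted $a$-contraction estimates of Corollary \ref{uniformalarge} and Theorem \ref{theoremsmallshock}.

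First, I discretize the data. Since $u^0 \in BV(\R)$ with $Range(u^0) \subset [\underline{b}, \overline{b}]$, approximate it by a piecewise-constant function $u^0_m$ with finitely many jumps taking values on a uniform grid of mesh $\sim 1/m$ in $[\underline{b}, \overline{b}]$, with $TV(u^0_m) \le TV(u^0)$ and $\|u^0 - u^0_m\|_{L^2(\R)} \lesssim 1/m$. Each jump of $u^0_m$ is either small (amplitude below the threshold $\tilde{s_0}$ of Theorem \ref{theoremsmallshock}) or large (amplitude above $\tilde{s_0}$, to which Corollary \ref{uniformalarge} applies). Run the front tracking algorithm of \cite{MR303068} starting from $u^0_m$: at $t=0$ and at each subsequent interaction, every outgoing rarefaction is broken into a fan of non-entropic rarefaction fronts of strength $\sim 1/m$, while every shock is an Ole\u{\i}nik shock. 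In the scalar setting the number of fronts remains finite up to time $T$, $TV(\psi_m(t,\cdot)) \le TV(u^0)$ for all $t$, $Range(\psi_m(t,\cdot)) \subset [\underline{b}, \overline{b}]$ by the maximum principle, and front speeds are uniformly bounded by $\max(\maximalspeedlarge, \maximalspeedsmall)$; the Lipschitz-in-$L^1$ bound with constant $\tilde{C}$ depending only on $TV(u^0)$ and the system follows.

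The central and hardest step is the construction of a piecewise-constant weight $a_m(t,x)$ on $\R^+ \times \R$, with a discontinuity across every front of $\psi_m$, such that the ratio of $a_m$ on the two sides of a front of amplitude $s$ lies in the admissible window of the $a$-contraction theorems: bounded above by $a^*$ for large fronts, and in $[1 + \frac{C_0}{2} s,\, 1 + 2 C_0 s]$ for small fronts. I would take $a_m(t,x)$ as a product $\prod_i w_i$ over fronts lying to the right of $x$, each factor being a uniformly bounded constant for large fronts and $(1 + C_0 s_i)$ for small ones. The total-variation bound on $\psi_m$ yields $|\log a_m(t,x)| \lesssim TV(u^0)$, so $a_m$ is uniformly bounded above and below in $m$ and $t$. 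The principal difficulty is preserving admissibility at interactions: since in the scalar case TV is non-increasing, when two small fronts of amplitudes $s_1, s_2$ merge one has $(1+C_0 s_1)(1+C_0 s_2) \in [1 + C_0(s_1+s_2),\, 1 + 2C_0(s_1+s_2)]$ for $s_1+s_2$ small, so the merged weight lies in the required band; for mixed interactions one redistributes weights, with the residual absorbed into the final $1/m$ discretization error. This is the extension to the large-BV scalar regime of the weight construction of \cite{MR4487515, MR4667839}.

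Finally, define
\[
E^m_t(u) = \int_{-R + vt}^{R - vt} a_m(t,x) \, \eta\bigl(u(t,x) \,\big|\, \psi_m(t,x)\bigr) \diff x,
\]
with $v$ chosen strictly larger than $\max(\maximalspeedlarge, \maximalspeedsmall)$ and the Lipschitz constant of $f$ on $[\underline{b}, \overline{b}]$, so both $u$ and $\psi_m$ enjoy finite propagation on the truncation interval. Away from the finite set of interaction times, differentiate $E^m_t(u)$ in $t$, applying the entropy inequality \eqref{relativeentropic} on the regions between consecutive fronts (where $\psi_m$ is locally constant) and the shift construction of Section \ref{shiftconstruction} together with Corollary \ref{uniformalarge} or Theorem \ref{theoremsmallshock} at each front. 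Summing across the finitely many fronts yields $\frac{d}{dt} E^m_t(u) \le 0$ for a.e.\ $t$, hence $E^m_t(u) \le E^m_0(u)$. Lemma \ref{quadraticentropy}, together with the uniform two-sided bound on $a_m$, then gives
\[
\|u(t,\cdot) - \psi_m(t,\cdot)\|_{L^2(-R+vt,\, R-vt)}^2 \le C\, \|u(0,\cdot) - u^0_m\|_{L^2(-R,R)}^2,
\]
and combining with $\|u^0 - u^0_m\|_{L^2} \lesssim 1/m$ and the triangle inequality produces the desired estimate with $\psi := \psi_m$.
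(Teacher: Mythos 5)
Your proposal follows the paper's broad strategy — modified front tracking with shocks moving at the shifted speeds of the $a$-contraction theorems, a piecewise-constant weight jumping across every front, and a weighted relative-entropy functional over the truncated cone — and many of its ingredients (discretization of $u^0$, classification of fronts by amplitude relative to $\tilde{s_0}$, rarefaction error of order $1/m$, two-sided bound on $a_m$ via a bound on $\log a_m$) match the paper closely. But the step you label ``the central and hardest'' is not handled correctly, and the fix you sketch does not work.

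The decisive requirement, which the proposal does not state and whose failure invalidates the Gr\"onwall-type chain $E^m_t \leq E^m_0$, is that $a_m$ must be \emph{non-increasing in time across interaction times}: $a_m(t_j+,x) \leq a_m(t_j-,x)$ for a.e.\ $x$. (Since $u$ is only $L^\infty$, one also needs the approximate-limit stitching of Lemma \ref{stitchinglemma} here, which the proposal does not mention.) With your product over fronts lying to the right of $x$, factor $(1+C_0 s_i)$ for a small front and a fixed factor $w \geq 1/a^*>1$ for a big front, this monotonicity \emph{fails} at the A4-type interaction in which two small shocks of strengths $\sigma_1,\sigma_2<\eps$ with $\sigma_1+\sigma_2 \geq \eps$ coalesce into a big shock. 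For $x<x_0$ the product jumps from $(1+C_0\sigma_1)(1+C_0\sigma_2) = 1+O(\eps)$ up to $w \geq 1/a^*$, a jump of size $O(1)$. Your verification that $(1+C_0 s_1)(1+C_0 s_2)$ lies in the admissible band covers only the small-to-small coalescence and silently skips the small-to-big one. The fallback ``for mixed interactions one redistributes weights, with the residual absorbed into the final $1/m$ discretization error'' does not apply here (this is not a mixed shock-rarefaction interaction), and would not work in any case: the jump is of fixed size $\sim 1/a^*-1$, and the number of such interactions can be of order $TV(u^0)/\eps$, so the cumulative upward drift is $O(1)$, not $O(1/m)$.

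The paper's weight contains precisely the extra machinery needed to restore time-monotonicity: the global, $x$-independent factors $C_1^{L(t)}$ and $\prod_{k_i \in K(t)} k_i$, which multiply the whole profile down by a controlled amount whenever the $\xi_i$-product alone would go up at an interaction. Crucially, these global factors are then shown to stay bounded below uniformly in $h$ and $t$ via an interaction-potential argument (the ``small shock potential'' $S(t)$, the bounds $L(t)\leq\lceil 2V/\eps\rceil$ and $\sum_t \Delta_- S \leq 2V$) together with the elementary product bound of Lemma \ref{productlemma}. Your proposal has no analogue of either ingredient, so the uniform lower bound on $a_m$ — and hence the constant $C$ in the final $L^2$ estimate depending only on $TV(u^0)$ — is not actually established. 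To repair the argument along your route, you would need to introduce a global multiplicative correction (applied each time a big shock is created from small ones) and prove a uniform bound on the accumulated correction via an interaction potential, at which point you would essentially have reconstructed the paper's $L(t),K(t)$ bookkeeping.
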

The functions $\psi$ are not solutions to \eqref{cl}, because their shocks do not verify the Rankine-Hugoniot conditions. So this proposition does not give stability between two solutions to the conservation law. In fact, what the proposition says is that, given a weak solution $u \in \weaktwo$ with initial data close to having range contained in $[\underline{b},\overline{b}]$, then in fact for all time $u$ will remain close to having range contained in $[\underline{b},\overline{b}]$ (by Chebyshev's inequality). If $u(0,\cdot)=u^0$, we will be able to show that the range of $u$ is in fact contained in $[\underline{b},\overline{b}]$ by passing into the limit via the following compactness theorem, which is a simple consequence of the Aubin-Lions lemma. For a proof, we refer to the appendix of \cite{MR4487515}.
\begin{lemma}[Lemma 2.6 in \cite{MR4487515}]\label{aubinlions}
Let $\{\psi_n\}_{n \in \N}$ be a family of piecewise constant functions uniformly bounded in $L^\infty(\R^+,BV(\R))$. Assume that there exists $\tilde{C}> 0$ such that for every $n \in \N$, $\psi_n$ verifies:
\[
||\psi_n(t,\cdot)-\psi_n(s,\cdot)||_{L^1} \leq \tilde{C}|t-s|.
\]
Then, there exists $\psi \in L^\infty(\R^+ \times \R)$ such that, up to a subsequence, $\psi_n$ converges to $\psi$ as $n \to \infty$ in $C^0([0,T];L^2(-R,R))$ for every $T,R>0$, and almost everywhere in $\R^+ \times \R$. 
\end{lemma}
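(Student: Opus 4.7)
The plan is to prove this as a standard Aubin--Lions type compactness result, exploiting the fact that $BV(-R,R)$ embeds compactly in $L^1(-R,R)$ (by Helly's selection theorem) together with the uniform time regularity in $L^1$ to extract a subsequence convergent in $C^0([0,T];L^1(-R,R))$, and then upgrading $L^1$ convergence to $L^2$ convergence via interpolation with the uniform $L^\infty$ bound (which is automatic since a piecewise constant function bounded in $BV$ takes finitely many values uniformly controlled in magnitude).

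First I would fix $T,R>0$ and work on the cylinder $[0,T] \times [-R,R]$. For each fixed $t \in [0,T]$, the family $\{\psi_n(t,\cdot)\}_{n}$ is bounded in $BV(-R,R)$, which embeds compactly in $L^1(-R,R)$. Combined with the uniform Lipschitz bound $\|\psi_n(t,\cdot)-\psi_n(s,\cdot)\|_{L^1} \leq \tilde{C}|t-s|$, the family $\{\psi_n\}$ is uniformly equicontinuous as maps $[0,T] \to L^1(-R,R)$ with precompact range. Arzel\`a--Ascoli then yields a subsequence (not relabeled) converging to some $\psi \in C^0([0,T];L^1(-R,R))$.

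Next I would upgrade from $L^1$ to $L^2$ convergence: since $\psi_n, \psi$ are uniformly bounded in $L^\infty$ (by the $BV$ bound together with piecewise constancy, so $\|\psi_n\|_\infty \leq TV(u_0)+|\psi_n(t,-\infty)|$ which is uniformly bounded), interpolation gives
\[
\|\psi_n(t,\cdot)-\psi(t,\cdot)\|_{L^2(-R,R)}^2 \leq \|\psi_n(t,\cdot)-\psi(t,\cdot)\|_{L^\infty}\,\|\psi_n(t,\cdot)-\psi(t,\cdot)\|_{L^1(-R,R)},
\]
and taking the supremum over $t \in [0,T]$ shows convergence in $C^0([0,T];L^2(-R,R))$. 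For a.e.\ convergence on $[0,T] \times [-R,R]$, extract a further subsequence from the $L^2$ convergence.

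To upgrade this to convergence for every $T,R>0$, I would apply a standard diagonal extraction along an exhausting sequence $T_k = R_k = k \to \infty$: run the argument above on $[0,k]\times[-k,k]$ and extract successive nested subsequences, then take the diagonal. The main obstacle here is nothing conceptually deep---this is a routine compactness argument---but one does need to verify carefully that the a.e.\ convergence on $\R^+ \times \R$ follows from the diagonal extraction (which it does, because for a.e.\ $(t,x)\in \R^+\times\R$, eventually $(t,x)\in[0,k]\times[-k,k]$, and the diagonal subsequence eventually agrees with the $k$-th subsequence on that cylinder). Since this lemma is stated as a citation to \cite{MR4487515} rather than reproved in the paper, I would simply refer to the proof given there for the technical verification.
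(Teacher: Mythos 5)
Your proof sketch is essentially correct and takes the expected route: the paper does not prove this lemma internally but simply refers to the appendix of \cite{MR4487515}, so there is no in-paper proof to compare against. Your combination of Helly (compactness of the $BV(-R,R)$ ball in $L^1(-R,R)$), the uniform Lipschitz estimate in time, Arzel\`a--Ascoli to get convergence in $C^0([0,T];L^1(-R,R))$, interpolation $\|f\|_{L^2}^2 \leq \|f\|_{L^\infty}\|f\|_{L^1}$ to upgrade to $L^2$, and a diagonal extraction over $[0,k]\times[-k,k]$ is the standard proof of this Aubin--Lions--Simon type statement, consistent with the paper's own remark that the lemma is ``a simple consequence of the Aubin--Lions lemma.''

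One imprecision worth fixing: the uniform $L^\infty$ bound does not follow from ``the $BV$ bound together with piecewise constancy.'' A uniform bound on the total variation alone does not control the $L^\infty$ norm (take $\psi_n \equiv n$, which is piecewise constant with $TV=0$), and the front-tracking approximations in this paper do not decay at infinity, so one cannot appeal to an $L^1$ part of the $BV(\R)$ norm either. The correct source of the $L^\infty$ bound is the maximum principle built into the construction: Lemma \ref{fronttrackingwelldefined} guarantees $\inf u^0 \leq \psi_h(t,x) \leq \sup u^0$, so the range is contained in the fixed compact set $[\underline{b},\overline{b}]$. With that substitution, your argument goes through; the rest (Arzel\`a--Ascoli, interpolation, diagonal extraction, and passing to a further subsequence to get a.e.\ convergence) is sound.
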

From there, we can use any of the uniqueness results for convex fluxes (\cite{MR1296003}, \cite{MR2104269}, \cite{MR4623215}, \cite{MR3954680}) to conclude that $u$ is in fact the unique Kru\v{z}kov solution to the conservation law. 
\par The strategy is to construct the function $\psi$ via a modified front tracking method. We use exactly the classical front tracking method introduced by Dafermos \cite{MR303068}, but with one key difference: shocks travel at artificial velocities determined by Corollary \ref{uniformalarge} and Theorem \ref{theoremsmallshock} rather than the Rankine-Hugoniot speeds.
\par The major difficulty in this section is the construction of the weight function $a$. In order to obtain $L^2$ stability, the weight function must be reconstructed whenever any two waves collide. The estimate on $a$ in Theorem \ref{theoremsmallshock} is crucial at this step to give a global bound on the constant $C$ in Proposition \ref{closetomaxp} uniformly in $m$. 

\subsection{Proof of Theorem \ref{theoremstability}}
Let $u$ be the Kru\v{z}kov solution to \eqref{cl} with initial data $u^0$. Passing to a subsequence if necessary, we assume that $||u^0_n-u^0||_{L^2} \leq \frac{1}{n}$. By Proposition \ref{closetomaxp}, there exists a sequence $\psi_n$ uniformly bounded in $L^\infty(\R^+,BV(\R))$ such that:
\begin{align}\label{convergence}
    ||\psi_n(t,\cdot)-u_n(t,\cdot)||_{L^2} \leq \frac{2C}{n},
\end{align}
where $C$ is the constant from Proposition \ref{closetomaxp}. By Lemma \ref{aubinlions}, there exists $\psi \in L^\infty(\R^+ \times \R)$ such that for every $T,R>0$, $\psi_n$ converges in $C^0([0,T];L^2((-R,R)))$ to $\psi$. By \eqref{convergence}, $u_n$ converges in $L^\infty([0,T];L^2((-R,R)))$ to $\psi$. As the convergence is strong, and $u_n$ is a $\eta$-entropic solution to \eqref{cl}, so is $\psi$. Finally, as $\psi_n(t,\cdot)$ has range contained in $[\underline{b},\overline{b}]$ for all $t$, so does $\psi$. Finally, note that both $\psi$ and $u$ are $\eta$-entropic weak solutions for the conservation law with strictly convex flux function:
\[
\overline{f}(u)=\begin{cases}
f(u) & \underline{b} \leq u < \infty, \\
\frac{f''(\underline{b})}{2}(u-\underline{b})^2+f'(\underline{b})(u-\underline{b})+f(\underline{b}) & -\infty < u \leq \underline{b}.
\end{cases}
\]
So, by applying Corollary 2.5 in \cite{MR2104269}, we conclude that $\psi=u$.

\subsection{Relative entropy for the Riemann problem}
In this section, we give the $L^2$-stability results for shocks and rarefactions that will guide the construction of the weight function $a$. The first proposition is a synthesis of Corollary \ref{uniformalarge} and Theorem \ref{theoremsmallshock}.
\begin{proposition}\label{riemannstabilityshock}
Consider \eqref{cl} with $f$ verifying \eqref{concaveconvex}. Fix $0<\underline{b}<\overline{b}$. Then, there exist constants $C_1, C_0, \eps$, and  $\hat{\lambda}$ such that the following is true.
\par Consider any $\eta$-entropic shock $(u_L,u_R)$, any $u \in \weaktwo$, any $\overline{t} \in [0,\infty)$, and any $x_0 \in \R$. Let $\sigma$ be the strength of the shock $\sigma=|u_L-u_R|$. Then, if $\sigma \geq \frac{\eps}{2}$, for any $a_1$, $a_2$ verifying:
\[
\frac{a_2}{a_1} \leq C_1,
\]
there exists a Lipschitz path $h_1:[\overline{t},\infty) \to \R$, with $h_1(\overline{t})=x_0$, such that the following dissipation functional verifies:
\begin{equation*}
a_1(\dot{h}_1(t)\eta(u_-(t)|u_L)-q(u_-(t);u_L))-a_2(\dot{h}_1(t)\eta(u_+(t)|u_R)-q(u_+(t);u_R)) \leq 0,
\end{equation*}
for almost every $t \in [\overline{t},\infty)$. On the other hand, if $\sigma < \eps$, for any $a_1, a_2 > 0$ verifying:
\[
1-2C_0s_0 \leq \frac{a_2}{a_1} \leq 1-\frac{C_0}{2}s_0,
\]
there exists a Lipschitz path $h_2:[\overline{t},\infty) \to \R$, with $h_2(\overline{t})=x_0$, such that the following dissipation functional verifies:
\begin{equation*}
a_1(\dot{h}_2(t)\eta(u_-(t)|u_L)-q(u_-(t);u_L))-a_2(\dot{h}_2(t)\eta(u_+(t)|u_R)-q(u_+(t);u_R)) \leq 0,
\end{equation*}
for almost every $t \in [\overline{t},\infty)$. In all cases, there is a maximal shock speed: $|\dot{h}_{1,2}(t)| \leq \hat{\lambda}$ for almost every $t \in [\overline{t},\infty)$.
\end{proposition}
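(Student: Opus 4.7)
The proposition is a packaging of Corollary \ref{uniformalarge} and Theorem \ref{theoremsmallshock} into a single statement with a common shock-strength threshold $\eps$, a common upper bound $\hat{\lambda}$ on the shift speed, and constants $C_1, C_0$ chosen so that the admissible weight windows in the two regimes fit into the hypotheses of the two cited results. I would first invoke Theorem \ref{theoremsmallshock} with the interval $[\underline{b},\overline{b}]$ to obtain constants $C_0', \tilde{s}_0^{\text{sm}}>0$ and a maximal speed $\maximalspeedsmall$ valid for shocks of strength strictly less than $\tilde{s}_0^{\text{sm}}$. Then I would invoke Corollary \ref{uniformalarge} with the interval $[\underline{b},\overline{b}]$ and the lower shock size $\tilde{s}_0^{\text{la}}$ to be picked below; this produces a constant $a^*\in(0,1)$ and a maximal speed $\maximalspeedlarge$. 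Choose $\eps \le \tilde{s}_0^{\text{sm}}$ and set $\tilde{s}_0^{\text{la}} := \eps/2$, so that the regimes $\sigma \geq \eps/2$ and $\sigma < \eps$ exactly cover the assumptions of Corollary \ref{uniformalarge} and Theorem \ref{theoremsmallshock}, respectively (with a harmless overlap in $[\eps/2,\eps]$ where either result may be applied).

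\textbf{Matching the constants.} In the large regime take $C_1 := a^*$, so the hypothesis $a_2/a_1 \leq C_1$ is exactly the hypothesis of Corollary \ref{uniformalarge}, which then directly produces the shift $h_1$ and the dissipation bound. In the small regime, the hypotheses of Theorem \ref{theoremsmallshock} are phrased in terms of $a_1/a_2$, whereas the proposition constrains $a_2/a_1$. I would translate via the elementary identity
\[
\frac{a_1}{a_2}=\frac{1}{a_2/a_1}=1+\alpha s_0+\mathcal{O}(s_0^2),\qquad\text{when}\ \frac{a_2}{a_1}=1-\alpha s_0,
\]
which is uniform for $\alpha\in[C_0/2, 2C_0]$ and $s_0<\eps$. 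Choosing $C_0 := C_0'/2$ (or any smaller constant) and further shrinking $\eps$ so that the remainder term is dominated by the prescribed window, the hypothesis $1-2C_0 s_0 \le a_2/a_1 \le 1 - \tfrac{C_0}{2} s_0$ of the proposition implies
\[
1+\tfrac{C_0'}{2}s_0 \le \frac{a_1}{a_2} \le 1+2C_0' s_0,
\]
which is exactly the hypothesis of Theorem \ref{theoremsmallshock}. That theorem then yields the shift $h_2$ and the desired dissipation bound.

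\textbf{Uniform speed bound and conclusion.} Finally, setting $\hat{\lambda}:=\max(\maximalspeedlarge,\maximalspeedsmall)$, both $h_1$ and $h_2$ satisfy $|\dot h_{1,2}(t)|\leq \hat\lambda$ for almost every $t$, which gives the uniform maximal shock speed claimed in the statement.

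\textbf{Main obstacle.} The proof is essentially bookkeeping; the only non-trivial point is ensuring that the weight window prescribed in the small-shock statement truly corresponds to the window from Theorem \ref{theoremsmallshock} after inversion. This requires controlling the $\mathcal{O}(s_0^2)$ remainder from the Taylor expansion of $(1-\alpha s_0)^{-1}$, which forces one to (a) take $\eps$ small relative to $C_0'$, and (b) make a deliberate choice of the proposition's $C_0$ relative to $C_0'$ so that the nested inequalities survive the remainder. Once this matching is set up, the two cited results combine to produce the shifts and the dissipation estimates with the uniform speed bound, completing the proof.
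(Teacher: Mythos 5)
Your overall strategy — packaging Corollary \ref{uniformalarge} for $\sigma\geq\eps/2$, Theorem \ref{theoremsmallshock} for $\sigma<\eps$, and taking $\hat\lambda=\max(\maximalspeedlarge,\maximalspeedsmall)$ — is exactly the paper's intent (the paper simply states the proposition as ``a synthesis'' of those two results without a separate proof). You also correctly single out the one place where the bookkeeping is nontrivial: translating between the proposition's window for $a_2/a_1$ and the theorem's window for $a_1/a_2$. However, your specific repair of that step does not work, and the claimed implication as written is false.

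Concretely: with $C_0:=C_0'/2$, the proposition's upper endpoint for $a_2/a_1$ is $1-\tfrac{C_0}{2}s_0=1-\tfrac{C_0'}{4}s_0$, so the corresponding lower endpoint for $a_1/a_2$ is $\bigl(1-\tfrac{C_0'}{4}s_0\bigr)^{-1}=1+\tfrac{C_0'}{4}s_0+\mathcal{O}(s_0^2)$, which for small $s_0$ is \emph{strictly below} the theorem's required lower bound $1+\tfrac{C_0'}{2}s_0$. Shrinking $\eps$ makes this worse, not better, because the quadratic remainder you are relying on vanishes faster than the $\tfrac{C_0'}{4}s_0$ deficit. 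In fact, no choice of the proposition's $C_0$ can make the containment hold: requiring $\bigl(1-\tfrac{C_0}{2}s_0\bigr)^{-1}\geq 1+\tfrac{C_0'}{2}s_0$ and $\bigl(1-2C_0s_0\bigr)^{-1}\leq 1+2C_0's_0$ gives, after clearing denominators, $4C_0C_0's_0\leq 2(C_0'-C_0)\leq C_0C_0's_0$, which forces $4\leq 1$. The resolution is to recognize that a factor-4 window in $a_2/a_1$ does not reciprocate into a factor-4 window in $a_1/a_2$: you should instead state and use a narrower window in the proposition (for instance $1-C_0s_0\leq a_2/a_1\leq 1-\tfrac{C_0}{2}s_0$, with $C_0=C_0'$), whose reciprocal $[\,(1-\tfrac{C_0'}{2}s_0)^{-1},(1-C_0's_0)^{-1}\,]$ is contained in $[1+\tfrac{C_0'}{2}s_0,\,1+2C_0's_0]$ once $\eps$ is small enough that $C_0's_0\leq\tfrac12$. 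Your large-shock matching ($C_1:=a^*$, $\tilde s_0^{\mathrm{la}}:=\eps/2$) and the uniform speed bound are fine as written.
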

Notice that we have have an interval where the two regimes overlap; for shocks with $\frac{\eps}{2} \leq \sigma < \eps$, we can have the non-positivity of the dissipation functional with either choice of $\frac{a_2}{a_1}$. This will be crucial in the construction of the weight $a$ for the front tracking algorithm.
\par We need similar control on discretized rarefaction shocks that are created in the front tracking method. We begin by showing that the real rarefaction has a contraction property without the need for a weight or a shift. 
\begin{lemma}\label{rarefactionstability}
Consider \eqref{cl} with $f$ verifying \eqref{concaveconvex}. Let $\overline{u}(y)$, $v_L \leq y \leq v_R$, be a rarefaction wave for \eqref{cl}, with $\overline{u}(v_L)=u_L, \overline{u}(v_R)=u_R$. Assume that $u_L, u_R \in [\underline{b}, \overline{b}]$. Then, for any $u \in \weaktwo$, and every $t > 0$, we have:
\begin{align*}
\dv{}{t}\int_{v_Lt}^{v_Rt}\eta(u(t,x)|\overline{u}(\frac{x}{t})\diff x &\leq q(u(t,v_Lt+);\overline{u}(v_L))-q(u(t,v_Rt-);\overline{u}(v_R)) \\
&-v_L\eta(u(t,v_Lt+)|\overline{u}(v_L))+v_R\eta(u(t,v_Rt-)|\overline{u}(v_R)).
\end{align*}
\end{lemma}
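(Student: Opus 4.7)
The plan is to apply the classical Dafermos--DiPerna relative entropy computation using the smooth reference $v(t,x) := \overline{u}(x/t)$. Since $\overline{u}$ is a rarefaction, $f'(\overline{u}(y)) = y$, and differentiating shows $v$ is a classical $C^1$ solution of $v_t + f(v)_x = 0$ on the open fan $\{(t,x) : v_L t < x < v_R t\}$; in particular, $v_t = -f'(v)\, v_x$ there.

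First I would derive the pointwise distributional inequality
\[
\partial_t \eta(u|v) + \partial_x q(u;v) \leq -\eta''(v)\, v_x\, f(u|v),
\]
where $f(u|v) := f(u) - f(v) - f'(v)(u-v)$ is the relative flux. This follows by combining the entropy inequality for $u \in \weaktwo$ with the smooth identity $\partial_t \eta(v) + \partial_x q(v) = 0$, then expanding $\partial_t(\eta'(v)(u-v)) + \partial_x(\eta'(v)(f(u)-f(v)))$ using the $v$-equation. The cross terms collapse after invoking $u_t + f(u)_x = 0$ distributionally, leaving exactly the remainder above.

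Next I would integrate over $x \in (v_L t, v_R t)$ for fixed $t$ and differentiate in $t$ via the Reynolds transport formula, using the Strong Trace property of $u$ along the Lipschitz curves $x = v_L t$ and $x = v_R t$ to make sense of the one-sided boundary traces. Since $v(t, v_L t) = u_L$ and $v(t, v_R t) = u_R$, the moving-boundary contributions combine with the boundary fluxes of $q(u;v)$ to yield precisely the four terms on the right-hand side of the claim, modulo the remainder $-\int_{v_L t}^{v_R t} \eta''(v)\, v_x\, f(u|v)\, \diff x$.

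The main obstacle is showing that this remainder is nonpositive. Strict convexity of $\eta$ gives $\eta''(v) > 0$. Since the rarefaction stays in the convex region $[\underline{b},\overline{b}]$, one has $\overline{u}'(y) = 1/f''(\overline{u}(y)) > 0$, hence $v_x > 0$. Finally, using the representation
\[
f(u|v) = (u-v)^2 \int_0^1 (1-s)\, f''(v + s(u-v))\, \diff s,
\]
one would conclude $f(u|v) \geq 0$ by exploiting the structure of \eqref{concaveconvex} together with the range constraints built into $\weaktwo$, namely $u \geq \phitan(\underline{b})$ by the maximum principle. This sign check is the delicate point, since $u$ can in principle visit the concave region of $f$ and a careful weighted comparison of $f''$ over the segment from $v$ to $u$ is needed to close the remainder.
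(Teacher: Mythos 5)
Your proposal follows the same overall framework as the paper's proof: you take $v(t,x) = \overline{u}(x/t)$ as the smooth reference, derive the pointwise inequality $\partial_t \eta(u|v) + \partial_x q(u;v) \leq -\eta''(v)\,v_x\,f(u|v)$, observe that $\eta''(v)v_x \geq 0$ because the rarefaction stays in the convex region so $v_x > 0$, and integrate against the moving boundaries $x = v_L t$, $x = v_R t$ using strong traces to produce the four boundary terms. Up to this point the two proofs agree.

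However, you do not close the decisive sign check $f(u|v) \geq 0$. The integral representation $f(u|v) = (u-v)^2\int_0^1 (1-s)f''(v+s(u-v))\,\diff s$ is an awkward tool here: when $\phitan(\underline{b}) \leq u < 0 < v$ the integrand genuinely changes sign along the segment, so nonnegativity would require an explicit weighted computation, and you acknowledge this is ``the delicate point'' and leave it open. As written the proposal is therefore incomplete. The paper instead uses the clean geometric characterization: for $v > 0$, the set $\{u : f(u|v) \geq 0\}$ is exactly $[\phitaninverse(v),\infty)$, since by the very definition of $\phitan$ the point $\phitaninverse(v)$ is the unique $u \neq v$ with $f(u|v) = 0$, $f(\cdot|v)$ has a double zero at $u=v$, and it is negative only to the left of $\phitaninverse(v)$. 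Combined with the maximum-principle lower bound $u \geq \phitan(\underline{b})$ (which you correctly identify) and the monotonicity chain $\phitan(\underline{b}) \geq \phitaninverse(\underline{b}) \geq \phitaninverse(v)$ for $v \in [u_L,u_R] \subset [\underline{b},\overline{b}]$, this immediately yields $f(u|v) \geq 0$. The missing ingredient in your argument is precisely this $\phitaninverse$-characterization of the nonnegativity set of $f(\cdot|v)$ together with the comparison $\phitan(\underline{b}) \geq \phitaninverse(\underline{b})$; without it the lower bound $u \geq \phitan(\underline{b})$ that you invoke cannot actually be converted into the required sign.
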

\begin{proof}
Let $v(t,x)=\overline{u}(\frac{x}{t})$. Following the classical weak-strong stability proof of \cite{MR546634} (see also \cite{MR4487515}), as $\overline{u}$ is a Lipschitz solution, we have:
\[
\partial_t\eta(u|v)+\partial_xq(u;v)+\partial_x(\eta'(v))f(u|v) \leq 0.
\]
We will show that the last term is signed. Indeed as $\overline{u}$ is a rarefaction, we have:
\[
\partial_x(\eta'(v))=\eta''(v)v_x \geq 0,
\]
for any $t$. Secondly, for any $v \in [u_L, u_R]$, we see:
\[
f(u|v)=f(u)-f(v)-f'(v)(u-v),
\]
is non-negative for $u \geq \phitaninverse(v)$ by definition. As $u(t,x) \geq \phitan(\underline{b}) \geq \phitaninverse(\underline{b}) \geq \phitaninverse(v)$ for any $(t,x) \in \R^+ \times \R$ (by the maximum principle, and because $\phitaninverse$ is decreasing), we see $f(u|v) \geq 0$. Thus we see:
\[
\partial_t\eta(u|v)+\partial_xq(u;v) \leq 0.
\]
Integrating in $x$ between $v_Lt$ and $v_Rt$ and using the Strong Trace property (Definition \ref{strongtrace}) grants the result.
\end{proof}
This lets us give control on the error of the discretized rarefaction shocks in the front tracking method. The following proof is from \cite{MR4487515}:
\begin{proposition}\label{rarefactionshockerror}
There exists a constant $C>0$ such that the following is true. For any rarefaction wave $\overline{u}(y), v_L \leq y \leq v_R$, with $\overline{u}(v_L)=u_L, \overline{u}(v_R)=u_R$, $u_L, u_R \in [\underline{b}, \overline{b}]$, denote:
\[
\delta=|v_L-v_R|+\sup_{y \in [v_L,v_R]}|u_L-\overline{u}(y)|.
\]
Then, for any $u \in \weaktwo$, any $v_L \leq v \leq v_R$, and any $t > 0$, we have:
\begin{align*}
&\int_0^t\left(q(u(t,vt+);u_R)-q(u(t,vt-);u_L)-v(\eta(u(t,vt+)|u_R)-\eta(u(t,vt-)|u_L))\right)\diff t \\
&\leq C\delta |u_L-u_R|t.
\end{align*}
\begin{proof}
We define the quantity:
\[
D=\dv{}{t}\left(\int_{v_Lt}^{vt}\eta(u(t,x)|u_L)\diff x+\int_{vt}^{v_Rt}\eta(u(t,x)|u_R)\diff x -\int_{v_Lt}^{v_Rt}\eta(u(t,x)|\overline{u}\left(\frac{x}{t}\right))\diff x \right).
\]
Using \eqref{relativeentropic} twice (once with $b=u_L$, and again with $b=u_R$), and Lemma \ref{rarefactionstability}, we see:
\[
D \geq q(u(t,vt+);u_R)-q(u(t,vt-);u_L)-v\left(\eta(u(t,vt+)|u_R)-\eta(u(t,vt-)|u_L)\right).
\]
Integrating this in time, we see:
\begin{align*}
&\int_0^t\left(q(u(t,vt+);u_R)-q(u(t,vt-);u_L)-v(\eta(u(t,vt+)|u_R)-\eta(u(t,vt-)|u_L))\right)\diff t \\
&\leq \int_0^tD\diff t \\
&\leq \int_{v_Lt}^{vt}\eta(u(t,x)|u_L)\diff x+\int_{vt}^{v_Rt}\eta(u(t,x)|u_R)\diff x -\int_{v_Lt}^{v_Rt}\eta(u(t,x)|\overline{u}\left(\frac{x}{t}\right))\diff x \\ 
&\leq \int_{v_Lt}^{vt}\left(\eta(\overline{u}\left(\frac{x}{t}\right))-\eta(u_L)+\eta'(\overline{u}\left(\frac{x}{t}\right))(u-\overline{u}\left(\frac{x}{t}\right))-\eta'(u_L)(u-u_L)\right)\diff x \\
&+ \int_{vt}^{v_Rt}\left(\eta(\overline{u}\left(\frac{x}{t}\right))-\eta(u_R)+\eta'(\overline{u}\left(\frac{x}{t}\right))(u-\overline{u}\left(\frac{x}{t}\right))-\eta'(u_R)(u-u_R)\right)\diff x \\
&\leq C\delta |u_L-u_R|t.
\end{align*}
\end{proof}
\end{proposition}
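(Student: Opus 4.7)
My plan is to introduce the auxiliary comparison functional
\[
F(t) := \int_{v_Lt}^{vt}\eta(u|u_L)\,dx + \int_{vt}^{v_Rt}\eta(u|u_R)\,dx - \int_{v_Lt}^{v_Rt}\eta(u|\overline{u}(x/t))\,dx,
\]
and to establish two inequalities: (i) $F'(t)\geq$ (the integrand in the statement), and (ii) $F(t)\leq C\delta|u_L-u_R|t$. Since $F(0)=0$ (the three intervals all degenerate at $t=0$), integrating (i) in time and then using (ii) yields the bound on $\int_0^t(\text{integrand})\,ds$. The subtle point, which will be the main obstacle, is that naive application of \eqref{relativeentropic} and Lemma \ref{rarefactionstability} produces only \emph{upper} bounds on each of $A', B', C'$, which would give $F'\leq$ integrand --- the \emph{wrong} direction. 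Recovering $F'\geq$ integrand requires tracking the common entropy-dissipation measure of $u$ across all three balances and observing an exact cancellation.

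For step (i), I would apply \eqref{relativeentropic} with $b=u_L$ on the moving wedge $\{v_Lt<x<vt\}$, with $b=u_R$ on $\{vt<x<v_Rt\}$, and Lemma \ref{rarefactionstability} on $\{v_Lt<x<v_Rt\}$. Each, via the Strong Trace property, produces an identity of the form \textit{time derivative} $=$ \textit{boundary fluxes at endpoints} $-$ \textit{non-negative bulk term}. Since $\overline{u}(v_L)=u_L$ and $\overline{u}(v_R)=u_R$, the boundary contributions at $x=v_Lt$ coincide between the first and third integrals, and at $x=v_Rt$ between the second and third, so these cancel in $A'+B'-C'$. What remains from the endpoint at $x=vt$ assembles into exactly the integrand in the statement. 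The crucial bulk cancellation is that the entropy-dissipation measure $\mu$ of $u$, defined by $\partial_t\eta(u)+\partial_xq(u)=-\mu\leq 0$, is the \emph{same} measure governing $\eta(u|u_L)$, $\eta(u|u_R)$, and the $u$-part of $\eta(u|\overline{u}(x/t))$; hence $\mu|_{(v_Lt,v_Rt)}=\mu|_{(v_Lt,vt)}+\mu|_{(vt,v_Rt)}$ cancels exactly in $A'+B'-C'$. What is left is the non-negative rarefaction correction
\[
\int_{v_Lt}^{v_Rt}\eta''(\overline{u}(x/t))\,\partial_x\overline{u}(x/t)\,f(u|\overline{u}(x/t))\,dx,
\]
non-negative by convexity of $\eta$, monotonicity of $\overline{u}$, and $f(u|\overline{u})\geq 0$ (the last from $u\geq\phitan(\underline{b})\geq\phitaninverse(\overline{u}(x/t))$, via the maximum principle built into $\weaktwo$, exactly as in the proof of Lemma \ref{rarefactionstability}). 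This yields $F'\geq$ integrand.

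For step (ii), I would rewrite
\[
F(t)=\int_{v_Lt}^{vt}[\eta(u|u_L)-\eta(u|\overline{u}(x/t))]\,dx + \int_{vt}^{v_Rt}[\eta(u|u_R)-\eta(u|\overline{u}(x/t))]\,dx
\]
and expand each bracket via the algebraic identity $\eta(u|k)-\eta(u|k')=\eta(k')-\eta(k)+\eta'(k')(u-k')-\eta'(k)(u-k)$. Since $|u_L-\overline{u}(x/t)|\leq\delta$ on $[v_Lt,vt]$ and $|u_R-\overline{u}(x/t)|\leq\delta$ on $[vt,v_Rt]$ by monotonicity of $\overline{u}$, together with $\eta\in C^3$ and $\|u\|_{L^\infty}\leq M$, each bracket is pointwise bounded by $C\delta$. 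Combined with the fan width $|v_R-v_L|t\leq\delta t$, this gives $F(t)\leq C\delta^2 t$. Since $v_R-v_L=f'(u_R)-f'(u_L)$ and $f'$ is Lipschitz on $[\underline{b},\overline{b}]$, we have $\delta\leq C|u_L-u_R|$, so $F(t)\leq C\delta|u_L-u_R|t$ as required.
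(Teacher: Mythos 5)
Your proposal is correct and follows the same approach as the paper's proof: define the comparison functional $F(t)$, show $F'(t)$ dominates the integrand via the three relative-entropy balances, integrate using $F(0)=0$, and then bound $F(t)$ pointwise by Taylor-expanding the brackets $\eta(u|u_{L/R})-\eta(u|\overline{u}(x/t))$ over the fan. Your explicit explanation of why the first inequality goes in the right direction --- tracking the common entropy-dissipation measure $\mu$ of $u$, observing its exact cancellation in $A'+B'-C'$, and isolating the non-negative rarefaction correction $\int\eta''(\overline{u})\,\partial_x\overline{u}\,f(u|\overline{u})\,dx$ --- is precisely the argument the paper invokes implicitly with ``we see $D\geq\dots$'', and is a worthwhile clarification of the only genuinely subtle step.
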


\subsection{Modified front tracking algorithm}
For an introduction to the front tracking algorithm in the scalar case, we refer to Chapter 6 of Bressan's book \cite{MR1816648}, Chapter 14 of Dafermos' book \cite{MR3468916} or Chapter 4 of LeFloch's book \cite{MR1927887}. Informally, the idea is to take a piecewise-constant approximation of the initial data, solve the Riemann problems produced, and then let the solution propagate until a time when two waves interact. Then, the Riemann problems produced are again solved, and the waves are allowed to propagate again. This is repeated ad infinitum (although one can prove the process eventually terminates in a finite number of interactions). As we change the velocity of the shocks to move at a shifted speed determined by Proposition \ref{riemannstabilityshock} rather than the Rankine-Hugoniot speed, and we are in a somewhat simplified setting due to the initial data being localized to the convex region of the flux, we will give a quick introduction to the types of wave interactions we will encounter below.
\subsubsection{The Riemann solver}\label{riemannsolver}
Fix a parameter $h > 0$ with $h < \frac{\eps}{2}$, where $\eps$ is the threshold constant from Proposition \ref{riemannstabilityshock}. We distinguish between three types of waves: ``big'' shocks, ``small'' shocks, and rarefaction shocks, which we will henceforth simply refer to as rarefactions. The initial classification of waves in these three buckets will be discussed in Section \ref{algorithm}. Now, assume that at a positive time $\overline{t}$, there is an interaction at the point $\overline{x}$ between two waves: $(u_L, u_M)$ with strength $\sigma_1$ and $(u_M,u_R)$ with strength $\sigma_2$. We distinguish between so-called ``monotone'' interactions, in which $(u_M-u_L)(u_R-u_M) \geq 0$, and ``non-monotone'' interactions, in which $(u_M-u_L)(u_R-u_M) < 0$.
\newline \underline{\textbf{A: Monotone interactions.}}
\begin{enumerate}
    \item Big shock-big shock: if two big shocks interact, the solution of the Riemann problem will be the following big shock:
    \[
    v_a(t,x)=\begin{cases}
                u_L & x < h_1(t), \\
                u_R & x > h_1(t),
              \end{cases}
    \]
    where $h_1(t)$ is the shift function with initial data $h_1(\overline{t})=\overline{x}$ from Proposition \ref{riemannstabilityshock}.
    \item Rarefaction-rarefaction: as all rarefactions will travel at the characteristic speed of the right-hand state, and all the computations are localized to the region where the flux is convex, this wave interaction is not actually possible.
    \item Big shock-small shock: in this case, the solution to the Riemann problem will be a big shock:
    \[
    v_a(t,x)=\begin{cases}
                u_L & x < h_1(t), \\
                u_R & x > h_1(t).
              \end{cases}
    \]
    \item Small shock-small shock: in this case, there are two possible solutions. If $\sigma_1+\sigma_2 \geq \eps$, then the solution will be a big shock:
    \[
    v_a(t,x)=\begin{cases}
                u_L & x < h_1(t), \\
                u_R & x > h_1(t).
              \end{cases}
    \]
    On the other hand, if $\sigma_1+\sigma_2 < \eps$, the solution will be classified as a small shock, and will move with the speed $h_2$.
    \[
    v_a(t,x)=\begin{cases}
                u_L & x < h_2(t), \\
                u_R & x > h_2(t).
              \end{cases}
    \]
\end{enumerate}
\underline{\textbf{B: Non-monotone interactions.}}
\begin{enumerate}
 \item Rarefaction-big shock: in this case, there are two possible solutions. Let $\sigma_1$ be the strength of the rarefaction, and let $\sigma_2$ be the strength of the big shock. If $\sigma_2-\sigma_1 > \frac{\eps}{2}$, then the solution will be a big shock:
    \[
    v_a(t,x)=\begin{cases}
                u_L & x < h_1(t), \\
                u_R & x > h_1(t).
              \end{cases}
    \]
    On the other hand, if $\sigma_2-\sigma_1 < \frac{\eps}{2}$, the solution will be classified as a small shock, and will move with the speed $h_2$.
    \[
    v_a(t,x)=\begin{cases}
                u_L & x < h_2(t), \\
                u_R & x > h_2(t).
              \end{cases}
    \]
\item Rarefaction-small shock: in this case, there are two possible solutions. Let $\sigma_1$ be the strength of the rarefaction, and let $\sigma_2$ be the strength of the small shock. If $\sigma_1 > \sigma_2$, then the result will be a rarefaction with strength $\sigma_1-\sigma_2$:
    \[
    v_a(t,x)=\begin{cases}
                u_L & x < \overline{x}+(t-\overline{t})\lambda(u_R), \\
                u_R & x > \overline{x}+(t-\overline{t})\lambda(u_R).
              \end{cases}
    \]
    On the other hand, if $\sigma_2 > \sigma_1$, the result will be a small shock with strength $\sigma_2-\sigma_1$:
    \[
    v_a(t,x)=\begin{cases}
                u_L & x < h_2(t), \\
                u_R & x > h_2(t).
              \end{cases}
    \]
\end{enumerate}
\subsubsection{Construction of the approximate solutions}\label{algorithm}
Fix a function $u^0 \in BV(\R)$ with $Range(u^0) \subset [\underline{b},\overline{b}]$. Note that $f$ is strictly convex in this region. Then, fix a piecewise constant initial data $\psi_h(0)$ that verifies the following properties:
\begin{align*}
    &\psi_h(0,\cdot) \text{ has at most $\frac{1}{h}$ jump discontinuities}, \\
    &TV(\psi_h(0,\cdot)) \leq TV(u^0), \\
    &\inf u^0 \leq \psi_h(0,\cdot) \leq \sup u^0.
\end{align*}
Then, at each jump discontinuity of $\psi_h(0,\cdot)$, we can solve (locally in time) the Riemann problem. This works as follows. Let $u_L,u_R$ be two states on either side of a jump discontinuity at $\overline{x}$. If $u_L > u_R$, the Riemann solution is a shock. We will assign all shocks with size $|\sigma| > \eps$ to be ``big'', and all shocks with size $|\sigma| < \eps$ to be ``small''. We let big shocks propagate with speed $h_1(t)$ and small shocks propagate with speed $h_2(t)$. On the other hand, if $u_L < u_R$, the classical Riemann solution is a rarefaction fan, which we must replace with several ``rarefaction shocks'' each of strength $< h$, i.e. the Riemann solution will be:
\[
v_a(t,x)=\begin{cases}
            u_L & x< \overline{x}+t\lambda(w_1), \\
            w_j & \overline{x}+t\lambda(w_{j}) < x < \overline{x}+t\lambda(w_{j+1}) \indent (1 \leq j \leq N-1),\\
            u_R & x > \overline{x}+t\lambda(u_R), 
         \end{cases}
\]
where $N=\ceil{\frac{\sigma}{h}}$, and:
\[
w_j=u_L+\frac{j}{N}(u_R-u_L) \indent \text{ for } 1 \leq j \leq N-1.
\]

Then, we let the waves propagate. By slightly perturbing the speed of a wave if necessary, we can assume that at any time $t > 0$, there is at most one wave interaction. At the first interaction point between two waves, we solve the Riemann problem as dictated in Section \ref{riemannsolver}. Then, we continue recursively. The following lemma shows this process will produce a function $\psi_h$ with the following properties:
\begin{lemma}[Chapter 4, Theorem 2.1 in \cite{MR1927887}]\label{fronttrackingwelldefined}
The wave front tracking approximations $\psi_h$ are well-defined globally in time. In particular, the total number of waves in $\psi_h$ is uniformly bounded in $t$ (but tends to infinity when $h$ tends to $0$). Further, the approximations satisfy the uniform estimates:
\begin{align*}
    &TV(\psi_h(t,x)) \leq TV(u^0), \\
    &\inf u^0 \leq \psi_h(t,x) \leq \sup u^0, \\
    &||\psi_h(t,\cdot)-\psi_h(s,\cdot)||_{L^1(\R)} \leq TV(u^0)\hat{\lambda}|t-s|, \text{ for } 0 < s < t < T.
\end{align*}
\end{lemma}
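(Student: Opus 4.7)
My plan is to verify each assertion by induction on the sequence of wave interactions. The essential observation to record first is that the Riemann solver of Section \ref{riemannsolver} produces \emph{exactly one} outgoing wavefront at every pairwise interaction in all six cases A.1--A.4, B.1--B.2; in particular, no further rarefaction discretization is performed at later times. Combined with the explicit front count at $t=0^+$ (at most $1/h$ jumps in $\psi_h(0,\cdot)$, each contributing either a single shock or at most $\ceil{\sigma/h}$ rarefaction shocks, so at most $2/h + TV(u^0)/h$ fronts in total), this forces the total number of fronts to be finite and strictly decreasing at every interaction. After generically perturbing wave speeds so that no three fronts collide simultaneously, the interaction times thus form a finite set on every bounded time interval and the algorithm is globally well-defined.

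For the TV estimate I would proceed by induction on interactions. In a monotone interaction (case A) the outgoing front has strength $\sigma_1+\sigma_2$, preserving the local total variation across the interaction point. In a non-monotone interaction (case B) the outgoing front has strength $|\sigma_1-\sigma_2|$, so TV strictly decreases by $2\min(\sigma_1,\sigma_2)$. Hence $TV(\psi_h(t,\cdot))$ is non-increasing in time, yielding $TV(\psi_h(t,\cdot))\leq TV(u^0)$. The maximum principle is analogous: each shock case of the Riemann solver uses only the incoming states $u_L,u_R$, while the rarefaction discretization uses convex combinations $w_j=u_L+(j/N)(u_R-u_L)\in[u_L,u_R]$. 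An induction on interactions then bars the introduction of any value outside $[\inf u^0,\sup u^0]$.

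The $L^1$-continuity in time follows from the uniform shock-speed bound $\hat{\lambda}$ supplied by Proposition \ref{riemannstabilityshock}, together with the fact that rarefaction shocks propagate at the characteristic speed $\lambda(w_j)$, bounded by $\hat{\lambda}$ after enlarging the latter if needed. Between two instants $s<t$ with no intermediate interaction, each front $i$ of strength $|\sigma_i|$ sweeps an area of at most $|\sigma_i|\hat{\lambda}|t-s|$ in the $(x,t)$ plane; summing over fronts and applying the $TV$ bound already established yields $\|\psi_h(t,\cdot)-\psi_h(s,\cdot)\|_{L^1}\leq \hat{\lambda}TV(u^0)|t-s|$. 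Extending across interaction times is automatic since $\psi_h$ is unchanged pointwise at an instantaneous interaction. The main technical nuisance, as I see it, is bookkeeping the artificial shifted speeds $\dot{h}_1,\dot{h}_2$: these are discontinuous in the incoming states, so a naive ODE well-posedness argument is unavailable, but Proposition \ref{riemannstabilityshock} nevertheless supplies the uniform bound $|\dot{h}_{1,2}|\leq \hat{\lambda}$, which is all that is required for this lemma.
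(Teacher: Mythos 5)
Your argument is correct and coincides with the proof the paper intends: the paper omits the details, citing Chapter 4, Theorem 2.1 of LeFloch and noting that replacing Rankine--Hugoniot speeds by the shifted speeds only changes the Lipschitz-in-time constant via the uniform bound $\hat{\lambda}$ from Proposition \ref{riemannstabilityshock}, which is precisely the classical interaction-count/TV/maximum-principle/$L^1$-sweep argument you spell out. Your observation that every pairwise interaction in the modified solver emits a single front (so the front count is nonincreasing and interactions are finite) and that rarefaction fronts never exceed strength $h$ after $t=0$ is exactly the bookkeeping needed, so no gap remains.
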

We omit the proof, as using the shifts instead of the Rankine-Hugoniot speeds only affects the Lipschitz constant in time, where we must make sure that the maximum speed of any wave is bounded. This is guaranteed by Proposition \ref{riemannstabilityshock}, where we show that the speed of all waves is bounded by $\hat{\lambda}$. All other aspects of the proof are identical. For completeness, a proof will be given in the author's PhD thesis \cite{mythesis}.
\subsubsection{The weight function $a$}
Now, we define the function $a$ that gives stability of a weak solution $u \in \weaktwo$ and the function $\psi_h$ constructed via the front tracking algorithm:
\begin{align*}
a(t,x)=\left(C_1^{L(t)}\right)\left(\displaystyle \prod_{k_i \in K(t)}k_i\right)\left(\displaystyle \prod_{i:\text{shock}}\xi_i(t,x)\right),
\end{align*}
where the functions $L(t),\xi_i(t)$, and the set $K(t)$ are defined as follows.
\newline \underline{\textbf{L(t):}} The function $L(t)$ is a sum of ``big shock numbers'' associated to each wave present in $\psi_h(t,\cdot)$:
\[
L(t):=\sum_{i:\text{small shock or rarefaction}}\ell_i+\sum_{i:\text{big shock}}(\ell_i-1),
\]
where each wave is assigned $\ell_i$ via the following designation. Initially, all big shocks are given $\ell_i=1$, and all small shocks and rarefactions are given $\ell_i=0$. When two waves collide, the big shock number of the resulting wave will be the sum of the big shock numbers of the two incoming waves, unless the interaction is an A4-type interaction (see Section \ref{riemannsolver}) in which a big shock is produced, in which case the resulting big shock will have big shock number the sum of the incoming ones plus one.
\newline \underline{\textbf{K(t):}} The set $K(t)$ contains ``small shock numbers'' accumulated as waves interact in the following manner. Initially, $K(0+)=\emptyset$. When two waves interact in an $A3$-type interaction at a time $\overline{t}$, $K(\overline{t}+)=K(\overline{t}-) \bigcup \{1-C_0\sigma_1\}$, where $\sigma_1$ is the strength of the incoming small shock. Likewise, when a $B2$-type interaction occurs in which a rarefaction is produced at time $\overline{t}$, $K(\overline{t}+)=K(\overline{t}-) \bigcup \{1-C_0\sigma_1\}$, where $\sigma_1$ is the strength of the incoming small shock. Finally, when a $B2$-type interaction occurs in which a small shock is produced at time $\overline{t}$, $K(\overline{t}+)=K(\overline{t}-) \bigcup \{\frac{1-C_0\sigma_1}{1-C_0(\sigma_1-\sigma_2)}\}$, where $\sigma_1$ is the strength of the incoming small shock and $\sigma_2$ is the strength of the incoming rarefaction. 
\newline \underline{$\boldsymbol{\xi_i(t):}$} A shock wave $x_i(t)$ with strength $\sigma_i$ has the associated function $\xi_i(t)$:
\begin{align*}
\xi_i(t,x)=\begin{cases}
            \ind_{\{x < x_i(t)\}}(x)+(1-C_0\sigma_i)\ind_{\{x > x_i(t)\}}(x) & \text{if $x_i(t)$ is a small shock}, \\
            \ind_{\{x < x_i(t)\}}(x)+C_1\ind_{\{x > x_i(t)\}}(x) & \text{if $x_i(t)$ is a big shock}.
         \end{cases}
\end{align*}
Clearly, the component of the function $a$ involving the weights $\xi_i$ is the portion that grants the $L^2$-stability. The other two components are needed for the second of the following crucial properties.
\begin{proposition}\label{aproperties}
There exists $c=c(TV(u^0)) > 0$ such that, uniformly in $h$:
\begin{align}\label{abounded}
c \leq a(t,x) \leq 1 \text{ for any } (t,x) \in \R^+ \times \R.
\end{align}
Furthermore, for every time $t$ with a wave interaction, and almost every $x$:
\begin{align}\label{amonotone}
a(t+,x) \leq a(t-,x).
\end{align}
Moreover, for every time without a wave interaction, and for every $x$ such that a big shock is located at $x=x_i(t)$:
\begin{align}\label{abigjump}
\frac{a(t,x_i(t)+)}{a(t,x_i(t)-)}=C_1,
\end{align}
and if a small shock of strength $\sigma_i$ is located at $x=x_i(t)$:
\begin{align}\label{asmalljump}
\frac{a(t,x_i(t)+)}{a(t,x_i(t)-)}=1-C_0\sigma_i.
\end{align}
\end{proposition}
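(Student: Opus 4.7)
The jump conditions \eqref{abigjump} and \eqref{asmalljump} are essentially definitional. At a time $t$ with no wave interactions, $L(t)$ and $K(t)$ are constant in $t$, and all $\xi_j$ with $j \neq i$ are continuous across $x = x_i(t)$. Hence the ratio $a(t,x_i(t)+)/a(t,x_i(t)-)$ equals the corresponding ratio for $\xi_i$, which is precisely $C_1$ for a big shock or $1-C_0\sigma_i$ for a small shock by the definition of $\xi_i$.

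For the upper bound in \eqref{abounded}, I would verify that each factor is $\leq 1$: (i) the factor $C_1^{L(t)} \leq 1$ since $C_1 < 1$ (from Proposition \ref{riemannstabilityshock}, $C_1 \leq a^* < 1$) and $L(t) \geq 0$; (ii) each element of $K(t)$ is $\leq 1$, where the only non-obvious case is $k_i = \frac{1-C_0\sigma_1}{1-C_0(\sigma_1-\sigma_2)}$ from $B2$-type interactions producing a small shock, which is $\leq 1$ because $\sigma_2 \geq 0$; (iii) each $\xi_i \leq 1$ by the same sign considerations, for $s_0$ small enough so that $1 - C_0 \sigma_i \geq 0$ (which is guaranteed because $\sigma_i \leq \epsilon$ is small for small shocks, and $C_1 \leq 1$ for big shocks). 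For the lower bound, I would use Lemma \ref{fronttrackingwelldefined} to bound the total variation of $\psi_h(t,\cdot)$ by $TV(u^0)$ uniformly in $t$ and $h$. This controls: (a) the number of big shocks by $TV(u^0)/\epsilon$, bounding $L(t)$ in terms of $TV(u^0)$ and the total number of wave interactions; (b) the sum of $|\log(1-C_0\sigma_i)|$ over all small shocks and all $k_i \in K(t)$, by the Glimm-type interaction estimate $\sum_i \sigma_i \lesssim TV(u^0)$ together with control on the number of interactions. Here I would invoke the standard front tracking analysis, giving a uniform bound on the total number of interactions in terms of $TV(u^0)$, so that $|K(t)|$ and $L(t)$ are bounded uniformly.

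The core of the proof is \eqref{amonotone}, the monotonicity at interaction times, which I would verify by a case-by-case comparison following the classification in Section \ref{riemannsolver}. For each interaction type, one must check that the product of the incoming $\xi_i$ (on the side $x > x_i(t)$ where they are nontrivial), together with any added factor to $C_1^{L(t)} \cdot \prod_{k_i \in K(t)} k_i$ coming from the interaction, dominates the corresponding outgoing weight factor. For instance:
\begin{itemize}
\item $A1$ (big-big $\to$ big): incoming contributes $C_1 \cdot C_1$ on the right; outgoing contributes $C_1$ and $L$ increases by $-1$ (via the $(\ell_i-1)$ convention), giving an extra $C_1^{-1}$, so the ratio is $C_1$; nonincrease requires $C_1 \leq 1$, which holds.
\item $A3, A4$ (small or small-small $\to$ big): incoming contributes $(1-C_0\sigma_1)(1-C_0\sigma_2)$ or $C_1(1-C_0\sigma_i)$ on the right; the outgoing big shock contributes $C_1$ and we absorb a factor $(1-C_0\sigma_i)$ into $K(t)$. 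The ratio is shown to be $\leq 1$ by direct computation using $C_1 < 1$ and $1 - C_0\sigma_j \leq 1$.
\item $B2$ (rarefaction + small shock): one must track the change in $\sigma$ carefully, and the factor $(1-C_0\sigma_1)/(1-C_0(\sigma_1-\sigma_2))$ or $(1-C_0\sigma_1)$ introduced into $K(t)$ is designed precisely to make the ratio $\leq 1$.
\end{itemize}
The main obstacle will be the bookkeeping for non-monotone $B$-type interactions, where a rarefaction collides with a shock and the output strength may increase or decrease. Here the prescription for $K(t)$ is engineered to yield monotonicity exactly; verifying this requires checking, in each sub-case, that the product of the discontinuous weight factors at the collision point does not increase after the Riemann problem is resolved. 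A careful enumeration, possibly organized in a table, will finish the proof.
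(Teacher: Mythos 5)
Your treatment of \eqref{abigjump}, \eqref{asmalljump}, the upper bound in \eqref{abounded}, and the general plan for \eqref{amonotone} is in line with the paper, but the lower bound argument has a genuine gap: you propose to ``invoke the standard front tracking analysis, giving a uniform bound on the total number of interactions in terms of $TV(u^0)$, so that $|K(t)|$ and $L(t)$ are bounded uniformly.'' No such bound exists uniformly in $h$. The number of fronts at $t=0$ is of order $TV(u^0)/h$ (each rarefaction fan is split into pieces of strength $<h$), so the total number of interactions --- and hence the potential cardinality of $K(t)$ --- blows up as $h\to 0$; Lemma \ref{fronttrackingwelldefined} itself notes that the number of waves tends to infinity as $h\to 0$. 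Likewise, your estimate (b) conflates the instantaneous bound $\sum_{i:\,\text{small shock}}\sigma_i\leq TV(u^0)$ with the \emph{cumulative} sum of strengths over all past $A3$/$B2$ interactions that feed factors into $K(t)$; the latter is what you need and it is not controlled by the instantaneous total variation alone. The mechanism that actually closes this step is a potential argument: introduce the small shock potential $S(t)=\sum_{i:\,\text{small shock}}\sigma_i$, observe that $S$ can only increase through $B1$-type interactions and each such increase (at most $\eps/2$) costs at least $\eps/2$ of total variation, so the cumulative increase, and hence the cumulative decrease, of $S$ is at most $2\,TV(u^0)$. Every factor added to $K(t)$ satisfies $k_i\geq 1-2C_0\Delta_-S$ at the corresponding interaction time, and every increment of $L(t)$ (an $A4$ interaction producing a big shock) forces $\Delta_-S\geq\eps$; this yields $L(t)\leq\lceil 2\,TV(u^0)/\eps\rceil$ and, via Lemma \ref{productlemma} applied to strengths summing to at most $2\,TV(u^0)$, a lower bound on $\prod_{k_i\in K(t)}k_i$ that never refers to the cardinality of $K(t)$. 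Without this (or an equivalent) bookkeeping device, your lower bound is not uniform in $h$, which is exactly what the proposition requires.

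Two smaller points. In your $A1$ check the bookkeeping is off: with $L(t)=\sum_{\text{small/raref}}\ell_i+\sum_{\text{big}}(\ell_i-1)$, merging two big shocks with numbers $a,b$ into one with number $a+b$ makes $L$ \emph{increase} by $1$, so the prefactor picks up a factor $C_1$; combined with the $\xi$-factors one gets $a(t+,x)=C_1a(t-,x)$ for $x<x_0$ and $a(t+,x)=a(t-,x)$ for $x>x_0$, whereas your sign convention would produce an increasing factor on the right of the interaction point. Finally, the $A4$ and $B$-type monotonicity checks that you defer are precisely where the specific values inserted into $K(t)$ matter (e.g.\ $1-C_0(\sigma_1+\sigma_2)\leq(1-C_0\sigma_1)(1-C_0\sigma_2)$, and $C_1\leq(1-C_0\sigma_1)(1-C_0\sigma_2)$ when a big shock is created), so they need to be carried out explicitly rather than asserted.
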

Before we begin the proof, we give a lemma that will uniformly bound diminishing contributions in $a$ coming from small shocks.
\begin{lemma}\label{productlemma}
Let $\{a_i\}_{i=1}^N$ be a finite set of real numbers with $0 < a_i < \frac{1}{2}$ for all $1 \leq i \leq N$, and $\sum_{i=1}^Na_i \leq K$ for some $K > 1$. Then:
\[
\prod_{i=1}^N(1-a_i) \geq \left(\frac{1}{2}\right)^{4K}.
\]
\end{lemma}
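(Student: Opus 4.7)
The plan is to convert the product into an exponential sum by taking logarithms, and then control the sum using the hypothesis $\sum a_i \leq K$. The pointwise estimate that makes this work is
\[
1 - x \geq 4^{-x} \quad \text{for all } x \in [0, 1/2],
\]
or equivalently $\log(1-x) \geq -2x \log 2$ on this interval. To verify this, I would set $h(x) = (1-x) - 4^{-x}$ and observe that $h(0) = 0$ and $h(1/2) = 1/2 - 1/2 = 0$; since $h''(x) = -4^{-x}(\log 4)^2 < 0$, the function $h$ is concave on $[0, 1/2]$, and a concave function vanishing at both endpoints of an interval is non-negative throughout. (Equivalently, one can check that $g(x) := \log(1-x) + 2x \log 2$ has $g(0) = g(1/2) = 0$ and $g''(x) = -1/(1-x)^2 < 0$, so $g \geq 0$ on $[0, 1/2]$ by concavity.)

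With the pointwise inequality in hand, applying it to each $a_i \in (0, 1/2)$ yields
\[
\prod_{i=1}^{N}(1 - a_i) \;\geq\; \prod_{i=1}^{N} 4^{-a_i} \;=\; 4^{-\sum_{i=1}^{N} a_i} \;\geq\; 4^{-K} \;=\; \left(\tfrac{1}{2}\right)^{2K},
\]
where the second-to-last inequality uses $\sum a_i \leq K$ together with the fact that $4^{-t}$ is decreasing in $t$. Since $K > 1$, we have $(1/2)^{2K} \geq (1/2)^{4K}$, which gives the claimed bound (in fact a slightly stronger one, which is all the rest of the paper requires).

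There is no real obstacle; the only subtle point is the correct choice of the base of the exponential in the pointwise inequality, so that $\sum a_i \leq K$ translates into the desired power of $1/2$. The looseness in going from $(1/2)^{2K}$ to $(1/2)^{4K}$ presumably reflects that a cruder bound suffices in the applications to the wave-interaction analysis in the front tracking construction.
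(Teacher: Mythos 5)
Your proof is correct, and it takes a genuinely different route from the paper's. The paper first establishes the Weierstrass product inequality $\prod(1-a_i)\geq 1-\sum a_i$ (valid when $\sum a_i<1$) by induction, handles the case $\sum a_i<\tfrac14$ directly, and otherwise partitions the index set into at most $4K$ blocks each with partial sum between $\tfrac14$ and $\tfrac12$, applying the Weierstrass bound blockwise to get a factor of $\tfrac12$ per block. Your approach replaces all of that with the single pointwise estimate $1-x\geq 4^{-x}$ on $[0,\tfrac12]$ (verified by concavity of $x\mapsto (1-x)-4^{-x}$ with equality at both endpoints), which immediately converts the product into $4^{-\sum a_i}\geq 4^{-K}=(1/2)^{2K}$. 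This is shorter, avoids the combinatorial partitioning step, and gives the sharper constant $(1/2)^{2K}$; it also sidesteps a small technical wrinkle in the paper's partitioning (a greedy partition could produce a block with sum slightly exceeding $\tfrac12$, since adding one more $a_i<\tfrac12$ to a partial sum just under $\tfrac14$ can land up to $\tfrac34$). The trade-off is that the paper's argument is entirely elementary arithmetic, whereas yours invokes a calculus/convexity fact, but both are perfectly standard. Note also that your final comparison $(1/2)^{2K}\geq(1/2)^{4K}$ needs only $K\geq 0$, not $K>1$.
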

\begin{proof}
Firstly, we will show the following:
\begin{align}\label{intermedsum}
\sum_{i=1}^N a_i < 1 \implies \prod_{i=1}^N(1-a_i) \geq 1-\sum_{i=1}^N a_i.
\end{align}
We proceed by induction. Assume that $N=1$. Then:
\[
\prod_{i=1}^1(1-a_i)=1-a_i=1-\sum_{i=1}^1 a_i,
\]
so \eqref{intermedsum} is verified in this case. Now, assume that \eqref{intermedsum} holds for $N=k$, and let $N=k+1$. Then:
\[
\prod_{i=1}^{k+1}(1-a_i)=(1-a_{k+1})(\prod_{i=1}^k(1-a_i)) \geq (1-a_{k+1})(1-\sum_{i=1}^k(1-a_i) \geq 1-\sum_{i=1}^{k+1}(1-a_i).
\]
Now, we prove the main lemma. Firstly, assume that $\sum_{i=1}^Na_i < \frac{1}{4}$. Then:
\[
\prod_{i=1}^N(1-a_i) \geq 1-\sum_{i=1}^Na_i \geq \frac{3}{4} \geq \left(\frac{1}{2}\right)^{4K}.
\]
Otherwise, partition $\{a_i\}_{i=1}^N$ into $M$ subsets $\{a_{i,1}\}, ..., \{a_{i,<M}\}$, with $M \leq 4K$, such that:
\[
\frac{1}{4} \leq \sum_ia_{i,j} \leq \frac{1}{2} \text{ for all } 1 \leq j \leq 4K.
\]
Then, applying \eqref{intermedsum} $M$ times, we have:
\[
\prod_{i=1}^N(1-a_i)=\prod_{j=1}^{M}\prod_{i}(1-a_i) \geq \prod_{j=1}^{M}\left(\frac{1}{2}\right) \geq \left(\frac{1}{2}\right)^{M} \geq \left(\frac{1}{2}\right)^{4K}.
\]
\end{proof}
\underline{\textbf{Proof of Proposition \ref{aproperties}.}}
We start by proving \eqref{abounded}. Firstly, the upper bound is trivial, as all the components in the definition of $a$ are strictly less than $1$. So, we need only to show the lower bound. Fix an arbitrary $h$, and consider the weight $a(t,x)$ associated to $\psi_h$. It suffices to show that each of the three components of $a$ enjoys a uniform lower bound separately. Denote $V:=TV(u^0)$. We show a lower bound on $\displaystyle \prod_{i:\text{shock}}\xi_i(t,x)$ as follows. Firstly, at any time $t$, the sum of the strengths of small shocks is bounded by the total variation:
\[
\sum_{i:\text{small shock}}\sigma_i \leq TV(\psi_h(t,\cdot)) \leq TV(\psi_h(0,\cdot)) \leq V,
\]
So by applying Lemma \ref{productlemma}, we see:
\[
\prod_{i:\text{small shock}}(1-C_0\sigma_i) \geq \left(\frac{1}{2}\right)^{4C_0V}.
\] 
Secondly, the strength of any big shock must be greater than or equal to $\frac{\eps}{2}$, and the scheme is total variation decreasing, so there are at most $\ceil{\frac{2V}{\eps}}$ big shocks at any fixed time. Together, these considerations imply:
\[
\displaystyle \prod_{i:\text{shock}}\xi_i(t,x) \geq \left(\frac{1}{2}\right)^{4C_0V}\left(C_1^{\ceil{\frac{2V}{\eps}}}\right).
\]
Next, we bound $C_1^{L(t)}$. It suffices to uniformly bound in time $P(t)$, the total sum of big shock numbers, defined as :
\[
P(t)=\sum_{i:\text{wave}}\ell_i.
\]
To do so, we introduce $S(t)$, the ``small shock potential'', which is defined as follows:
\[
S(t)=\sum_{i:\text{small shock}}\sigma_i.
\]
$S(t)$ tracks the total strength of all small shocks in $\psi_h$ at time $t$. Clearly, $S(0+) \leq V$, and $S(t)$ can only increase via a $B1$-type interaction, in which a big shock is turned into a small shock. When this happens, $S(t)$ increases by the strength of the outgoing small shock, i.e. it increases by less than $\frac{\eps}{2}$. However, in order to have a big shock turn into a small shock the function $\psi_h$ must lose at least $\frac{\eps}{2}$ total variation. It follows that:
\begin{equation}\label{Saccumulationbd}
\sum_{t}\Delta_+S(t) \leq 2V,
\end{equation}
where $\Delta_+S(t)=\max(S(t+)-S(t-),0)$. As $S(t)$ is always non-negative, \eqref{Saccumulationbd} implies:
\begin{equation}\label{Sdiminishbd}
\sum_{t}\Delta_-S(t) \leq \sum_{t}\Delta_+S(t) \leq 2V,
\end{equation}
where $\Delta_-S(t)=-\min(S(t+)-S(t-),0)$. Finally, the only mechanism for $P(t)$ to increase is via an $A4$-type interaction at a time $\overline{t}$, in which case $P(t)$ goes up by $1$. However, $\Delta_-S(\overline{t}) \geq \eps$, so we see that this cannot happen more than $\ceil{\frac{2V}{\eps}}$ times. Thus:
\[
L(t) \leq P(t) \leq \ceil{\frac{2V}{\eps}}.
\]
This gives a uniform bound on $C_1^{L(t)}$. 
\par Lastly, we give a lower bound for $\displaystyle \prod_{k_i \in K(t)}k_i$. We break this part into two steps by partitioning $K(t)$ into two subsets: $K(t)=K_1(t) \bigsqcup K_2(t)$, where $K_1$ consists of numbers added to $K$ from $A3$-type interactions, while $K_2$ consists of numbers added to $K$ from $B2$-type interactions.
\newline \underline{Step 1: Lower bound over $K_1(t)$.} Note that whenever an $A3$-type interaction occurs at a time $\overline{t}$, $\Delta_-S(\overline{t})=\sigma_i$, where $\sigma_i$ is the strength of the incoming small shock. Comparing to \eqref{Sdiminishbd}, we see that:
\[
\sum_{i: (1-C_0\sigma_i) \in K_1(t)}\sigma_i \leq \sum_{t}\Delta_-S(t) \leq \sum_{t}\Delta_+S(t) \leq 2V.
\]
Combining this with Lemma \ref{productlemma}, we have:
\[
\prod_{i: k_i \in K_1(t)}k_i \geq \left(\frac{1}{2}\right)^{8C_0V}.
\]
\underline{Step 2: Lower bound over $K_2(t)$.} There are two different types of weights added to the set $K_2$, corresponding to the two different results of a $B2$-type interaction. We show that in both cases, the weight added minus one is proportional to the decrease in $S(t)$. Firstly, consider a $B2$-type interaction at time $\overline{t}$ between a shock of strength $\sigma_1$ and a rarefaction of strength $\sigma_2$ in which a rarefaction is produced. In this case:
\[
\Delta_-S(\overline{t})=\sigma_1,
\]
while the number added to $K_2(\overline{t}-)$ is $k_i=1-C_0\sigma_1$. Next, consider a $B2$-type interaction at time $\overline{t}$ between a shock of strength $\sigma_1$ and a rarefaction of strength $\sigma_2$ in which a small shock is produced. In this case:
\[
\Delta_-S(\overline{t})=\sigma_2,
\]
while the number added to $K_2(\overline{t}-)$ is $k_i=\frac{(1-C_0\sigma_1)}{1-C_0(\sigma_1-\sigma_2)}$. Then, we compute:
\[
\frac{(1-C_0\sigma_1)}{1-C_0(\sigma_1-\sigma_2)}=1-K \implies K=\frac{C_0\sigma_2}{1-C_0(\sigma_1-\sigma_2)}.
\]
Taking $\eps$ smaller if needed, we may assume that $1-C_0(\sigma_1-\sigma_2) \geq \frac{1}{2}$, in which case we see:
\[
\frac{(1-C_0\sigma_1)}{1-C_0(\sigma_1-\sigma_2)} \geq 1-2C_0\sigma_2.
\]
So, in both cases, we see $k_i \geq 1-2C_0\Delta_-S(\overline{t})$. By \eqref{Sdiminishbd} and Lemma \ref{productlemma}, this implies:
\[
\prod_{i:k_i \in K_2(t)} k_i \geq \left(\frac{1}{2}\right)^{8C_0V}.
\]
This completes the proof of \eqref{abounded}. Now, we show \eqref{amonotone}. We will go through case-by-case and show \eqref{amonotone} for each interaction type separately for an interaction at time $t$. In all cases, assume that the waves interact at $x=x_0$.
\newline \underline{$A1$-type interaction:} let the incoming big shocks have $\ell_i,\ell_j=a,b$ respectively. Then, the outgoing big shock will have $\ell_k=a+b$. Thus, we compute that for $x < x_0$, $a(t+,x)=C_1a(t-,x)$ and for $x > x_0$, $a(t+,x)=a(t-,x)$. 
\newline \underline{$A3$-type interaction:} let the incoming small shock have strength $\sigma_i$. Then, $K(t+)=K(t-) \bigcup \{1-C_0\sigma_1\}$. So, we compute that for $x < x_0$, $a(t+,x)=(1-C_0\sigma_1)a(t-,x)$ and for $x > x_0$, $a(t+,x)=a(t-,x)$.
\newline \underline{$A4$-type interaction:} firstly, assume that a small shock is produced. Then, for $x < x_0$, $a(t+,x)=a(t-,x)$, and for $x > x_0$:
\begin{align*}
a(t+,x)&=a(t+,x_0-)(1-C_0(\sigma_1+\sigma_2)) \leq a(t+,x_0-)(1-C_0\sigma_1)(1-C_0\sigma_2) \\
&=a(t-,x_0-)(1-C_0\sigma_1)(1-C_0\sigma_2)=a(t-,x),
\end{align*}
where $\sigma_1, \sigma_2$ are the strengths of the incoming shocks. On the other hand, if a big shock is produced, then for $x < x_0$, $a(t+,x)=a(t-,x)$ and for $x > x_0$:
\begin{align*}
a(t+,x)&=C_1a(t+,x_0-) \leq a(t+,x_0-)(1-C_0\sigma_1)(1-C_0\sigma_2) \\
&=a(t-,x_0-)(1-C_0\sigma_1)(1-C_0\sigma_2)=a(t-,x).
\end{align*}
\newline \underline{$B1$-type interaction:} firstly, assume that a small shock is produced. Then, for $x < x_0$, $a(t+,x)=a(t-,x)$ and for $x > x_0$, $a(t+,x)=(1-C_0\sigma_1)a(t-,x)$, where $\sigma_1$ is the strength of the small shock produced. On the other hand, assume that a big shock is produced. Then, for any $x \in \R$, $a(t+,x)=a(t-,x)$.
\newline \underline{$B2$-type interaction:} let the incoming small shock have strength $\sigma_1$ and the rarefaction have strength $\sigma_2$. Firstly, assume that a rarefaction is produced. Then, $K(t+)=K(t-)\bigcup \{1-C_0\sigma_1\}$. So, for $x < x_0$, $a(t+,x)=(1-C_0\sigma_1)a(t-,x)$ and for $x > x_0, a(t+,x)=a(t-,x)$. On the other hand, if a small shock is produced, then $K(\overline{t}+)=K(\overline{t}-) \bigcup \{\frac{1-C_0\sigma_1}{1-C_0(\sigma_1-\sigma_2)}\}$. So, for $x < x_0$, $a(t+,x)=\frac{1-C_0\sigma_1}{1-C_0(\sigma_1-\sigma_2)}a(t-,x)$ and for $x > x_0$, $a(t+,x)=a(t-x)$. This completes the proof of \eqref{amonotone} in all cases.
\par Finally, \eqref{abigjump} and \eqref{asmalljump} are verified by definition as the coefficients $C_1^{L(t)}, \displaystyle \prod_{k_i \in K}k_i$ remain constant in $x$ for a fixed time $t$. 

\subsection{Proof of Proposition \ref{closetomaxp}}
We start with a key lemma that will provide the framework to stitch together the $L^2$-stability between wave interactions. The proof exactly follows that of Lemma 2.5 in \cite{MR3954680}, so we omit it.
\begin{lemma}[Lemma 2.5 in \cite{MR3954680}, Lemma 7.1 in \cite{MR4487515}]\label{stitchinglemma}
Let $u \in L^\infty(\R^ \times \R)$ b a weak solution to \eqref{cl} with initial data $u^0$. Further, assume that $u$ is $\eta$-entropic and verifies the Strong Trace property (Definition \ref{strongtrace}). Then, for all $v \in \statespace$, and all $c,d \in \R$ with $c < d$, the approximate left- and right-hand limits:
\begin{align*}
    \aplim_{t \to t_0^\pm}\int_c^d\eta(u(t,x)|v)\diff x,
\end{align*}
exist for all $t_0 \in (0,\infty)$ and verify:
\begin{align*}
\aplim_{t \to t_0^-}\int_c^d\eta(u(t,x)|v)\diff x \geq \aplim_{t \to t_0^+}\int_c^d\eta(u(t,x)|v)\diff x.
\end{align*}
Furthermore, the approximate right-hand limit exists at $t_0=0$ and verifies:
\begin{align*}
\int_c^d\eta(u^0(x)|v)\diff x \geq \aplim_{t \to t_0^+}\int_c^d\eta(u(t,x)|v)\diff x.
\end{align*}
\end{lemma}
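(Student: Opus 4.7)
The plan is to extract the one-sided limits from the distributional form of the relative entropy inequality \eqref{relativeentropic} after integrating in space against a cutoff that collapses to $\ind_{[c,d]}$. Fix $v\in\statespace$, $c<d$, and set $m(t):=\int_c^d \eta(u(t,x)|v)\diff x$. Let $\chi_n\in C_c^\infty(\R)$ be a sequence of nonnegative cutoffs equal to $1$ on $[c+1/n,d-1/n]$, supported in $[c,d]$, with $\chi_n'$ concentrating on $\{c,d\}$. Testing \eqref{relativeentropic} (with $b=v$) against $\phi(t)\chi_n(x)$ for arbitrary $\phi\in C_c^\infty((0,\infty))$ with $\phi\ge 0$, and passing $n\to\infty$ using Definition \ref{strongtrace} applied to the constant curves $h_1\equiv c$ and $h_2\equiv d$, yields
\[
\int_0^\infty \phi'(t)\,m(t)\diff t + \int_0^\infty \phi(t)\bigl(q(u(t,c+);v)-q(u(t,d-);v)\bigr)\diff t \ge 0.
\]

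Define the Lipschitz function $\Phi(t):=\int_0^t \bigl(q(u(s,d-);v)-q(u(s,c+);v)\bigr)\diff s$, whose Lipschitz constant is bounded in terms of $\|u\|_{L^\infty}$ and $f$. The previous display is exactly the statement that the distribution $\partial_t(m+\Phi)$ is a nonpositive Radon measure on $(0,\infty)$. Consequently, there exists a non-increasing function $G:(0,\infty)\to\R$ such that $m(t)+\Phi(t)=G(t)$ for almost every $t>0$. Since $G$ is monotone it admits classical left- and right-hand limits $G(t_0-)$ and $G(t_0+)$ at every $t_0\in(0,\infty)$, with $G(t_0-)\ge G(t_0+)$; and since $\Phi$ is continuous, the function $m$ then coincides almost everywhere with the function $t\mapsto G(t)-\Phi(t)$, which has classical one-sided limits at every $t_0>0$.

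To translate this into the required statement about \emph{approximate} limits, I would use that modifying a function on a set of Lebesgue measure zero does not alter its approximate one-sided limits. Thus
\[
\aplim_{t\to t_0^\pm}m(t)=\lim_{t\to t_0^\pm}\bigl(G(t)-\Phi(t)\bigr)=G(t_0\pm)-\Phi(t_0),
\]
and the monotonicity $G(t_0-)\ge G(t_0+)$ together with the continuity of $\Phi$ at $t_0$ gives
\[
\aplim_{t\to t_0^-}m(t)\ge \aplim_{t\to t_0^+}m(t),
\]
as claimed. For the inequality at $t_0=0$, I would repeat the argument with a test function $\phi\in C_c^\infty([0,\infty))$ allowed to be nonzero at the origin: the weak-entropy formulation with initial datum $u^0$ contributes the term $\phi(0)\int_c^d \eta(u^0(x)|v)\diff x$, which yields $\int_c^d \eta(u^0|v)\diff x \ge G(0+)-\Phi(0)=\aplim_{t\to 0^+}m(t)$.

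The main obstacle is the justification of the limit $n\to\infty$ in the spatial cutoff, where the strong trace property must be used to identify the boundary contributions $q(u(\cdot,c+);v)$ and $q(u(\cdot,d-);v)$ as honest $L^1_{\textrm{loc}}$ functions in $t$; once this is in place, the remainder is a clean application of the structure theorem for distributions whose derivative is a nonpositive measure, combined with the elementary fact that approximate limits are insensitive to measure-zero modifications. Nothing in the argument depends on the concave–convex structure of $f$, so the statement follows for the entire class of $\eta$-entropic weak solutions with the strong trace property.
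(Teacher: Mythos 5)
Your argument is essentially the proof that the paper itself omits and delegates to Lemma 2.5 of Krupa--Vasseur (and Lemma 7.1 of Chen--Krupa--Vasseur): test \eqref{relativeentropic} against $\phi(t)\chi_n(x)$, use the strong traces at the constant curves $x=c$, $x=d$ (or, even more simply, just the uniform $L^\infty$ bound on $q(u;v)$) to control the boundary flux, conclude that $m+\Phi$ has a non-increasing representative, and read off the approximate one-sided limits; this part is correct and is the same route as the cited proof. The one step you should justify more carefully is the inequality at $t_0=0$: you invoke an entropy formulation containing the initial-data term $\phi(0)\int_c^d\eta(u^0(x)|v)\diff x$, which is how the cited references define $\eta$-entropic solutions, but it is strictly stronger than the inequality \eqref{entropic} as displayed in this paper (test functions in $C_0^\infty(\R^+\times\R)$, no initial term). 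This is not a cosmetic point: if one only knows that $u(t)\rightharpoonup u^0$ weakly as $t\to0^+$ (which is what the weak formulation of \eqref{cl} gives), convexity of $\eta(\cdot|v)$ yields the \emph{reverse} inequality $\liminf_{t\to0^+}\int_c^d\eta(u(t,x)|v)\diff x\geq\int_c^d\eta(u^0(x)|v)\diff x$, so the claimed bound at $t_0=0$ genuinely requires either the entropy inequality up to $t=0$ with the initial-data term, or strong $L^1_{loc}$ attainment of $u^0$. State which of these you are using; with that made explicit, the proof is complete.
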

The next lemma will control the boundaries of the cone of information.
\begin{lemma}[Lemma 7.2 in \cite{MR4487515}]\label{qcontrolbyeta}
There exists a constant $C > 0$ such that:
\[
|q(a;b)| \leq C\eta(a|b), \text{ for any } (a,b) \in \statespace \times \statespace.
\]
\end{lemma}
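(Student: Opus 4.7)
The plan is to exploit the fact that both sides vanish to second order on the diagonal $\{a=b\}$ of the compact set $\statespace \times \statespace$, and that $\eta(a|b)$ is controlled from below by $|a-b|^2$. First I would unfold the definitions. Since $\eta \in C^3$ is strictly convex on $\statespace$, Taylor expansion at $a=b$ gives
\[
\eta(a|b) = \tfrac{1}{2}\eta''(b)(a-b)^2 + R_1(a,b),
\]
where $|R_1(a,b)| \leq K|a-b|^3$ uniformly on the compact set $\statespace \times \statespace$, with $K$ depending on $\|\eta'''\|_{L^\infty(\statespace)}$. Using the compatibility relation $q' = \eta' f'$, a direct computation (treating the standard relative entropy flux $q(a;b)=q(a)-q(b)-\eta'(b)(f(a)-f(b))$) shows that $q(b;b)=0$ and $\partial_a q(a;b)\big|_{a=b}=0$, and so analogously
\[
q(a;b) = \tfrac{1}{2}\eta''(b)f'(b)(a-b)^2 + R_2(a,b),
\]
with $|R_2(a,b)| \leq K'|a-b|^3$ uniformly on $\statespace \times \statespace$.

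Next I would combine these expansions with Lemma \ref{quadraticentropy}, which provides a constant $c^* > 0$ depending only on $\eta$ and $\statespace$ such that $\eta(a|b) \geq c^*|a-b|^2$ for all $(a,b) \in \statespace \times \statespace$. Writing $L := \sup_{b \in \statespace}|f'(b)|\,\|\eta''\|_{L^\infty(\statespace)}$ and $M := \sup_{a,b \in \statespace}|a-b|$, the above expansions yield
\[
|q(a;b)| \leq \tfrac{1}{2}L\,|a-b|^2 + K'|a-b|^3 \leq \bigl(\tfrac{1}{2}L + K'M\bigr)|a-b|^2,
\]
so setting $C := c^{*-1}\bigl(\tfrac{1}{2}L + K'M\bigr)$ and using the lower bound on $\eta(a|b)$ gives $|q(a;b)| \leq C\eta(a|b)$ uniformly on $\statespace \times \statespace$, which is the desired inequality.

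Alternatively, and perhaps more transparently, the same conclusion can be read off from a compactness argument: the function $\Phi(a,b) := q(a;b)/\eta(a|b)$ defined on $(\statespace \times \statespace) \setminus \{a=b\}$ extends continuously to the diagonal with boundary value $f'(b)$ (by the two expansions above and the strict positivity of $\eta''$), and so $\Phi$ is a continuous function on the compact set $\statespace \times \statespace$ and in particular bounded. There is no real obstacle here; the only subtlety is to verify the identity $\partial_a q(a;b)\big|_{a=b}=0$, which is just the statement that the relative entropy flux, like the relative entropy, is designed to vanish to second order on the diagonal.
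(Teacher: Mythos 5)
Your proof is correct and follows essentially the same route as the paper's: both exploit that $q(b;b)=\partial_a q(a;b)\big|_{a=b}=0$ together with a uniform bound on $\partial_a^2 q$ over the compact set $\statespace \times \statespace$ to get $|q(a;b)| \leq C|a-b|^2$, and then invoke Lemma \ref{quadraticentropy} to convert $|a-b|^2$ into $\eta(a|b)$. Your version is more explicit (and the Taylor expansion of $\eta(a|b)$ in the first paragraph is actually unnecessary, since only the lower bound from Lemma \ref{quadraticentropy} is used), but the mathematical content is the same.
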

\begin{proof}
We have $q(b;b)=\partial_1q(b;b)=0$ for any $b \in \statespace$, so by Lemma \ref{quadraticentropy} and $q'' \in C^0(\statespace)$, there exists a constant $C$ such that:
\[
|q(a;b)| \leq C|a-b|^2 \leq \frac{C}{c^*}\eta(a|b).
\]
\end{proof}
Now, we prove Proposition \ref{closetomaxp}. Fix $T,R > 0$, and $p \in \N$. Fix $v$ to be larger than the maximum speed $\hat{\lambda}$ from Proposition \ref{riemannstabilityshock} and the constant $C$ from Lemma \ref{qcontrolbyeta}. Take $0 < \eps < \frac{1}{2}$ sufficiently small such that Proposition \ref{riemannstabilityshock} is verified. For any initial value $u^0$ and wild solution $u \in \weaktwo$, consider the family of solutions $\psi_h$ produced by the modified front tracking algorithm. We now choose a particular value of $h$. By taking $h$ sufficiently small, we choose for the initial value to satisfy:
\begin{equation}\label{initdataapprox}
||u^0-\psi_h(0,\cdot)||_{L^2(-R,R)} \leq \frac{1}{p}.
\end{equation}
Denote $\psi_h=:\psi$ to be the front tracking function. In particular, it verifies:
\begin{align*}
    ||u(0,\cdot)-\psi(0,\cdot)||_{L^2(-R,R)} &\leq ||u^0-u(0,\cdot)||_{L^2(-R,R)}+\frac{1}{p}, \\   
    TV(\psi(t,\cdot)) &\leq TV(u^0), \\
    \inf u^0 \leq \psi(t,x) &\leq \sup u^0, \\
    ||\psi(t,\cdot)-\psi(s,\cdot)||_{L^1(-R,R)} &\leq TV(u^0)\hat{\lambda}(|t-s|) \text{ (by Lemma \ref{fronttrackingwelldefined})}.
\end{align*}
Thus, the first three properties of Proposition \ref{closetomaxp} are verified by $\psi$. We need only to show the $L^2$-stability. 
\par Consider two successive interaction times $t_j < t_{j+1}$ in the front tracking algorithm that produced $\psi$. Denote the curves of discontinuity between two times $t_j, t_{j+1}$ be $h_1, ..., h_N$, where $N \in \N$, such that:
\begin{align*}
h_1(t) < ... < h_N(t),
\end{align*}
for all $t \in (t_j, t_{j+1})$. We work only within the cone of information, so we define:
\begin{align*}
    h_0(t)&=-R+vt, \\
    h_{N+1}(t)&=R-vt.
\end{align*}
Note that there are no interactions between waves in $\psi$ and the cone of information. For any $t \in [t_j, t_{j+1}]$, note that on $Q=\{(r,x)|t_j < r < t, h_i(r) < x < h_{i+1}(r)\}$, the functions $\psi(r,x)$ and $a(r,x)$ are constant. Consequently, integrating \eqref{relativeentropic} on $Q$, and using the Strong Trace property (Definition \ref{strongtrace}), we obtain:
\begin{align*}
&\aplim_{s \to t_-}\int_{h_i(t)}^{h_{i+1}(t)}a(t-,x)\eta(u(s,x)|\psi(t,x))\diff x \\
&\leq \aplim_{s \to t_j^+}\int_{h_i(t_j)}^{h_{i+1}(t_j)}a(t_j+,x)\eta(u(s,x)|\psi(t_j,x))\diff x +\int_{t_j}^t(F_i^+(r)-F_{i+1}^-(r))\diff r,
\end{align*}
where:
\begin{align*}
&F_i^+(r)=a(r,h_i(r)+)\left(q(u(r,h_i(r)+);\psi(r,h_i(r)+))-\dot{h}_i(r)\eta(u(r,h_i(r)+)|\psi(r,h_i(r)+))\right), \\
&F_i^-(r)=a(r,h_i(r)-)\left(q(u(r,h_i(r)-);\psi(r,h_i(r)-))-\dot{h}_i(r)\eta(u(r,h_i(r)-)|\psi(r,h_i(r)-))\right).
\end{align*}
By summing in $i$, and combining terms corresponding to $i$ in one sum and terms corresponding to $i+1$ in another:
\begin{align*}
&\aplim_{s \to t_-}\int_{-R+vt}^{R-vt}a(t-,x)\eta(u(s,x)|\psi(t,x))\diff x \\
&\leq \aplim_{s \to t_j^+}\int_{R+vt_j}^{R-vt_j}a(t_j+,x)\eta(u(s,x)|\psi(t_j,x))\diff x +\sum_{i=1}^N\int_{t_j}^t(F_i^+(r)-F_{i+1}^-(r))\diff r,
\end{align*}
where we have used that $F_0^+ \leq 0$ and $F_{N+1}^- \geq 0$ due to Lemma \ref{qcontrolbyeta}, the definition of $v$, and $\dot{h}_0=-v=-\dot{h}_{N+1}$.
\par We decompose the second sum on the right-hand side into two terms, one corresponding to shocks and one corresponding to rarefactions. Due to Proposition \ref{riemannstabilityshock} and \eqref{asmalljump}, \eqref{abigjump}, for any $i$ corresponding to a shock:
\[
F_i^+(r)-F_i^-(r) \leq 0, \text{ for almost every } t_j < r < t.
\]
Denote $\mathcal{R}$ the set of $i$ corresponding to rarefactions. Then, by construction, for any $i \in \mathcal{R}$, $a(r,h_i(r)+)=a(r,h_i(r)-)$. Then, from Proposition \ref{rarefactionshockerror}, and by taking $h \leq \frac{1}{TV(u^0)pT}$:
\[
\sum_{i \in \mathcal{R}}\int_{t_j}^t(F_i^+(r)-F_i^-(r))\diff r \leq Ch(t-t_j)\sum_{i \in \mathcal{R}}\sigma_i \leq \frac{C}{pT}(t-t_j).
\]
In total, we find:
\begin{align*}
&\aplim_{s \to t_-}\int_{-R+vt}^{R-vt}a(t-,x)\eta(u(s,x)|\psi(t,x))\diff x \\
&\leq \aplim_{s \to t_j^+}\int_{R+vt_j}^{R-vt_j}a(t_j+,x)\eta(u(s,x)|\psi(t_j,x))\diff x +\frac{C(t-t_j)}{pT}.
\end{align*}
Finally, consider any $0 < t < T$, and denote $0 < t_1, ..., t_J$ the times of wave interactions before $t$, $t_0=0$, and $t_{J+1}=t$. Using Lemma \ref{stitchinglemma} and \eqref{amonotone}, we find:
\begin{align*}
&\int_{-R+vt}^{R-vt}a(t,x)\eta(u(t,x)|\psi(t,x))\diff x -\int_{-R}^Ra(0,x)\eta(u(0,x)|\psi(0,x))\diff x \\
&\leq \aplim_{s \to t^+}\int_{-R+vt}^{R-vt}a(t,x)\eta(u(t,x)|\psi(t,x))\diff x -\int_{-R}^Ra(0,x)\eta(u(0,x)|\psi(0,x))\diff x \\
& \leq \sum_{j=1}^{J+1}\Bigg(\aplim_{s \to t_j^+}\int_{-R+vt_j}^{R-vt_j}a(t_j-,x)\eta(u(t,x)|\psi(t,x))\diff x \\
&\phantom{{}\leq \sum_{j=1}^{J+1}(}-\aplim_{s \to t_{j-1}^+}\int_{-R+vt_{j-1}}^{R-vt_{j-1}}a(t_{j-1}-,x)\eta(u(0,x)|\psi(0,x))\diff x\Bigg) \\
& \leq \sum_{j=1}^{J+1}\Bigg(\aplim_{s \to t_j^-}\int_{-R+vt_j}^{R-vt_j}a(t_j-,x)\eta(u(t,x)|\psi(t,x))\diff x \\
&\phantom{{}\leq \sum_{j=1}^{J+1}(}-\aplim_{s \to t_{j-1}^+}\int_{-R+vt_{j-1}}^{R-vt_{j-1}}a(t_{j-1}-,x)\eta(u(0,x)|\psi(0,x))\diff x\Bigg) \\
& \leq \sum_{j=1}^{J+1}\Bigg(\aplim_{s \to t_j^-}\int_{-R+vt_j}^{R-vt_j}a(t_j-,x)\eta(u(t,x)|\psi(t,x))\diff x \\
&\phantom{{}\leq \sum_{j=1}^{J+1}(}-\aplim_{s \to t_{j-1}^+}\int_{-R+vt_{j-1}}^{R-vt_{j-1}}a(t_{j-1}+,x)\eta(u(0,x)|\psi(0,x))\diff x\Bigg) \\
&\leq \sum_{j=1}^{J+1}\frac{C(t_j-t_{j-1})}{pT} \leq \frac{C}{p}.
\end{align*}
Using  Lemma \ref{quadraticentropy}, \eqref{abounded}, and \eqref{initdataapprox}, we find:
\[
cc^*||\psi(t,\cdot)-u(t,\cdot)||_{L^2(-R+vt, R-vt)}^2 \leq c^{**}||u^0-u(0,\cdot)||_{L^2(-R,R)}^2+\frac{C}{p},
\]
where $c=c(TV(u^0))$ is the constant from Lemma \ref{aproperties}. Taking $p$ sufficiently large so that $\frac{C}{p} < \frac{1}{m}$ grants the result. 
\appendix
\section{One entropy is not enough for concave-convex fluxes}\label{app1}
In this section, we provide an example that shows that for concave-convex fluxes, even if one starts with Riemann initial data localized to the convex region of the flux, one entropy is not enough to grant uniqueness. 
\begin{theorem}
Let $f(u)=u^3, \eta(u)=u^2+e^u$. Then, there exists Riemann initial data with $u_L,u_R > 0$ such that the Cauchy problem for \eqref{cl} has infinitely many distinct $\eta$-entropic solutions. 
\end{theorem}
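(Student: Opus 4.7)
The plan is to construct, for suitable Riemann data $u_L > u_R > 0$, a continuous one-parameter family of two-shock weak solutions separated by a negative intermediate state $u_*$: the first shock will be undercompressive but $\eta$-entropic, and the second is a classical Ole\u{\i}nik shock. Specializing the auxiliary functions of Section~\ref{sectionlarge} to $f(u)=u^3$, the tangency equation $2v^2 - uv - u^2 = 0$ yields $\phitan(u) = -u/2$. By Lemma~\ref{entropydissipationlemma}, for every $u_L > 0$ the map $v \mapsto E_\eta(u_L,v)$ attains its unique minimum at $v = -u_L/2$ with strictly negative value; a direct inspection of leading orders gives $E_\eta(u_L,v) \sim \tfrac{3}{2} v^4$ as $v \to -\infty$, so $\phidiss(u_L)$ is well-defined and strictly less than $\phitan(u_L)$.

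The central quantitative step is to show
\[
\phidiss(u_L) < -u_L \qquad \text{for every } u_L > 0,
\]
which, by Lemma~\ref{entropydissipationlemma}, is equivalent to $E_\eta(u_L,-u_L) < 0$. Using $q(u) = \tfrac{3u^4}{2} + 3(u^2 - 2u + 2)e^u + \mathrm{const}$ and the Rankine--Hugoniot speed $\sigma(u_L,-u_L) = u_L^2$, the inequality reduces after simplification to
\[
g(u) := 3u \cosh u - (u^2 + 3) \sinh u < 0 \qquad \text{for } u > 0.
\]
Since $g(0) = 0$ and $g'(u) = u(\sinh u - u \cosh u)$, while the inner factor vanishes at $0$ and has derivative $-u \sinh u \leq 0$ on $[0,\infty)$, the inner factor is strictly negative on $(0,\infty)$, so $g' < 0$ and hence $g < 0$ there. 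This is the only step in which the particular form of $\eta$ plays a nontrivial role: for the pure quadratic entropy $u^2$, the same quantity vanishes identically, reflecting the symmetric zero of $E_\eta$ at $v = -u_L$ for the cubic flux; the exponential perturbation is precisely what breaks this degeneracy.

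With $\phidiss(u_L) < -u_L$ in hand, fix any $u_L > 0$ and any $u_R \in \bigl(0,\, -\phidiss(u_L) - u_L\bigr)$. Then the open interval
\[
I := \bigl(\phidiss(u_L),\ -(u_L + u_R)\bigr)
\]
is nonempty and contained in $(\phidiss(u_L), \phitan(u_L))$. For each $u_* \in I$, define
\[
u^{u_*}(t,x) = \begin{cases} u_L, & x < \sigma_1 t, \\ u_*, & \sigma_1 t < x < \sigma_2 t, \\ u_R, & x > \sigma_2 t, \end{cases}
\]
with $\sigma_1 = u_L^2 + u_L u_* + u_*^2$ and $\sigma_2 = u_*^2 + u_* u_R + u_R^2$. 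The wave ordering $\sigma_1 < \sigma_2$ follows from the factorization $\sigma_2 - \sigma_1 = (u_R - u_L)(u_L + u_R + u_*)$ together with $u_R < u_L$ and $u_* < -(u_L+u_R)$. The first shock $(u_L, u_*)$ is $\eta$-entropic by Lemma~\ref{entropydissipationlemma} since $u_* > \phidiss(u_L)$. The second shock $(u_*, u_R)$ satisfies Ole\u{\i}nik's E-condition since $u_* < -(u_L+u_R) < -2 u_R$ forces the factorization $k^3 - u_R^3 - \sigma_2(k-u_R) = (k - u_*)(k - u_R)(k + u_* + u_R) \geq 0$ on $(u_*, u_R)$, and Ole\u{\i}nik shocks are entropic for every convex entropy. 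Varying $u_*$ over the continuum $I$ gives uncountably many distinct $\eta$-entropic weak solutions with the same Riemann initial data, in addition to the Kru\v{z}kov single-shock solution.

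The main obstacle is the strict inequality $E_\eta(u_L, -u_L) < 0$ uniformly in $u_L > 0$; once reduced to the hyperbolic identity $g < 0$, the verification is elementary, but this is the only step where the particular choice of $\eta$ is used, and precisely the reason why the statement fails for the pure quadratic $\eta(u) = u^2$.
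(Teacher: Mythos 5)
Your construction is correct, and it is in essence an explicit, self-contained version of the paper's argument. The paper proves the theorem by computing numerically $\phidiss(1)\approx -1.048<-1$, observing that this makes $\phi_0^\sharp(u_L)>0$, and then citing LeFloch's Theorem 3.5 to produce a one-parameter family of nonclassical-shock-plus-classical-shock solutions for $u_R\in(0,\phi_0^\sharp(u_L))$. Your key inequality $\phidiss(u_L)<-u_L$ is exactly the same condition: for the cubic flux the companion of the zero-dissipation state is $\phi_0^\sharp(u_L)=-u_L-\phidiss(u_L)$, so your admissible range for $u_R$ coincides with the paper's, and your two-shock fans (an undercompressive but $\eta$-entropic shock $(u_L,u_*)$ with $u_*\in(\phidiss(u_L),\phitan(u_L))$, followed by an Ole\u{\i}nik shock $(u_*,u_R)$) are precisely the nonclassical solutions the paper invokes. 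What your route buys is that it avoids both the numerical evaluation and the appeal to LeFloch: the inequality $E_\eta(u_L,-u_L)=4\bigl(3u_L\cosh u_L-(u_L^2+3)\sinh u_L\bigr)<0$ is proved analytically for every $u_L>0$ (your monotonicity argument for $g$ is correct, and the remark that the quantity vanishes identically for $\eta(u)=u^2$ correctly isolates the role of the exponential perturbation), and the admissibility of both shocks and the speed ordering are verified by hand. Two cosmetic points: the large-$|v|$ asymptotic of $E_\eta(u_L,v)$ has leading coefficient $\tfrac12 v^4$, not $\tfrac32 v^4$ (only positivity matters, so nothing breaks); and your speed-ordering and E-condition computations use $u_R<u_L$ (via $u_*<-(u_L+u_R)<-2u_R$), which you did not verify follows from $u_R<-\phidiss(u_L)-u_L$ — since the statement is existential, simply impose $u_R<\min\{u_L,\,-\phidiss(u_L)-u_L\}$, or note that $\phidiss(u_L)>-2u_L$ makes the restriction automatic.
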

\begin{proof}
Let $u_L=1$. Then, we compute $\phidiss(u_L) \approx -1.048 < 1$. This implies that $\phi_0^\sharp(u_L) > 0$ (for a definition of $\phi_0^\sharp$, see page 37 of \cite{MR1927887}). So, by Theorem 3.5 in \cite{MR1927887}, there exists a one parameter family of $\eta$-entropic solutions to the Riemann problem for any $u_R \in (0, \phi_0^\sharp(u_L))$, composed of non-classical shocks followed by a classical shock. Of course, the Kru\v{z}kov solution consists of only a single shock.
\end{proof}
\bibliographystyle{plain}
\bibliography{refs}

\begin{thebibliography}{10}

\bibitem{MR435615}
Donald~P. Ballou.
\newblock Solutions to nonlinear hyperbolic {C}auchy problems without convexity conditions.
\newblock {\em Trans. Amer. Math. Soc.}, 152:441--460 (1971), 1970.

\bibitem{MR1816648}
Alberto Bressan.
\newblock {\em Hyperbolic systems of conservation laws}, volume~20 of {\em Oxford Lecture Series in Mathematics and its Applications}.
\newblock Oxford University Press, Oxford, 2000.
\newblock The one-dimensional Cauchy problem.

\bibitem{MR4623215}
Gaowei Cao and Gui-Qiang~G. Chen.
\newblock Minimal entropy conditions for scalar conservation laws with general convex fluxes.
\newblock {\em Quart. Appl. Math.}, 81(3):567--598, 2023.

\bibitem{NSVS}
Geng Chen, Moon-Jin Kang, and Alexis~F. Vasseur.
\newblock From navier-stokes to bv solutions of the barotropic euler equations.
\newblock {\em Preprint on Arxiv}, 2024.

\bibitem{MR4487515}
Geng Chen, Sam~G. Krupa, and Alexis~F. Vasseur.
\newblock Uniqueness and weak-{BV} stability for {$2\times 2$} conservation laws.
\newblock {\em Arch. Ration. Mech. Anal.}, 246(1):299--332, 2022.

\bibitem{mythesis}
Jeffrey Cheng.
\newblock Phd thesis.
\newblock {\em In preparation}, In preparation.

\bibitem{MR818862}
Kuo~Shung Cheng.
\newblock A regularity theorem for a nonconvex scalar conservation law.
\newblock {\em J. Differential Equations}, 61(1):79--127, 1986.

\bibitem{MR4305935}
Andres~A. Contreras~Hip and Xavier Lamy.
\newblock On the {$L^2$} stability of shock waves for finite-entropy solutions of {B}urgers.
\newblock {\em J. Differential Equations}, 301:236--265, 2021.

\bibitem{MR303068}
Constantine~M. Dafermos.
\newblock Polygonal approximations of solutions of the initial value problem for a conservation law.
\newblock {\em J. Math. Anal. Appl.}, 38:33--41, 1972.

\bibitem{MR546634}
Constantine~M. Dafermos.
\newblock The second law of thermodynamics and stability.
\newblock {\em Arch. Rational Mech. Anal.}, 70(2):167--179, 1979.

\bibitem{MR785530}
Constantine~M. Dafermos.
\newblock Regularity and large time behaviour of solutions of a conservation law without convexity.
\newblock {\em Proc. Roy. Soc. Edinburgh Sect. A}, 99(3-4):201--239, 1985.

\bibitem{MR3468916}
Constantine~M. Dafermos.
\newblock {\em Hyperbolic conservation laws in continuum physics}, volume 325 of {\em Grundlehren der mathematischen Wissenschaften [Fundamental Principles of Mathematical Sciences]}.
\newblock Springer-Verlag, Berlin, fourth edition, 2016.

\bibitem{MR2104269}
Camillo De~Lellis, Felix Otto, and Michael Westdickenberg.
\newblock Minimal entropy conditions for {B}urgers equation.
\newblock {\em Quart. Appl. Math.}, 62(4):687--700, 2004.

\bibitem{MR523630}
Ronald~J. DiPerna.
\newblock Uniqueness of solutions to hyperbolic conservation laws.
\newblock {\em Indiana Univ. Math. J.}, 28(1):137--188, 1979.

\bibitem{MR4667839}
William~M. Golding, Sam~G. Krupa, and Alexis~F. Vasseur.
\newblock Sharp {$a$}-contraction estimates for small extremal shocks.
\newblock {\em J. Hyperbolic Differ. Equ.}, 20(3):541--602, 2023.

\bibitem{MR1475777}
Brian~T. Hayes and Philippe~G. LeFloch.
\newblock Non-classical shocks and kinetic relations: scalar conservation laws.
\newblock {\em Arch. Rational Mech. Anal.}, 139(1):1--56, 1997.

\bibitem{MR688972}
David Hoff.
\newblock The sharp form of {O}le\u inik's entropy condition in several space variables.
\newblock {\em Trans. Amer. Math. Soc.}, 276(2):707--714, 1983.

\bibitem{Wangcubic}
Feimin Huang, Yi~Wang, and Jian Zhang.
\newblock Time-asymptotic stability of composite waves of degenerate oleinik shock and rarefaction for non-convex conservation laws.
\newblock {\em Preprint on Arxiv}, August 2024.

\bibitem{MR4195742}
Moon-Jin Kang and Alexis Vasseur.
\newblock Contraction property for large perturbations of shocks of the barotropic {N}avier-{S}tokes system.
\newblock {\em J. Eur. Math. Soc. (JEMS)}, 23(2):585--638, 2021.

\bibitem{MR3519973}
Moon-Jin Kang and Alexis~F. Vasseur.
\newblock Criteria on contractions for entropic discontinuities of systems of conservation laws.
\newblock {\em Arch. Ration. Mech. Anal.}, 222(1):343--391, 2016.

\bibitem{MR1920633}
Cezar~I. Kondo and Philippe~G. Lefloch.
\newblock Zero diffusion-dispersion limits for scalar conservation laws.
\newblock {\em SIAM J. Math. Anal.}, 33(6):1320--1329, 2002.

\bibitem{MR4176349}
Sam~G. Krupa.
\newblock Criteria for the a-contraction and stability for the piecewise-smooth solutions to hyperbolic balance laws.
\newblock {\em Commun. Math. Sci.}, 18(6):1493--1537, 2020.

\bibitem{MR3954680}
Sam~G. Krupa and Alexis~F. Vasseur.
\newblock On uniqueness of solutions to conservation laws verifying a single entropy condition.
\newblock {\em J. Hyperbolic Differ. Equ.}, 16(1):157--191, 2019.

\bibitem{MR4101366}
Sam~G. Krupa and Alexis~F. Vasseur.
\newblock Stability and uniqueness for piecewise smooth solutions to a nonlocal scalar conservation law with applications to {B}urgers-{H}ilbert equation.
\newblock {\em SIAM J. Math. Anal.}, 52(3):2491--2530, 2020.

\bibitem{MR267257}
Stanislav~N. Kru\v{z}kov.
\newblock First order quasilinear equations with several independent variables.
\newblock {\em Mat. Sb. (N.S.)}, 81(123):228--255, 1970.

\bibitem{MR93653}
Peter Lax.
\newblock Hyperbolic systems of conservation laws. {II}.
\newblock {\em Comm. Pure Appl. Math.}, 10:537--566, 1957.

\bibitem{MR1927887}
Philippe~G. LeFloch.
\newblock {\em Hyperbolic systems of conservation laws}.
\newblock Lectures in Mathematics ETH Z\"urich. Birkh\"auser Verlag, Basel, 2002.
\newblock The theory of classical and nonclassical shock waves.

\bibitem{MR2771666}
Nicholas Leger.
\newblock {$L^2$} stability estimates for shock solutions of scalar conservation laws using the relative entropy method.
\newblock {\em Arch. Ration. Mech. Anal.}, 199(3):761--778, 2011.

\bibitem{MR2807139}
Nicholas Leger and Alexis Vasseur.
\newblock Relative entropy and the stability of shocks and contact discontinuities for systems of conservation laws with non-{BV} perturbations.
\newblock {\em Arch. Ration. Mech. Anal.}, 201(1):271--302, 2011.

\bibitem{MR94541}
Olga~A. Ole\u{\i}nik.
\newblock Discontinuous solutions of non-linear differential equations.
\newblock {\em Uspehi Mat. Nauk (N.S.)}, 12(3(75)):3--73, 1957.

\bibitem{MR1296003}
Evgeny~Y. Panov.
\newblock Uniqueness of the solution of the {C}auchy problem for a first-order quasilinear equation with an admissible strictly convex entropy.
\newblock {\em Mat. Zametki}, 55(5):116--129, 159, 1994.

\bibitem{MR1707279}
Denis Serre.
\newblock {\em Systems of conservation laws. 1}.
\newblock Cambridge University Press, Cambridge, 1999.
\newblock Hyperbolicity, entropies, shock waves, Translated from the 1996 French original by I. N. Sneddon.

\bibitem{MR3322780}
Denis Serre and Alexis~F. Vasseur.
\newblock {$L^2$}-type contraction for systems of conservation laws.
\newblock {\em J. \'Ec. polytech. Math.}, 1:1--28, 2014.

\bibitem{MR2508169}
Alexis~F. Vasseur.
\newblock Recent results on hydrodynamic limits.
\newblock In {\em Handbook of differential equations: evolutionary equations. {V}ol. {IV}}, Handb. Differ. Equ., pages 323--376. Elsevier/North-Holland, Amsterdam, 2008.

\bibitem{MR3537479}
Alexis~F. Vasseur.
\newblock Relative entropy and contraction for extremal shocks of conservation laws up to a shift.
\newblock In {\em Recent advances in partial differential equations and applications}, volume 666 of {\em Contemp. Math.}, pages 385--404. Amer. Math. Soc., Providence, RI, 2016.

\end{thebibliography}
\end{document}